   \def\frakC#1{#1}%
\DeclareMathOperator{\trace}{tr}   % keep short 'tr' if you use it widely
\DeclareMathOperator{\Ric}{Ric}
\DeclareMathOperator{\Spec}{Spec}
\DeclareMathOperator{\ram}{ram}
\DeclareMathOperator{\Aut}{Aut}
\DeclareMathOperator{\ad}{ad}
\DeclareMathOperator{\Fix}{Fix}
\DeclareMathOperator{\Kern}{Ker}
\DeclareMathOperator{\Mat}{Mat}
\DeclareMathOperator{\Eig}{Eig}
\DeclarePairedDelimiter{\norm}{\lVert}{\rVert}
\newcommand{\bbR}{\mathbb{R}}
\newcommand{\C}{\mathrm{C}}
\newcommand{\bbZ}{\mathbb{Z}}
\newcommand{\N}{\mathbb{N}}
\newcommand{\bbC}{\mathbb{C}}
\newcommand{\bbH}{\mathbb{H}}      % use this for quaternions (avoid redefining \H)
\newcommand{\bbO}{\mathbb{O}}
 \newcommand{\frakC}{\mathfrak{C}}
\newcommand{\scrP}{\mathscr{P}}
\newcommand{\scrR}{R}
\newcommand{\dt}{\mathrm{dt}}
\newcommand{\Id}{\mathrm{Id}}
\newcommand{\Iso}{\mathrm{I}}
\newcommand{\Hom}{\mathrm{Hom}}
\newcommand{\rmS}{\mathrm{S}}
\newcommand{\rmR}{\mathrm{R}}
\newcommand{\Eval}{\mathrm{Eval}}
\newcommand{\pwcp}{\mathrm{pw\_cp}}
\newcommand{\bwmin}{\mathrm{bw\_min}}
\newcommand{\Spin}{\mathbf{Spin}}
\newcommand{\Pin}{\mathbf{Pin}}
\newcommand{\Sp}{\mathbf{Sp}}
\newcommand{\Cl}{\mathrm{Cl}}
\newcommand{\Gtwo}{\mathbf{G_2}}
\newcommand{\rmL}{\mathrm{L}}
\newcommand{\pr}{\mathrm{pr}}
\newcommand{\Ogroup}{\mathbf{O}}
\newcommand{\U}{\mathbf{U}}
\newcommand{\SO}{\mathbf{SO}}
\newcommand{\SU}{\mathbf{SU}}
\newcommand{\so}{\mathfrak{so}}
\renewcommand{\i}{\mathrm{i}}
\renewcommand{\j}{\mathrm{j}}
\renewcommand{\k}{\mathrm{k}}
\renewcommand{\d}{\mathrm{d}}     % differential 'd'
\renewcommand{\Im}{\mathrm{Im}}   % keep imaginary part upright (optional)
\renewcommand{\Re}{\mathrm{Re}}    % use \Rea to avoid clobbering \Re accent
\newcommand{\K}{\mathrm{K}}
\newcommand{\underK}{\check{\mathrm{K}}}
\newcommand{\overK}{\hat{\mathrm{K}}}
\renewcommand{\#}{\sharp}              % special character macro
\newcommand{\ext}{\mathrm{ext}}
\newcommand{\g}{\mathfrak{g}}
\newcommand{\h}{\mathfrak h}
\renewcommand{\H}{\mathfrak H}
\newcommand{\n}{\mathfrak{n}}
\renewcommand{\t}{\mathfrak{t}}
\newcommand{\z}{\mathfrak z}
\newcommand{\Z}{\mathfrak Z}
\renewcommand{\v}{\mathfrak v} % avoid \v (TeX accent)
\newcommand{\undermu}{\check{\mu}}
\newcommand{\overmu}{\hat{\mu}}
\newcommand{\oversigma}{\hat{\sigma}}
\newcommand{\branch}{\mathcal{B}}
\DeclareMathOperator{\Endop}{End}
\newcommand{\End}[1]{\Endop\!\left(#1\right)}
\newcommand{\Endp}[1]{\Endop\!\left(#1\right)}
\newcommand{\cbullet}{\mathbin{\bullet}}
\newcommand{\spinprod}{\mathbin{*}}
\theoremstyle{definition}
\newtheorem{definition}{Definition}[section]
\theoremstyle{plain}
\newtheorem{theorem}[definition]{Theorem}
\newtheorem{lemma}[definition]{Lemma}
\newtheorem{proposition}[definition]{Proposition}
\newtheorem{corollary}[definition]{Corollary}
\theoremstyle{remark}
\newtheorem{remark}[definition]{Remark}
\title{Minimal Polynomials of Generalized Heisenberg Groups}
\author{Tillmann Jentsch}
\address{Unidad Cuernavaca del Instituto de Matem\'aticas, Universidad
Nacional Aut\'onoma de M\'exico, Avenida Universidad s/n, Lomas de Chamilpa,
62210 Cuernavaca, Morelos, MEXICO (Guest visitor)}
\email{tilljentsch@gmail.com}
\subjclass[2020]{Primary 53C30; Secondary 53C27, 53B20, 22E25}
\keywords{\(\frakC_0\)-space, generalized Heisenberg group, 
minimal polynomial, jet of the curvature tensor, Killing tensors, Singer invariant}
\begin{document}
\sloppy
%\raggedbottom

\begin{abstract}
Every homogeneous Riemannian \(\frakC_0\)-space is associated
with its minimal polynomial. To provide explicit examples, we compute the
minimal polynomials for generalized Heisenberg groups equipped with
their canonical left-invariant metrics.
\end{abstract}

\maketitle

\section{Preliminaries}
A Riemannian manifold is a smooth manifold \(N\) equipped with a
positive definite symmetric \((0,2)\)-tensor field \(g\). The pair
\((N,g)\) admits a unique torsion-free metric connection \(\nabla\) with
\(\nabla g = 0\), the Levi--Civita connection. The Riemann curvature
tensor \(\rmR\) is defined by
\[
  \rmR(X,Y)Z \coloneqq \nabla_X \nabla_Y Z - \nabla_Y \nabla_X Z
  - \nabla_{[X,Y]} Z
\]
for all vector fields \(X,Y,Z\) on \(N\).

For \(k \ge 0\), let \(\nabla^k \rmR\) denote the \(k\)th iterated
covariant derivative of \(\rmR\), defined recursively by
\[
  \nabla^0 \rmR \coloneqq \rmR,\qquad
  \nabla^k \rmR \coloneqq
  \nabla(\nabla^{k-1} \rmR)
  \quad (k \ge 1)
\]
via the induced covariant derivative \(\nabla\) acting on tensor fields
in the standard way. (For instance, locally symmetric spaces are
characterized by \(\nabla \rmR = 0\).) 

Given \(X \in T_pN\), define an
endomorphism \(\scrR^k(X) \in \End{T_pN}\) by
\begin{equation}\label{eq:kth_order_symmetrized_curvature_tensor_1}
  \scrR^k(X)Y \coloneqq (\nabla^{k}_{X,\ldots,X}\,\rmR)(Y,X)X
  \qquad (Y \in T_pN)
\end{equation}
Then the assignment
\begin{equation}
\label{eq:kth_order_symmetrized_curvature_tensor_2}
  \scrR^k|_p \colon T_pN \longrightarrow \End{T_pN},\qquad
  X \longmapsto \scrR^k|_p(X)
\end{equation}
defines an \(\End{T_pN}\)-valued homogeneous polynomial map of degree \(k+2\)
on \(T_pN\). We refer to \(\scrR^k\) as the {\em symmetrized \(k\)th covariant
derivative of the curvature tensor}.

For each \(p \in N\), let \(\scrP^j(T_pN)\) be the vector space of
homogeneous polynomials of degree \(j\) on \(T_pN\), and let
\(\scrP^j\bigl(T_pN;\End{T_pN}\bigr)\) denote the space of homogeneous
polynomials of degree \(j\) on \(T_pN\) with values in \(\End{T_pN}\).
Then
\[
  \scrR^k|_p \in \scrP^{k+2}(T_pN;\End{T_pN})
\]
Hence the symmetrized \(k\)-jet of the curvature tensor,
\[
  \scrR^{\le k}|_p \coloneqq
  (\scrR^0|_p,\,\scrR^1|_p,\,\ldots,\,\scrR^k|_p)
\]
belongs to the graded vector space
\[
  \scrP(T_pN;\End{T_pN}) =
  \bigoplus_{j=0}^{\infty} \scrP^j(T_pN;\End{T_pN})
\]
which is a graded module over the graded \(\bbR\)-algebra
\[
  \scrP(T_pN) = \bigoplus_{i=0}^{\infty} \scrP^i(T_pN)
\]
of polynomial functions on \(T_pN\).

Let \(P = \sum_{i=0}^k a_i\,\lambda^{k-i}\) be a polynomial of degree
\(k\) in \(\lambda\) with coefficients \(a_i \in \scrP(T_pN)\), so
\(P \in \scrP(T_pN)[\lambda]\). We evaluate \(P\) on
\(\scrR^{\le k}|_p\) by
\[
  P\bigl(\scrR^{\le k}|_p\bigr)
  \coloneqq \sum_{i=0}^k a_i\,\scrR^{k-i}|_p
\]
which lies in \(\scrP(T_pN;\End{T_pN})\). This induces the evaluation
map
\begin{equation}\label{eq:Eval_p}
  \Eval_p \colon \scrP(T_pN)[\lambda]
  \longrightarrow \scrP(T_pN;\End{T_pN}), \qquad
  P \longmapsto P\bigl(\scrR^{\le \deg(P)}|_p\bigr)
\end{equation}
characterized as the unique homomorphism of \(\scrP(T_pN)\)-modules such
that \(\Eval_p(\lambda^i) = \scrR^i|_p\) for \(i \ge 0\).

As usual, we call \(P\) \emph{homogeneous} if \(a_i \in \scrP^i(T_pN)\)
for \(i = 0,\ldots,k\). For such \(P\) one has
\[
  P\bigl(\scrR^{\le k}|_p\bigr) \in
  \scrP^{k+2}(T_pN;\End{T_pN})
\]
We say that \(P\) is \emph{monic} if \(a_0 = 1\).

\begin{definition}\label{de:JR}
Let a monic homogeneous polynomial
\(P = \sum_{i=0}^k a_i\,\lambda^{k-i} \in \scrP(T_pN)[\lambda]\)
be given. Following \cite{J}, we say:
\begin{enumerate}
  \item \(P\) is \emph{admissible} if \(\Eval_p(P) = 0\), i.e.,
    \begin{equation}\label{eq:JR_1}
      \text{for all } X \in T_pN\colon\quad
      \scrR^k(X)
      = -\sum_{i=1}^k a_i(X)\,\scrR^{k-i}(X)
    \end{equation}
  \item \(P\) is \emph{minimal} if it is admissible and
    \begin{equation}\label{eq:U(p)_1}
      U(p) \coloneqq \bigl\{\,X \in T_pN \,\big|\,
      \{\scrR^{j}(X)\}_{j=0}^{k-1}
      \text{ is linearly independent in } \End{T_pN}\bigr\}
    \end{equation}
    is nonempty.
\end{enumerate}
\end{definition}

If \(U(p)\) is nonempty, then it is Zariski open and hence open and dense
in the Euclidean topology on \(T_pN\). Therefore, if a minimal polynomial
\(P \in \scrP(T_pN)[\lambda]\) exists, the coefficients \(a_i\) of \(P\)
are uniquely determined by the symmetrized curvature \(k\)-jet at \(p\).
We then define
\[
  P_{\min}(N,g,p) \coloneqq P
\]
and say that the minimal polynomial of \((N,g)\) exists at \(p\). However,
the existence of \(P_{\min}(N,g,p)\) is not automatic without additional
hypotheses.

On the other hand, if the minimal polynomial of a homogeneous Riemannian space
\((N,g)\) exists at some point---and therefore, by the action of \(\Iso(N,g)\),
at every point---then the coefficients assemble into smooth sections
\(a_i \in \Gamma(\scrP^i(N))\) of the vector bundle \(\scrP^i(N) \to N\)
whose fiber over \(p\) is \(\scrP^i(T_pN)\). Let
\[
  \scrP(N) \coloneqq \bigoplus_{i=0}^\infty \scrP^i(N) \to N
\]
denote the associated graded \(\bbR\)-algebra bundle. Then the sections
\(a_i\) combine to a global polynomial section
\[
  P_{\min}(N,g) \coloneqq \sum_{i=0}^k a_i \,\lambda^{k-i}
  \in \Gamma(\scrP(N))[\lambda]
\]
called the minimal polynomial of \((N,g)\). The coefficients \(a_i\) are
\(\Iso(N,g)\)-invariant and satisfy \(a_0 \equiv 1\). For each \(p \in N\),
the fiber
\[
  P_{\min}(N,g)\big|_p \coloneqq \sum_{i=0}^k a_i\big|_p \,\lambda^{k-i}
  \in \scrP(T_pN)[\lambda]
\]
is the minimal polynomial \(P_{\min}(N,g,p)\). Since this is the only
case considered here, we will henceforth work in this setting.

\DeclareRobustCommand{\Czero}{\texorpdfstring{\(\frakC_0\)}{C0}}
\subsection{The minimal polynomial of a homogeneous \Czero-space}
\label{se:JR_on_C0}
Let \(\gamma\colon I \to N\) be a geodesic of \((N,g)\). The Jacobi operator
\(\scrR_\gamma\) is the endomorphism-valued tensor field along \(\gamma\) defined by
\begin{equation}\label{eq:Jacobi_operator}
  \scrR_\gamma(t) \coloneqq
  \rmR(\,\cdot\,,\dot\gamma(t))\,\dot\gamma(t)\colon
  T_{\gamma(t)}N \longrightarrow T_{\gamma(t)}N,\qquad
  X \longmapsto \rmR\bigl(X,\dot\gamma(t)\bigr)\dot\gamma(t)
\end{equation}
cf.~\cite[Ch.~2.8]{BTV}. We define its first covariant derivative along
\(\gamma\) by
\[
  \scrR^{(1)}_\gamma(t) \coloneqq \frac{\nabla}{\d t}\,\scrR_\gamma(t)
\]
that is, for every vector field \(Y(t)\) along \(\gamma\),
\begin{equation}\label{eq:def_Rgamma_1}
  \bigl(\scrR^{(1)}_\gamma(t)\bigr)Y(t)
  \coloneqq
  \frac{\nabla}{\d t}\bigl(\scrR_\gamma(t)Y(t)\bigr)
  -
  \scrR_\gamma(t)\,\frac{\nabla}{\d t}Y(t)
\end{equation}
Along a geodesic this is equivalently
\[
  \scrR^{(1)}_\gamma(t)
  = (\nabla_{\dot\gamma(t)}\rmR)(\,\cdot\,,\dot\gamma(t))\,\dot\gamma(t)
\]
since \(\frac{\nabla}{\d t}\,\dot\gamma(t) \equiv 0\).

\begin{definition}[{\cite[Ch.~2.9]{BTV}}]\label{de:C_0}
A Riemannian manifold \((N,g)\) is called a \(\frakC_0\)-space if, for
every geodesic \(\gamma\colon I \to N\), there exists a parallel
skew-symmetric endomorphism field \(\C_\gamma\) along \(\gamma\) such
that
\begin{equation}\label{eq:def_C_0}
  \scrR^{(1)}_\gamma(t) = [\C_\gamma(t),\,\scrR_\gamma(t)]
  \quad\text{for all } t \in I
\end{equation}
\end{definition}
Here,
\[
  [\C_\gamma(t),\,\scrR_\gamma(t)] \coloneqq \C_\gamma(t)\circ\scrR_\gamma(t)
  - \scrR_\gamma(t)\circ\C_\gamma(t)
\] denotes the usual commutator of endomorphisms. Also \(\C_\gamma\) being parallel means
\[
  \tfrac{\nabla}{\d t}\C_\gamma \equiv 0
\]
By definition, a \emph{homogeneous \(\frakC_0\)-space} \((N,g)\) is a
homogeneous Riemannian manifold that admits a \(\frakC_0\)-structure.
For example, every G.O.\ space is a homogeneous \(\frakC_0\)-space: a
Riemannian manifold \((N,g)\) is called a G.O.\ space if every geodesic
is an orbit of a one-parameter subgroup of \(\Iso(N,g)\).
Furthermore, generalized Heisenberg groups provide additional examples
of homogeneous \(\frakC_0\)-spaces (cf.~\cite[Ch.~2.2]{BTV}); many such
groups are not G.O. To the best of our knowledge, these are the only
\(\frakC_0\)-spaces currently known.

Also recall that a section \(a\) of \(\scrP^i(N)\) (more precisely, the
corresponding symmetric tensor obtained by polarization) is called a
\emph{Killing tensor} if, for every geodesic \(\gamma\colon \bbR \to N\),
the map \(t \mapsto a(\dot\gamma(t))\) is constant; cf.~\cite{DM,HMS}.
The following result is proved in~\cite{J}.

\begin{theorem}\label{th:JR_on_C_0_spaces}
Let \((N,g)\) be a homogeneous Riemannian \(\frakC_0\)-space with
nonvanishing Ricci tensor. Then:
\begin{enumerate}
\item The minimal polynomial \(P_{\min}(N,g)
  \in \Gamma\bigl(\scrP(N)\bigr)[\lambda]\) exists.
\item The degree \(k\) is odd, all odd-indexed coefficients vanish,
  \(a_{2i+1}=0\), and the minimal polynomial is determined entirely by
  its even coefficients:
  \[
    P_{\min}(N,g)
    \;\;=\;\; \sum_{i=0}^{\frac{k-1}{2}} a_{2i}\,\lambda^{\,k-2i}
  \]
  where \(a_{2i} \geq 0\) and \(a_{2i}\big|_{U(p)} > 0\) at an arbitrary point \(p \in N\).
\item Each even coefficient \(a_{2i}\) is a Killing tensor.
\end{enumerate}
\end{theorem}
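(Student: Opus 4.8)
My plan is to pass from the defining equation~\eqref{eq:def_C_0} to an isospectral, Lax-type description of the whole symmetrized curvature jet along a geodesic, and then read off $P_{\min}$ from the elementary linear algebra of a skew-symmetric operator. Fix $p\in N$ and $X\in T_pN$, let $\gamma$ be the geodesic with $\dot\gamma(0)=X$, and trivialize $TN$ along $\gamma$ by parallel transport $\tau_t\colon T_pN\to T_{\gamma(t)}N$. Since $\C_\gamma$ is parallel, in this trivialization it is a constant $C_X\in\so(T_pN)$, and $L(t):=\tau_t^{-1}\,\scrR_\gamma(t)\,\tau_t$ satisfies $L'(t)=[C_X,L(t)]$, hence $L(t)=\mathrm{e}^{tC_X}\,\scrR^0(X)\,\mathrm{e}^{-tC_X}$ with $\mathrm{e}^{tC_X}$ orthogonal. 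Since $\scrR^{(j)}_\gamma(0)=(\nabla^{j}_{X,\dots,X}\rmR)(\cdot,X)X=\scrR^j(X)$ (cf.~\eqref{eq:kth_order_symmetrized_curvature_tensor_1}), differentiating at $t=0$ gives
\[
  \scrR^j(X)=\ad_{C_X}^{\,j}\,\scrR^0(X)\qquad(j\ge 0),
\]
and for general $t$, $\scrR^j(\dot\gamma(t))=g_t\,\scrR^j(X)\,g_t^{-1}$ with $g_t:=\tau_t\,\mathrm{e}^{tC_X}$ an isometry $T_pN\to T_{\gamma(t)}N$; thus the entire jet $\scrR^{\le k}$ is carried along $\gamma$ by one orthogonal conjugation, in particular isospectrally.

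\emph{Part (a).} With respect to the trace form on $\End{T_pN}$ the operator $\ad_{C_X}$ is skew-symmetric, hence semisimple with purely imaginary spectrum; so the minimal polynomial of $\scrR^0(X)$ with respect to $\ad_{C_X}$ — equivalently, the minimal polynomial of $\ad_{C_X}$ on $W_X:=\Span\{\scrR^j(X):j\ge 0\}$ — has the shape $m_X(\mu)=\mu^{\varepsilon_X}\prod_s(\mu^2+\nu_{s,X}^2)$ with distinct $\nu_{s,X}>0$ and $\varepsilon_X\in\{0,1\}$, has degree $d(X):=\dim W_X$, and satisfies $m_X(\ad_{C_X})\,\scrR^0(X)=0$. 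The integer $d(X)$ is bounded by $\dim\End{T_pN}$, and $d(X)=k$ is a Zariski-open condition (nonsingularity of the Gram matrix $[\langle\scrR^a(X),\scrR^b(X)\rangle]_{0\le a,b<k}$); with $k:=\max_X d(X)$ and $U(p):=\{X:d(X)=k\}=\{X:\scrR^0(X),\dots,\scrR^{k-1}(X)\ \text{linearly independent}\}$, this set is nonempty, Zariski-open, hence dense, and on it $\Span\{\scrR^0(X),\dots,\scrR^{k-1}(X)\}=W_X\ni\scrR^k(X)$, so $\scrR^k(X)=-\sum_{i=1}^k a_i(X)\,\scrR^{k-i}(X)$ with $a_i(X)$ uniquely determined by Cramer's rule, and $a_i(sX)=s^i a_i(X)$ by rescaling $\gamma$ (which multiplies $\scrR^j$ by $s^{j+2}$). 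I expect the \emph{main obstacle} to be showing that these rational functions $a_i$ on $U(p)$ extend to genuine homogeneous polynomials of degree $i$ on all of $T_pN$: here one uses that $(N,g)$ is homogeneous — the $a_i$ are $\Iso(N,g)$-invariant (Part (c)) and the spectral data $\nu_{s,X}^2$ can be controlled on the unit sphere — together with a removable-singularity argument; I would follow \cite{J} for the details. Granting this, $\Eval_p(\lambda^k+\sum a_i\lambda^{k-i})$ vanishes on the dense set $U(p)$, hence identically, so $P:=\lambda^k+\sum a_i\lambda^{k-i}$ is admissible and, since $U(p)\ne\emptyset$, minimal; transitivity of $\Iso(N,g)$ then assembles $P_{\min}(N,g)$.

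\emph{Part (b).} For $j\ge 1$, $\scrR^j(X)=[C_X,\ad_{C_X}^{\,j-1}\scrR^0(X)]$ is traceless (equivalently, $t\mapsto\trace\scrR_\gamma(t)=\Ric(\dot\gamma(t),\dot\gamma(t))$ is constant by isospectrality), while $\trace\scrR^0(X)$ equals $\Ric(X,X)$ up to sign. Taking the trace of $m_X(\ad_{C_X})\scrR^0(X)=0$ annihilates every term except the one carrying $\scrR^0(X)$, so $m_X(0)\,\Ric(X,X)=0$. Since $\Ric\not\equiv 0$, the Zariski-open set $\{\Ric(X,X)\ne 0\}$ meets the dense set $U(p)$; on the intersection $\scrR^0(X)\ne 0$, so $d(X)\ge 1$ and $m_X(0)=0$, i.e.\ $\varepsilon_X=1$, whence $m_X$ is an odd polynomial of degree $d(X)=k$. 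Therefore $k$ is odd; comparing coefficients in the relation above gives $a_{2i+1}(X)=0$ and $a_{2i}(X)=e_i(\nu_{1,X}^2,\dots,\nu_{r,X}^2)>0$ there, with $r=\tfrac{k-1}{2}$. By continuity (Part (a)) and density, $a_{2i+1}\equiv 0$ and $a_{2i}\ge 0$ on all of $N$; and $a_{2i}|_{U(p)}>0$, since once $k$ is known to be odd the shape of $m_X$ forces $\varepsilon_X=1$ for \emph{every} $X\in U(p)$.

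\emph{Part (c).} The map sending a jet whose leading components $\scrR^0,\dots,\scrR^{k-1}$ are linearly independent to the coefficients of its minimal polynomial is invariant under simultaneous conjugation of all components by a linear isomorphism (by uniqueness of those coefficients). Hence, if $\dot\gamma(0)\in U(\gamma(0))$, then $\dot\gamma(t)\in U(\gamma(t))$ for all $t$ — conjugation by $g_t$ preserves linear independence — and $a_{2i}(\dot\gamma(t))=a_{2i}(\dot\gamma(0))$, so $t\mapsto a_{2i}(\dot\gamma(t))$ is constant. For an arbitrary geodesic $\gamma$, approximate $\dot\gamma(0)$ by initial velocities in the dense set $U(\gamma(0))$ and pass to the limit, using smooth dependence of geodesics on initial conditions and continuity of the polynomial $a_{2i}$; this shows $t\mapsto a_{2i}(\dot\gamma(t))$ is constant for every geodesic, i.e.\ each $a_{2i}$ is a Killing tensor.
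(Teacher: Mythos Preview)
The paper does not contain a proof of this theorem: it is stated as ``The following result is proved in~\cite{J}'' and used as a black box. What the paper \emph{does} contain (Section~\ref{se:blueprint}) is exactly the framework you set up---the relation $\scrR^j(X)=\ad(\C_\gamma(0))^j\scrR^0(X)$ (equation~\eqref{eq:C_0}), the definition of $k(X)$ and $U(p)$ via linear independence of the iterates, and the identification of $P_{\min}(N,g)|_p(X)$ with the minimal annihilating polynomial $P_{\min}(C_X,\scrR^0(X))$ on $U(p)$---so your outline matches the paper's viewpoint precisely.

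Your argument for (b) and (c) is correct and self-contained: the trace trick using $\trace\scrR^j(X)=0$ for $j\ge 1$ and $\trace\scrR^0(X)=\pm\Ric(X,X)$ is clean, the structure $\mu^{\varepsilon}\prod_s(\mu^2+\nu_s^2)$ follows from skewness of $\ad_{C_X}$ with respect to the trace form, and the conjugation-invariance argument for the Killing property is exactly right (and the density argument to pass from $U(p)$ to all geodesics is standard). The one step you do not prove---and correctly flag as the main obstacle---is the polynomial extension of the $a_i$ from $U(p)$ to all of $T_pN$. The paper makes the same acknowledgment in different words (``Theorem~\ref{th:JR_on_C_0_spaces} ensures, via~\eqref{eq:U(p)_3}, that the coefficient functions $a_i\colon U(p)\to\bbR$ \dots\ extend uniquely to polynomial functions (not just rational functions)''), so this is indeed the substantive content delegated to~\cite{J}; your appeal to that reference is consistent with the paper's own stance.
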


An initial illustration of Theorem~\ref{th:JR_on_C_0_spaces} appears
in~\cite{JW}, where we exhibited minimal polynomials on naturally
reductive (hence G.O.) spaces whose even coefficients \(a_{2i}\) satisfy
\(a_{2i}(X) = c_i\,\norm{X}^{2i}\) with \(c_i \in \bbR_{>0}\); in
particular, each \(a_{2i}\) is a Killing tensor.

\subsection{Generalized Heisenberg groups}
\label{se:JR_GH}
In this article, we study the family of generalized Heisenberg
groups (also called H-type groups) constructed by Kaplan
\cite{Ka1,Ka2}. All H-type groups are homogeneous
\(\frakC_0\)-spaces \cite[Ch.~3.5]{BTV}, are irreducible
\cite[Ch.~3.1.11]{BTV}, and have nonvanishing Ricci tensor
\cite[Ch.~3.1.7]{BTV}; see also \eqref{eq:Ricci} below.
Therefore, their minimal polynomials exist and have the
properties stated in Theorem~\ref{th:JR_on_C_0_spaces} above.

These groups are parameterized by a pair \((\z,\v)\) consisting of
an \(n\)-dimensional Euclidean vector space \(\z\) with scalar
product \(g_\z\) and an orthogonal Clifford module \(\v\) for
\(\z\); that is, \(\v\) is a nontrivial real module for the
Clifford algebra \(\Cl(\z,g_\z)\), endowed with a scalar product
\(g_\v\) such that
\begin{align}\label{eq:Clifford_module_1}
  \langle Z \cbullet S, \, \tilde S \rangle
    &= -\,\langle S, \, Z \cbullet \tilde S \rangle, \\
\label{eq:Clifford_module_2}
  Z \cbullet Z \cbullet S &= -\,\norm{Z}^2\,S
\end{align}
for all \(S, \tilde S \in \v\) and \(Z \in \z\). By polarization
of~\eqref{eq:Clifford_module_2},
\begin{equation}\label{eq:Clifford_module_3}
  Z_1 \cbullet Z_2 \cbullet S + Z_2 \cbullet Z_1 \cbullet S
  = -\,2\,\langle Z_1, Z_2 \rangle\, S
\end{equation}
for all \(S \in \v\) and \(Z_1, Z_2 \in \z\). In particular, every
orthonormal system \(\{E_1,E_2\}\) in \(\z\) defines a pair
\(\{E_1 \cbullet, E_2 \cbullet\}\) of anticommuting orthogonal
complex structures on \(\v\) \cite[p.~24]{BTV}.

Clifford algebras of \(\z = \bbR^n\) in dimension \(n \le 8\) are given as
follows~\cite{LM}:
\begin{equation*}
\renewcommand{\arraystretch}{1.2}
\begin{array}{c|ccccccccc}
  n & 0 & 1 & 2 & 3 & 4 & 5 & 6 & 7 & 8 \\ \hline
  \Cl(\bbR^n)
    & \bbR
    & \bbC
    & \bbH
    & \bbH \oplus \bbH
    & \Mat_2(\bbH)
    & \Mat_4(\bbC)
    & \Mat_8(\bbR)
    & \Mat_8(\bbR) \oplus \Mat_8(\bbR)
    & \Mat_{16}(\bbR)
\end{array}
\end{equation*}
Moreover, Clifford algebras \(\Cl(\bbR^n)\)  for \(n \ge 9\) can be
calculated from the above table by the famous eightfold periodicity
\(
\Cl(\bbR^{n})
\cong
\Cl(\bbR^8) \otimes_{\bbR} \Cl(\bbR^{n-8})
\).

Adjoint to the Clifford multiplication \(\cbullet\) is the
alternating spinor product \(\spinprod \colon \v \times \v \to \z\)
defined by
\begin{equation}\label{eq:def_spinor_product}
  \langle Z, S \spinprod \tilde S \rangle
  \coloneqq \langle Z \cbullet S, \tilde S \rangle
  = -\,\langle Z, \tilde S \spinprod S \rangle
\end{equation}
for all \(Z \in \z\) and \(S, \tilde S \in \v\). Via the spinor product
the direct sum \(\n \coloneqq \z \oplus \v\) becomes an
orthogonal Lie algebra under the orthogonal sum scalar product
\(g \coloneqq g_\z \oplus g_\v\) and the bracket
\[
  [X, Y] \coloneqq X_\v \spinprod Y_\v \oplus 0
\]
for all \(X = X_\z \oplus X_\v \in \z \oplus \v\) and
\(Y = Y_\z \oplus Y_\v \in \z \oplus \v\). Needless to say, the
Jacobi identity holds trivially, as all iterated brackets vanish.
In particular, the Lie algebra \(\n\) is a central extension of
the abelian Lie algebra \(\v\) with trivial bracket by \(\z\):
\[
  0 \to \z \stackrel{\subset}{\longrightarrow} \n
  \stackrel{\pr}{\longrightarrow} \v \to 0
\]
Therefore, the Lie algebra \(\n\) is 2-step nilpotent and thus the
Lie algebra of a nilpotent Lie group \(N\), which as a manifold is
simply the vector space \(\n\) with the polynomial multiplication
\[
  X \cdot Y \coloneqq
  (X_\z + Y_\z + \tfrac12 X_\v \spinprod Y_\v)
  \oplus (X_\v + Y_\v)
\]
By left translation, the scalar product \(g_\z \oplus g_\v\) induces
a left-invariant metric \(g\), the canonical left-invariant metric
of the H-type group \(N\). In the following, by an H-type group
\((N,g)\) we mean \(N\) equipped with its canonical left-invariant
metric.

Every oriented isometry \(F \colon \z \to \z\) of a Euclidean vector
space \(\z\) lifts along the covering homomorphism
\[
  \Spin(\z, g_\z) \longrightarrow \SO(\z, g_\z)
\]
to an isometry \(\pm F^\ext \colon \v \to \v\) for each orthogonal
Clifford module \(\v\) with the characteristic property that
\[
  F^\ext(Z \cbullet S) = FZ \cbullet F^\ext S
\]
for all \(S \in \v\) and \(Z \in \z\). Then \(F^\ext\) is unique up to
the sign. Consequently, the
automorphism group of the generalized Heisenberg algebra and the
corresponding simply connected Lie group contains at least the
quotient
\[
  \Aut(N,g) \supseteq \Spin(\z, g_\z) \times
  \Ogroup_\Cl(\v, g_\v, \cbullet) \,\big/\,
  \langle (-1, -\Id_\v) \rangle
\]
of the direct product of the spin group of \(\z\) with the group
\(\mathbf{O}_\Cl(\v, g_\v, \cbullet)\) of Clifford-linear isometries
of \(\v\), modulo the two-element central subgroup generated by
\((-1, -\Id_\v)\).

According to standard representation theory, the latter group is
isomorphic to
\[
  \Ogroup_\Cl(\v, g_\v, \cbullet) \cong
  \begin{cases}
    \Ogroup(r, \bbR) & \text{if } n \equiv 0, 6 \pmod{8},\\
    \Ogroup(r_1, \bbR) \times \Ogroup(r_2, \bbR)
      & \text{if } n \equiv 7 \pmod{8},\\
    \mathbf{U}(r, \bbC) & \text{if } n \equiv 1, 5 \pmod{8},\\
    \mathbf{Sp}(r, \bbH) & \text{if } n \equiv 2, 4 \pmod{8},\\
    \mathbf{Sp}(r_1, \bbH) \times \mathbf{Sp}(r_2, \bbH)
      & \text{if } n \equiv 3 \pmod{8},
  \end{cases}
\]
where \(r \in \N_0\) or \(r_1, r_2 \in \N_0\), respectively, are the
multiplicities of the real spinor modules in the Clifford module
\(\v\). For \(n = 0\), \((N,g)\) is a Euclidean space with 
\(P_\min(N,g) = 1\). For the remainder of this article, we may assume that
\(\z\) has positive dimension, i.e. \(n \ge 1\). 

\medskip
A key role in describing the minimal polynomial is 
played by the skew-symmetric linear operator
\[
  \overK(X)\colon \z \to \z
\]
associated with each \(X \in \n\), defined by
\begin{equation}\label{eq:def_overK}
  \big\langle \overK(X)Z_1, Z_2 \big\rangle
  \coloneqq
  \big\langle Z_1 \cbullet X_\z \cbullet X_\v,\,
    Z_2 \cbullet X_\v \big\rangle
\end{equation}
for all \(Z_1, Z_2 \in \z\); cf.\ Section~\ref{se:K}. By
construction, \(\Aut(N,g)\)-equivariance holds:
\begin{equation}\label{eq:K_Aut_equivariance}
  \overK(FX)
  = F|_\z \circ \overK(X) \circ \bigl(F|_\z\bigr)^{-1}
\end{equation}
for all \(F \in \Aut(N,g)\). Moreover, \(\overK(X)X_\z = 0\); hence
\(\Kern \overK(X) \supseteq \bbR X_\z\), and \(\overK(X)\) preserves
the codimension-one subspace
\begin{equation}\label{eq:def_h_X}
  \h_X \coloneqq X_\z^\perp \subseteq \z
\end{equation}
whenever \(X_\z \neq 0\). Since \(\overK(X)\) is skew-symmetric, its
nonzero eigenvalues are \(\pm \i \sqrt{\overmu}\), where
\(\overmu > 0\) is an eigenvalue of the nonnegative operator
\[
  -\overK(X)^2 \coloneqq -\overK(X) \circ \overK(X)
\]
This yields the family
\begin{equation}\label{eq:def_overK_square}
  -\overK^2 \colon \n \longrightarrow \Endp{\z},\quad
  X \longmapsto -\overK(X)^2
\end{equation}
of nonnegative self-adjoint operators.

There is an open dense subset \(\n \setminus \ram(\overK^2)\) (the
complement of the ramification locus) on which the eigenvalue
multiplicities of \(\overK(X)^2\)—and hence the number of distinct
eigenvalues—are constant on each connected component \(U\) of
\(\n \setminus \ram(\overK^2)\). The eigenvalues \(\overmu(X)\) of
\(-\overK(X)^2\) then depend real-analytically on \(X \in U\). The
corresponding real-analytic functions \(\overmu \colon U \to
\bbR_{\ge 0}\) are called \emph{eigenvalue branches}; that is,
\(\overmu(X)\) is an eigenvalue of \(-\overK(X)^2\) for each
\(X \in U\). Since \(\overK(X)X_\z \equiv 0\), there is always the
constant eigenvalue branch \(\overmu \equiv 0\).

We define the rescaled skew-symmetric operator
\(\underK(X)\colon \z \to \z\), characterized by
\begin{equation}\label{eq:def_underK}
  \big\langle \underK(X)Z_1, Z_2 \big\rangle
  \coloneqq
  \frac{\big\langle \overK(X)Z_1, Z_2 \big\rangle}
       {\norm{X_\z}\norm{X_\v}^2}
  =
  \Big\langle
    Z_1 \cbullet \frac{X_\z}{\norm{X_\z}}
    \cbullet \frac{X_\v}{\norm{X_\v}},\,
    Z_2 \cbullet \frac{X_\v}{\norm{X_\v}}
  \Big\rangle
\end{equation}
defined for all \(X \in \n\) that
belong to the open subset \(D \coloneqq \{X \in \n \mid X_\z
\neq 0,\; X_\v \neq 0\}\subseteq \n\). Similar as before,
\begin{equation}\label{eq:underK_square}
  -\underK^2 \colon D
  \longrightarrow \Endp{\z},\quad
  X \longmapsto -\underK(X)^2
\end{equation}
defines a family of nonnegative self-adjoint operators.

If \(\overK \not\equiv 0\), let \(\overmu\) be a nonzero eigenvalue
branch of \(-\overK^2\) defined on some connected component
\(U \subseteq \n \setminus \ram(\overK^2)\). Then \(\overmu(X) > 0\)
for all \(X \in U\), since on \(\n \setminus \ram(\overK^2)\) no
nonzero eigenvalue branch meets the zero branch. In particular,
\(X_\z \neq 0\) and \(X_\v \neq 0\), since otherwise \(\overK(X) = 0\)
by~\eqref{eq:def_overK}. Hence \(\n \setminus \ram(\overK^2) \subseteq D\). 
In this case, we therefore have
\begin{equation}\label{eq:domain}
  D \setminus \ram(\underK^2) = \n \setminus \ram(\overK^2)
\end{equation}
and eigenvalue branches \(\undermu\colon U \to \bbR_{\ge 0}\) of \(-\underK^2\)
correspond to eigenvalue branches \(\overmu\colon U \to \bbR_{\ge 0}\) of
\(-\overK^2\) via
\begin{equation}\label{eq:undermu_overmu}
  \undermu(X)
  = \frac{\overmu(X)}{\norm{X_\z}^2 \norm{X_\v}^4}
\end{equation}
In particular, \(\undermu\) is real-analytic as well.
Moreover, \(\undermu(X) \in [0,1]\) by Proposition~\ref{p:overK},
which justifies the rescaling. The associated eigenspaces
\(\Eig_{\undermu(X)}\) define smooth distributions \(\Eig_\undermu\)
on the trivial bundle \(U \times \z\).

The case \(\overK \equiv 0\) occurs precisely when \(n \le 2\). In this
case, \(\underK \equiv 0\) as well, and the only eigenvalue branch of 
\(-\underK^{2}\) is the constant zero branch. The minimal polynomials
for \(n = 1\) and \(n = 2\) are
\begin{align}
  P_\min(N,g) \;\;&=\;\; \lambda^3 + \norm{X}^2 \lambda
  \label{eq:min_pol_n_=_1}
  \\
  P_\min(N,g) \;\;&=\;\;
  \begin{aligned}[t]
    \lambda^{7}
      &+ \tfrac{1}{4}\bigl(
        49\norm{X_\z}^{2} + 9\norm{X_\v}^{2}
      \bigr)\lambda^{5}
    \\
      &+ \tfrac{1}{2}\Bigl(
        3\norm{X_\v}^{4}
        + 34\norm{X_\z}^{2}\norm{X_\v}^{2}
        + 63\norm{X_\z}^{4}
      \Bigr)\lambda^{3}
    \\
      &+ \tfrac{1}{4}\Bigl(
        \norm{X_\v}^{6}
        + 19\norm{X_\z}^{2}\norm{X_\v}^{4}
        + 99\norm{X_\z}^{4}\norm{X_\v}^{2}
        + 81\norm{X_\z}^{6}
      \Bigr)\lambda
  \end{aligned}
  \label{eq:min_pol_n_=_2}
\end{align}
respectively. These expressions are independent of \(\dim \v\)
(assuming only \(\v \neq \{0\}\)), where \(X = X_\z \oplus X_\v\) and
\(\norm{X}^{2} = \norm{X_\z}^{2} + \norm{X_\v}^{2}\); see
Theorem~\ref{th:main_2} below. In particular, for \(n = 1\) we recover
the minimal polynomial \eqref{eq:min_pol_n_=_1} of the
\((\dim \v + 1)\)-dimensional real Heisenberg group associated with the
complex space \(\v\); see also \cite{JW}. The unique H-type group with
\(n = 2\) and \(\dim \v = 4\) is Kaplan’s example of a G.O.\ space that
is not naturally reductive \cite{Ka2}. In this situation,
\cite[Thm.~3.1]{AN} claims that the minimal polynomial is
\begin{equation}\label{eq:wrong_minimal_polynomial}
  P(X) = \lambda^5 + \tfrac{5}{4}\norm{X}^2 \lambda^3
  + \tfrac{1}{4}\norm{X}^4 \lambda
\end{equation}
However, the proof given there contains an error. The minimal
polynomial for Kaplan’s example is instead the one stated in
\eqref{eq:min_pol_n_=_2}.

\begin{remark}
The polynomial~\eqref{eq:wrong_minimal_polynomial} is obtained if
one replaces Riemannian geodesics by geodesics of the canonical
left-invariant connection. Let \(\nabla^{\mathrm{left}}\) denote the
connection associated with the left-invariant framing, that is, the
connection for which all left-invariant vector fields are
\(\nabla^{\mathrm{left}}\)-parallel. Let \(c\) be a
\(\nabla^{\mathrm{left}}\)-geodesic.

In analogy with~\eqref{eq:Jacobi_operator}, we consider the
endomorphism field
\[
  \widetilde{\scrR}_c(t)
  \coloneqq
  \rmR(\,\cdot\,,\dot c(t))\dot c(t)
  \colon T_{c(t)}N \longrightarrow T_{c(t)}N
\]
For \(j \ge 0\), let
\[
  \widetilde{\scrR}^{(j)}_c(t)
  \coloneqq
  \left(\frac{\nabla}{\d t}\right)^j
  \widetilde{\scrR}_c(t)
  \in \End{T_{c(t)}N}
\]
denote the iterated covariant derivative of
\(\widetilde{\scrR}_c\) with respect to the Levi--Civita connection.
Here \(\tfrac{\nabla}{\d t}\) acts on endomorphism fields as
in~\eqref{eq:def_Rgamma_1}; equivalently,
\[
  \widetilde{\scrR}^{(0)}_c(t)
  \coloneqq
  \widetilde{\scrR}_c(t),
  \qquad
  \widetilde{\scrR}^{(j)}_c(t)
  \coloneqq
  \frac{\nabla}{\d t}
  \widetilde{\scrR}^{(j-1)}_c(t)
  \quad (j \ge 1)
\]
cf.~\cite[p.~65--66]{A}. Then, for Kaplan's \(6\)-dimensional H-type group, one has
\[
  \widetilde{\scrR}^{(5)}_c(t)
  + \frac{5}{4}\norm{\dot c(t)}^2
    \widetilde{\scrR}^{(3)}_c(t)
  + \frac{1}{4}\norm{\dot c(t)}^4
    \widetilde{\scrR}^{(1)}_c(t)
  \equiv 0
\]
Thus the polynomial~\eqref{eq:wrong_minimal_polynomial}
belongs to this auxiliary sequence along \(\nabla^{\mathrm{left}}\)-geodesics,
rather than to the sequence of symmetrized covariant derivatives
along Riemannian geodesics.
\end{remark}

\subsection{A blueprint algorithm to find the minimal
polynomial of a homogeneous \Czero-space}
\label{se:blueprint}

We now return to the original setting of
Theorem~\ref{th:JR_on_C_0_spaces}, where the relevant curves are
Riemannian geodesics. Fix \(p \in N\) and \(X \in T_pN\), and let
\(\gamma\colon \bbR \to N\) be the geodesic with
\(\gamma(0) = p\) and \(\dot\gamma(0) = X\).

For \(j \ge 0\), we consider the iterated covariant derivative
\[
  \scrR^{(j)}_\gamma(t)
  \coloneqq
  \left(\frac{\nabla}{\d t}\right)^j
  \scrR_\gamma(t)
  \in \End{T_{\gamma(t)}N}
\]
of the Jacobi operator \(\scrR_\gamma\) defined
in~\eqref{eq:Jacobi_operator}.

Since the tangent vector field \(\dot\gamma\) is parallel along
\(\gamma\), we have
\begin{equation}
\label{eq:Jacobi_operator_versus_symmetrized_curvature_tensor}
  \scrR^{(j)}_\gamma(t)
  =
  \scrR^j\big|_{\gamma(t)}\bigl(\dot\gamma(t)\bigr)
\end{equation}
for all \(t \in \bbR\), where the right-hand side is defined
via~\eqref{eq:kth_order_symmetrized_curvature_tensor_1}
and~\eqref{eq:kth_order_symmetrized_curvature_tensor_2}.

For a chosen \(\mathfrak{C}_0\)-structure \(\C_{\gamma}\) along \(\gamma\),
we set
\begin{equation}\label{eq:def_C_X}
  C_X \coloneqq \ad\bigl(\C_\gamma(0)\bigr)\colon
  \End{T_pN} \longrightarrow \End{T_pN}
\end{equation}
where
\begin{equation}\label{eq:ad}
  \ad\colon \End{T_pN} \longrightarrow \End{\End{T_pN}}, \quad
  A \longmapsto \ad(A) \coloneqq [\,A,\,\cdot\,]
\end{equation}
denotes the adjoint action.

Since \(\tfrac{\nabla}{\d t}\C_\gamma \equiv 0\), the induced endomorphism field
\(\ad(\C_\gamma(t))\) on \(\End{T_{\gamma(t)}N}\) is parallel with respect to the
connection on \(\End{TN}\) induced by \(\nabla\). In particular, for every
endomorphism field \(A(t)\in\End{T_{\gamma(t)}N}\) along \(\gamma\) we have
\[
  \tfrac{\nabla}{\d t}\bigl(\ad(\C_\gamma(t))A(t)\bigr)
  = \ad(\C_\gamma(t))\,\tfrac{\nabla A}{\d t}(t)
\]
Hence the defining property \eqref{eq:def_C_0} of a \(\mathfrak{C}_0\)-structure implies
\[
  \scrR_{\gamma}^{(j)}(t)
  = \ad\bigl(\C_\gamma(t)\bigr)\bigl(\scrR_{\gamma}^{(j-1)}(t)\bigr)
  = \cdots
  = \ad\bigl(\C_\gamma(t)\bigr)^{j}\bigl(\scrR_{\gamma}(t)\bigr)
\]
by induction on \(j \ge 1\). Evaluating at \(t = 0\) and using
\eqref{eq:Jacobi_operator_versus_symmetrized_curvature_tensor}
and~\eqref{eq:def_C_X}, we obtain
\begin{equation}\label{eq:C_0}
  \scrR^{j}(X)
  = C_X\bigl(\scrR^{j-1}(X)\bigr)
  = \cdots
  = C_X^{j}\bigl(\scrR^0(X)\bigr)
\end{equation}

Furthermore, there exists a largest integer \(k(X)\) such that the set
\[
  \{\,\scrR^0(X),\dotsc,\scrR^{k(X)-1}(X)\,\} \subseteq \End{T_pN}
\]
is linearly independent.
Then the extended set
\[
  \{\,\scrR^0(X),\dotsc,\scrR^{k(X)}(X)\,\}
\]
is linearly dependent, and there exist unique coefficients \(a_i(X) \in \bbR\)
for \(i = 1, \ldots, k(X)\) such that
\begin{equation}\label{eq:P_min_1}
\begin{aligned}
  \scrR^{k(X)}(X)
  &= -\sum_{i = 1}^{k(X)} a_i(X)\,\scrR^{k(X)-i}(X) \\
  &\stackrel{\eqref{eq:C_0}}{=}
    -\sum_{i = 1}^{k(X)} a_i(X)\,C_X^{k(X)-i}\,\scrR^0(X)
\end{aligned}
\end{equation}
The monic polynomial
\[
  P_{\min}(C_X,\scrR^0(X)) \coloneqq
  \lambda^{k(X)} + \sum_{i=1}^{k(X)} a_i(X)\,\lambda^{k(X)-i}
\]
is therefore the \emph{minimal annihilating polynomial}
of \(\scrR^0(X)\) for the linear operator \(C_X\) defined in
\eqref{eq:def_C_X}, cf.~\cite[Sec.~4]{J}.

Since \(P_{\min}(N,g)\big|_p(X)\) is an annihilating polynomial for
\(\scrR^0(X)\) under \(C_X\), we have \(k(X) \le k\)
where \(k \coloneqq  \deg\!\bigl(P_{\min}(N,g)\bigr)\).
Moreover, \(k(X) = k\) if and only if \(X\) belongs to \(U(p)\),
the subset of \(T_pN\) defined in~\eqref{eq:U(p)_1}.
Hence \(U(p)\) can also be described as
\begin{align}
  U(p)
  &= \{\,X \in T_pN \mid k(X) = k\,\} \label{eq:U(p)_2} \\
  &= \bigl\{\,X \in T_pN \mid
       P_{\min}(C_X,\scrR^0(X)) = P_{\min}(N,g)\big|_p(X)\,\bigr\}
       \label{eq:U(p)_3}
\end{align}

From a different point of view, one may start by defining
\(U(p)\) as the subset of \(T_pN\) on which the degree
\(k(X)\) of the minimal annihilating polynomial
\(P_{\min}(C_X,\scrR^0(X))\) in~\eqref{eq:P_min_1}
attains its maximal value \(k\). Then Theorem~\ref{th:JR_on_C_0_spaces}
ensures, via \eqref{eq:U(p)_3}, that the coefficient functions
\(a_i\colon U(p)\to\bbR,\ X\mapsto a_i(X)\) from
\eqref{eq:P_min_1} extend uniquely to polynomial functions
(not just rational functions). Then
\(k = \deg\!\bigl(P_{\min}(N,g)\bigr)\), the degree
of the minimal polynomial of \((N,g)\), and this definition of
\(U(p)\) agrees with the one from~\eqref{eq:U(p)_1}.

\medskip\noindent\textbf{Algorithm.}
Given explicit matrices for \(\C(X) \coloneqq \C_\gamma(0)\) and
\(\scrR(X)\) at \(p\), so that \(C_X = \ad(\C(X))\),
compute \(P_{\min}(N,g)\big|_p\) as follows.
\begin{enumerate}
  \item Initialize \(B_0(X) \coloneqq \scrR^0(X) \in \End{T_pN}\) and
        define recursively
        \[
          B_{i+1}(X) \coloneqq \ad\bigl(\C(X)\bigr)B_i(X)
          = \C(X)B_i(X) - B_i(X)\C(X), \qquad i \ge 0
        \]
        By~\eqref{eq:C_0}, we have \(B_i(X) = \scrR^{i}(X)\) for all \(i\).
  \item To solve a linear system in \(\bbR^{n^2}\), where
        \(n \coloneqq \dim T_pN\), “flatten” the
        iterates \(B_0(X),B_1(X),\ldots,B_k(X)\) via the
        column-stacking map \(\operatorname{vec}\colon
        \bbR^{n \times n}\to\bbR^{n^2}\).
        Use the preceding iterates
        \(B_0(X),B_1(X),\ldots,B_{k-1}(X)\) as columns and
        \(B_k(X)\) as the right-hand side:
        \begin{align*}
          B(X) &\coloneqq \bigl[\operatorname{vec}(B_0(X)),
                   \operatorname{vec}(B_1(X)), \dotsc,
                   \operatorname{vec}(B_{k-1}(X))\bigr]
                   \in \bbR^{n^2 \times k} \\
          b(X) &\coloneqq \operatorname{vec}\bigl(B_k(X)\bigr)
        \end{align*}
        Solve \(B(X)\,\alpha(X) = b(X)\) on \(U(p)\).
        Writing \(\alpha(X) = (\alpha_1(X),\ldots,\alpha_k(X))^T\), we obtain
        \[
          B_k(X) = \sum_{i=1}^{k} \alpha_i(X)\,B_{k-i}(X)
          \quad\text{on } U(p)
        \]
        The coefficients of \(P_{\min}\) are then
        \[
          a_0 = 1, \qquad a_{i}(X) = -\alpha_{k+1-i}(X) \quad (i = 1,\ldots,k)
        \]
        Hence
        \[
          P_{\min}(N,g)\big|_p(X)
          = \lambda^k + \sum_{i=1}^k a_i(X)\,\lambda^{k-i}
        \]
\end{enumerate}\noindent In Maple (dimension-agnostic), this reads:
\begin{verbatim}
> with(LinearAlgebra):

k := ...:     # degree of the minimal polynomial
# the right value for k = 1,2,3,... has to be found by guess and proof:
# for smaller values the following algorithm will not yield a solution

n := ...:     # dimension of N

A := Matrix([[a[1,1](X), ..., a[1,n](X)], ...,
             [a[n,1](X), ..., a[n,n](X)]]);
# A is the C_0-structure at X

B := Array(0..k):
B[0] := Matrix([[b[1,1](X), ..., b[1,n](X)], ...,
                [b[n,1](X), ..., b[n,n](X)]]);
# B[0] is the symmetrized curvature tensor at X

for i from 0 to k - 1 do
    B[i+1] := simplify(A.B[i] - B[i].A):
od:
# B[i] is the i-th symmetrized covariant derivative of the curvature tensor

flatB := Matrix(n^2, k + 1):

for i from 1 to k + 1 do
    for j from 1 to n do
        for l from 1 to n do
            flatB[(j-1)*n + l, i] := B[i-1][j, l]:
        od:
    od:
od:
# flattening: the columns of B[i-1] are stacked into the i-th column of flatB
# columns 1..k are predictors (B[0]..B[k-1]),
# column k+1 is the right-hand side (B[k])

a := LinearSolve(flatB):
# without explicitly providing a separate right-hand side,
# LinearSolve(flatB) treats flatB as an augmented system and expresses
# the last column as a linear combination of the preceding columns

# for convenience, reverse the solution order
# so it matches the notation alpha_1,...,alpha_k
a_help := array(1..k):
for i from 1 to k do
    a_help[i] := - a[k + 1 - i]:
od:
a := a_help:

# minimal annihilating polynomial:
# p = lambda^k + sum_{i=1}^k a_i*lambda^(k-i)
p := collect(lambda^k + sum(a[f]*lambda^(k - f), f = 1 .. k), lambda):

# in the setting of the theorem (g positive definite, Ricci tensor nonzero),
# k is odd and a[1] = a[3] = ... = a[k] = 0
\end{verbatim}

In the case of an H-type group, explicit formulas for both the
symmetrized curvature tensor \(\scrR(X)\) and, above all, for
a natural \(\frakC_0\)-structure \(\C(X)\) are given in~\cite{BTV};
we recall them in Sections~\ref{se:K(X)} and~\ref{se:C0}.
However, the matrices of \(\C(X)\) and \(\scrR(X)\) are block-structured
with respect to the \(X\)-dependent decomposition of \(\n\) into smaller
subspaces described in Lemmas~\ref{le:decomposition_of_n} and
\ref{le:decomposition_of_h}.
In particular, the number and size of the blocks depend on the eigenvalue
branches \(\undermu\) of \(-\underK^2\) defined in \eqref{eq:underK_square}.
Hence, the general blueprint algorithm must be applied blockwise:
one first analyzes the individual blocks.
The output of Maple’s \emph{LinearSolve} routine, which yields the
pointwise factors of the minimal polynomial (for fixed \(X\)), is discussed in
Section~\ref{se:splitting}, where we also provide Maple code that can be used
without further modification.
This leads to a general pointwise factorization of the minimal
polynomial in \(\bbR[\lambda]\), stated in
Theorem~\ref{th:main_1}, whose proof is completed in
Section~\ref{se:proof_of_the_main_theorem_1}.
To determine the minimal polynomial of a specific H-type group, the eigenvalue
branches of \(-\underK^2\) must be computed explicitly.
These results are summarized in Theorem~\ref{th:main_2}.

\section{The main result}
\label{se:main}
First, we determine the general structure of the minimal
polynomial \(P_{\min}\), valid for any H-type group \((N,g)\) 
that is not a Euclidean space. Since this group is a homogeneous space, 
it suffices to work at the single point \(p = e \coloneqq e_N\), the unit element of
\(N\). By definition, \(g|_e\) is the scalar product on \(T_eN
\cong \n = \z \oplus \v\) as described earlier.

For each \(\undermu \in \bbR\) and every \(X \in \n\), we define
\(P_\undermu(X) \in \bbR[\lambda]\) by
\begin{equation}\label{eq:def_P_mu_1}
\begin{aligned}
 P_\undermu(X) \coloneqq
 &\ \lambda^6
  + \left( \tfrac{27}{2} \norm{X_\z}^2 + \tfrac{3}{2} \norm{X_\v}^2
    \right) \lambda^4\\
 &\quad\; + \left( \tfrac{729}{16} \norm{X_\z}^4 + \tfrac{81}{8}
    \norm{X_\v}^2 \norm{X_\z}^2 + \tfrac{9}{16} \norm{X_\v}^4 \right)
    \lambda^2\\
 &\quad\; +  \tfrac{729}{16} \norm{X_\z}^6 + \tfrac{243}{16}
    \norm{X_\z}^4 \norm{X_\v}^2 + \left( \tfrac{27}{16}
    - \tfrac{243}{64} \undermu \right) \norm{X_\z}^2 \norm{X_\v}^4
    + \tfrac{1}{16} \norm{X_\v}^6
\end{aligned}
\end{equation}
For every connected component \(U \subset \n \setminus \ram(\overK^2)\),
let \(\branch_{\mathrm{nc}}(U)\) denote the set of nonconstant
eigenvalue branches \(\undermu \colon U \to [0,1]\) of
\(- \underK^2\) defined in~\eqref{eq:underK_square}. For each
\(\undermu \in \branch_{\mathrm{nc}}(U)\) we define a smooth
function \(P_\undermu \colon U \to \bbR[\lambda]\) by
\begin{equation}\label{eq:def_P_mu_2}
  P_\undermu(X) = P_{\undermu(X)}(X)
\end{equation}
In particular, the product
\[
  \prod_{\undermu \in \branch_{\mathrm{nc}}(U)}\!\!\! P_\undermu
\]
defines a smooth map \(U \to \bbR[\lambda]\). Its degree is
constant, equal to six times the cardinality
\(|\branch_{\mathrm{nc}}(U)|\) on each connected component of
\(\n \setminus \ram(\overK^2)\).

Constant eigenvalue branches \(\nu\) of \(- \underK^2\) arise
from the analytic equation
\[
  \det\!\big(\overK(X)^2 - \nu\norm{X_\z}^2\norm{X_\v}^4\big) = 0
\]
In particular, \(\norm{X_\z}^2\norm{X_\v}^4\nu\) is an eigenvalue
of \(-\overK(X)^2\) for all \(X \in \n\). In this case we refer
to \(\nu\) as a \emph{global eigenvalue} of \(-\underK^2\).
Moreover, its multiplicity is fixed on \(\n \setminus \ram(\overK^2)\).
Indeed, such multiplicities are orders of vanishing of the characteristic polynomial at the
corresponding constant roots, and these orders cannot vary on
different open components without forcing a polynomial identity.

Proposition~\ref{p:overK} and Corollary~\ref{co:mu_=_constant}
show that only \(\nu = 0\) and \(\nu = 1\) can occur, with
multiplicity \(m_0 \in \{1,2\}\) in the case \(\nu = 0\). We set
\begin{align}\label{eq:def_P_0_sharp}
  &\begin{aligned}
    P_0^\sharp(X) \coloneqq \lambda^4
    &+ \left( \tfrac{45}{4}\norm{X_\z}^2 + \tfrac{5}{4}\norm{X_\v}^2
      \right)\lambda^2\\
    &+ \tfrac{81}{4}\norm{X_\z}^4 + \tfrac{9}{2}\norm{X_\z}^2
      \norm{X_\v}^2 + \tfrac{1}{4}\norm{X_\v}^4
  \end{aligned}\\
  \label{eq:def_P_1_sharp}
  &P_1^\sharp(X) \coloneqq \lambda^2 + \tfrac{9}{4}\norm{X_\z}^2
    + \norm{X_\v}^2
\end{align}
and finally
\begin{equation}\label{eq:def_P_n_3}
  P_{\n_3}(X) \coloneqq \lambda^3 + \norm{X}^2 \lambda
\end{equation}
see~\eqref{eq:min_pol_n_=_1}.

\begin{theorem}\label{th:main_1}
  Let \((N,g)\) be the H-type group associated with a Euclidean space \(\z\)
  of positive dimension and a nontrivial orthogonal \(\Cl(\z)\)-module \(\v\).
  There is the following pointwise factorization of the minimal
  polynomial in \(\bbR[\lambda]\):
  \begin{equation}\label{eq:P_min_2}
    P_{\min}(N,g)(X) = P_{\n_3}(X)
    \prod_{\substack{\nu \in \{0,1\}\\ m_\nu\,\ge\,2}}\!\!\!
    P_\nu^\sharp(X)\!\!\!\!
    \prod_{\undermu \in \branch_{\mathrm{nc}}(U(X))}\!\!\!\!\!\!
    P_\undermu(X)
  \end{equation}
  valid for all \(X \in \n \setminus \ram(\overK^2)\), where:
\begin{itemize}
\item \(\nu\) runs over the global eigenvalues \(\nu \in \{0,1\}\),
  with \(\nu = 0\) taken into account only if \(m_0 = 2\).
\item \(\undermu\) runs over \(\branch_{\mathrm{nc}}(U(X))\), the
  set of nonconstant eigenvalue branches of \(-\underK^2\) on the
  connected component \(U(X) \subseteq \n \setminus \ram(\overK^2)\),
\end{itemize}
\end{theorem}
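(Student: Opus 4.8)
The plan is to apply the blueprint algorithm of Section~\ref{se:blueprint}, reducing the assertion to a blockwise computation of minimal annihilating polynomials organised according to the eigenvalue structure of \(-\underK^2\). Since \((N,g)\) is homogeneous it suffices to argue at \(p=e\), with \(T_eN\cong\n=\z\oplus\v\). By Theorem~\ref{th:JR_on_C_0_spaces} the minimal polynomial exists with polynomial coefficients, so \(U(e)\subseteq\n\) is nonempty, hence Zariski open and dense, and therefore \(U(e)\cap(\n\setminus\ram(\underK^2))\) is open and dense in \(\n\setminus\ram(\underK^2)\). For \(X\) in this set, \eqref{eq:U(p)_3} and \eqref{eq:P_min_1} identify \(P_{\min}(N,g)\big|_e(X)\) with the minimal annihilating polynomial \(P_{\min}(C_X,\scrR^0(X))\) of the vector \(\scrR^0(X)\in\End{\n}\) under the linear operator \(C_X=\ad(\C(X))\), where \(\C(X)=\C_\gamma(0)\) is the canonical \(\frakC_0\)-structure along the geodesic with initial velocity \(X\). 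It therefore suffices to compute this minimal annihilating polynomial on a dense subset of each connected component of \(\n\setminus\ram(\underK^2)\) and then to extend the resulting identity.

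For \(X\in\n\setminus\ram(\underK^2)\), Lemmas~\ref{le:decomposition_of_n} and~\ref{le:decomposition_of_h} provide an \(X\)-dependent orthogonal decomposition of \(\n\) into a distinguished Heisenberg-type block containing \(X_\z\) and \(X_\v\), one block for each global eigenvalue \(\nu\in\{0,1\}\) with multiplicity \(m_\nu\ge 2\), and one block for each nonconstant eigenvalue branch \(\undermu\in\branch_{\mathrm{nc}}(U(X))\) of \(-\underK^2\). From the explicit expressions for \(\scrR(X)\) and \(\C(X)\) recalled in Sections~\ref{se:K(X)} and~\ref{se:C0} one verifies that both endomorphisms preserve this decomposition; hence so does \(C_X=\ad(\C(X))\) acting on \(\End{\n}\), and \(\scrR^0(X)\) is block-diagonal. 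Consequently \(P_{\min}(C_X,\scrR^0(X))\) equals the least common multiple, taken over the blocks, of the minimal annihilating polynomials of the restricted data.

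On each block the restrictions of \(\C(X)\) and \(\scrR(X)\) are small matrices whose entries are controlled by \(\norm{X_\z}\), \(\norm{X_\v}\), and, for the last family of blocks, by the eigenvalue \(\undermu(X)\in(0,1)\); the Clifford identities \eqref{eq:Clifford_module_1}--\eqref{eq:Clifford_module_3} are used to bring these entries to normal form. Running Maple's \emph{LinearSolve} as explained in Section~\ref{se:splitting} then yields \(P_{\n_3}(X)\) for the Heisenberg block, \(P_\nu^\sharp(X)\) for the block attached to the global eigenvalue \(\nu\), and \(P_\undermu(X)\) for the block attached to the branch \(\undermu\). This is the step I expect to be the main obstacle: one must choose a workable \(\frakC_0\)-structure together with bases adapted to the decomposition so that the matrices stay small enough to be treated uniformly, and one must check that the computed annihilating polynomials are genuinely the minimal ones — equivalently, that the chosen \(X\) lies in \(U(e)\), which is exactly why the density statement above is needed.

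It remains to assemble these factors. As a polynomial in \(\lambda\) each of \(P_{\n_3}\), \(P_0^\sharp\), \(P_1^\sharp\) and \(P_\undermu(X)\) is a product of factors \(\lambda^2+c\) with \(c>0\) (the Heisenberg factor carrying, in addition, a single factor \(\lambda\)), where the constants \(c\) are the manifestly distinct expressions \(\norm{X}^2\), \(\tfrac94\norm{X_\z}^2+\norm{X_\v}^2\), the two roots of the \(\lambda^2\)-part of \(P_0^\sharp\), and the three \(\undermu(X)\)-dependent roots of \(P_\undermu(X)\); on \(\n\setminus\ram(\underK^2)\) distinct nonconstant branches stay distinct and avoid the values \(0\) and \(1\). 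Hence, for \(X\) in a dense subset of each connected component, these factors are pairwise coprime, so the least common multiple from the previous paragraph coincides with the product appearing in \eqref{eq:P_min_2}. Finally, both sides of \eqref{eq:P_min_2} are real-analytic \(\bbR[\lambda]\)-valued maps on each connected component of \(\n\setminus\ram(\underK^2)\) — the left-hand side because the coefficients of \(P_{\min}(N,g)\) are polynomial, the right-hand side because each branch \(\undermu\) is real-analytic on \(U(X)\) and \(|\branch_{\mathrm{nc}}(U)|\) is locally constant — so the identity, already established on a dense subset, holds throughout \(\n\setminus\ram(\underK^2)\).
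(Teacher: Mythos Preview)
Your strategy is the paper's, but there are two genuine gaps. First, the decomposition from Lemmas~\ref{le:decomposition_of_n} and~\ref{le:decomposition_of_h} has a further summand \((\Z_X)^\perp\) that you omit; on it \(\scrR(X)\) is scalar, so the blockwise minimal annihilating polynomial is \(\lambda\) (Lemma~\ref{le:unimportant}). Relatedly, the Maple computations of Section~\ref{se:splitting} show that on the \(\undermu\)- and \(\nu\)-blocks the minimal annihilating polynomials are \(\lambda P_\undermu(X)\) and \(\lambda P_\nu^\sharp(X)\), not \(P_\undermu(X)\) and \(P_\nu^\sharp(X)\) as you state; these extra factors of \(\lambda\) are absorbed into \(P_{\n_3}\) when one forms the least common multiple, but you must account for them.

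Second, your assertion that nonconstant branches avoid the value \(1\) on all of \(\n\setminus\ram(\underK^2)\) is false: when \(1\) is \emph{not} a global eigenvalue, a nonconstant branch may attain \(\undermu(X_0)=1\) without \(X_0\in\ram(\underK^2)\), and at such a point the three summands defining \((\H_\undermu)_{X_0}\) become linearly dependent by Proposition~\ref{p:overK}(e), so the \(6\times 6\) block matrices \eqref{eq:R_mu_matrix}--\eqref{eq:C_mu_matrix} are not available. The paper remedies this by restricting first to the open dense set \(U_{(0,1)}\) of Definition~\ref{de:U_simple} and Corollary~\ref{co:U_sharp}. Finally, your coprimeness argument via a complete factorization of each \(P_\undermu\) into real quadratics \(\lambda^2+c\) is correct in principle---it follows from the skew-symmetry of \(\C(X)\)---but you do not justify it; the paper instead compares constant terms of the \(P_{\undermu_i}\) and uses the divisibilities \(P_\nu^\sharp\mid P_\nu\) (Lemma~\ref{le:gcd}), which sidesteps that issue entirely.
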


From Theorem~\ref{th:main_1} we can already draw the following
conclusion: on each connected component of \(\n \setminus
\ram(\overK^2)\), the degrees of the factors on the right-hand
side of~\eqref{eq:P_min_2} must sum to \(k\), the degree of
\(P_{\min}(N,g)\). Since constant eigenvalue branches are
automatically global eigenvalues, the number of distinct
nonconstant eigenvalue branches of \(-\underK^2\) is the same on
every connected component of \(\n \setminus \ram(\overK^2)\).
Indeed, in the factorization \eqref{eq:P_min_2}, the degrees
contributed by the constant-eigenvalue factors \(P_\nu^\sharp\)
are fixed, so the remaining degree comes from the product over
\(\undermu\), each contributing \(6\) to the degree; hence their
number \(\ell\) is independent of the connected component.

Consequently, denoting by \(\branch_{\mathrm{nc}}(N)
= \{\undermu_1,\ldots,\undermu_\ell\}\) the set of distinct
nonconstant eigenvalue branches on any connected component of
\(\n \setminus \ram(\overK^2)\), we may briefly write
\[
  P_{\min} = P_{\n_3} \prod_{\substack{\nu \in \{0,1\}\\
  m_\nu \ge 2}} \!\!\!\!P_\nu^\sharp\; \prod_{i=1}^\ell
  P_{\undermu_i}
\]
In the same vein, since \(P_{\min}(N,g)\) is well-defined by
Theorem~\ref{th:JR_on_C_0_spaces}, the product on the
right-hand side of~\eqref{eq:P_min_2}, initially written on
\(\n \setminus \ram(\overK^2)\), extends across
\(\ram(\overK^2)\) as a polynomial in \(\lambda\) with
coefficients in \(\scrP(T_eN)\). Thus, near a point
\(X \in \ram(\overK^2)\), the number of incoming nonconstant
branch factors is independent of the connected component of
\(U \cap (\n \setminus \ram(\overK^2))\) from which \(X\) is
approached, where \(U\) is any sufficiently small connected open
neighborhood of \(X\). Furthermore, the classification shows that not
only the number of distinct nonconstant branches but also their
multiplicities are constant throughout \(\n \setminus
\ram(\overK^2)\) \footnote{However it is not true that \(\n \setminus
  \ram(\overK^2)\) is always connected. For example, for \(n = 3\) and a nonisotypic
\(\Cl(\z)\)-module \(r_1 > 0\) and \(r_2 > 0\) it is not connected.}

These observations set the framework in which the classification
of eigenvalue branches in Section~\ref{se:eigenvalue_branches}
must fit. We will return to this point in
Section~\ref{se:explicit_formulas}, but, in general, the eigenvalue
branches of an arbitrary family of nonnegative self-adjoint
operators need not exhibit the same favorable behavior described
there. The classification also shows that already
\[\prod_{\undermu \in \branch_{\mathrm{nc}}(N)} P_\undermu\] belongs to 
\(\scrP(T_eN)[\lambda]\). Hence the minimal polynomial \(P_{\min}\) of a
generalized Heisenberg group \(N\) associated with a nontrivial orthogonal
\(\Cl(\z)\)-module \(\v\) admits a factorization in
\(\scrP(T_eN)[\lambda]\) into the factors \(\lambda\),
\(\lambda^2+\norm{\,\cdot\,}^2\), and—unless \(\dim\z = 1\)—at
least one among \(\prod_{\undermu \in \branch_{\mathrm{nc}}(N)}
P_\undermu\), \(P_0^\sharp\), and \(P_1^\sharp\). In particular, even after removing the universal factor
\(\lambda\), the minimal polynomial need not be irreducible in
\(\scrP(T_eN)[\lambda]\).

Regarding the positivity assertion \(a_{2i}(X) > 0\) for \(X \in U(e_N)\) claimed by
Theorem~\ref{th:JR_on_C_0_spaces}, the occurrence of the term
\[
  \left(\frac{27}{16} - \frac{243}{64}\undermu(X)\right)\norm{X_\z}^{2}\norm{X_\v}^{4}
\]
in \eqref{eq:def_P_mu_1} makes the inequality \(a_{2i}(X) > 0\) for \(X \in U(e_N)\) less
immediate. However, it is straightforward that the entire constant coefficient
\[
  \frac{729}{16}\norm{X_\z}^{6}
  + \frac{243}{16}\norm{X_\z}^{4}\norm{X_\v}^{2}
  + \left(\frac{27}{16} - \frac{243}{64}\undermu(X)\right)\norm{X_\z}^{2}\norm{X_\v}^{4}
  + \frac{1}{16}\norm{X_\v}^{6}
\]
of \eqref{eq:def_P_mu_1} is strictly positive provided \(X \neq 0\) and \(\undermu(X) < 1\),
which are obvious prerequisites for \(X \in U(e_N)\). Therefore \(P_{\min}(N,g,e_N;X) > 0\), cf.
Corollary~\ref{co:positive}.

\subsection{\texorpdfstring{Overview of the classification of the
eigenvalue branches of \(-\underK^2\)}{Overview of the classification of
the eigenvalue branches of -K^2}}
\label{se:main_result}

Let \(\z\) be a Euclidean vector space of positive dimension \(n\), and
let \((N,g)\) be the H-type group associated with a nontrivial orthogonal
\(\Cl(\z)\)-module \(\v\). Once the eigenvalue branches of
\(-\underK^2\)~\eqref{eq:underK_square} have been determined, we can describe the minimal
polynomial \(P_{\min} \coloneqq P_{\min}(N,g)\) explicitly and compute
its degree using Theorem~\ref{th:main_1}.

Recall that \(\nu = 0\) is always a global eigenvalue since
\(\overK(X)X_\z = 0\) for every \(X \in \n\). Moreover, the multiplicity
satisfies \(m_0 \in \{1,2\}\) with \(n - m_0 \equiv 0 \pmod{2}\)
(cf.~Corollary~\ref{co:mu_=_constant}). Therefore, to apply
Theorem~\ref{th:main_1} to compute the degree \(k\) of \(P_{\min}(N,g)\),
it suffices to determine
\begin{itemize}
\item the cardinality \(\ell \coloneqq
  \#\bigl(\branch_{\mathrm{nc}}(N)\bigr)\), and
\item whether or not \(1\) is a global eigenvalue.
\end{itemize}

Since \(-\underK(X)^2\) acts on \(\z\) with \(\dim \z = n\), the remaining
eigenvalue branches have total multiplicity \(n - m_0\). Because
\(\underK(X)\) is skew-symmetric, each nonzero eigenvalue branch of
\(-\underK(X)^2\) has even multiplicity. Thus, the remaining
multiplicity splits into at most \(\tfrac{n - m_0}{2}\) distinct nonzero
eigenvalue branches.

Therefore, the maximal possible value of \(\ell\) is
\(\ell = \frac{n-2}{2}\) when \(n\) is even and \(\ell = \frac{n-1}{2}\)
when \(n\) is odd. In the maximal case, each branch has multiplicity
\(2\), i.e., \(m_{\undermu_i} = 2\) for all \(i = 1,\dotsc,\ell\), and
\(\nu = 1\) is not a global eigenvalue. If \(n\) is even, then
\begin{equation}\label{eq:P_min_n_even_1}
  P_{\min}
  = P_{\n_3}\,P_0^\sharp\,P_{\undermu_1}\cdots P_{\undermu_{(n-2)/2}}
\end{equation}
where \(P_{\n_3}\), \(P_0^\sharp\), and each \(P_{\undermu_i}\) have
degrees \(3\), \(4\), and \(6\), respectively. Hence
\[
  k = 3 + 4 + 6 \cdot \frac{n-2}{2} = 3n + 1
\]
Similarly, if \(n\) is odd, the minimal polynomial has the form
\begin{equation}\label{eq:P_min_n_odd_1}
  P_{\min}
  = P_{\n_3}\,P_{\undermu_1}\cdots P_{\undermu_{(n-1)/2}}
\end{equation}
and, with no \(P_0^\sharp\) factor, the degrees sum to
\[
  k = 3 + 6 \cdot \frac{n-1}{2} = 3n
\]
This describes the generic situation, which occurs for most orthogonal
\(\Cl(\z)\)-modules and, in particular, for all \(n \ge 10\) according
to Propositions~\ref{p:n_ge_8} and~\ref{p:mult_ge_10}.

Higher-multiplicity eigenvalue branches are exceptional for two reasons.
First, Proposition~\ref{p:m_geq_4} and Corollary~\ref{co:fixed_point_group}
in Section~\ref{se:gradient} relate multiplicities \(m \ge 4\) to the rank
of the common fixed point group \(\Fix(S,\Spin(n-1))\) of a generic tuple of
spinors \(S \in \v\). Because common fixed point groups of spinors are well
understood when \(\dim \z\) is small, this streamlines the classification of
eigenvalue branches and their multiplicities. Second, for \(n \ge 10\), the
fundamental inequality \eqref{eq:dimensions_abschaetzung_1} excludes
eigenvalue branches of multiplicity \(m \ge 4\) \emph{from the outset}.

On the other hand, for \(n \le 9\), there are cases in which eigenvalue
branches of \(-\underK^2\) occur with multiplicity greater than \(2\).
More specifically, nonconstant eigenvalue branches of \(-\underK^2\) of
multiplicity at least \(4\) arise as follows:

\smallskip
\noindent
\emph{Case \(n = 7\).} Let \(\bbO\) and \(\overline{\bbO}\) denote the
two distinct \(8\)-dimensional \(\Cl(7)\)-modules (pinor types), namely
the spaces of positive and negative pinors \cite{Ba,Ha}. They play, for
\(n = 7\), the role that the half-spin representations (positive and
negative spinors) play in the representation theory of \(\Spin(\z)\) for
\(n = 8\); cf. \cite[Ch.~2.3]{Ba} and \cite[Ch.~1,
Proposition~5.12]{LM}.

If \(\v = \bbO \oplus \bbO\) or \(\v = \overline{\bbO} \oplus
\overline{\bbO}\), then \(\nu = 1\) is a global eigenvalue of
\(-\underK^2\) of multiplicity \(2\). Here \(\n \setminus \ram(\overK^2)\)
is connected and there is also a nonconstant
eigenvalue branch \(\undermu\) of multiplicity \(4\), so
\[
  P_{\min} = P_{\n_3}\,P_1^\sharp\,P_{\undermu}
  \qquad\text{and}\qquad
  k = 3 + 2 + 6 = 11
\]
which is smaller than the generic degree \(3 \cdot 7 = 21\).

In the second case, \(\v = \bbO \oplus \overline{\bbO}\),
\(\n \setminus \ram(\overK^2)\) decomposes into two connected components.
On each component, there are two
distinct nonconstant eigenvalue branches \(\undermu_1\) and
\(\undermu_2\) of multiplicities \(2\) and \(4\), respectively. Here,
\[
  P_{\min} = P_{\n_3}\,P_{\undermu_1}\,P_{\undermu_2}
  \qquad\text{and}\qquad
  k = 3 + 6 + 6 = 15
\]
which is again smaller than \(21\)
(see Proposition~\ref{p:n_=_7}).

\emph{Case \(n = 8\).} For an irreducible orthogonal \(\Cl(\z)\)-module \(\v\),
Proposition~\ref{p:n_=_8} shows that \(\n \setminus \ram(\overK^2)\) is connected
and that there is a single nonconstant eigenvalue branch \(\undermu\), with multiplicity \(6\).
Then,
\[
  P_{\min} = P_{\n_3}\,P_0^\sharp\,P_{\undermu}
  \qquad\text{with}\qquad
  k = 3 + 4 + 6 = 13
\]
If \(\v\) is a direct sum of at least two irreducible \(\Cl(\z)\)-modules,
then
\[
  k = 3 \cdot 8 + 1 = 25
\]

\smallskip
\noindent
\emph{Case \(n = 9\).} For an irreducible orthogonal \(\Cl(\z)\)-module,
Proposition~\ref{p:n_=_9} yields three nonconstant eigenvalue branches,
two of multiplicity \(2\) and one of multiplicity \(4\) on each connected
component of \(\n \setminus \ram(\overK^2)\); there are two
such components. It follows that
\[
  P_{\min} = P_{\n_3}\,P_{\undermu_1}P_{\undermu_2}P_{\undermu_3}
  \qquad\text{and}\qquad  k = 3 + 6 + 6 +6  = 21
\]
In the reducible case,
\[
  k = 3 \cdot 9 = 27
\]

For the remaining possible global eigenvalue \(\nu = 1\), a well-studied
condition is the \(J^2\)-condition (cf.~\cite[Ch.~3.1.3]{BTV}), which
asserts that
\[
  \underK^2|_{\h} = -\Id
\]
More precisely, the \(J^2\)-condition means that \(\underK(X)^2 Z = -Z\)
for all \(X \in \n\) such that \(X_\z \neq 0\), \(X_\v \neq 0\)
and all \(Z \in \h_X\), cf.~\eqref{eq:def_h_X}. In this situation, the multiplicity of
\(\nu = 1\) is automatically \(n - 1\), and the only other eigenvalue is
\(\nu = 0\) with multiplicity \(m_0 = 1\). This condition holds
precisely for \(n \in \{3, 7\}\): for \(n = 3\) when \(\v\) is an isotypic
orthogonal \(\Cl(\z)\)-module, and for \(n = 7\) when \(\v\) is an irreducible
orthogonal \(\Cl(\z)\)-module; cf. \cite[Theorem~1.1]{CDKR}.\footnote{Since we do
not rely on~\cite{CDKR}, our classification of the eigenvalue branches of
\(-\underK^2\) yields an independent (albeit somewhat lengthy) proof of
this notable result.}
Here
\[
  P_{\min} = P_{\n_3}\,P_1^\sharp
  \quad\text{with}\quad k = 5
\]
More broadly, the occurrence of \(\nu = 1\) as a global eigenvalue of
\(-\underK^2\) is restricted on structural grounds to dimensions
\(n \le 7\): by Proposition~\ref{p:n_=_8}, \(\nu = 1\) is not a global
eigenvalue for \(n = 8\). By the eightfold periodicity of Clifford
algebras and their modules, the same holds true for all \(n \ge 8\)
(see Proposition~\ref{p:n_ge_8}).

For \(n = 6\) and \(\dim \v = 8\), the group \(\Spin(n)\) acts
transitively on the product
\[
  \rmS^{n-1} \times \rmS^{\dim\v-1} \subseteq \z \oplus \v
\]
of the vector unit sphere \(\rmS^{n-1} \subseteq \z\) and the spinor
unit sphere \(\rmS^{\dim\v-1} \subseteq \v\), exactly as in the
situations where the \(J^2\)-condition holds. Hence all eigenvalue
branches are constant. Here \(\nu = 1\) occurs as a global eigenvalue of
maximal possible multiplicity \(m_1 = 4\), and \(0\) is a global
eigenvalue of multiplicity \(m_0 = 2\) (so the \(J^2\)-condition is not
satisfied). Then \(P_{\min} = P_{\n_3}\,P_0^\sharp\,P_1^\sharp\) and
\[
  k = 3 + 4 + 2 = 9
\]
which is smaller than the generic degree \(k = 3 \cdot 6 + 1 = 19\)
attained whenever \(\dim \v \ge 16\), i.e., \(\v\) is the direct sum of
at least two irreducible \(\Cl(\z)\)-modules, see
Proposition~\ref{p:n_=_4_5_6}.

For \(n = 5\) and \(\dim \v = 8\), the group \(\Spin(n)\)
still acts transitively on the spinor unit sphere
\(\rmS^{7} \subseteq \v\). Moreover, the global eigenvalue
\(\nu = 1\) occurs again, with multiplicity \(m_1 = 2\), while
\(\n \setminus \ram(\overK^2)\) has two connected components,
each carrying a second, nonconstant eigenvalue branch \(\undermu\),
necessarily of multiplicity \(m_{\undermu} = 2\). In this case,
\[
  P_{\min} = P_{\n_3}\,P_1^\sharp\,P_{\undermu}
  \quad\text{with}\quad k = 3 + 2 + 6 = 11
\]
This degree is smaller than the generic degree \(k = 3 \cdot 5 = 15\),
which is attained whenever \(\dim \v \geq 16\).

In the reducible case, the occurrence of the global eigenvalue
\(\nu = 1\) is restricted \emph{a priori} to \(n \in \{3,7\}\) by
Corollary~\ref{co:reducible_Clifford_module}. For \(n = 3\), all cases in
which \(\nu = 1\) occurs satisfy the \(J^2\)-condition. For \(n = 7\), the
global eigenvalue \(\nu = 1\) also appears when \(\v\) is the direct sum
of two copies of the same irreducible \(\Cl(7)\)-representation, as noted
above. This phenomenon is related to a result on the topology of the
space of orthogonal complex structures in dimension \(6\); see
\cite{FGM}. In this case the multiplicity is \(m_1 = 2\), so the
\(J^2\)-condition is not satisfied.

These explanations are distilled
into the following theorem, which constitutes the main result of this
article:

\begin{theorem}\label{th:main_2}
Let \(\z\) be an \(n\)-dimensional Euclidean vector space with \(n\ge 1\), 
let \(\v\) be a nontrivial orthogonal \(\Cl(\z)\)-module, and let \((N,g)\)
be the H-type group associated with the pair \((\z, \v)\). Then the eigenvalue
branches of \(-\underK^2\), the minimal polynomial \(P_{\min}\) of \((N,g)\),
and its degree \(k\) are as follows.
\begin{enumerate}
\item Suppose \(n \le 9\) and \(\v\) is an irreducible \(\Cl(\z)\)-module:
\begin{center}
\begin{tabular}{l|c|c|c|l|l}
\hline
\(n\) & \(\ell\) & \(\nu = 0\) with \(m_0 \ge 2\) & \(\nu = 1\) &
\(P_{\min}\) & \(k\) \\
\hline
1 & 0 & no  & no  & \(P_{\n_3}\) & 3 \\
2 & 0 & yes & no  & \(P_{\n_3}\,P_0^\sharp\) & 7 \\
3 & 0 & no  & yes & \(P_{\n_3}\,P_1^\sharp\) & 5 \\
4 & 1 & yes & no  & \(P_{\n_3}\,P_0^\sharp\,P_{\undermu}\) & 13 \\
5 & 1 & no  & yes & \(P_{\n_3}\,P_1^\sharp\,P_{\undermu}\) & 11 \\
6 & 0 & yes & yes & \(P_{\n_3}\,P_0^\sharp\,P_1^\sharp\) & 9 \\
7 & 0 & no  & yes & \(P_{\n_3}\,P_1^\sharp\) & 5 \\
8 & 1 & yes & no  & \(P_{\n_3}\,P_0^\sharp\,P_{\undermu}\) & 13 \\
9 & 3 & no  & no  &
  \(P_{\n_3}\,P_{\undermu_1}\,P_{\undermu_2}\,P_{\undermu_3}\) & 21 \\
\hline
\end{tabular}
\end{center}
Here \(\ell\) is the number of nonconstant eigenvalue branches of
\(-\underK^2\) on any connected component of
\(\n \setminus \ram(\overK^2)\).

\item Suppose \(\dim \z = 3\) and \(\v\) is isotypic, i.e.,
\(\v = \bigoplus_{i=1}^j \bbH\) or
\(\v = \bigoplus_{i=1}^j \overline{\bbH}\) for some \(j\), where
\(\bbH\) and \(\overline{\bbH}\) are the two distinct irreducible
\(\Cl(3)\)-modules. Then the \(J^2\)-condition holds, and
\(P_{\min} = P_{\n_3}\,P_1^\sharp\) with \(k = 5\), as in the irreducible
case.

\item Suppose \(\dim \z = 7\) and \(\v\) is the direct sum of two
equivalent pinor representations, \(\v \cong \bbO \oplus \bbO\) or
\(\v \cong \overline{\bbO} \oplus \overline{\bbO}\). Then \(\nu = 1\)
occurs as a global eigenvalue of \(-\underK^2\) with multiplicity
\(m_1 = 2\), and there is a nonconstant eigenvalue branch \(\undermu\)
with multiplicity \(m_{\undermu} = 4\) on , so
\(P_{\min} = P_{\n_3}\,P_1^\sharp\,P_{\undermu}\) and \(k = 11\).

\item Suppose \(\dim \z = 7\) and \(\v \cong \bbO \oplus \overline{\bbO}\)
is the direct sum of two different pinor representations. Then there are
two nonconstant eigenvalue branches \(\undermu_1\) and \(\undermu_2\) of
\(-\underK^2\) with multiplicities \(m_{\undermu_1} = 2\) and
\(m_{\undermu_2} = 4\), so
\(P_{\min} = P_{\n_3}\,P_{\undermu_1}\,P_{\undermu_2}\) and \(k = 15\).

\item If none of the above cases applies, then \(\nu = 1\) does not occur
as a global eigenvalue of \(-\underK^2\). The number \(\ell\) of
nonconstant eigenvalue branches on any connected component of
\(\n \setminus \ram(\overK^2)\) is
\[
  \ell = \frac{n - 2}{2} \quad\text{if \(n\) is even}, \qquad
  \ell = \frac{n - 1}{2} \quad\text{if \(n\) is odd}
\]
The minimal polynomials are then given by \eqref{eq:P_min_n_even_1} and
\eqref{eq:P_min_n_odd_1}. Therefore \(k = 3n + 1\) if \(n\) is even, and
\(k = 3n\) if \(n\) is odd.
\end{enumerate}
\end{theorem}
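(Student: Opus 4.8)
The plan is to reduce Theorem~\ref{th:main_2} entirely to the pointwise factorization of Theorem~\ref{th:main_1}, so that the only substantive task becomes the classification of the eigenvalue branches of the family \(-\underK^2\) from~\eqref{eq:underK_square}. For a given pair \((\z,\v)\) one must determine the multiset of branches of \(-\underK^2\): the constant branches \(\nu\in\{0,1\}\) with multiplicities \(m_0\) and \(m_1\) (where \(m_1>0\) means that \(\nu=1\) is a global eigenvalue and \(m_1=0\) otherwise), together with the list \(\undermu_1,\dots,\undermu_\ell\) of distinct nonconstant branches with multiplicities \(m_{\undermu_i}\). As explained after Theorem~\ref{th:main_1}, \(\ell\) and the \(m_{\undermu_i}\) are independent of the connected component of \(\n\setminus\ram(\underK^2)\), so Theorem~\ref{th:main_1} then presents \(P_{\min}(N,g)\) as the product of \(P_{\n_3}\), of \(P_0^\sharp\) precisely when \(m_0=2\), of \(P_1^\sharp\) precisely when \(\nu=1\) is a global eigenvalue, and of one factor \(P_{\undermu_i}\) for each nonconstant branch; the degree \(k\) is then the sum of the factor degrees, namely \(3\), plus \(4\) if \(m_0=2\), plus \(2\) if \(\nu=1\) is global, plus \(6\ell\). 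Thus every line of the statement follows once the relevant branch data are pinned down.

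First I would collect the universal constraints. By Proposition~\ref{p:overK} and Corollary~\ref{co:mu_=_constant} the only possible global eigenvalues are \(\nu=0\) and \(\nu=1\), with \(m_0\in\{1,2\}\) and \(n-m_0\) even, so \(m_0=1\) if \(n\) is odd and \(m_0=2\) if \(n\) is even. Since \(\underK(X)\) is skew-symmetric, every nonzero branch of \(-\underK(X)^2\) has even multiplicity, and the multiplicities of all branches sum to \(n=\dim\z\); hence \(\sum_i m_{\undermu_i}=n-m_0-m_1\) and \(\ell\le\tfrac12(n-m_0-m_1)\), with equality exactly when every nonconstant branch has multiplicity \(2\). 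By Proposition~\ref{p:n_ge_8}, \(\nu=1\) is never a global eigenvalue for \(n\ge 8\), and by Corollary~\ref{co:reducible_Clifford_module} a global \(\nu=1\) for reducible \(\v\) forces \(n\in\{3,7\}\). Finally, the fundamental inequality~\eqref{eq:dimensions_abschaetzung_1}, together with Propositions~\ref{p:mult_ge_10} and~\ref{p:m_geq_4} and Corollary~\ref{co:fixed_point_group}, shows that any nonconstant branch of multiplicity \(m\ge 4\) forces \(n\le 9\) and ties such \(m\) to the rank of the common fixed-point group \(\Fix(S,\Spin(n-1))\) of a generic spinor tuple \(S\in\v\).

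With these in hand, part~(5) is immediate: outside the exceptional list there is no global \(\nu=1\) and, by the multiplicity bound, every nonconstant branch has multiplicity exactly \(2\), so \(2\ell=n-m_0\), giving \(\ell=\tfrac{n-2}{2}\) for \(n\) even and \(\ell=\tfrac{n-1}{2}\) for \(n\) odd; feeding this into Theorem~\ref{th:main_1} yields~\eqref{eq:P_min_n_even_1}, \eqref{eq:P_min_n_odd_1} and the degrees \(k=3n+1\) (\(n\) even), \(k=3n\) (\(n\) odd). The exceptional statements are handled by the analysis of Section~\ref{se:eigenvalue_branches}. For part~(1): \(n\le 2\) is elementary, see~\eqref{eq:min_pol_n_=_1} and~\eqref{eq:min_pol_n_=_2}; for \(n\in\{3,7\}\) with \(\v\) irreducible the \(J^2\)-condition holds, so \(m_1=n-1\) and \(\ell=0\) (this also gives an independent proof of~\cite[Thm.~1.1]{CDKR}); the rows \(n\in\{4,5,6\}\) come from Proposition~\ref{p:n_=_4_5_6}, the decisive features being the transitivity of \(\Spin(6)\) on \(\rmS^{5}\times\rmS^{7}\) for \(n=6\), \(\dim\v=8\) (all branches constant, \(m_0=2\), \(m_1=4\)) and of \(\Spin(5)=\Sp(2)\) on the spinor sphere \(\rmS^{7}\) for \(n=5\), \(\dim\v=8\) (global \(\nu=1\) of multiplicity \(2\) and one nonconstant branch of multiplicity \(2\)); and the rows \(n\in\{8,9\}\) come from Propositions~\ref{p:n_=_8} and~\ref{p:n_=_9}. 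Part~(2) is the observation that the \(n=3\) irreducible analysis, including the \(J^2\)-condition, applies verbatim to isotypic \(\v=\bigoplus\bbH\) or \(\bigoplus\overline{\bbH}\). Parts~(3)–(4) are the reducible \(n=7\) cases, treated by Proposition~\ref{p:n_=_7}: Proposition~\ref{p:m_geq_4}, with the topological input of~\cite{FGM} on orthogonal complex structures in dimension \(6\), produces a nonconstant branch of multiplicity \(4\), accompanied either by a global \(\nu=1\) of multiplicity \(2\) (for \(\v\cong\bbO\oplus\bbO\) or \(\overline{\bbO}\oplus\overline{\bbO}\)) or by a second nonconstant branch of multiplicity \(2\) (for \(\v\cong\bbO\oplus\overline{\bbO}\)); the total multiplicity \(7\) fixes the remaining data, and Theorem~\ref{th:main_1} gives \(k=11\) and \(k=15\) respectively.

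The main obstacle is the branch classification itself—everything underlying parts~(1)–(4): proving that nonconstant branches of multiplicity \(4\) occur in exactly the enumerated situations and in no others, and that \(\nu=1\) fails to be a global eigenvalue outside \(n\in\{3,5,6,7\}\). This is where one genuinely needs the fundamental inequality~\eqref{eq:dimensions_abschaetzung_1} to bound multiplicities by dimension, the detailed study of the stabilizer \(\Fix(S,\Spin(n-1))\) of generic spinor tuples (including the split into the two pinor types \(\bbO,\overline{\bbO}\) in dimension \(7\)), and the orbit/transitivity results for \(\Spin(n)\) acting on products of vector and spinor spheres. By contrast, once the branch data are established, reading off \(P_{\min}\) and \(k\) from Theorem~\ref{th:main_1} is purely mechanical.
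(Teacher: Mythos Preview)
Your proposal is correct and follows essentially the same approach as the paper. The proof of Theorem~\ref{th:main_2} is distributed across Section~\ref{se:main_result} (the reduction to Theorem~\ref{th:main_1} and the branch-counting bookkeeping you describe) and Section~\ref{se:eigenvalue_branches} (the case-by-case classification of eigenvalue branches via Propositions~\ref{p:n_=_3}--\ref{p:mult_ge_10}), exactly as you outline.
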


\subsection{\texorpdfstring{Explicit formulas for some eigenvalue
branches of \(-\overK^2\)}{Explicit formulas for some eigenvalue
branches of -K^2}}\label{se:explicit_formulas}

Recall that Theorem~\ref{th:JR_on_C_0_spaces} asserts that the
coefficient functions of \(P_{\min}(N,g)\) are polynomial functions
that extend to \(\Iso(N,g)\)-invariant Killing tensors. We now briefly
explain this assertion, with a view toward the low-dimensional and
pinor cases singled out in Theorem~\ref{th:main_2}.

Since the isometry group \(\Iso(N,g)\) is the semidirect product
\(N \rtimes \Aut(\n,g)\) (see \cite[p.~134]{Ka1}), a polynomial
function \(P \colon \n \to \bbR\) extends to an
\(\Iso(N,g)\)-invariant tensor field on \(N\) if and only if it is
invariant under \(\Aut(\n,g)\). Accordingly, let
\(\scrP(\n)[\lambda]_{\Aut(\n,g)}\) denote the subalgebra of
\(\scrP(\n)[\lambda]\) consisting of the \(\Aut(\n,g)\)-invariant
polynomials, that is, polynomials
\(P = \sum_{i=0}^{k} a_i\lambda^{k-i}\) whose coefficient functions
\(a_i \colon \n \to \bbR\) are invariant under the action of
\(\Aut(\n,g)\).

From the pointwise factorization~\eqref{eq:P_min_2} of the minimal
polynomial and the explicit formulas
\eqref{eq:def_P_mu_1}--\eqref{eq:def_P_n_3} for its factors, it follows
that the even \(\lambda\)-coefficients \(a_{2i}(X)\) of
\(P_{\min}(N,g)\) can be expressed as polynomials in
\(\norm{X_\z}^{2}\), \(\norm{X_\v}^{2}\), and the elementary symmetric
polynomials
\begin{equation}\label{eq:over_sigma_j}
  \oversigma_j(X)
  =
  \sigma_j\bigl(\overmu_1(X), \ldots, \overmu_\ell(X)\bigr)
\end{equation}
in the {\em distinct} eigenvalue branches
\[
  \overmu_i(X)
  =
  \norm{X_\z}^2\norm{X_\v}^4\undermu_i(X),
  \qquad i=1,\ldots,\ell,
\]
of \(-\overK(X)^2\), corresponding to the nonconstant eigenvalue
branches \(\undermu_1,\ldots,\undermu_\ell\) of
\(-\underK(X)^2\), for \(1 \le j \le \ell\) and
\(X \in \n \setminus \ram(\overK^2)\). The argument is routine.
Nevertheless, we give the details in
Section~\ref{se:expression_as_polynomials}, together with closed-form
expressions for the coefficients of the product of those factors in
\eqref{eq:P_min_2} that arise from the distinct nonconstant eigenvalue
branches \(\overmu_1(X), \ldots, \overmu_\ell(X)\); see
\eqref{eq:D_q_l}--\eqref{eq:product_of_the_P_undermu_i}.

Moreover, each \(\oversigma_j(X)\) is automatically
\(\Aut(\n,g)\)-invariant, since \(\overK\) is
\(\Aut(\n,g)\)-equivariant; compare~\eqref{eq:K_Aut_equivariance}.
However, even though \(\overK\) is a polynomial family of operators,
without recourse to Theorem~\ref{th:JR_on_C_0_spaces} it is not
\emph{a priori} clear that these invariant functions are polynomial in
the cases where multiplicities greater than two occur. This has to be
verified through the explicit formulas.

What's more, H-type groups need not be G.O.\ spaces. Thus,
\(\Aut(\n,g)\)-invariance of a polynomial function on \(\n\) does not,
in general, imply that the associated \(\Iso(N,g)\)-invariant symmetric
tensor field is a Killing tensor. Here, however, the tensor fields
induced by the functions \(\oversigma_j\) are Killing for an independent
reason: the eigenvalue branches of \(\underK^2\) are constant along every
geodesic (cf.~Proposition~\ref{p:eigenvalues_of_K_and_tilde_K}), while
\[
  g_{\z}(X) \coloneqq \norm{X_\z}^2,
  \qquad
  g_{\v}(X) \coloneqq \norm{X_\v}^2
\]
are Killing tensors. Consequently, the polynomial combinations appearing
in the functions \(\oversigma_j\) induce Killing tensors. Nevertheless,
we corroborate the Killing property independently in
Section~\ref{se:verification}, providing an additional check on the
explicit formulas for the eigenvalue branches presented below. Since the
space of Killing tensors is closed under polynomial combinations, it
follows that all coefficients of \(P_{\min}(N,g)\) are Killing tensors.

In order to explain why each \(\oversigma_j\) is, in fact, always an
\(\Aut(\n,g)\)-invariant polynomial function on \(\n\), let us first
assume that \(\ell\) is maximal, i.e.,
\[
  \ell = \frac{n-2}{2}
  \quad\text{or}\quad
  \ell = \frac{n-1}{2}
\]
according to whether \(n \coloneqq \dim \z\) is even or odd. In this
case, \(\nu = 1\) is not a global eigenvalue of \(-\underK^2\), and,
away from \(\ram(\underK^2)\), each nonzero eigenvalue of
\(-\underK(X)^2\) has multiplicity \(2\). Consequently, the
characteristic polynomial of \(\overK(X)\) can be written as
\begin{equation}\label{eq:characteristic_polynomial}
  \det\bigl(\lambda \,\Id - \overK(X)\bigr)
  = \sum_{j=0}^\ell \oversigma_j(X) \,\lambda^{n-2j}
\end{equation}
Because \(\overK\) is a polynomial family of \(\Aut(\n,g)\)-invariant
operators by~\eqref{eq:K_Aut_equivariance}, each coefficient
\(\oversigma_j\) is then an \(\Aut(\n,g)\)-invariant polynomial
function.

The simplest nontrivial example is the case \(n = 3\). Let \(\bbH\)
denote the \(4\)-dimensional normed division algebra of quaternions, and
set \(\z \coloneqq \Im(\bbH)\), the subspace of purely imaginary
quaternions. Right multiplication by \(S \in \z\),
\begin{equation}\label{eq:Clifford_multiplication_for_n_=_3_+}
  S \cbullet \tilde{S} \coloneqq \tilde{S} S
\end{equation}
for all \(\tilde{S} \in \bbH\) defines the orthogonal 
\(\Cl(\z)\)-module \(\bbH\). With the opposite sign,
\begin{equation}\label{eq:Clifford_multiplication_for_n_=_3_-}
  S \cbullet \tilde{S} \coloneqq -\,\tilde{S} S
\end{equation}
we obtain the orthogonal \(\Cl(\z)\)-module \(\overline{\bbH}\). 
Here the Clifford algebra
\[
  \Cl(\z) \cong \bbH \oplus \bbH
\]
is the direct sum of two copies of \(\bbH\), reflecting the fact that
\(\bbH\) and \(\overline{\bbH}\) are two nonisomorphic irreducible
\(\Cl(\z)\)-modules.

Let \(\v = \v_+ \oplus \v_-\) be the isotypic decomposition of \(\v\),
with \(\v_+ \cong \bigoplus_{i = 1}^{r_1}\bbH\) and
\(\v_- \cong \bigoplus_{i = 1}^{r_2}\overline{\bbH}\), where
\(r_1 = \tfrac{1}{4}\dim(\v_+)\) and \(r_2 = \tfrac{1}{4}\dim(\v_-)\).
In this case,
\[
  \Aut(\n,g)
  \;=\;
  \Bigl(
    \Spin(\Im(\bbH)) \times \mathbf{Sp}(r_1,\bbH)
    \times \mathbf{Sp}(r_2,\bbH)
  \Bigr)\Big/\bigl\langle(-1,-\Id_{\v})\bigr\rangle
\]
if \(r_1 \neq r_2\). If \(r_1=r_2=r\), then \(\Aut(\n,g)\) has an
additional component generated by the involution
\[
  \tau(S_0\oplus S_+\oplus S_-)\coloneqq (-S_0)\oplus S_-\oplus S_+
\]
and hence
\[
  \Aut(\n,g)
  \;=\;
  \Bigl(
    \bigl(
      \Spin(\Im(\bbH)) \times \mathbf{Sp}(r,\bbH)
      \times \mathbf{Sp}(r,\bbH)
    \bigr)\big/\bigl\langle(-1,-\Id_{\v})\bigr\rangle
  \Bigr)\rtimes \langle\tau\rangle
\]
The unique nonzero eigenvalue branch \(\overmu\) of \(-\overK^2\), given
by
\begin{equation}\label{eq:over_mu_for_n_=_3}
  \overmu(S_0 \oplus S_+ \oplus S_-) \coloneqq
  \norm{S_0}^2 \bigl(\norm{S_+}^2 - \norm{S_-}^2\bigr)^2
\end{equation}
for all \(S_0 \oplus S_+ \oplus S_- \in \z \oplus \v_+ \oplus \v_-\),
defines an \(\Aut(\n,g)\)-invariant polynomial. Moreover, this yields a
Killing tensor because both \(g_{\z}\) and \(g_{\v}\) are Killing
(cf. Corollary~\ref{co:Killing_1}). By the description of the
Levi-Civita connection in \eqref{eq:LC}, H-type subgroups associated
with \(\Cl(\z)\)-submodules are totally geodesic; hence both
\(g_{\v_+}\) and \(g_{\v_-}\) are Killing as well.

The corresponding rescaled eigenvalue branch \(\undermu\) in
\eqref{eq:undermu_overmu} is
\begin{equation}\label{eq:under_mu_for_n_=_3}
  \undermu(S_0 \oplus S_+ \oplus S_-) \coloneqq
  \frac{\bigl(\norm{S_+}^2 - \norm{S_-}^2\bigr)^2}
       {\bigl(\norm{S_+}^2 + \norm{S_-}^2\bigr)^2}
\end{equation}
If \(\v\) is isotypic, then one of the summands \(\v_+\) or \(\v_-\)
is trivial. Hence \eqref{eq:under_mu_for_n_=_3} is identically equal to
\(1\), so the \(J^2\)-condition holds and \(\nu = 1\) is a global
eigenvalue. Otherwise, \eqref{eq:under_mu_for_n_=_3} defines the unique
nonconstant eigenvalue branch \(\undermu\), of multiplicity \(2\), on
each connected component of \(\n \setminus \ram(\overK^2)\).
\footnote{On the natural domain where \(S_0 \neq 0\) and
\(S_+ \oplus S_- \neq 0\), the nonisotypic unramified locus has two
connected components, distinguished by the sign of
\(d \coloneqq \norm{S_+}^2 - \norm{S_-}^2\), as follows from
\eqref{eq:over_mu_for_n_=_3}.}

For \(n = 4\), let \(\bbO\) denote the \(8\)-dimensional normed division
algebra of octonions and let \(\Im(\bbO)\) be the subspace of purely
imaginary octonions. Fix a \(4\)-dimensional subspace
\(\z \subseteq \Im(\bbO)\), whose orthogonal complement
\(\z^\perp \subseteq \Im(\bbO)\) is a \(3\)-plane. Let
\(\{E_1,E_2,E_3\}\) be an oriented orthonormal basis of \(\z^\perp\), and
put \(\v \coloneqq \bbO\), with Clifford action given by right octonionic
multiplication \(Z \cbullet S \coloneqq SZ\) for \(Z \in \z\) and \(S \in \v\). 
Then \(\v\) is an irreducible orthogonal \(\Cl(\z)\)-module, and
\[
  \Aut(\n,g)
  = \bigl(\Pin(\z)\times \mathbf{Sp}(1,\bbH)\bigr)
    \big/\bigl\langle(-1,-\Id_{\v})\bigr\rangle
\]
The endomorphisms
\(I_1 \coloneqq \rmR_{E_1}\rmR_{E_2}\),
\(I_2 \coloneqq \rmR_{E_2}\rmR_{E_3}\), and
\(I_3 \coloneqq \rmR_{E_3}\rmR_{E_1}\) define a quaternionic unitary
structure on \(\v\) commuting with the \(\Cl(\z)\)-action, thereby
realizing the \(\mathbf{Sp}(1,\bbH)\)-factor of \(\Aut(\n,g)\). Hence the
polynomial \(\overmu:\z\oplus\bbO\to\bbR\) defined by
\begin{equation}\label{eq:over_mu_for_n_=_4}
  \overmu(S_0\oplus S_1)
  = \sum_{\alpha=1}^3 \bigl\langle I_\alpha S_1,\; S_1S_0\bigr\rangle^2
\end{equation}
is \(\Aut(\n,g)\)-invariant for all \(S_0 \oplus S_1 \in \z \oplus \bbO\).

It describes a polynomial eigenvalue branch of \(-\overK^{2}\) of multiplicity
\(2\). The corresponding rescaled expression \(\undermu\), obtained by
dividing \eqref{eq:over_mu_for_n_=_4} by
\(\norm{S_0}^{2}\norm{S_1}^{4}\), defines the unique nonconstant
eigenvalue branch of \(-\underK^{2}\) on \(\n \setminus \ram(\overK^{2})\). Thus,
\[
  P_{\min}(N,g) = P_{\n_3}\,P_{0}^\sharp\,P_{\undermu}
  \in \scrP(\n)[\lambda]_{\Aut(\n,g)}
\]
in accordance with Theorem~\ref{th:JR_on_C_0_spaces}.

The remaining cases are handled as follows. Suppose that \(\nu = 1\) is a
global eigenvalue of \(-\underK^2\) with multiplicity \(2\), and that
\(\undermu\) is the only nonconstant eigenvalue branch, with multiplicity
\(m\). Then, even without invoking
Theorem~\ref{th:JR_on_C_0_spaces}, we obtain that
\[
  \overmu(X)
  =
  \frac{1}{m}\Bigl(
    \trace\bigl(-\overK(X)^2\bigr)
    - 2\,\norm{X_\z}^{2}\,\norm{X_\v}^{4}
  \Bigr)
\]
is a polynomial function.

For instance, consider \(n = 5\). Fix a \(5\)-dimensional subspace
\(\z \subseteq \Im(\bbO)\), so its orthogonal complement
\(\z^\perp \subseteq \Im(\bbO)\) is a \(2\)-plane. Let
\(\{E_1, E_2\}\) be an oriented orthonormal basis of \(\z^\perp\), and
set \(\v \coloneqq \bbO\) with Clifford multiplication
\(S \cbullet \tilde S \coloneqq \tilde S S\), given by right octonionic
multiplication as before. Here
\[
  \Aut(\n,g)
  = \bigl(\Pin(\z)\times \U(1,\bbC)\bigr)
    \big/\bigl\langle(-1,-\Id_{\v})\bigr\rangle
\]
and \(J \coloneqq \rmR_{E_1}\rmR_{E_2}\) defines an orthogonal complex
structure on \(\v\) that commutes with the \(\Cl(\z)\)-action, thereby
realizing the \(\U(1,\bbC)\)-factor of \(\Aut(\n,g)\). A nonzero
eigenvalue branch \(\overmu\) of the polynomial family \(-\overK^{2}\) is
given by the \(\Aut(\n,g)\)-invariant polynomial
\begin{equation}\label{eq:over_mu_for_n_=_5}
  \overmu(S_0 \oplus S_1) \coloneqq
  \bigl\langle (S_1 S_0),\; J S_1 \bigr\rangle^{2}
\end{equation}
for \(S_0 \oplus S_1 \in \z \oplus \bbO\). The corresponding rescaled
expression \(\undermu\) yields the unique nonconstant eigenvalue branch of the
family \(-\underK^{2}\) defined in \eqref{eq:underK_square} on each connected
components of \(\n \setminus \ram(\overK^{2})\).\footnote{On
the natural domain \(S_0\neq0\), \(S_1\neq0\), the
unramified locus has two connected components, distinguished
by the sign of \(d \coloneqq \langle (S_1 S_0),\; J S_1 \bigr\rangle\),
as follows from~~\eqref{eq:over_mu_for_n_=_5}.} Therefore,
\[
  P_{\min}(N,g) = P_{\n_3}\,P_{1}^\sharp\,P_{\undermu}
  \in \scrP(\n)[\lambda]_{\Aut(\n,g)}
\]
as required by Theorem~\ref{th:JR_on_C_0_spaces}. The Killing property
for this and the previous example is verified in
Section~\ref{se:verification_for_n_=_4_and_n_=_5}.

For \(n = 7\), there is a decomposition
\[
  \Cl(\Im(\bbO)) \cong \Mat_8(\bbR) \oplus \Mat_8(\bbR)
\]
of \(\Cl(\Im(\bbO))\) as the direct sum of two copies of \(\Mat_8(\bbR)\),
the algebra of \(8 \times 8\) real matrices; see also \cite[p.~283]{Ha}.
Hence there are two distinct irreducible \(\Cl(\Im(\bbO))\)-modules,
\(\bbO\) and \(\overline{\bbO}\), called the spaces of positive and
negative pinors \cite{Ba}. On \(\bbO\) we again use right multiplication
by \(S \in \Im(\bbO)\) to define the structure of an orthogonal 
\(\Cl(\Im(\bbO))\)-module
\begin{equation}\label{eq:Clifford_multiplication_for_n_=_7_1}
  S \cbullet \tilde S \coloneqq \tilde S S
\end{equation}
for \(\tilde S \in \bbO\). For \(\tilde S \in \overline{\bbO}\) we take
the negative sign:
\begin{equation}\label{eq:Clifford_multiplication_for_n_=_7_2}
  S \cbullet \tilde S \coloneqq -\,\tilde S S
\end{equation}
If \(\v\) is the orthogonal direct sum \(\v_1 \oplus \v_2\), where
\(\v_1\) and \(\v_2\) are isomorphic irreducible \(\Cl(\Im(\bbO))\)-modules
(either \(\v_1 \cong \v_2 \cong \bbO\) or
\(\v_1 \cong \v_2 \cong \overline{\bbO}\)), then
\[
  \Aut(\n,g)
  = \Spin(\Im(\bbO)) \times \Ogroup(2)
    \big/ \big\langle (-1, -\Id_{\v}) \big\rangle
\]
Since odd Clifford elements exchange \(\bbO\) and \(\overline{\bbO}\), no
orientation-reversing map of \(\Im(\bbO)\) preserves \(\v_1\oplus\v_2\),
so the \(\Im(\bbO)\)-factor is \(\Spin(\Im(\bbO))\), not
\(\Pin(\Im(\bbO))\). A polynomial eigenvalue branch of \(\overK^2\)
is given by the \(\Aut(\n,g)\)-invariant homogeneous polynomial
\begin{equation}\label{eq:over_mu_for_n_=_7_1}
  \overmu(S_0 \oplus S_1 \oplus S_2) \coloneqq
  \norm{S_0}^2\big(\norm{S_1}^4 + \norm{S_2}^4\big)
  + 2\,\big\langle S_2^*(S_1 S_0), (S_0 S_2^*) S_1 \big\rangle
\end{equation}
where \(^{*}\) denotes octonionic conjugation. The corresponding
rescaled expression ---obtained
by dividing \eqref{eq:over_mu_for_n_=_7_1} by
\(\norm{S_0}^2\big(\norm{S_1}^2 + \norm{S_2}^2\big)^2\)---
yields the unique nonconstant eigenvalue branch \(\undermu\) of \(-\underK^2\)
on \(\n \setminus \ram(\overK^{2})\).

The mixed term \(\big\langle S_2^*(S_1 S_0), (S_0 S_2^*) S_1 \big\rangle\) is
\(\Spin(\Im(\bbO))\)-invariant by the Triality Theorem
\cite[Theorem~14.19]{Ha}; hence \(\overmu\) is
\(\Spin(\Im(\bbO))\)-invariant as well. Both summands
\(\norm{S_0}^2(\norm{S_1}^2 + \norm{S_2}^2)^2\) and
\(\big\langle S_2^*(S_1 S_0), (S_0 S_2^*) S_1 \big\rangle\) induce
Killing tensors on \((N,g)\); thus \(\overmu\) is Killing as well, cf.
Section~\ref{se:verification_for_n_=_7}.
 Consequently, the coefficients
of
\[
  P_{\min}(N,g) = P_{\n_3}\,P_{1}^\sharp\,P_{\undermu}
\]
are \(\Aut(\n,g)\)-invariant polynomial functions on \(\n\) that yield
Killing tensors on \((N,g)\).

When \(-\underK^2\) has nonconstant eigenvalue branches of different
multiplicities, the determinantal identity \eqref{eq:characteristic_polynomial} is no longer
useful for computing the elementary symmetric functions \(\oversigma_j\)
in the \emph{distinct} eigenvalue branches \(\overmu_i\). These cases
must therefore be analyzed separately in order to understand how the
assertions of Theorem~\ref{th:JR_on_C_0_spaces} manifest in this setting.

As a concrete illustration, consider again \(n = 7\), but suppose now
that \(\v_1\) and \(\v_2\) are nonisomorphic irreducible
\(\Cl(\Im(\bbO))\)-modules. In this case the Clifford multiplication is given by
\begin{equation}\label{eq:Clifford_multiplication_for_n_=_7_3}
  S_0 \cbullet (S_+ \oplus S_-) \coloneqq S_+ S_0 \oplus (-S_- S_0)
\end{equation}
for all \(S_0 \oplus S_+ \oplus S_- \in \Im(\bbO) \oplus \bbO \oplus
\overline{\bbO}\). Moreover, the automorphism group \(\Aut(\n,g)\) is the
semidirect product
\[
  \Bigl(
    \bigl(\Spin(\Im(\bbO)) \times \Ogroup(1) \times \Ogroup(1)\bigr)
    \big/ \bigl\langle (-1, -\Id_{\v}) \bigr\rangle
  \Bigr)\rtimes \langle \tau \rangle
\]
where \(\tau(S_0 \oplus S_+ \oplus S_-) \coloneqq (-S_0) \oplus S_- \oplus
S_+\), as in the analogous quaternionic case \(n = 3\).

Without invoking Theorem~\ref{th:JR_on_C_0_spaces}, there is no
\emph{general} argument that \(\oversigma_1\) and \(\oversigma_2\) are
polynomial functions, let alone that they define global Killing tensors
on \((N,g)\). Fortunately, in this case the explicit formulas
\begin{align}
\label{eq:over_mu_for_n_=_7_2}
  \overmu_1(S_0 \oplus S_+ \oplus S_-)
  &= \norm{S_0}^2\bigl(\norm{S_+}^2 - \norm{S_-}^2\bigr)^2 \\[0.3em]
\label{eq:over_mu_for_n_=_7_3}
  \overmu_2(S_0 \oplus S_+ \oplus S_-)
  &= \norm{S_0}^2\bigl(\norm{S_+}^4 + \norm{S_-}^4\bigr)
    - 2\,\big\langle S_-^*(S_+ S_0), (S_0 S_-^*) S_+ \big\rangle
\end{align}
already exhibit the two nonzero eigenvalue branches of \(-\overK^2\) as
\(\Aut(\n,g)\)-invariant polynomial functions. The rescaled expressions
\(\undermu_1\) and \(\undermu_2\)---obtained by dividing
\eqref{eq:over_mu_for_n_=_7_2} and \eqref{eq:over_mu_for_n_=_7_3},
respectively, by \(\norm{S_0}^2\bigl(\norm{S_+}^2 + \norm{S_-}^2\bigr)^2\)---yield the two
nonconstant eigenvalue branches of \(-\underK^2\) on each
connected component of \(\n \setminus \ram(\overK^{2})\).\footnote{On
the natural domain \(S_0\neq0\), \(S_+\oplus S_-\neq0\), the
unramified locus has two connected components, distinguished
by the sign of \(d \coloneqq \norm{S_+}^2 - \norm{S_-}^2\)
(similar as in the nonisotypic case for \(n=3\)).}

In particular, \(\oversigma_1 = \overmu_1 + \overmu_2\) and
\(\oversigma_2 = \overmu_1\overmu_2\) are \(\Aut(\n,g)\)-invariant
polynomials; hence
\[
  P_{\min}(N,g) = P_{\n_3}\,P_{\undermu_1}\,P_{\undermu_2}
  \;\in\; \scrP(\n)[\lambda]_{\Aut(\n,g)}
\]
in complete agreement with Theorem~\ref{th:JR_on_C_0_spaces}. Moreover, a
direct calculation in Section~\ref{se:verification_for_n_=_7} shows that
\(\big\langle S_-^*(S_+ S), (S S_-^*) S_+ \big\rangle\) is a Killing
tensor. Hence \(\overmu_1\) and \(\overmu_2\) are both Killing, and
therefore so are the coefficients of \(P_{\min}(N,g)\).
For \(n = 8\), following \cite[Def.~14.6]{Ha}, set
\(\z \coloneqq \bbO\). We equip
\(\v \coloneqq \bbO^2 = \bbO \oplus \bbO\) with the unique
orthogonal \(\Cl(\bbO)\)-module structure, up to isomorphism, given by
\begin{equation}\label{eq:Clifford_multiplication_for_n_=_8}
  S_0 \cbullet (S_+ \oplus S_-)
  \coloneqq S_- S_0 \oplus (-S_+ S_0^*)
\end{equation}
for \(S_0, S_+, S_- \in \bbO\), where multiplication in each component is
octonionic and \(S_0^*\) denotes octonionic conjugation; see
\cite[Part~II, Ch.~14, p.~275]{Ha}.

The automorphism group is given by
\[
  \Aut(\n,g) \cong \Pin(\z)
  \cong \Spin(\bbO) \rtimes \langle \tau \rangle
\]
where \(\tau \colon \n \to \n\) is the involution defined by
\[
  \tau(S_0 \oplus S_+ \oplus S_-)
  \coloneqq (-S_0^*) \oplus S_- \oplus S_+
\]
In this case, there is a single nonzero polynomial eigenvalue branch
\(\overmu\) of \(-\overK^2\), with multiplicity \(6\). Hence
\begin{equation}\label{eq:trace_formula_over_mu_for_n_=_8}
  \overmu(S_0 \oplus S_+ \oplus S_-)
  =
  -\frac{1}{6}
  \trace\bigl(
    \overK(S_0 \oplus S_+ \oplus S_-)^2
  \bigr)
\end{equation}
is an \(\Aut(\n,g)\)-invariant polynomial function on \(\bbO^3\), and it
yields the unique nonzero eigenvalue branch of \(-\overK^2\) on
\(\n \setminus \ram(\overK^2)\).

More explicitly, this polynomial is given by
\begin{equation}\label{eq:over_mu_for_n_=_8}
  \overmu(S_0 \oplus S_+ \oplus S_-)
  \coloneqq
  4\bigl(
    \norm{S_0}^2\norm{S_+}^2\norm{S_-}^2
    - \langle S_+, S_- S_0 \rangle^2
  \bigr)
\end{equation}
Indeed, \eqref{eq:over_mu_for_n_=_8} is \(\Spin(\bbO)\)-invariant because
the trilinear form
\[
  (S_0,S_+,S_-)
  \longmapsto
  \langle S_+, S_- S_0 \rangle
\]
is \(\Spin(\bbO)\)-invariant by the Triality Theorem
\cite[Theorem~14.19]{Ha}. Moreover, its square is invariant under the
involution \(\tau\), and therefore \eqref{eq:over_mu_for_n_=_8} is
\(\Aut(\n,g)\)-invariant.

Here neither
\(
  \norm{S_0}^2\norm{S_+}^2\norm{S_-}^2
\)
nor
\(
  \langle S_+, S_- S_0 \rangle^2
\)
is a Killing tensor separately; however, their difference
\[
  \norm{S_0}^2\norm{S_+}^2\norm{S_-}^2
  - \langle S_+, S_- S_0 \rangle^2
\]
is Killing. Hence the coefficients of \(P_{\min}(N,g)\) are Killing
tensors as well; see Section~\ref{se:verification_for_n_=_8}.

The corresponding rescaled expression \(\undermu\), obtained by dividing
\eqref{eq:over_mu_for_n_=_8} by
\[
  \norm{S_0}^2
  \bigl(\norm{S_+}^2 + \norm{S_-}^2\bigr)^2
\]
describes the unique nonconstant eigenvalue branch of \(-\underK^2\).
Hence, as predicted by Theorem~\ref{th:JR_on_C_0_spaces},
\[
  P_{\min}(N,g) = P_{\n_3}\,P_0^\sharp\,P_{\undermu}
  \in \scrP(\n)[\lambda]_{\Aut(\n,g)}
\]

For \(n = 9\), following \cite[Part~II, Ch.~14, p.~287]{Ha}, we let
\(\z \coloneqq \bbR \oplus \bbO\) and consider the unique (up to
isomorphism) irreducible orthogonal \(\Cl(\z)\)-module \(\v\), 
realized as the complexification \(\bbO^2 \otimes_{\bbR} \bbC\).
For \(S_0 = r\oplus Z\in\z=\bbR\oplus\bbO\), set
\begin{equation}\label{eq:M_matrix}
  M(S_0)\coloneqq
  \begin{pmatrix}
    r & Z^*\\
    Z & -r
  \end{pmatrix}
\end{equation}
where \(^{*}\colon \bbO \to \bbO\) denotes octonionic conjugation.
Viewing
\[
  S_1=S_+\oplus S_-\in\bbO^2\otimes_{\bbR}\bbC
\]
as a row vector, row--matrix multiplication (using the octonionic
product entrywise) yields
\[
  S_1\,M(S_0)
  = \bigl(r S_+ + S_- Z\bigr)\,\oplus\,\bigl(S_+ Z^* - r S_-\bigr)
\]
for all \(S_0 = r\oplus Z \in \bbR \oplus \bbO\) and
\(S_1 = S_+\oplus S_-\in\bbO^2 \otimes_{\bbR} \bbC\). This defines the
Clifford multiplication\footnote{There is another canonical construction
of an orthogonal \(\Cl(\z)\)-module structure on \(\bbO^2 \otimes_\bbR \bbC\)
whose Clifford multiplication is given in~\eqref{eq:tensor_product_multiplication};
cf.\ Corollary~\ref{co:periodicity}. In that model, the natural splitting
\(\bbO^2 \otimes_\bbR \bbC\) defines a \(\bbZ_2\)-grading as an orthogonal
\(\Cl(\z)\)-module. In the model just defined, this splitting is only
\(\Spin(\z)\)-invariant.}
\begin{equation}\label{eq:Clifford_multiplication_for_n_=_9}
  S_0 \cbullet S_1 \coloneqq \i\,S_1\,M(S_0)
\end{equation}
Moreover,
\[
  \Aut(\n,g) \cong
  \Bigl(
    \bigl(\Spin(9) \times \U(1,\bbC)\bigr)\big/
    \bigl\langle(-1,-\Id_\v)\bigr\rangle
  \Bigr)\rtimes\langle\tau\rangle
\]
where \(\tau\) is the involution defined by
\[
  \tau(r \oplus Z \oplus S_+ \oplus S_-)
  \coloneqq (-r) \oplus (-Z) \oplus \overline{S_+ \oplus S_-}
\]
Here
\[
  \overline{(\tilde S_+ \oplus \tilde S_-)\otimes z}
  \coloneqq
  (\tilde S_+ \oplus \tilde S_-)\otimes \overline z
\]
denotes the standard complex conjugation on the complexified space
\(\v = \bbO^2 \otimes_\bbR \bbC\), and the factor \(\U(1,\bbC)\) acts
naturally by complex multiplication on the \(\bbC\)-factor.

There are three eigenvalue branches, namely \(\overmu_1\) and
\(\overmu_2\), each of multiplicity \(2\), and \(\overmu_3\), of
multiplicity \(4\), of \(-\overK^2\) on each connected component of
\(\n \setminus \ram(\overK^2)\).\footnote{Here, the unramified locus
has two connected components. These are distinguished by the sign of
\[
  d \coloneqq \trace\bigl(G(S_1)JH(S_0,S_1)\bigr)
\]
as follows from~\eqref{eq:over_sigma_2_for_n_=_9}.}
The functions \(\overmu_1\) and \(\overmu_2\) are not polynomial
individually. However, their sum \(\overmu_1+\overmu_2\) and product
\(\overmu_1\overmu_2\) are polynomial. Define
\begin{equation}\label{eq:def_G}
  G(S_1)\coloneqq
  \begin{pmatrix}
    \norm{\Re(S_1)}^{2} &
    \langle \Re(S_1), \Im(S_1)\rangle\\
    \langle \Re(S_1), \Im(S_1)\rangle &
    \norm{\Im(S_1)}^{2}
  \end{pmatrix},
  \qquad
  J\coloneqq
  \begin{pmatrix}
    0 & 1\\
    -1 & 0
  \end{pmatrix}
\end{equation}
and
\begin{equation}\label{eq:def_H}
  H(S_0,S_1)\coloneqq
  \begin{pmatrix}
    -\bigl\langle \Re(S_1),\; \Im(S_1)\,M(S_0)\bigr\rangle &
     \bigl\langle \Re(S_1),\; \Re(S_1)\,M(S_0)\bigr\rangle\\
    -\bigl\langle \Im(S_1),\; \Im(S_1)\,M(S_0)\bigr\rangle &
     \bigl\langle \Im(S_1),\; \Re(S_1)\,M(S_0)\bigr\rangle
  \end{pmatrix}
\end{equation}
(real and imaginary parts taken in the tensor factor \(\bbC\)). Then
\begin{align}
  \overmu_1(S_0 \oplus S_1) + \overmu_2(S_0 \oplus S_1)
  &=
  4\Bigl(
    \norm{S_0}^{2}\,\det G(S_1)
    + \det H(S_0,S_1)
  \Bigr)
  \label{eq:over_sigma_1_for_n_=_9}
  \\
  \overmu_1(S_0 \oplus S_1)\,\overmu_2(S_0 \oplus S_1)
  &=
  4\norm{S_0}^{2}\,
  \bigl(
    \trace\bigl(G(S_1)\,J\,H(S_0,S_1)\bigr)
  \bigr)^{2}
  \label{eq:over_sigma_2_for_n_=_9}
\end{align}
are \(\Aut(\n,g)\)-invariant homogeneous polynomials of degrees
\(6\) and \(12\), respectively, in
\[
  S_0 \oplus S_1
  \in
  (\bbR \oplus \bbO)
  \oplus
  (\bbO^2 \otimes_{\bbR} \bbC)
\]
Conversely, from
\eqref{eq:over_sigma_1_for_n_=_9}--\eqref{eq:over_sigma_2_for_n_=_9},
we recover \(\overmu_1(S_0 \oplus S_1)\) and
\(\overmu_2(S_0 \oplus S_1)\) as the two roots of the resulting
quadratic polynomial, using Vieta's formulas and the quadratic formula.

Moreover, the third eigenvalue branch \(\overmu_3\) is already a
polynomial function. For a row vector \(S = S_+ \oplus S_- \in \bbO^2\), write
\[
  S^* \coloneqq \begin{pmatrix} S_+^*, & S_-^* \end{pmatrix},
  \qquad
  S^\intercal \coloneqq \begin{pmatrix} S_+ \\ S_- \end{pmatrix}
\]
(entrywise octonionic conjugation and transpose). Let
\(J_0\coloneqq\begin{psmallmatrix}0&1\\1&0\end{psmallmatrix}\), set
\[
  \tilde{S}_\pm \coloneqq S_1 \pm S_1\,M(S_0)
  \in \bbO^2 \otimes_{\bbR} \bbC
\]
and regard \(\Re(\tilde S_\pm)\) and \(\Im(\tilde S_\pm)\) as row
vectors in \(\bbO^2\). (In particular, the above notation applies to
\(S=\Re(\tilde S_\pm)\) and \(S=\Im(\tilde S_\pm)\).) Then
\begin{equation}\label{eq:over_mu_3_for_n_=_9_1}
  \begin{split}
    \overmu_3(S_0 \oplus S_1)
    &= \tfrac{1}{2}\bigl(\overmu_1(S_0 \oplus S_1)
      + \overmu_2(S_0 \oplus S_1)\bigr)
    \\
    &\quad - \tfrac{1}{4}
      \bigl\langle
        \Re(\tilde{S}_+)^* J_0\, \Re(\tilde{S}_-)^\intercal,\;
        \Im(\tilde{S}_+)^* J_0\, \Im(\tilde{S}_-)^\intercal
      \bigr\rangle
    \\
    &\quad + \tfrac{1}{4}
      \bigl\langle
        \Re(\tilde{S}_+)^* J_0\, \Im(\tilde{S}_-)^\intercal,\;
        \Im(\tilde{S}_+)^* J_0\, \Re(\tilde{S}_-)^\intercal
      \bigr\rangle
  \end{split}
\end{equation}

In particular,
\(\oversigma_1 = \overmu_1 + \overmu_2 + \overmu_3\),
\(\oversigma_2 = \overmu_1\overmu_2 + (\overmu_1 + \overmu_2)\overmu_3\),
and \(\oversigma_3 = \overmu_1\overmu_2\overmu_3\) are
\(\Aut(\n,g)\)-invariant homogeneous polynomials of degrees \(6\), \(12\),
and \(18\). On the natural domain \(S_0\neq0\), \(S_1\neq0\), the corresponding
three rescaled functions yield three nonconstant eigenvalue branches of
\(-\underK^2\) on each of the two connected component of its unramified locus.
We conclude that
\[
  P_{\min}(N,g)
  = P_{\n_3}\,P_{\undermu_1}\,P_{\undermu_2}\,P_{\undermu_3}
  \in \scrP(\n)[\lambda]_{\Aut(\n,g)}
\]
In Section~\ref{se:verification_for_n_=_9} we show by explicit
calculations that each \(\overmu_i\), \(i=1,2,3\), is constant along
geodesics.

\subsection{Concluding remarks on Theorem \ref{th:main_2}}
\label{se:remarks}
The degree \(k\) of \(P_{\min}(N,g)\) is always an upper bound for
the Singer invariant, \(k_{\mathrm{Singer}}\le k\) according to
\cite[Theorem~1.15]{J}. Therefore, we obtain in particular the
upper bounds \(k_{\mathrm{Singer}}\le 3n+1\) (for \(n\) even) and
\(k_{\mathrm{Singer}}\le 3n\) (for \(n\) odd), valid for every
H-type group modeled on an orthogonal Clifford module \(\v\)
over the Clifford algebra \(\Cl(\z)\) of a fixed Euclidean
vector space \(\z\) with \(\dim \z = n\). Hence, these upper
bounds are independent of \(\dim \v\); e.g., they do not take
into account the number of irreducible copies in which \(\v\)
may decompose as a \(\Cl(\z)\)-module.

In the exceptional cases discussed above, we obtain
even sharper bounds for the Singer invariant. For instance,
suppose that \(\dim \z = 3\) and that \(\v\) is an
isotypic \(\Cl(\z)\)-module, say
\(\v \cong \bbH^{r_1}\). Then
\[
  k_{\mathrm{Singer}} \le 5
\]
independently of the multiplicity \(r_1\).

Likewise, for irreducible Riemannian spaces \((N,g)\), the degree
of the minimal polynomial \(P_{\min}(N,g)\) can be arbitrarily
large.

Finally, following the notation from \cite{JW}, we call
\(P_{\min}(N,g)\) \emph{linear} if all \(a_i\) are multiples of
products of the metric tensor: there exist \(c_i\in\bbR_{>0}\) such that
\[
a_{2i}(X)=c_i\norm{X}^{2i}\quad\text{and}\quad a_{2i-1}=0
\quad \text{for } i=1,\ldots,\frac{k-1}{2}
\]
Thus, inspection of the coefficients \(a_i\) of the minimal
polynomials \(P_{\min}(N,g)\) considered above reveals that H-type
groups do not provide additional examples of linear minimal
polynomials beyond the one associated with Heisenberg groups
for \(n=1\), which was already known \cite{JW}. This supports
our presumption that such a condition occurs only in very few
cases.
\section{Geometric properties of H-type groups}
\label{se:geometric_properties}

Following Chapter~3 of~\cite{BTV}, we first describe the Levi-Civita
connection and the curvature tensor of an H-type group \((N,g)\). More
generally, let \(G\) be an arbitrary Lie group equipped with a
left-invariant metric \(g\). Then the Levi-Civita connection \(\nabla\)
is given by (cf.~\cite[p.~16]{Esch})
\begin{equation}\label{LC_left_invariant}
  \nabla_X Y = \tfrac{1}{2}\bigl([X, Y] - B(X, Y)\bigr)
\end{equation}
for all left-invariant vector fields \(X\) and \(Y\), where
\begin{equation}\label{eq:def_B}
  B(X, Y) \coloneqq \ad(X)^* Y + \ad(Y)^* X
\end{equation}
Here \(\ad\colon \g\to\End{\g}\) denotes the adjoint representation of
the Lie algebra \(\g\), and \(A^*\) denotes the \(g\)-adjoint of
\(A\in\End{\g}\). In particular, \(\nabla_X Y\) is left-invariant. Note
that \(B(X,Y)=B(Y,X)\).

For an H-type group \((N,g)\) with orthogonal Lie algebra \((\n,g|_e)\),
the bracket satisfies \([X,Y] = X_\v \spinprod Y_\v\), where \(\spinprod\)
denotes the alternating spinor product introduced in
\eqref{eq:def_spinor_product}. Thus
\begin{equation}\label{eq:ad(X)*}
  \ad(X)^* Y = Y_\z \cbullet X_\v
\end{equation}
where \(\cbullet\) is the Clifford multiplication. Hence we obtain
\(\nabla_X Y = A(X)Y\) with
\begin{equation}\label{eq:LC}
  A(X)Y \coloneqq \tfrac{1}{2}\bigl(
    X_\v \spinprod Y_\v - X_\z \cbullet Y_\v - Y_\z \cbullet X_\v
  \bigr)
\end{equation}
for all \(X, Y \in \n\) (cf.~\cite[Ch.~3.1.6]{BTV}). Equivalently,
\[
  (\nabla_X Y)_\z = \tfrac{1}{2}(X_\v \spinprod Y_\v),\qquad
  (\nabla_X Y)_\v = -\tfrac{1}{2}\bigl(
    X_\z \cbullet Y_\v + Y_\z \cbullet X_\v
  \bigr)
\]

For the Ricci tensor one computes (cf.~\cite[Ch.~3.1.7]{BTV})
\begin{equation}\label{eq:Ricci}
  \Ric(X, Y) = \tfrac{\dim \v}{4}\, g(X_\z, Y_\z)
  - \tfrac{\dim \z}{2}\, g(X_\v, Y_\v)
\end{equation}
In particular, the Ricci tensor does not vanish.

\subsection{\texorpdfstring{Geodesics of \(N\)}{Geodesics of \(N\)}}
\label{se:geodesics}

Following~\cite[Ch.~3.1.9]{BTV}, we consider the natural left-invariant,
orthogonal framing
\begin{equation}\label{eq:framing}
  N \times \n \longrightarrow TN,\quad (g,X) \longmapsto \rmL_{g*}X
\end{equation}
In this way, the geodesics of \(N\) admit the following description.

\begin{lemma}\label{le:geodesics}
Let \(\gamma \colon \bbR \to N\) be the geodesic with \(\gamma(0) = e_N\)
and \(\dot{\gamma}(0) = X\). In the natural left-invariant
framing~\eqref{eq:framing}, we have \(\dot{\gamma}(t) \cong
(\gamma(t),X(t))\), where
\begin{equation}\label{eq:geodesic}
  X(t)
  = X_\z \oplus \cos\bigl(t \norm{X_\z}\bigr) X_\v
  + \frac{\sin\bigl(t \norm{X_\z}\bigr)}{\norm{X_\z}}\,
    X_\z \cbullet X_\v
\end{equation}
\end{lemma}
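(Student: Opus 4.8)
The plan is to solve the geodesic equation $\nabla_{\dot\gamma}\dot\gamma = 0$ directly in the left-invariant framing. Writing $\dot\gamma(t) \cong (\gamma(t), X(t))$ as in \eqref{eq:framing}, the geodesic equation becomes the ODE $\dot X(t) = A(X(t))X(t)$ on $\n$, where $A$ is the operator from \eqref{eq:LC}; here I use that $\nabla_X X = A(X)X$ for left-invariant fields and that, since $\dot\gamma$ is not itself left-invariant, the covariant derivative along $\gamma$ picks up exactly the extra $\dot X(t)$ term from the framing. Since $A(X)X = \tfrac12(X_\v \spinprod X_\v - 2\,X_\z \cbullet X_\v) = -\,X_\z \cbullet X_\v$ (the spinor product is alternating, so $X_\v \spinprod X_\v = 0$), the ODE simplifies dramatically to
\[
  \dot X(t) = -\,X_\z(t)\cbullet X_\v(t).
\]

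Next I would split this into its $\z$- and $\v$-components. The right-hand side $-X_\z\cbullet X_\v$ lies entirely in $\v$, so $\dot X_\z(t) = 0$, i.e.\ $X_\z(t) \equiv X_\z$ is constant, and $\dot X_\v(t) = -\,X_\z \cbullet X_\v(t)$. This is now a \emph{linear} constant-coefficient ODE in $\v$: $\dot X_\v = -(X_\z\cbullet)X_\v$. By \eqref{eq:Clifford_module_2}, the operator $X_\z\cbullet$ satisfies $(X_\z\cbullet)^2 = -\norm{X_\z}^2\,\Id$, so $\tfrac{1}{\norm{X_\z}}X_\z\cbullet$ is an orthogonal complex structure on $\v$ and $\exp\bigl(-t\,X_\z\cbullet\bigr) = \cos(t\norm{X_\z})\,\Id - \tfrac{\sin(t\norm{X_\z})}{\norm{X_\z}}\,X_\z\cbullet$. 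Applying this to the initial value $X_\v(0) = X_\v$ gives exactly \eqref{eq:geodesic}. The case $X_\z = 0$ is handled by continuity (or directly: then $X_\v$ is constant and $\gamma$ is a straight line in the abelian subgroup $\v$).

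The main obstacle — really the only nonroutine point — is justifying that the geodesic equation in the left-invariant framing takes the form $\dot X(t) = A(X(t))X(t)$ with the $+\dot X$ sign and no additional terms. This requires care with the difference between the left-invariant extension of $X(t)$ (whose covariant derivative at $\gamma(t)$ is $A(X(t))X(t)$ by \eqref{eq:LC}) and the actual velocity field $\dot\gamma$; the discrepancy is precisely the ordinary $t$-derivative of the coefficient $X(t)$ in the framing. One can cite \cite[Ch.~3.1.9]{BTV} for this reduction, or verify it in two lines using that parallel transport along $\gamma$ in the left-invariant framing is governed by the connection $1$-form evaluated on $\dot\gamma$. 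Once that is in place, everything else is the elementary integration of a linear ODE built from a complex structure, and one should also remark that the resulting $X(t)$ has constant norm $\norm{X(t)}^2 = \norm{X_\z}^2 + \norm{X_\v}^2$, confirming that $\gamma$ is indeed parametrized by arclength.
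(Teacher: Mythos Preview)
Your approach --- solving the geodesic ODE in the left-invariant framing rather than merely verifying the formula, as the paper does --- is perfectly sound and arguably more illuminating. However, you have a sign error at precisely the point you flagged as the ``main obstacle.'' In the left-invariant framing, for a vector field $Y(t)$ along $\gamma$ one has
\[
  \tfrac{\nabla}{\d t}Y = \dot Y(t) + A(X(t))Y(t),
\]
so the geodesic equation reads $\dot X(t) + A(X(t))X(t) = 0$, i.e.\ $\dot X(t) = -A(X(t))X(t)$, not $\dot X(t) = A(X(t))X(t)$. Since $A(X)X = -\,X_\z\cbullet X_\v$, the correct ODE is $\dot X_\v(t) = +\,X_\z\cbullet X_\v(t)$, whose solution is $\exp\bigl(+t\,X_\z\cbullet\bigr)X_\v = \cos(t\norm{X_\z})X_\v + \tfrac{\sin(t\norm{X_\z})}{\norm{X_\z}}X_\z\cbullet X_\v$. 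Your $\exp(-t\,X_\z\cbullet)$ produces the wrong sign on the sine term and does \emph{not} give \eqref{eq:geodesic}, contrary to what you claim.

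With this sign corrected, the rest of your argument goes through cleanly. The paper, by contrast, simply writes down \eqref{eq:geodesic} and checks that $\tfrac{\d}{\d t}X(t) + A(X(t))X(t) = 0$ by direct computation --- a verification rather than a derivation. Your route has the advantage of explaining where the formula comes from (the operator $X_\z\cbullet$ is a scaled complex structure on $\v$, so the flow is a rotation), at the cost of needing to get the framing sign right.
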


\begin{proof}
We have to show that \(\tfrac{\nabla}{\d t}\dot{\gamma}(t) = 0\).
On the one hand,
\[
  \frac{\d}{\dt} X(t)
  = -\norm{X_\z}\sin\bigl(t\norm{X_\z}\bigr)X_\v
    + \cos\bigl(t\norm{X_\z}\bigr)\,X_\z \cbullet X_\v
\]
On the other hand, the algebraic description~\eqref{eq:LC} of the
Levi-Civita connection on left-invariant vector fields yields
\[
  A(X(t))X(t)
  = \tfrac12 \underbrace{X_\v(t) \spinprod X_\v(t)}_{=\,0}
    - X_\z(t) \cbullet X_\v(t)
\]
where
\[
  \begin{aligned}
    X_\z(t) \cbullet X_\v(t)
    &= X_\z \cbullet \cos\bigl(t\norm{X_\z}\bigr)X_\v
      + \frac{\sin\bigl(t\norm{X_\z}\bigr)}{\norm{X_\z}}\,
        X_\z \cbullet X_\z \cbullet X_\v \\
    &\stackrel{\eqref{eq:Clifford_module_2}}{=}
      \cos\bigl(t\norm{X_\z}\bigr)\,X_\z \cbullet X_\v
      - \norm{X_\z}\sin\bigl(t\norm{X_\z}\bigr)\,X_\v \\
    &= \frac{\d}{\dt} X(t)
  \end{aligned}
\]
It follows that
\[
  \tfrac{\nabla}{\d t}\dot{\gamma}(t)
  = \frac{\d}{\dt}X(t) + A(X(t))X(t) = 0
\]
\end{proof}

For an explicit description of these geodesics, see~\cite[Ch.~3]{Ka1}.

Under left translation, the vector spaces \(\z\) and \(\v\) extend to
left-invariant, pointwise orthogonal distributions on \(N\), also
denoted by \(\z\) and \(\v\). Every vector field \(X\) has a unique
pointwise orthogonal decomposition \(X = X_\z \oplus X_\v\), where
\(X_\z\) is a section of \(\z\) and \(X_\v\) is a section of \(\v\).
Similarly, the scalar products \(g_\z\) and \(g_\v\) on \(\z\) and \(\v\)
induce left-invariant symmetric tensor fields \(g_\z\) and \(g_\v\) on
\(N\), with \(g_\z(X,X) = g(X_\z,X_\z)\) and \(g_\v(X,X) = g(X_\v,X_\v)\).

\begin{corollary}\label{co:Killing_1}
Both \(g_\z\) and \(g_\v\) are Killing tensors.
\end{corollary}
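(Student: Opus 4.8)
The plan is to combine the explicit description of geodesics furnished by Lemma~\ref{le:geodesics} with the two-term Clifford identities \eqref{eq:Clifford_module_1} and \eqref{eq:Clifford_module_2}. Recall that a section of \(\scrP^2(N)\) is a Killing tensor exactly when \(t\mapsto a(\dot\gamma(t))\) is constant along every geodesic \(\gamma\colon\bbR\to N\). Since \(g_\z\) and \(g_\v\) are left-invariant, it suffices to test this on the geodesic \(\gamma\) with \(\gamma(0)=e_N\) and \(\dot\gamma(0)=X\), for which \(\dot\gamma(t)\) corresponds under the framing~\eqref{eq:framing} to the curve \(X(t)\) of \eqref{eq:geodesic}. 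So the whole proof reduces to showing that \(\norm{X(t)_\z}^2\) and \(\norm{X(t)_\v}^2\) are independent of \(t\).

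First I would dispatch the \(\z\)-part: formula~\eqref{eq:geodesic} shows directly that \(X(t)_\z=X_\z\) is constant in \(t\), so \(g_\z(\dot\gamma(t),\dot\gamma(t))=\norm{X_\z}^2\) for all \(t\), and \(g_\z\) is Killing. Next comes the \(\v\)-part, where a short computation is needed. Writing \(X(t)_\v=\cos(t\norm{X_\z})\,X_\v+\norm{X_\z}^{-1}\sin(t\norm{X_\z})\,X_\z\cbullet X_\v\) and expanding \(\norm{X(t)_\v}^2\) into three terms, the cross term vanishes because \(\langle X_\z\cbullet X_\v,\,X_\v\rangle=0\) by the skew-symmetry~\eqref{eq:Clifford_module_1}, and \(\norm{X_\z\cbullet X_\v}^2=-\langle X_\v,\,X_\z\cbullet X_\z\cbullet X_\v\rangle=\norm{X_\z}^2\norm{X_\v}^2\) by \eqref{eq:Clifford_module_1} together with \eqref{eq:Clifford_module_2}. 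Substituting and using \(\cos^2+\sin^2=1\) collapses the expression to \(\norm{X_\v}^2\), independent of \(t\); hence \(g_\v\) is Killing as well.

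I do not anticipate a genuine obstacle; the only point requiring a little care is the degenerate case \(X_\z=0\), where \(\sin(t\norm{X_\z})/\norm{X_\z}\) must be read as its limit \(t\) and \(X_\z\cbullet X_\v=0\), so that \(X(t)\) is the constant \(X_\v\) and both assertions hold trivially. Everything else is an immediate consequence of the orthogonality of the decomposition \(\n=\z\oplus\v\) and the quadratic Clifford relation.
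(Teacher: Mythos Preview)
Your proof is correct and follows essentially the same approach as the paper: both compute \(\norm{X(t)_\v}^2\) from the explicit geodesic formula of Lemma~\ref{le:geodesics}, use the Clifford identities to get \(\norm{X_\z\cbullet X_\v}^2=\norm{X_\z}^2\norm{X_\v}^2\), and collapse via \(\cos^2+\sin^2=1\). You are in fact slightly more careful than the paper, making explicit the vanishing of the cross term, the reduction to geodesics through \(e_N\) by left-invariance, and the degenerate case \(X_\z=0\).
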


\begin{proof}
Along a geodesic as in Lemma~\ref{le:geodesics}, we have
\[
  \begin{aligned}
    \norm{X_\v(t)}^2
    &= \cos^2\bigl(t\norm{X_\z}\bigr)\norm{X_\v}^2
      + \frac{\sin^2\bigl(t\norm{X_\z}\bigr)}{\norm{X_\z}^2}\,
        \underbrace{\norm{X_\z \cbullet X_\v}^2}
          _{=\norm{X_\z}^2\norm{X_\v}^2} \\
    &= \bigl(\cos^2\bigl(t\norm{X_\z}\bigr)
      + \sin^2\bigl(t\norm{X_\z}\bigr)\bigr)\norm{X_\v}^2 \\
    &= \norm{X_\v}^2
  \end{aligned}
\]
which is constant. The same is true for \(\norm{X_\z(t)}^2\), since
\(X_\z(t) \equiv X_\z\).
\end{proof}

\subsection{The symmetrized curvature tensor}
\label{se:K(X)}

Recall that the Lie algebra \(\g\) of a Lie group \(G\) is, by
definition, the vector space of left-invariant vector fields on \(G\),
with Lie bracket defined via the Lie derivative. The following formula
for the symmetrized curvature tensor is well known
(cf.~\cite[p.~17]{Esch}, \cite[p.~54]{CE}), but note that both references
contain a small error.

\begin{lemma}\label{le:sym_curv_general}
Let \(G\) be a Lie group with a left-invariant metric and Lie algebra
\(\g\). Then, for all \(X,Y\in\g\),
\begin{equation}\label{eq:sym_curv}
  \begin{aligned}
    \langle \scrR(X)Y, Y \rangle
    &= -\tfrac{3}{4}\norm{[X,Y]}^2
       - \langle \ad(X)^*X, \ad(Y)^*Y \rangle \\
    &\quad + \tfrac{1}{4}\norm{\ad(X)^*Y + \ad(Y)^*X}^2 \\
    &\quad + \tfrac{1}{2}\bigl(
      \langle [[X,Y],X], Y \rangle + \langle [[Y,X],Y], X \rangle
    \bigr)
  \end{aligned}
\end{equation}
\end{lemma}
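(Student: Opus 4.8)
The claimed identity \eqref{eq:sym_curv} is purely algebraic, so the plan is to unravel the left-hand side using only the formula \eqref{LC_left_invariant} for the Levi--Civita connection on left-invariant vector fields together with the symmetries of the curvature tensor. First I would rewrite the left-hand side: by \eqref{eq:kth_order_symmetrized_curvature_tensor_1} with \(k=0\) one has \(\langle \scrR(X)Y, Y\rangle = \langle \rmR(Y,X)X, Y\rangle\), and the pair symmetry \(\langle \rmR(A,B)C,D\rangle = \langle \rmR(C,D)A,B\rangle\) combined with antisymmetry in the first two arguments turns this into \(\langle \rmR(X,Y)Y, X\rangle\), the usual numerator of the sectional curvature.

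Next I would abbreviate, following \eqref{LC_left_invariant}--\eqref{eq:def_B}, \(\nabla_X Y = \tfrac12[X,Y] - \tfrac12 B(X,Y)\) with \(B(X,Y) = \ad(X)^*Y + \ad(Y)^*X\); in particular \(\nabla_X X = -\ad(X)^*X\) because \(B(X,X) = 2\,\ad(X)^*X\). Since inner products of left-invariant vector fields are constant and \(\nabla\) preserves left-invariance, differentiating \(\langle \nabla_Y Y, X\rangle\) along the flow of \(X\) and \(\langle \nabla_X Y, X\rangle\) along the flow of \(Y\) yields \(\langle \nabla_X\nabla_Y Y, X\rangle = -\langle \nabla_Y Y, \nabla_X X\rangle\) and \(\langle \nabla_Y\nabla_X Y, X\rangle = -\langle \nabla_X Y, \nabla_Y X\rangle\), hence
\[
  \langle \rmR(X,Y)Y, X\rangle
  = -\langle \nabla_Y Y, \nabla_X X\rangle
    + \langle \nabla_X Y, \nabla_Y X\rangle
    - \langle \nabla_{[X,Y]}Y, X\rangle.
\]

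Then I would substitute term by term. The first term is \(-\langle \ad(X)^*X, \ad(Y)^*Y\rangle\) directly. For the second, writing \(\nabla_X Y = -\tfrac12 B(X,Y) + \tfrac12[X,Y]\) and \(\nabla_Y X = -\tfrac12 B(X,Y) - \tfrac12[X,Y]\) (same symmetric part, opposite antisymmetric parts) makes the cross terms cancel, leaving \(\tfrac14\norm{\ad(X)^*Y + \ad(Y)^*X}^2 - \tfrac14\norm{[X,Y]}^2\). For the third, expanding \(\nabla_{[X,Y]}Y = \tfrac12[[X,Y],Y] - \tfrac12\bigl(\ad([X,Y])^*Y + \ad(Y)^*[X,Y]\bigr)\), pairing with \(X\), and moving each adjoint back onto a bracket via \(\langle \ad(Z)^*U, V\rangle = \langle U, [Z,V]\rangle\) gives \(-\langle \nabla_{[X,Y]}Y, X\rangle = -\tfrac12\langle [[X,Y],Y], X\rangle + \tfrac12\langle [[X,Y],X], Y\rangle - \tfrac12\norm{[X,Y]}^2\). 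Collecting everything, the \(\norm{[X,Y]}^2\) coefficients add up to \(-\tfrac34\), and the leftover double-bracket terms are \(\tfrac12\langle [[X,Y],X], Y\rangle - \tfrac12\langle [[X,Y],Y], X\rangle\); rewriting \(-\langle [[X,Y],Y], X\rangle = \langle [[Y,X],Y], X\rangle\) by antisymmetry of the bracket brings this into the symmetrized form \(\tfrac12\bigl(\langle [[X,Y],X], Y\rangle + \langle [[Y,X],Y], X\rangle\bigr)\) of \eqref{eq:sym_curv}.

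The computation involves no conceptual difficulty; the only place to be careful is the sign bookkeeping, especially in the \(\nabla_{[X,Y]}Y\) term and in the repeated passage between \(\ad(\,\cdot\,)^*\) and Lie brackets under the inner product — this is presumably precisely where the ``small error'' in \cite{Esch,CE} enters. A convenient sanity check on the final formula is that for a bi-invariant metric, where \(\ad(X)^* = -\ad(X)\), it collapses to \(\langle\scrR(X)Y,Y\rangle = \tfrac14\norm{[X,Y]}^2\).
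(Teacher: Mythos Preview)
Your proof is correct and follows essentially the same approach as the paper: both exploit the constancy of inner products of left-invariant fields to rewrite the second-derivative terms as first-order products, then substitute the Koszul-type formula \eqref{LC_left_invariant} and sort signs. The only cosmetic difference is that the paper expands \(\langle \rmR(Y,X)X,Y\rangle\) directly (so its third term is \(+\langle \nabla_{[X,Y]}X,Y\rangle\)) whereas you first pass to \(\langle \rmR(X,Y)Y,X\rangle\) and work with \(-\langle \nabla_{[X,Y]}Y,X\rangle\); the two computations are related by swapping \(X\leftrightarrow Y\) in that single term and yield identical expressions.
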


\begin{proof}
Let \(X,Y,Z,W\in\g\). Since the metric and the Levi--Civita connection
\(\nabla\) are left-invariant (cf.~\eqref{LC_left_invariant}), we have
\[
  0 = X \cdot \langle \nabla_Y Z, W \rangle
    = \langle \nabla_X \nabla_Y Z, W \rangle
      + \langle \nabla_Y Z, \nabla_X W \rangle
\]
Thus
\[
  \langle \nabla_X \nabla_Y Z, W \rangle
  = - \langle \nabla_Y Z, \nabla_X W \rangle
\]
In particular,
\begin{equation}\label{eq:RXY_identity}
  \langle \rmR(Y,X)X, Y \rangle
  = - \langle \nabla_X X, \nabla_Y Y \rangle
    + \langle \nabla_X Y, \nabla_Y X \rangle
    + \langle \nabla_{[X,Y]} X, Y \rangle
\end{equation}

Furthermore, using~\eqref{LC_left_invariant},
\[
  \begin{aligned}
    \langle \nabla_X X, \nabla_Y Y \rangle
    &= \tfrac{1}{4}\langle B(Y,Y), B(X,X) \rangle \\
    &\stackrel{\eqref{eq:def_B}}{=}
      \tfrac{1}{4}\langle 2\ad(Y)^*Y,\,2\ad(X)^*X \rangle \\
    &= \langle \ad(Y)^*Y, \ad(X)^*X \rangle
  \end{aligned}
\]
\[
  \begin{aligned}
    \langle \nabla_X Y, \nabla_Y X \rangle
    &= \Bigl\langle \tfrac12\bigl([X,Y] - B(X,Y)\bigr),\,
      \tfrac12\bigl([Y,X] - B(Y,X)\bigr) \Bigr\rangle \\
    &\stackrel{\eqref{eq:def_B}}{=}
      -\tfrac{1}{4}\norm{[X,Y]}^2
      + \tfrac{1}{4}\norm{\ad(Y)^*X + \ad(X)^*Y}^2
  \end{aligned}
\]
\[
  \begin{aligned}
    \langle \nabla_{[X,Y]} X, Y \rangle
    &= \tfrac12\Bigl(
      \langle [[X,Y],X], Y \rangle
      - \langle B([X,Y],X), Y \rangle
    \Bigr) \\
    &\stackrel{\eqref{eq:def_B}}{=}
      \tfrac12\Bigl(
        \langle [[X,Y],X], Y \rangle
        - \langle X, [[X,Y],Y] \rangle
        - \norm{[X,Y]}^2
      \Bigr) \\
    &= \tfrac12\Bigl(
         \langle [[X,Y],X], Y \rangle
        + \langle [[Y,X],Y], X \rangle
        - \norm{[X,Y]}^2
      \Bigr)
  \end{aligned}
\]
Substituting these identities into~\eqref{eq:RXY_identity}
yields~\eqref{eq:sym_curv}.
\end{proof}

\begin{corollary}\label{co:sym_curv_H_type}
The symmetrized curvature tensor of an H-type group \((N,g)\) is given,
for all \(X,Y\in\n\), by
\begin{equation}\label{eq:sym_curv_2}
  \begin{aligned}
    \scrR(X)Y
    &= -\tfrac{3}{4}(X_\v \spinprod Y_\v) \cbullet X_\v
       + \tfrac{1}{4}\norm{X_\z}^2 Y_\v
       + \tfrac{3}{4}Y_\z \cbullet X_\z \cbullet X_\v \\
    &\quad + \tfrac{1}{2}\langle X_\z, Y_\z\rangle X_\v
       + \tfrac{1}{4}\norm{X_\v}^2 Y_\z
       - \tfrac{3}{4}Y_\v \spinprod (X_\z \cbullet X_\v)
       + \tfrac{1}{2}\langle X_\v, Y_\v\rangle X_\z
  \end{aligned}
\end{equation}
\end{corollary}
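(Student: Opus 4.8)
The plan is to specialize Lemma~\ref{le:sym_curv_general} to the Lie algebra $\g = \n = \z \oplus \v$ of an H-type group. First I would record the two structural facts I need: the bracket is central-valued, $[X,Y] = X_\v \spinprod Y_\v \in \z$, and by \eqref{eq:ad(X)*} the $g$-adjoint of the adjoint representation is $\ad(X)^* Y = Y_\z \cbullet X_\v$. The first fact immediately kills the two iterated-bracket terms in \eqref{eq:sym_curv}: since $[X,Y] \in \z$ has vanishing $\v$-component, $[[X,Y],X] = \bigl([X,Y]\bigr)_\v \spinprod X_\v = 0$, and likewise $[[Y,X],Y] = 0$. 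Substituting the remaining terms, \eqref{eq:sym_curv} collapses to
\[
  \langle \scrR(X)Y, Y\rangle
  = -\tfrac34\,\norm{X_\v \spinprod Y_\v}^2
  - \langle X_\z \cbullet X_\v,\; Y_\z \cbullet Y_\v\rangle
  + \tfrac14\,\norm{Y_\z \cbullet X_\v + X_\z \cbullet Y_\v}^2 .
\]

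Next I would expand this quadratic form in $Y$ using only \eqref{eq:Clifford_module_1}--\eqref{eq:Clifford_module_3} and the definition \eqref{eq:def_spinor_product} of $\spinprod$. From \eqref{eq:Clifford_module_2} one gets $\norm{Y_\z \cbullet X_\v}^2 = \norm{Y_\z}^2\norm{X_\v}^2$ and $\norm{X_\z \cbullet Y_\v}^2 = \norm{X_\z}^2\norm{Y_\v}^2$; moving Clifford factors across the inner product by \eqref{eq:Clifford_module_1} and then applying the polarized relation \eqref{eq:Clifford_module_3} turns the cross term into $\langle Y_\z \cbullet X_\v,\, X_\z \cbullet Y_\v\rangle = -\langle X_\z \cbullet X_\v,\, Y_\z \cbullet Y_\v\rangle + 2\langle X_\z, Y_\z\rangle\langle X_\v, Y_\v\rangle$; and \eqref{eq:def_spinor_product} rewrites $\norm{X_\v \spinprod Y_\v}^2 = \langle (X_\v \spinprod Y_\v) \cbullet X_\v,\, Y_\v\rangle$. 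Collecting everything yields a closed expression for $\langle \scrR(X)Y, Y\rangle$ as a sum of a $\spinprod$-term, a $\langle X_\z \cbullet X_\v, Y_\z \cbullet Y_\v\rangle$-term, a $\langle X_\z,Y_\z\rangle\langle X_\v,Y_\v\rangle$-term, and the two square terms $\norm{X_\z}^2\norm{Y_\v}^2$ and $\norm{X_\v}^2\norm{Y_\z}^2$.

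Finally I would recover the endomorphism. The Jacobi operator $\scrR(X)$ is $g$-self-adjoint, by the pair symmetry $\langle \rmR(Z,W)U,V\rangle = \langle \rmR(U,V)Z,W\rangle$ of the curvature tensor, hence it is determined by its associated quadratic form. Let $T(X)$ denote the endomorphism given by the right-hand side of \eqref{eq:sym_curv_2}. I would then check that $T(X)$ is also self-adjoint: the two $\z$-valued and the two $\v$-valued ``mixed'' terms pair off --- namely $\tfrac12\langle X_\z, Y_\z\rangle X_\v$ with $\tfrac12\langle X_\v, Y_\v\rangle X_\z$, and $\tfrac34\,Y_\z \cbullet X_\z \cbullet X_\v$ with $-\tfrac34\,Y_\v \spinprod (X_\z \cbullet X_\v)$ (using \eqref{eq:Clifford_module_1} and \eqref{eq:def_spinor_product} for the latter pairing) --- while the term $-\tfrac34(X_\v \spinprod Y_\v) \cbullet X_\v$ is symmetric in $Y$ again via \eqref{eq:Clifford_module_1} and \eqref{eq:def_spinor_product}, and the two remaining terms $\tfrac14\norm{X_\z}^2 Y_\v$ and $\tfrac14\norm{X_\v}^2 Y_\z$ are manifestly symmetric. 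A termwise computation then shows $\langle T(X)Y, Y\rangle$ equals the expression obtained in the previous step; since $\scrR(X)$ and $T(X)$ are self-adjoint with the same quadratic form, $\scrR(X) = T(X)$, which is exactly \eqref{eq:sym_curv_2}.

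The main obstacle is the bookkeeping in the middle step --- consistently splitting into $\z$- and $\v$-components and applying the Clifford and spinor-product identities in the right order --- together with the realization, in the last step, that one should verify self-adjointness of $T(X)$ and equality of quadratic forms rather than polarize the quadratic form by hand, since the ``cross'' terms of \eqref{eq:sym_curv_2} only become transparent after one recognizes which $\z$-valued and $\v$-valued pieces are $g$-adjoint to one another.
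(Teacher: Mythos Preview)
Your argument is correct, but it takes a different route from the paper. The paper first \emph{polarizes} \eqref{eq:sym_curv} in $Y$ to obtain $\langle \scrR(X)Y,Z\rangle$ for an arbitrary third vector $Z$, computes each term directly, and then reads off an explicit expression for $\scrR(X)Y$ (an intermediate formula \eqref{eq:sym_curv_1}), which is subsequently simplified via three Clifford identities to reach \eqref{eq:sym_curv_2}. Your approach instead stays at the level of the quadratic form $\langle \scrR(X)Y,Y\rangle$, independently verifies that the target operator $T(X)$ from \eqref{eq:sym_curv_2} is self-adjoint, and concludes by matching quadratic forms. The paper's route is constructive---it \emph{derives} the formula rather than confirming a given one---and never needs to argue self-adjointness separately; your route is arguably cleaner once the target is on the table, since it avoids the somewhat tedious polarization bookkeeping and the intermediate expression \eqref{eq:sym_curv_1}, at the cost of the extra self-adjointness check (which, as you note, falls out neatly from pairing the ``mixed'' $\z$- and $\v$-valued terms).
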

Here \(\cbullet\) and \(\spinprod\) denote the Clifford multiplication
and the alternating spinor product, respectively; see
Section~\ref{se:JR_GH}.

\begin{proof}
First polarize~\eqref{eq:sym_curv} with respect to \(Y\) and use that
\(\n\) is two-step nilpotent, hence \([[\n,\n],\n]=0\). Therefore,
\begin{equation}\label{eq:sym_curv_polarized}
  \begin{aligned}
    \langle \scrR(X)Y, Z \rangle
    &= -\tfrac{3}{4}\langle [X,Y], [X,Z] \rangle \\
    &\quad - \tfrac{1}{2}\langle \ad(X)^*X,\,
      \ad(Y)^*Z + \ad(Z)^*Y \rangle \\
    &\quad + \tfrac{1}{4}\langle \ad(X)^*Y + \ad(Y)^*X,\,
      \ad(X)^*Z + \ad(Z)^*X \rangle
  \end{aligned}
\end{equation}
Next, compute the terms in~\eqref{eq:sym_curv_polarized}:
\[
  \begin{aligned}
    \langle [X,Y], [X,Z] \rangle
    &= \langle X_\v \spinprod Y_\v,\, X_\v \spinprod Z_\v \rangle \\
    &\stackrel{\eqref{eq:def_spinor_product}}{=}
      \langle (X_\v \spinprod Y_\v) \cbullet X_\v,\, Z_\v \rangle \\
    &= \langle (X_\v \spinprod Y_\v) \cbullet X_\v,\, Z \rangle
  \end{aligned}
\]
Moreover, using~\eqref{eq:ad(X)*},
\[
  \begin{aligned}
    \langle \ad(X)^*X,\, \ad(Y)^*Z + \ad(Z)^*Y \rangle
    &= \langle X_\z \cbullet X_\v,\, Z_\z \cbullet Y_\v
      + Y_\z \cbullet Z_\v \rangle \\
    &= \langle Y_\v \spinprod (X_\z \cbullet X_\v),\, Z_\z \rangle
      - \langle Y_\z \cbullet X_\z \cbullet X_\v,\, Z_\v \rangle \\
    &= \langle Y_\v \spinprod (X_\z \cbullet X_\v)
      - Y_\z \cbullet X_\z \cbullet X_\v,\, Z \rangle
  \end{aligned}
\]
Similarly,
\[
  \begin{aligned}
    \langle \ad(X)^*Y,\, \ad(X)^*Z + \ad(Z)^*X \rangle
    &= \langle Y_\z \cbullet X_\v,\, Z_\z \cbullet X_\v
      + X_\z \cbullet Z_\v \rangle \\
    &= \langle X_\v \spinprod (Y_\z \cbullet X_\v),\, Z_\z \rangle
      - \langle X_\z \cbullet Y_\z \cbullet X_\v,\, Z_\v \rangle \\
    &= \langle X_\v \spinprod (Y_\z \cbullet X_\v)
      - X_\z \cbullet Y_\z \cbullet X_\v,\, Z \rangle
  \end{aligned}
\]
and
\[
  \begin{aligned}
    \langle \ad(Y)^*X,\, \ad(X)^*Z + \ad(Z)^*X \rangle
    &= \langle X_\z \cbullet Y_\v,\, Z_\z \cbullet X_\v
      + X_\z \cbullet Z_\v \rangle \\
    &= \langle X_\v \spinprod (X_\z \cbullet Y_\v),\, Z_\z \rangle
      - \langle X_\z \cbullet X_\z \cbullet Y_\v,\, Z_\v \rangle \\
    &\stackrel{\eqref{eq:Clifford_module_2}}{=}
      \langle X_\v \spinprod (X_\z \cbullet Y_\v)
      + \norm{X_\z}^2 Y_\v,\, Z \rangle
  \end{aligned}
\]
Substituting into~\eqref{eq:sym_curv_polarized} gives, for all \(Z\in\n\),
\[
  \begin{aligned}
    \langle \scrR(X)Y, Z \rangle
    &= -\tfrac{3}{4}\langle (X_\v \spinprod Y_\v) \cbullet X_\v,\, Z \rangle \\
    &\quad - \tfrac{1}{2}\langle Y_\v \spinprod (X_\z \cbullet X_\v)
      - Y_\z \cbullet X_\z \cbullet X_\v,\, Z \rangle \\
    &\quad + \tfrac{1}{4}\langle X_\v \spinprod (Y_\z \cbullet X_\v)
      - X_\z \cbullet Y_\z \cbullet X_\v,\, Z \rangle \\
    &\quad + \tfrac{1}{4}\langle X_\v \spinprod (X_\z \cbullet Y_\v)
      + \norm{X_\z}^2 Y_\v,\, Z \rangle
  \end{aligned}
\]
which yields
\begin{equation}\label{eq:sym_curv_1}
  \begin{aligned}
    \scrR(X)Y
    &= -\tfrac{3}{4}(X_\v \spinprod Y_\v) \cbullet X_\v \\
    &\quad - \tfrac{1}{2}\bigl(
      Y_\v \spinprod (X_\z \cbullet X_\v)
      - Y_\z \cbullet X_\z \cbullet X_\v
    \bigr) \\
    &\quad + \tfrac{1}{4}\bigl(
      X_\v \spinprod (Y_\z \cbullet X_\v)
      - X_\z \cbullet Y_\z \cbullet X_\v
    \bigr) \\
    &\quad + \tfrac{1}{4}\bigl(
      X_\v \spinprod (X_\z \cbullet Y_\v)
      + \norm{X_\z}^2 Y_\v
    \bigr)
  \end{aligned}
\end{equation}
by nondegeneracy of the metric.

To obtain~\eqref{eq:sym_curv_2}, note that
\(X_\v \spinprod (X_\z \cbullet Y_\v)\in\z\) by~\eqref{eq:def_spinor_product}.
For any \(Z\in\z\),
\[
  \begin{aligned}
    \langle X_\v \spinprod (X_\z \cbullet Y_\v), Z \rangle
    &= -\langle Y_\v, X_\z \cbullet Z \cbullet X_\v \rangle \\
    &\stackrel{\eqref{eq:Clifford_module_3}}{=}
      \langle Y_\v, Z \cbullet X_\z \cbullet X_\v \rangle
      + 2\,\langle X_\z, Z \rangle\,\langle X_\v, Y_\v \rangle \\
    &= -\langle Z \cbullet Y_\v, X_\z \cbullet X_\v \rangle
      + 2\,\langle X_\z, Z \rangle\,\langle X_\v, Y_\v \rangle \\
    &= -\langle Z, Y_\v \spinprod (X_\z \cbullet X_\v) \rangle
      + 2\,\langle X_\z, Z \rangle\,\langle X_\v, Y_\v \rangle
  \end{aligned}
\]
Hence
\[
  X_\v \spinprod (X_\z \cbullet Y_\v)
  = 2\,\langle X_\v, Y_\v \rangle\,X_\z
    - Y_\v \spinprod (X_\z \cbullet X_\v)
\]
Similarly,
\[
  \begin{aligned}
    \langle X_\v \spinprod (Y_\z \cbullet X_\v), Z \rangle
    &= \langle Y_\z \cbullet X_\v, Z \cbullet X_\v \rangle \\
    &= \langle Y_\z, Z \rangle\,\norm{X_\v}^2
  \end{aligned}
\]
so
\[
  X_\v \spinprod (Y_\z \cbullet X_\v) = \norm{X_\v}^2\,Y_\z
\]
Moreover,
\[
  X_\z \cbullet Y_\z \cbullet X_\v
  = -\,Y_\z \cbullet X_\z \cbullet X_\v
    - 2\,\langle X_\z, Y_\z \rangle\,X_\v
\]
by~\eqref{eq:Clifford_module_3}. Inserting these identities into
\eqref{eq:sym_curv_1} gives~\eqref{eq:sym_curv_2}.
\end{proof}

The formula for \(\scrR(X)Y\) in~\eqref{eq:sym_curv_2} coincides with
\cite[p.~29]{BTV} and will be used below. Also, \(\scrR(X)Y\) is
quadratic in \(X\). Its polarization \(\scrR(X,X')Y\) is given by
\[
  \tfrac12\bigl(\scrR(X+X')Y - \scrR(X)Y - \scrR(X')Y\bigr)
\]
Polarizing~\eqref{eq:sym_curv_2} in \(X\) yields, for all \(X,X',Y\in\n\),
\begin{equation}\label{eq:sym_curv_3}
  \begin{aligned}
    2\,\scrR(X,X')Y
    &= -\tfrac{3}{4}(X'_\v \spinprod Y_\v) \cbullet X_\v
       - \tfrac{3}{4}(X_\v \spinprod Y_\v) \cbullet X'_\v \\
    &\quad + \tfrac{1}{2}\langle X_\z, X'_\z\rangle\,Y_\v
       + \tfrac{3}{4}Y_\z \cbullet X_\z \cbullet X'_\v
       + \tfrac{3}{4}Y_\z \cbullet X'_\z \cbullet X_\v \\
    &\quad + \tfrac{1}{2}\langle X_\z, Y_\z\rangle\,X'_\v
       + \tfrac{1}{2}\langle X'_\z, Y_\z\rangle\,X_\v
       + \tfrac{1}{2}\langle X_\v, X'_\v\rangle\,Y_\z \\
    &\quad - \tfrac{3}{4}\bigl(Y_\v \spinprod (X_\z \cbullet X'_\v)\bigr)
       - \tfrac{3}{4}\bigl(Y_\v \spinprod (X'_\z \cbullet X_\v)\bigr) \\
    &\quad + \tfrac{1}{2}\langle X_\v, Y_\v\rangle\,X'_\z
       + \tfrac{1}{2}\langle X'_\v, Y_\v\rangle\,X_\z
  \end{aligned}
\end{equation}

\section{The characteristic family of linear operators}\label{se:K}

Let \(\n\) be the Lie algebra associated with a nontrivial orthogonal
Clifford module \(\v\) over the positive-dimensional Euclidean vector 
space \(\z\), with alternating spinor product 
\(\spinprod\colon \v \times \v \to \z\).
Since \(\spinprod\) is bilinear, it induces a linear map
\[
  \v \longrightarrow \Hom(\v,\z),\quad
  S \longmapsto S\spinprod
\]
that sends \(S \in \v\) to the linear map
\[
  S\spinprod \colon \v \longrightarrow \z,\quad
  \tilde S \longmapsto S\spinprod\tilde S
\]
namely, the map given by the Lie bracket of \(\n\) restricted to \(\v\):
\begin{equation}\label{eq:spinor_product_2}
  \spinprod = \ad|_\v \colon \v \longrightarrow \Hom(\v,\z),\quad
  S \longmapsto [S,\,\cdot\,]
\end{equation}

\begin{lemma}\label{le:K}
  Let \(S \in \v\) be a fixed vector.
  \begin{enumerate}
    \item We have
      \begin{equation}\label{eq:Kern_spin_product}
        \Kern(S\spinprod)^\perp = \z \cbullet S
      \end{equation}
      where \(\z \cbullet S \coloneqq \{Z \cbullet S \mid Z \in \z\}\).
      Hence, there is an orthogonal decomposition
      \[
        \v = \Kern(S\spinprod) \oplus (\z \cbullet S)
      \]
    \item The map
      \begin{equation}\label{eq:isometry}
        \z \longrightarrow \Kern(S\spinprod)^\perp,\quad
        Z \longmapsto Z \cbullet S
      \end{equation}
      is an isometry once the metric on \(\z\) is rescaled by the
      factor \(\norm{S}^2\).
  \end{enumerate}
\end{lemma}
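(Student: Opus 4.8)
The plan is to reduce everything to the two defining relations of the Clifford module, \eqref{eq:Clifford_module_1} and \eqref{eq:Clifford_module_2}, together with the adjointness \eqref{eq:def_spinor_product} of $\spinprod$ to $\cbullet$; no input beyond linear algebra in the inner product space $\v$ is needed.

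For part (a), first I would note that $\z\cbullet S$ is the image of the linear map $Z\mapsto Z\cbullet S$, hence a genuine (finite-dimensional, therefore closed) subspace of $\v$. Next, for a fixed $\tilde S\in\v$, nondegeneracy of $g_\z$ gives $S\spinprod\tilde S=0$ if and only if $\langle Z,\,S\spinprod\tilde S\rangle=0$ for all $Z\in\z$; by \eqref{eq:def_spinor_product} the latter condition reads $\langle Z\cbullet S,\,\tilde S\rangle=0$ for all $Z\in\z$, i.e. $\tilde S\perp\z\cbullet S$. Thus $\Kern(S\spinprod)=(\z\cbullet S)^\perp$, and taking orthogonal complements in $\v$ yields \eqref{eq:Kern_spin_product}. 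The orthogonal decomposition $\v=\Kern(S\spinprod)\oplus(\z\cbullet S)$ is then the standard splitting of $\v$ along the subspace $\z\cbullet S$ and its complement.

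For part (b), I would compute the relevant norm directly: by skew-symmetry \eqref{eq:Clifford_module_1} followed by \eqref{eq:Clifford_module_2},
\[
  \langle Z\cbullet S,\,Z\cbullet S\rangle=-\langle S,\,Z\cbullet Z\cbullet S\rangle=\norm{Z}^2\norm{S}^2
  \qquad(Z\in\z).
\]
Polarizing this quadratic identity in $Z$ --- equivalently, running the same two steps with \eqref{eq:Clifford_module_3} in place of \eqref{eq:Clifford_module_2} --- gives $\langle Z_1\cbullet S,\,Z_2\cbullet S\rangle=\norm{S}^2\langle Z_1,Z_2\rangle$ for all $Z_1,Z_2\in\z$. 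Hence the linear map \eqref{eq:isometry}, which by part (a) has image exactly $\Kern(S\spinprod)^\perp=\z\cbullet S$, carries $g_\z$ into the restriction of $g_\v$ up to the scalar factor $\norm{S}^2$; in particular it is injective whenever $S\neq 0$, and therefore a linear isometry from $(\z,\norm{S}^2 g_\z)$ onto $(\Kern(S\spinprod)^\perp,g_\v)$. (When $S=0$ everything degenerates trivially: $\Kern(S\spinprod)^\perp=\{0\}$ and the rescaled form vanishes.)

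I do not expect any genuine obstacle; the argument is a direct unwinding of the definitions. The only points that deserve a moment's attention are the appeal to nondegeneracy of $g_\z$ in part (a) --- this is exactly what allows one to test $S\spinprod\tilde S=0$ against all $Z\in\z$ --- and the observation that $\z\cbullet S$ is a linear subspace, so that the double orthogonal complement returns it rather than merely a closure. Both are immediate.
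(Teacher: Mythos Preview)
Your proof is correct and follows essentially the same approach as the paper: both arguments establish $\Kern(S\spinprod)=(\z\cbullet S)^\perp$ directly from the adjointness relation~\eqref{eq:def_spinor_product} and then compute $\langle Z\cbullet S,\,Z\cbullet S\rangle=\norm{Z}^2\norm{S}^2$ via~\eqref{eq:Clifford_module_1} and~\eqref{eq:Clifford_module_2}. Your version is slightly more explicit in noting that $\z\cbullet S$ is a linear subspace (so double orthogonal complement returns it), in polarizing to the bilinear identity, and in handling the degenerate case $S=0$, but these are refinements rather than a different route.
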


\begin{proof}
  For (a): From
  \begin{align*}
    \langle Z,\, S_1 \spinprod S_2 \rangle
    \stackrel{\eqref{eq:def_spinor_product}}{=}
    \langle Z \cbullet S_1, S_2 \rangle
    = -\,\langle Z \cbullet S_2, S_1 \rangle
  \end{align*}
  for all \((S_1,S_2) \in \v \times \v\) and \(Z \in \z\), it follows
  that
  \[
    \Kern(S\spinprod) = \{Z \cbullet S \mid Z \in \z\}^\perp
  \]
  and (a) follows. In particular, the map~\eqref{eq:isometry} is
  well-defined, and
  \[
    \langle Z \cbullet S,\, Z \cbullet S \rangle
    = \norm{Z}^2 \norm{S}^2
  \]
  for all \(Z \in \z\), so after rescaling the metric on \(\z\) by the
  factor \(\norm{S}^2\) it is an isometry. This proves (b);
  cf.~also \cite[p.~24]{BTV} and the discussion in \cite[p.~3]{CDKR}.
\end{proof}

By Lemma~\ref{le:K} we can make the following definition:
\begin{definition}[{\cite[Ch.~3.1.12]{BTV}}]\label{de:K}
For every \(X = X_\z \oplus X_\v \in \z \oplus \v = \n\) with
\(X_\v \neq 0\) and every \(Z \in \z\), there exist unique vectors
\(U(X,Z) \in \Kern(X_\v\spinprod)\) and \(\overK(X)Z \in \z\) such that
\begin{equation}\label{eq:de_K_5}
  \norm{X_\v}^2\, Z \cbullet X_\z \cbullet X_\v
  =
  U(X,Z) + \overK(X)Z \cbullet X_\v
\end{equation}
\end{definition}

\begin{lemma}
We have
\begin{equation}\label{eq:Clifford_module_4}
  \langle Z_1 \cbullet S, Z_2 \cbullet S \rangle
  = \norm{S}^2 \langle Z_1, Z_2 \rangle
\end{equation}
for all \(Z_1,Z_2 \in \z\) and \(S \in \v\). Hence
\begin{equation}\label{eq:Clifford_module_5}
  S\spinprod(Z \cbullet S) = \norm{S}^2 Z
\end{equation}
for all \(Z \in \z\) and \(S \in \v\).
\end{lemma}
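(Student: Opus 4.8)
The plan is to deduce \eqref{eq:Clifford_module_4} from the Clifford relations \eqref{eq:Clifford_module_1} and \eqref{eq:Clifford_module_3} by a short symmetrization, and then to read off \eqref{eq:Clifford_module_5} from the defining relation \eqref{eq:def_spinor_product} of the alternating spinor product together with nondegeneracy of \(g_\z\).

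First I would use that the left-hand side of \eqref{eq:Clifford_module_4} is symmetric in \(Z_1\) and \(Z_2\), being an inner product. Writing it as \(\tfrac12\bigl(\langle Z_1\cbullet S, Z_2\cbullet S\rangle + \langle Z_2\cbullet S, Z_1\cbullet S\rangle\bigr)\) and applying \eqref{eq:Clifford_module_1} to each summand moves the Clifford factors across the inner product:
\[
  \langle Z_1\cbullet S, Z_2\cbullet S\rangle
  = -\tfrac12\bigl\langle S,\,(Z_1\cbullet Z_2\cbullet S + Z_2\cbullet Z_1\cbullet S)\bigr\rangle .
\]
By the polarized Clifford identity \eqref{eq:Clifford_module_3} the argument of \(\langle S,\,\cdot\,\rangle\) equals \(-2\langle Z_1, Z_2\rangle S\), so the right-hand side collapses to \(\langle Z_1, Z_2\rangle\langle S, S\rangle = \norm{S}^2\langle Z_1, Z_2\rangle\), which is \eqref{eq:Clifford_module_4}.

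For \eqref{eq:Clifford_module_5} I would fix \(Z \in \z\), \(S \in \v\) and test \(S\spinprod(Z\cbullet S)\) against an arbitrary \(Z' \in \z\). By \eqref{eq:def_spinor_product} one has \(\langle Z', S\spinprod(Z\cbullet S)\rangle = \langle Z'\cbullet S, Z\cbullet S\rangle\), and \eqref{eq:Clifford_module_4} rewrites this as \(\norm{S}^2\langle Z', Z\rangle = \langle Z', \norm{S}^2 Z\rangle\). Since \(Z'\) was arbitrary and \(g_\z\) is nondegenerate, \(S\spinprod(Z\cbullet S) = \norm{S}^2 Z\).

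I do not anticipate a real obstacle; the only subtlety is that a single, unsymmetrized application of \eqref{eq:Clifford_module_1} leaves \(-\langle S, Z_1\cbullet Z_2\cbullet S\rangle\), which is not visibly \(\norm{S}^2\langle Z_1, Z_2\rangle\) until one symmetrizes so as to expose the anticommutator and invoke \eqref{eq:Clifford_module_3}. (Alternatively, \eqref{eq:Clifford_module_5} could be extracted directly from \eqref{eq:de_K_5} and Lemma~\ref{le:K}, but the symmetrization argument above is shorter and self-contained, and in fact \eqref{eq:Clifford_module_4} and \eqref{eq:Clifford_module_5} are equivalent via \eqref{eq:def_spinor_product}.)
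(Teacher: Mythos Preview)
Your proof is correct and uses the same ingredients as the paper: skew-symmetry \eqref{eq:Clifford_module_1} to move a Clifford factor across the inner product, then the anticommutator \eqref{eq:Clifford_module_3} to collapse the result, followed by \eqref{eq:def_spinor_product} and nondegeneracy for \eqref{eq:Clifford_module_5}. The only cosmetic difference is that the paper first derives the polarized identity \(\langle Z_1\cbullet S_1, Z_2\cbullet S_2\rangle + \langle Z_1\cbullet S_2, Z_2\cbullet S_1\rangle = 2\langle Z_1,Z_2\rangle\langle S_1,S_2\rangle\) and then sets \(S_1=S_2\), whereas you symmetrize directly in \(Z_1,Z_2\); both routes are equivalent.
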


\begin{proof}
Using the skew-symmetry~\eqref{eq:Clifford_module_1} of
\(Z \cbullet \colon \v \to \v\) and the Clifford relation
\eqref{eq:Clifford_module_3}, we compute
\begin{align*}
  \langle Z_1 \cbullet S_1, Z_2 \cbullet S_2 \rangle
    &\stackrel{\eqref{eq:Clifford_module_1}}{=}
      -\,\langle S_1, Z_1 \cbullet Z_2 \cbullet S_2 \rangle \\
    &\stackrel{\eqref{eq:Clifford_module_3}}{=}
      \langle S_1, Z_2 \cbullet Z_1 \cbullet S_2 \rangle
      + 2 \langle Z_1, Z_2 \rangle \langle S_1, S_2 \rangle \\
    &\stackrel{\eqref{eq:Clifford_module_1}}{=}
      -\,\langle Z_2 \cbullet S_1, Z_1 \cbullet S_2 \rangle
      + 2 \langle Z_1, Z_2 \rangle \langle S_1, S_2 \rangle
\end{align*}
for all \(Z_1,Z_2 \in \z\) and \(S_1,S_2 \in \v\). We conclude that
\[
  \langle Z_1 \cbullet S_1, Z_2 \cbullet S_2 \rangle
  + \langle Z_1 \cbullet S_2, Z_2 \cbullet S_1 \rangle
  = 2 \langle Z_1, Z_2 \rangle \langle S_1, S_2 \rangle
\]
Setting \(S_2 \coloneqq S_1\) and dividing by \(2\), we obtain
\eqref{eq:Clifford_module_4}. Equation~\eqref{eq:Clifford_module_5}
then follows from \eqref{eq:Clifford_module_4} by applying
\eqref{eq:def_spinor_product}.
\end{proof}

\begin{proposition}[{\cite[Ch.~3.1.12]{BTV}}]\label{p:overK}
  The linear endomorphism \(\overK(X) \colon \z \to \z\), defined
  by~\eqref{eq:de_K_5}, is the operator introduced in
  Section~\ref{se:JR_GH}. It satisfies the following properties:
  \begin{enumerate}
  \item The associated bilinear form is
      \(\big\langle Z_1 \cbullet X_\z \cbullet X_\v,\,
      Z_2 \cbullet X_\v \big\rangle\); cf.~\eqref{eq:def_overK}.
  \item \(\overK(X)\) admits the explicit description
      \begin{equation}\label{eq:def_K_2}
        \overK(X) Z = X_\v \spinprod \bigl(Z \cbullet X_\z \cbullet X_\v\bigr)
      \end{equation}
      Equivalently,
      \begin{equation}\label{eq:de_K_4}
        \overK(X) Z = (Z \cbullet X_\v) \spinprod (X_\z \cbullet X_\v)
      \end{equation}
      From this it follows that~\eqref{eq:def_overK} is
      skew-symmetric, i.e.,~\(\overK(X)\) is a skew-symmetric
      endomorphism of \(\z\).
  \item We have \(\overK(X) X_\z = 0\) and
      \(\langle \overK(X) Z, X_\z \rangle = 0\) for all \(Z \in \z\).
      Thus, \(\overK(X)\) restricts to a skew-symmetric endomorphism
      of \(\h_X \coloneqq X_\z^\perp\).
    \item Assume that \(X_\z \neq 0\) and \(X_\v \neq 0\).
      The rescaled operator \(\underK(X)\coloneqq
      \tfrac{1}{\norm{X_\z}\norm{X_\v}^2}\overK(X)\) is characterized by
      \eqref{eq:def_underK}. The eigenvalues \(\undermu\) of
     \(-\underK(X)^2\) satisfy \(0 \le \undermu \le 1\).

  \item The case \(\undermu = 1\) is characterized as follows: assume
      again that \(X_\z \neq 0\) and \(X_\v \neq 0\). Then \(1\) is an
      eigenvalue of \(-\underK(X)^2\), and \(Z\) is a nonzero
      eigenvector if and only if there exists \(\tilde Z \in \h_X\) such
      that
      \begin{equation}\label{eq:mu_=_1}
        Z \cbullet X_\z \cbullet X_\v
        = \tilde Z \cbullet X_\v
      \end{equation}
      In the affirmative case, \(\tilde Z = \norm{X_\z} \underK(X) Z\) holds and
      \(\bigl\{E_1 \coloneqq \tfrac{1}{\norm{Z}}Z,\,
      E_2 \coloneqq \tfrac{1}{\norm{Z}\norm{X_\z}}\tilde Z\bigr\}\) is an
      orthonormal system of \((\Eig_1)_X\).
  \end{enumerate}
\end{proposition}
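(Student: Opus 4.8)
The plan is to reduce every assertion to three ingredients: the orthogonal decomposition $\v = \Kern(X_\v\spinprod)\oplus(\z\cbullet X_\v)$ from Lemma~\ref{le:K}(a), the isometry property $\norm{Z\cbullet S}=\norm{Z}\,\norm{S}$ (immediate from~\eqref{eq:Clifford_module_2}), and the identities~\eqref{eq:Clifford_module_3}--\eqref{eq:Clifford_module_4}. Throughout one may assume $X_\v\neq 0$, since otherwise~\eqref{eq:def_overK} vanishes and (a)--(c) are trivial while (d)--(e) are vacuous. For (a), pair the defining relation~\eqref{eq:de_K_5} with $Z_2\cbullet X_\v$ for $Z_2\in\z$: the summand $U(X,Z_1)$ is orthogonal to $\z\cbullet X_\v$ by Lemma~\ref{le:K}(a), so it drops out, and dividing by $\norm{X_\v}^2$ and applying~\eqref{eq:Clifford_module_4} on the right yields $\langle \overK(X)Z_1,Z_2\rangle=\langle Z_1\cbullet X_\z\cbullet X_\v,\,Z_2\cbullet X_\v\rangle$, which is~\eqref{eq:def_overK}; this identifies the operator of Definition~\ref{de:K} with the one of Section~\ref{se:JR_GH}. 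Part (b) is then pure rewriting: moving $Z_2$ onto the other factor via~\eqref{eq:def_spinor_product} turns~\eqref{eq:def_overK} into~\eqref{eq:def_K_2}, and a further use of~\eqref{eq:Clifford_module_3} (the cross term vanishing because $\langle X_\v,X_\z\cbullet X_\v\rangle=0$ by the skew-symmetry~\eqref{eq:Clifford_module_1}) gives the equivalent form~\eqref{eq:de_K_4}. From~\eqref{eq:de_K_4} skew-symmetry follows at once: $\langle \overK(X)Z_1,Z_2\rangle+\langle \overK(X)Z_2,Z_1\rangle=\bigl\langle(Z_1\cbullet Z_2+Z_2\cbullet Z_1)\cbullet X_\v,\;X_\z\cbullet X_\v\bigr\rangle=-2\langle Z_1,Z_2\rangle\,\langle X_\v,X_\z\cbullet X_\v\rangle=0$ by~\eqref{eq:Clifford_module_3}.

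For (c), use~\eqref{eq:def_K_2}: $\overK(X)X_\z=X_\v\spinprod(X_\z\cbullet X_\z\cbullet X_\v)=-\norm{X_\z}^2\,X_\v\spinprod X_\v=0$ by~\eqref{eq:Clifford_module_2} and the alternation of $\spinprod$; then $\langle \overK(X)Z,X_\z\rangle=-\langle \overK(X)X_\z,Z\rangle=0$ by skew-symmetry, so $\overK(X)$ kills $X_\z$, maps into $\h_X$, and restricts to a skew-symmetric endomorphism of $\h_X$. For (d), formula~\eqref{eq:def_underK} results from~\eqref{eq:def_overK} by pulling the scalars $\norm{X_\z}$ and $\norm{X_\v}$ out of the Clifford products (which are linear in $X_\z$ and in $X_\v$), and nondegeneracy of the metric makes~\eqref{eq:def_underK} a characterization. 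Since $\underK(X)$ is skew-symmetric, $-\underK(X)^2=\underK(X)^*\underK(X)\ge 0$, so $\undermu\ge 0$; normalizing $\norm{X_\z}=\norm{X_\v}=1$ and combining Cauchy--Schwarz with the isometry identities gives $|\langle \underK(X)Z_1,Z_2\rangle|\le \norm{Z_1}\,\norm{Z_2}$, hence $\norm{\underK(X)}\le 1$ and every eigenvalue of $-\underK(X)^2$ is $\le 1$.

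The substance is (e); I read it as the equivalence: for $Z\neq 0$, one has $-\underK(X)^2Z=Z$ \emph{if and only if} there is $\tilde Z\in\h_X$ with $Z\cbullet X_\z\cbullet X_\v=\tilde Z\cbullet X_\v$. Assume $X_\z,X_\v\neq 0$. Taking squared norms in~\eqref{eq:de_K_5}, using the orthogonality of its two right-hand summands (Lemma~\ref{le:K}(a)) and the isometry identities $\norm{Z\cbullet X_\z\cbullet X_\v}=\norm{Z}\norm{X_\z}\norm{X_\v}$, $\norm{\overK(X)Z\cbullet X_\v}=\norm{\overK(X)Z}\,\norm{X_\v}$, and substituting $\overK(X)=\norm{X_\z}\norm{X_\v}^2\,\underK(X)$, one obtains a Pythagorean identity
\[
  \norm{Z}^2 = c(X)\,\norm{U(X,Z)}^2 + \norm{\underK(X)Z}^2 ,
  \qquad c(X)=\tfrac{1}{\norm{X_\z}^2\norm{X_\v}^6}>0 .
\]
On the other hand, since $-\underK(X)^2$ is nonnegative self-adjoint with spectrum in $[0,1]$ by (d), expanding $Z$ in an eigenbasis shows that a nonzero $Z$ lies in $(\Eig_1)_X$ precisely when $\norm{\underK(X)Z}=\norm{Z}$. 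By the Pythagorean identity this is equivalent to $U(X,Z)=0$, i.e.\ to $\norm{X_\v}^2\,Z\cbullet X_\z\cbullet X_\v=\overK(X)Z\cbullet X_\v$; dividing by $\norm{X_\v}^2$ and setting $\tilde Z\coloneqq\norm{X_\z}\,\underK(X)Z\in\h_X$ (by (c)) gives~\eqref{eq:mu_=_1}. Conversely, if $Z\cbullet X_\z\cbullet X_\v=\tilde Z\cbullet X_\v$ for some $\tilde Z$, comparison with~\eqref{eq:de_K_5} forces $U(X,Z)=(\norm{X_\v}^2\tilde Z-\overK(X)Z)\cbullet X_\v\in \Kern(X_\v\spinprod)\cap(\z\cbullet X_\v)=\{0\}$, so $U(X,Z)=0$ and $\tilde Z=\norm{X_\z}\underK(X)Z$; for $Z\neq 0$ the Pythagorean identity then places $Z$ in $(\Eig_1)_X$. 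Finally $E_1=Z/\norm{Z}$ is a unit eigenvector, $E_2=\tilde Z/(\norm{Z}\norm{X_\z})=\underK(X)Z/\norm{Z}$ has norm $\norm{\underK(X)Z}/\norm{Z}=1$ and lies in $(\Eig_1)_X$ since $\underK(X)$ commutes with $-\underK(X)^2$, and $\langle E_1,E_2\rangle$ is a multiple of $\langle Z,\underK(X)Z\rangle=0$ by skew-symmetry; hence $\{E_1,E_2\}$ is an orthonormal system in $(\Eig_1)_X$.

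I expect (a)--(d) to be routine; the main obstacle is the three-way equivalence in (e) linking "$\undermu=1$", the norm equality $\norm{\underK(X)Z}=\norm{Z}$, and the vanishing of the kernel component $U(X,Z)$. Everything hinges on the Pythagorean identity above, whose derivation demands careful bookkeeping of the norm-scaling factors built into Definition~\ref{de:K} together with the orthogonality in Lemma~\ref{le:K}(a). It is also worth noting that "$1$ is an eigenvalue" in (e) is not an unconditional claim — e.g.\ for $n=3$ with $\v$ non-isotypic the branch~\eqref{eq:under_mu_for_n_=_3} satisfies $\undermu<1$ generically — so the statement must be handled as the eigenvector characterization proved above.
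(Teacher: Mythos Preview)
Your proposal is correct and follows essentially the same approach as the paper: identify \(\overK(X)\) via pairing \eqref{eq:de_K_5} against \(Z_2\cbullet X_\v\), then for (e) take norms in \eqref{eq:de_K_5}, use the orthogonality from Lemma~\ref{le:K}(a), and obtain the Pythagorean identity that makes \(U(X,Z)=0\) equivalent to \(\norm{\underK(X)Z}=\norm{Z}\). Two cosmetic differences: your skew-symmetry argument via \((Z_1\cbullet Z_2+Z_2\cbullet Z_1)\cbullet X_\v=-2\langle Z_1,Z_2\rangle X_\v\) is a bit quicker than the paper's (which first restricts to \(Z_1,Z_2\in X_\z^\perp\) and manipulates a longer chain), and for the bound \(\undermu\le1\) in (d) you appeal directly to Cauchy--Schwarz whereas the paper already deploys the Pythagorean identity there---but the substance is identical.
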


\begin{proof}
For items (a)–(c), let \(Z_1, Z_2 \in \z\). We compute:
\begin{align*}
  \langle \overK(X) Z_1, Z_2 \rangle
    &\stackrel{\eqref{eq:Clifford_module_4}}{=}
      \frac{1}{\norm{X_\v}^2}
      \big\langle \overK(X) Z_1 \cbullet X_\v,\,
      Z_2 \cbullet X_\v \big\rangle \\
    &\stackrel{\eqref{eq:def_spinor_product}}{=}
      \frac{1}{\norm{X_\v}^2}\big\langle X_\v\spinprod\bigl(\overK(X) Z_1\cbullet X_\v\bigr),\,
      Z_2 \big\rangle \\
    &\stackrel{\eqref{eq:de_K_5}}{=}
      \big\langle X_\v\spinprod\bigl(Z_1 \cbullet X_\z \cbullet X_\v\bigr),\,
      Z_2 \big\rangle -  \frac{1}{\norm{X_\v}^2}\big\langle \underbrace{X_\v\spinprod U(X,Z_1)}_{=0},\,
      Z_2 \big\rangle\\
    &\stackrel{\eqref{eq:def_spinor_product}}{=}
      \big\langle Z_1 \cbullet X_\z \cbullet X_\v,\,
      Z_2 \cbullet X_\v \big\rangle
\end{align*}
where we used that \(U(X,Z_1) \in \Kern(X_\v \spinprod)\).
This proves both descriptions~\eqref{eq:def_overK} and~\eqref{eq:def_K_2}
of \(\overK(X)\). Evaluating~\eqref{eq:def_overK} with
\(Z_1 \coloneqq X_\z\)  gives
\[
  \langle \overK(X) X_\z, Z \rangle
  = \big\langle X_\z \cbullet X_\z \cbullet X_\v,\,
    Z \cbullet X_\v \big\rangle
  = -\norm{X_\z}^2 \big\langle X_\v, Z \cbullet X_\v \big\rangle
  = 0
\]
for all \(Z \in \z\) since \(Z \cbullet\) is skew. Hence \(\overK(X) X_\z = 0\).
With \(Z_2 \coloneqq X_\z\), we likewise obtain
\(\langle \overK(X) Z, X_\z \rangle = 0\) for all \(Z \in \z\).

Thus, for the remaining arguments we may assume \(Z_1, Z_2 \in X_\z^\perp\).
Since \(Z_2 \perp X_\z\), the Clifford relation implies that
\((X_\z \cbullet)(Z_2 \cbullet)=-(Z_2 \cbullet)(X_\z \cbullet)\). Because
both \(X_\z \cbullet\) and \(Z_2 \cbullet\) are skew-symmetric,
their product \(X_\z \cbullet Z_2 \cbullet\) is skew-symmetric as well.
Then
\begin{align*}
  \langle Z_1 \cbullet X_\z \cbullet X_\v,\,
          Z_2 \cbullet X_\v \rangle
  &\stackrel{\eqref{eq:Clifford_module_3}}{=}
    -\,\langle X_\z \cbullet Z_1 \cbullet X_\v,\,
            Z_2 \cbullet X_\v \rangle \\
  &\stackrel{\eqref{eq:Clifford_module_1}}{=}
    \langle Z_2 \cbullet X_\z \cbullet Z_1 \cbullet X_\v,\,
            X_\v \rangle \\
  &= -\,\langle X_\z \cbullet Z_2 \cbullet Z_1 \cbullet X_\v,\,
             X_\v \rangle \\
  &= \langle Z_1 \cbullet X_\v,\,
           X_\z \cbullet Z_2 \cbullet X_\v \rangle \\
  &= \langle X_\z \cbullet Z_2 \cbullet X_\v,\,
           Z_1 \cbullet X_\v \rangle \\
  &= -\,\langle Z_2 \cbullet X_\z \cbullet X_\v,\,
             Z_1 \cbullet X_\v \rangle
\end{align*}

On the one hand, this yields the skew-symmetry of \(\overK(X)\)
by~\eqref{eq:def_overK}; on the other hand, we obtain
\begin{align*}
  \langle \overK(X) Z_1, Z_2 \rangle
  &\stackrel{\eqref{eq:def_overK}}{=}
    \langle Z_1 \cbullet X_\z \cbullet X_\v,\,
            Z_2 \cbullet X_\v \rangle \\
  &= -\langle Z_2 \cbullet X_\z \cbullet X_\v,\,
             Z_1  \cbullet X_\v \rangle \\
  &\stackrel{\eqref{eq:def_spinor_product}}{=}
    -\big\langle Z_2,\,
      (X_\z \cbullet X_\v)\spinprod(Z_1 \cbullet X_\v) \big\rangle \\
  &=  \big\langle Z_2,\,
      (Z_1 \cbullet X_\v)\spinprod(X_\z \cbullet X_\v) \big\rangle
\end{align*}
by alternation of \(\spinprod\) thereby establishing~\eqref{eq:de_K_4}.

For items \textnormal{(d)}–\textnormal{(e)}, the summands on the right-hand
side of \eqref{eq:de_K_5} are orthogonal because
\(\Kern(X_\v\spinprod)\perp (\z\cbullet X_\v)\) by~\eqref{eq:Kern_spin_product}.
Taking norms in \eqref{eq:de_K_5} and using \eqref{eq:Clifford_module_4}
with \(S=X_\v\), we obtain
\[
  \norm{X_\v}^4\,\norm{Z \cbullet X_\z \cbullet X_\v}^2
  = \norm{U(X,Z)}^2 + \norm{(\overK(X)Z)\cbullet X_\v}^2
  = \norm{U(X,Z)}^2 + \norm{X_\v}^2\,\norm{\overK(X)Z}^2
\]
by the Pythagorean theorem. Since \(\norm{Z \cbullet X_\z \cbullet X_\v}^2
= \norm{Z}^2\norm{X_\z}^2\norm{X_\v}^2\), dividing by \(\norm{X_\v}^2\) yields
\[
  \norm{Z}^2\norm{X_\z}^2\norm{X_\v}^4
  = \norm{\overK(X)Z}^2 + \frac{1}{\norm{X_\v}^2}\,\norm{U(X,Z)}^2
\]
In particular,
\[
  \norm{\overK(X) Z}^2 \le \norm{Z}^2 \norm{X_\z}^2 \norm{X_\v}^4
\]

Therefore, the eigenvalues \(\undermu\) of the rescaled operator
\(-\underK(X)^2\) lie in \([0,1]\). Moreover, considering the spectral
decomposition of \(Z \in \z\), we obtain the following characterization
of the \(1\)-eigenspace \(\Eig_{1}\bigl(-\underK(X)^2\bigr)\):
\begin{align*}
  Z \in \Eig_{1}\bigl(-\underK(X)^2\bigr)
  &\Longleftrightarrow
    \norm{\overK(X) Z}^2
    = \norm{Z}^2 \norm{X_\z}^2 \norm{X_\v}^4 \\
    &\Longleftrightarrow
    U(X,Z)=0 \\
    &\stackrel{\eqref{eq:de_K_5}}{\Longleftrightarrow}
    \exists \tilde Z \in \z\colon  Z \cbullet X_\z \cbullet X_\v
    = \tilde Z \cbullet X_\v \\
  &\stackrel{\eqref{eq:de_K_5}}{\Longleftrightarrow}
    \norm{X_\v}^2\, Z \cbullet X_\z \cbullet X_\v
    = \overK(X)Z\cbullet X_\v \\
  &\Longleftrightarrow
    Z \cbullet X_\z \cbullet X_\v
    = \norm{X_\z}\,\underK(X)Z \cbullet X_\v
\end{align*}
Since \(Z \mapsto Z \bullet X_\v\) is injective according to
Lemma~\ref{le:K}~\textnormal{(b)}, this yields~\eqref{eq:mu_=_1}.

The last assertion is clear since
\(\underK(X)\) defines an orthogonal complex structure on
\(\Eig_{1}\bigl(-\underK(X)^2\bigr)\).
\end{proof}

\begin{proposition}[{\cite[p.~34]{BTV}, Lemma}]
\label{p:eigenvalues_of_K_and_tilde_K}
  For each geodesic \(\gamma\) of \(N\), the eigenvalues of both
  \(\underK(\dot{\gamma}(t))^2\) and \(\overK(\dot{\gamma}(t))^2\)
  remain constant.
\end{proposition}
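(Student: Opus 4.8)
The plan is to exploit the explicit form of geodesics in Lemma~\ref{le:geodesics} and to prove the stronger fact that, in the left-invariant framing~\eqref{eq:framing}, the operator $\overK(\dot\gamma(t))$ is \emph{itself} constant along $\gamma$ — not merely its eigenvalues. Since left translations are isometries and the framing~\eqref{eq:framing} is left-invariant, it suffices to treat geodesics with $\gamma(0) = e_N$, so that $\dot\gamma(t) \cong (\gamma(t), X(t))$ with $X(t)$ as in~\eqref{eq:geodesic}, where $X \coloneqq \dot\gamma(0)$. If $X_\z = 0$ then $\dot\gamma(t)_\z \equiv 0$ and $\overK(\dot\gamma(t)) \equiv 0$ by~\eqref{eq:def_overK}, so there is nothing to prove; hence assume $X_\z \neq 0$. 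Then $X(t)_\z = X_\z$ is constant, and I would rewrite the $\v$-component of~\eqref{eq:geodesic} as $X(t)_\v = e^{t\norm{X_\z}\, J} X_\v$, where $J \coloneqq \tfrac{1}{\norm{X_\z}}\, X_\z \cbullet \colon \v \to \v$ is an orthogonal complex structure ($J^2 = -\Id$ by~\eqref{eq:Clifford_module_2}, and $J$ is skew-symmetric by~\eqref{eq:Clifford_module_1}, so $e^{\theta J}$ is an isometry of $\v$); this is immediate from $e^{\theta J} = \cos\theta\,\Id + \sin\theta\, J$ together with $J X_\v = \tfrac{1}{\norm{X_\z}}\, X_\z \cbullet X_\v$. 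Since $\overK(\dot\gamma(t)) X_\z = 0$ and $\overK(\dot\gamma(t))$ preserves $X_\z^\perp$ by Proposition~\ref{p:overK}(c), and since $\overK(\dot\gamma(t))$ is skew-symmetric, it suffices to show that the alternating form $(Z_1, Z_2) \mapsto \langle \overK(\dot\gamma(t)) Z_1, Z_2\rangle$ on $X_\z^\perp$ is independent of $t$.

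The core step is a bit of commutation bookkeeping. For $Z \perp X_\z$, the Clifford relation~\eqref{eq:Clifford_module_3} gives $Z\cbullet X_\z\cbullet = -\, X_\z\cbullet Z\cbullet$ on $\v$, so $Z\cbullet$ anticommutes with $J$, whence $Z\cbullet\, e^{\theta J} = e^{-\theta J}\, Z\cbullet$ for every $\theta \in \bbR$, whereas $X_\z\cbullet$ commutes with $e^{\theta J}$. Substituting $X(t)_\v = e^{\theta J} X_\v$ with $\theta = t\norm{X_\z}$ into~\eqref{eq:def_overK} and moving the exponential factors to the front (each crossing of a Clifford multiplication by a vector in $X_\z^\perp$ flips the sign of the exponent), I obtain, for $Z_1, Z_2 \in X_\z^\perp$,
\[
  Z_1\cbullet X_\z\cbullet X(t)_\v = e^{-\theta J}\bigl(Z_1\cbullet X_\z\cbullet X_\v\bigr)
  \qquad\text{and}\qquad
  Z_2\cbullet X(t)_\v = e^{-\theta J}\bigl(Z_2\cbullet X_\v\bigr).
\]
As $e^{-\theta J}$ is an isometry of $\v$, the inner product is unchanged, so
\[
  \bigl\langle \overK(\dot\gamma(t)) Z_1, Z_2\bigr\rangle
  = \bigl\langle Z_1\cbullet X_\z\cbullet X_\v,\; Z_2\cbullet X_\v\bigr\rangle
  = \bigl\langle \overK(X) Z_1, Z_2\bigr\rangle ,
\]
and therefore $\overK(\dot\gamma(t)) \equiv \overK(X)$; in particular the spectrum of $-\overK(\dot\gamma(t))^2$ does not depend on $t$. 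For the rescaled operator one has $\underK(\dot\gamma(t)) = \tfrac{1}{\norm{X(t)_\z}\,\norm{X(t)_\v}^2}\,\overK(\dot\gamma(t))$, and by Corollary~\ref{co:Killing_1} both $\norm{X(t)_\z}$ and $\norm{X(t)_\v}$ are constant along $\gamma$; hence $\underK(\dot\gamma(t))$, and a fortiori the eigenvalues of $\underK(\dot\gamma(t))^2$, are constant as well.

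I do not expect a genuine obstacle. The only delicate point is the systematic bookkeeping of which Clifford multiplications commute and which anticommute with $J$ — governed entirely by whether the relevant vector is parallel or orthogonal to $X_\z$ — together with disposing of the degenerate case $X_\z = 0$ at the outset. A more pedestrian alternative would be to differentiate the characteristic polynomial of $-\overK(\dot\gamma(t))^2$ in $t$ and check that the derivative vanishes; but the conjugation-by-$e^{\theta J}$ argument is cleaner and, as a bonus, gives the stronger pointwise identity $\overK(\dot\gamma(t)) \equiv \overK(\dot\gamma(0))$ for free.
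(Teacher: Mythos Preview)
Your proposal is correct and follows the same overall strategy as the paper: both prove the stronger statement that, in the left-invariant framing, the operator $\overK(\dot\gamma(t))$ is itself constant (not just its spectrum), and then deduce the claim for $\underK$ from Corollary~\ref{co:Killing_1}. The only difference is in the computation. The paper works with the spinor-product description~\eqref{eq:de_K_4}, substitutes the explicit trigonometric form of $X_\v(t)$, and checks that the mixed $\sin\cos$ terms cancel via alternation of $\spinprod$ and~\eqref{eq:Clifford_module_5}. Your route via the bilinear form~\eqref{eq:def_overK}, writing $X(t)_\v = e^{\theta J}X_\v$ and pushing $e^{\theta J}$ past the Clifford multiplications using the commutation/anticommutation with $X_\z\cbullet$ and $Z\cbullet$, is a cleaner packaging of the same cancellation: it replaces the case-by-case trigonometric bookkeeping with a single isometry argument, at the cost of having to restrict attention to $Z_1,Z_2 \in X_\z^\perp$ (which is harmless by Proposition~\ref{p:overK}(c)).
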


\begin{proof}
  Let \(\gamma\colon \bbR \to N\) be a geodesic with \(\gamma(0) = e_N\)
  and \(\dot{\gamma}(0) = X\). Then \(\dot{\gamma}(t) \cong (\gamma(t),
  X(t))\) in the left-invariant framing, where the function
  \(X\colon \bbR \to \n\) is given by~\eqref{eq:geodesic}. Then,
  \begin{align*}
  Z \cbullet X_\v(t)
  &\stackrel{\eqref{eq:geodesic}}{=} \cos(\norm{X_\z}t)\,Z \cbullet X_\v
     + \frac{1}{\norm{X_\z}}\,
       \sin(\norm{X_\z}t)\,Z \cbullet X_\z \cbullet X_\v \\
  X_\z \cbullet X_\v(t)
  &\stackrel{\eqref{eq:geodesic}}{=} \cos(\norm{X_\z}t)\,X_\z \cbullet X_\v
     - \norm{X_\z}\,\sin(\norm{X_\z}t)\,X_\v
\end{align*}
Hence,
\begin{align*}
  \overK(\dot{\gamma}(t))Z
  &\stackrel{\eqref{eq:de_K_4}}{=}
    (Z \cbullet X_\v(t))\spinprod(X_\z \cbullet X_\v(t))\\
  &=
    \cos(\norm{X_\z}t)^2(Z \cbullet X_\v)\spinprod(X_\z\cbullet X_\v)
    - \sin(\norm{X_\z}t)^2\,\bigl(Z\cbullet X_\z\cbullet X_\v\bigr)\spinprod X_\v\\
  &\quad
    + \sin(\norm{X_\z}t)\cos(\norm{X_\z}t)\Bigl(
      \underbrace{\tfrac{1}{\norm{X_\z}} (Z \cbullet X_\z\cbullet X_\v)
      \spinprod(X_\z\cbullet X_\v)
      - \norm{X_\z}\,(Z\cbullet X_\v)\spinprod X_\v}_{=0}\Bigr)\\
  &\stackrel{\eqref{eq:def_K_2},\eqref{eq:de_K_4}}{=}
    \cos(\norm{X_\z}t)^2\,\overK(X)Z
    + \sin(\norm{X_\z}t)^2\,\overK(X)Z\\
  &= \overK(X)Z
\end{align*}
where the mixed term vanishes by alternation of \(\spinprod\) and
\eqref{eq:Clifford_module_5} applied to \(S=X_\v\) and to
\(S=X_\z \cbullet X_\v\).

From this we see that \(\overK(\dot{\gamma}(t))\) is constant along
\(\gamma\) in the canonical left-invariant framing. The same is true
for
\begin{align*}
  \underK(\dot{\gamma}(t))
  &= \frac{1}{\norm{\dot{\gamma}_\z}\,\norm{\dot{\gamma}_\v}^2}
     \overK(\dot{\gamma}(t)) \\
  &\stackrel{\eqref{eq:geodesic}}{=}
    \frac{1}{\norm{X_\z}\,\norm{X_\v}^2}\overK(\dot{\gamma}(t))
\end{align*}
by Corollary~\ref{co:Killing_1}. In particular, the eigenvalues
of \(\underK(\dot{\gamma}(t))^2\) are constant functions of the parameter \(t\) as well.
\end{proof}

\subsection{Direct sums of Clifford modules}
\label{se:reducible_Clifford_module}
Throughout this subsection consider a Euclidean space \(\z\) of positive
dimension and an orthogonal \(\Cl(\z)\)-module \(\v\).
Furthermore, suppose that \(\v\) admits an orthogonal decomposition
\[
  \v = \v' \oplus \v''
\]
into two nontrivial \(\Cl(\z)\)-submodules \(\v'\) and \(\v''\).
Accordingly, let \(\overK\), \(\overK'\) and \(\overK''\)
denote the family of linear operators defined in~\eqref{eq:def_overK}
for the pairs \((\z,\v)\), \((\z,\v')\) and \((\z,\v'')\).

\begin{lemma}\label{le:reducible_Clifford_module}
Let \(X = Z_0 \oplus S' \oplus S'' \in \z \oplus \v' \oplus \v''\)
with \(Z_0 \in \z\) nonzero.
\begin{enumerate}
\item  We have
\[
  \overK(X) = \overK'(X') + \overK''(X'')
\] where
\(X' \coloneqq Z_0 \oplus S'\) and \(X'' \coloneqq Z_0 \oplus S''\).
\item If \(S' \neq 0\) and \(S'' \neq 0\), then the corresponding
  rescaled operators from~\eqref{eq:def_underK} satisfy
  \begin{equation}\label{eq:underK_vs_underK_1_and_underK_2}
    \underK(X)
    =
    \frac{\norm{S'}^2\,\underK'(X')
          + \norm{S''}^2\,\underK''(X'')}
         {\norm{S'}^2 + \norm{S''}^2}
  \end{equation}
\item If \(S' \neq 0\) and \(S'' \neq 0\), then the eigenspace of
  \(-\underK(X)^2\) for eigenvalue \(1\) is given by
  \begin{equation}\label{eq:common_nu_=_1_eigenspace}
    \Eig_1\bigl(-\underK(X)^2\bigr)
    =
    \bigl\{
      Z \in \Eig_1\bigl(-\underK'(X')^2\bigr)
        \cap \Eig_1\bigl(-\underK''(X'')^2\bigr)
      \,\bigm|\,
      \underK'(X')Z = \underK''(X'')Z
    \bigr\}
  \end{equation}
\end{enumerate}
\end{lemma}

\begin{proof}
For (a), using that \(\v'\) and \(\v''\) are orthogonal
\(\Cl(\z)\)-submodules, we compute for all \(Z_1, Z_2 \in \z\):
\begin{align*}
  \langle \overK(X) Z_1,\, Z_2 \rangle
    &\stackrel{\eqref{eq:def_overK}}{=}
      \langle Z_1 \cbullet Z_0 \cbullet X_\v,\, Z_2 \cbullet X_\v \rangle \\
    &= \langle Z_1 \cbullet Z_0 \cbullet (S' \oplus S''),\,
              Z_2 \cbullet (S' \oplus S'') \rangle \\
    &= \langle Z_1 \cbullet Z_0 \cbullet S',\, Z_2 \cbullet S' \rangle
      + \langle Z_1 \cbullet Z_0 \cbullet S'',\,
          Z_2 \cbullet S'' \rangle \\
    &= \langle \overK'(X') Z_1,\, Z_2 \rangle
      + \langle \overK''(X'') Z_1,\, Z_2 \rangle
\end{align*}
The mixed terms vanish since \(Z_i\cbullet S'\in \v'\),
\(Z_i\cbullet S''\in \v''\) for \(i \in \{1,2\}\) and \(\v'\perp\v''\).
Hence \(\overK(X) = \overK'(X') + \overK''(X'')\), which proves (a).
Comparing~\eqref{eq:def_overK} and~\eqref{eq:def_underK} yields
\eqref{eq:underK_vs_underK_1_and_underK_2}.

For (c), assume \(S' \neq 0\) and \(S'' \neq 0\), and set
\(S \coloneqq S' \oplus S''\).

\smallskip
\emph{``\(\subseteq\)'':}
Let \(Z \in \Eig_1\bigl(-\underK(X)^2\bigr)\). By
Proposition~\ref{p:overK}~\textnormal{(e)} applied to \((\z,\v)\) and \(X\),
there exists \(\tilde Z \in Z_0^\perp\) such that
\[
  Z \cbullet Z_0 \cbullet S = \norm{Z_0}\, \tilde Z \cbullet S
\]
Since \(\v'\) and \(\v''\) are \(\Cl(\z)\)-submodules and
\(\v=\v' \oplus \v''\), this is equivalent to the pair of equalities
\[
  Z \cbullet Z_0 \cbullet S' = \norm{Z_0}\, \tilde Z \cbullet S'\qquad\text{and}\qquad
  Z \cbullet Z_0 \cbullet S'' = \norm{Z_0}\, \tilde Z \cbullet S''
\]
Applying Proposition~\ref{p:overK}~\textnormal{(e)} to \((\z,\v')\) and \(X'\)
(with \(S'\) in place of \(S\)), we obtain
\(Z \in \Eig_1\bigl(-\underK'(X')^2\bigr)\) and \(\tilde Z=\underK'(X')Z\).
Likewise, applying Proposition~\ref{p:overK}~\textnormal{(e)} to \((\z,\v'')\)
and \(X''\), we obtain
\(Z \in \Eig_1\bigl(-\underK''(X'')^2\bigr)\) and \(\tilde Z=\underK''(X'')Z\).
Hence
\[
  Z \in \Eig_1\bigl(-\underK'(X')^2\bigr)
    \cap \Eig_1\bigl(-\underK''(X'')^2\bigr)
  \qquad\text{and}\qquad
  \underK'(X')Z = \underK''(X'')Z
\]
This proves ``\(\subseteq\)'' in~\eqref{eq:common_nu_=_1_eigenspace}.

\smallskip
\emph{``\(\supseteq\)'':}
Conversely, let
\[
  Z \in \Eig_1\bigl(-\underK'(X')^2\bigr)
    \cap \Eig_1\bigl(-\underK''(X'')^2\bigr)
  \quad\text{and}\quad
  \underK'(X')Z = \underK''(X'')Z
\]
Set \(\tilde Z \coloneqq \underK'(X')Z = \underK''(X'')Z\).
By Proposition~\ref{p:overK}~\textnormal{(e)} applied to \((\z,\v')\) and \(X'\),
we have
\[
  Z \cbullet Z_0 \cbullet S' = \norm{Z_0}\, \tilde Z \cbullet S'
\]
and similarly, by Proposition~\ref{p:overK}~\textnormal{(e)} applied to \((\z,\v'')\)
and \(X''\),
\[
  Z \cbullet Z_0 \cbullet S'' = \norm{Z_0}\, \tilde Z \cbullet S''
\]
Combining these yields
\[
  Z \cbullet Z_0 \cbullet (S' \oplus S'')
  = \norm{Z_0}\, \tilde Z \cbullet (S' \oplus S'')
\]
that is,
\[
  Z \cbullet Z_0 \cbullet S = \norm{Z_0}\, \tilde Z \cbullet S
\]
Therefore, Proposition~\ref{p:overK}~\textnormal{(e)} applied to \((\z,\v)\) and \(X\)
implies \(Z \in \Eig_1\bigl(-\underK(X)^2\bigr)\).
This proves ``\(\supseteq\)'' and completes the proof of~\textnormal{(c)}.
\end{proof}

\begin{lemma}\label{le:complex_structures}
Let \(V\) be a Euclidean space and let \(\{J_1,J_2\}\) be a pair of
orthogonal complex structures on \(V\). Then there exists
\(F \in \Ogroup(V)\) such that
\[
  \Kern\bigl(J_1 - F \circ J_2 \circ F^{-1}\bigr) = \{0\}
\]
\end{lemma}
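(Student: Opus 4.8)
The plan is to produce a single orthogonal complex structure \(J\) on \(V\) with \(\Kern(J_1 - J) = \{0\}\) and then conjugate \(J_2\) onto \(J\). The conjugation step is routine: since \(J_1\) exists, \(\dim_{\bbR}V = 2n\) is even, and any orthogonal complex structure \(J'\) on \((V,g)\) is \(\Ogroup(V)\)-conjugate to a fixed normal form — equip \(V\) with the Hermitian form \(h(x,y) \coloneqq g(x,y) + \i\,g(x,J'y)\), pick an \(h\)-orthonormal \(\bbC\)-basis \(e_1,\dots,e_n\), and note that \(e_1,J'e_1,\dots,e_n,J'e_n\) is a \(g\)-orthonormal \(\bbR\)-basis in which \(J'\) is block-diagonal with blocks \(\left(\begin{smallmatrix}0&-1\\1&0\end{smallmatrix}\right)\). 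Doing this for \(J_2\) and for the \(J\) built below and composing the two orthogonal change-of-basis maps yields \(F \in \Ogroup(V)\) with \(F\circ J_2\circ F^{-1} = J\), and then \(\Kern(J_1 - F\circ J_2\circ F^{-1}) = \Kern(J_1 - J) = \{0\}\).

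To build \(J\), I would use \(J_1\) to regard \(V\) as an \(n\)-dimensional complex space with the associated Hermitian metric and split it \(\bbC\)-orthogonally as \(V = W_1 \perp \dots \perp W_m \perp L\), where each \(W_k\) is a complex \(2\)-plane and \(L\) is a complex line if \(n\) is odd (and \(L = \{0\}\) if \(n\) is even). On \(W_k \cong \bbC^2\) take the quaternionic structure \(j_k(z,w) \coloneqq (-\bar w,\bar z)\), which is \(g\)-orthogonal, conjugate-linear with respect to \(J_1\), and satisfies \(j_k^2 = -\Id\) together with \(j_k J_1 = -J_1 j_k\); on the leftover line \(L\) (when present) take \(-J_1|_L\). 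Their orthogonal direct sum \(J\) is \(g\)-orthogonal with \(J^2 = -\Id\), hence an orthogonal complex structure.

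Checking \(\Kern(J_1 - J) = \{0\}\) is then blockwise: on \(L\) one has \(J_1 - J = 2J_1\), invertible; on \(W_k\), if \(J_1 x = j_k x\) then applying \(j_k\) and using \(j_k J_1 = -J_1 j_k\), \(j_k^2 = -\Id\), \(J_1^2 = -\Id\) gives \(-x = j_k(J_1x) = -J_1(j_k x) = -J_1(J_1 x) = x\), forcing \(x = 0\). So \(J_1 - J\) is injective on each summand, hence on \(V\).

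The one point requiring care is that an orthogonal complex structure \emph{anticommuting} with \(J_1\) exists only when the complex dimension of \((V,J_1)\) is even, which is exactly why the complex coordinates must be paired up; on a single residual complex line the elementary choice \(-J_1\) works instead. Everything else is standard linear algebra, so I do not expect a genuine obstacle.
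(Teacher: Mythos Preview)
Your proof is correct, but you have worked much harder than necessary. The paper simply takes \(J = -J_1\): since \(-J_1\) is itself an orthogonal complex structure on all of \(V\), conjugating \(J_2\) onto \(-J_1\) gives \(J_1 - F J_2 F^{-1} = J_1 - (-J_1) = 2J_1\), which is invertible. You already use exactly this trick on the leftover line \(L\), but then replace it on the \(W_k\) blocks by an anticommuting quaternionic structure; that detour is valid but buys nothing, and it forces you to worry about the parity of \(\dim_{\bbC}V\) and to pair up coordinates. The paper's choice \(J=-J_1\) works uniformly on all of \(V\) and makes the argument a two-liner.
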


\begin{proof}
Any two orthogonal complex structures on \(V\) are conjugate under the
action of \(\Ogroup(V)\). Since \(-J_2\) is again an orthogonal complex
structure, there exists \(F \in \Ogroup(V)\) such that
\[
  J_1 = F \circ (-J_2) \circ F^{-1}
\]
that is, \(J_1 \circ F = -\,F \circ J_2\). Hence
\[
  F \circ J_2 \circ F^{-1} = -J_1
\]
and therefore \(J_1 - F \circ J_2 \circ F^{-1} = 2J_1\). As
\(J_1^2 = -\Id_V\), the operator \(J_1\) is invertible, so
\(\Kern(J_1) = \{0\}\), and the claim follows.
\end{proof}

\begin{corollary}\label{co:reducible_Clifford_module}
If \(\nu = 1\) is a global eigenvalue of \(-\underK^2\), then both
\((\z,\v')\) and \((\z,\v'')\) satisfy the \(J^2\)-condition.
\end{corollary}

\begin{proof}
Let \(\nu = 1\) and suppose that \(\nu\) is a global eigenvalue of
\(-\underK^2\). Choose \(Z_0 \in \z\), \(S' \in \v'\), and \(S'' \in \v''\),
all nonzero, and set
\[
  X \coloneqq Z_0 \oplus S' \oplus S''
  \in \z \oplus \v' \oplus \v''
\]
Also set \(X' \coloneqq Z_0 \oplus S' \in \z \oplus \v'\) and
\(X'' \coloneqq Z_0 \oplus S'' \in \z \oplus \v''\). Let
\[
  E' \coloneqq \Eig_1\bigl(-\underK'(X')^2\bigr),
  \qquad
  E'' \coloneqq \Eig_1\bigl(-\underK''(X'')^2\bigr)
\]
Then \(J' \coloneqq \underK'(X')|_{E'}\), \(J'' \coloneqq \underK''(X'')|_{E''}\) define
orthogonal complex structures on \(E'\) and \(E''\), respectively.
Set
\[
  E \coloneqq \{Z \in E' \cap E'' \mid J'Z = J''Z\}
\]
On \(E\), the restrictions agree; set
\[
  J \coloneqq J'|_E = J''|_E
\]
Then \(J\) is an orthogonal complex structure on \(E\). By
Lemma~\ref{le:complex_structures}, there exists \(F \in \Ogroup(E)\) such
that
\begin{equation}\label{eq:Kernel_of_intersection_on_E}
  \Kern\bigl(F \circ J \circ F^{-1} - J\bigr) = \{0\}
\end{equation}

Assume, for a contradiction, that \(E'\) is a strict subspace of
\(Z_0^\perp\). Choose the orthogonal decomposition
\[
  E' \cap E'' = E \oplus  E^\perp
\]
where \(E^\perp\) denotes the orthogonal complement of \(E\) in \(E'\cap E''\),
and define \(\hat F \in \Ogroup(E' \cap E'')\) by \(\hat F|_E = F\) and
\(\hat F|_{E^\perp} = \Id_{E^\perp}\).

Extend \(\hat F\) to an element of \(\Ogroup(E')\), and then extend further to an
element \(\tilde F \in \SO(\z)\) such that
\[
  \tilde F Z_0 = Z_0,
  \qquad
  \tilde F(E') = E',
  \qquad
  \tilde F|_{E' \cap E''} = \hat F
\]
This is possible since \(E' \subsetneq Z_0^\perp\), so we can choose the
action of \(\tilde F\) on the nontrivial orthogonal complement of \(E'\) in \(Z_0^\perp\)
and, if necessary, adjust it to obtain \(\det(\tilde F)=+1\).

Because \(\tilde F \in \SO(\z)\), it admits a lift \(\tilde F^{\ext}\)
to the spin covering and thus acts on both \(\z\) (via the vector
representation as \(\tilde F\)) and on \(\v'\) (via the spin representation
of the orthogonal \(\Cl(\z)\)-module \(\v'\)). Then
\[
  \tilde F^{\ext} X' = Z_0 \oplus \tilde F^{\ext} S' \in \z \oplus \v'
\]
since \(\tilde F(Z_0) = Z_0\). By spin-equivariance of \(\underK'\) (cf.~\eqref{eq:K_Aut_equivariance}),
\[
  \underK'\bigl(\tilde F^{\ext} X'\bigr)
  = \tilde F \circ \underK'\bigl(X'\bigr) \circ \tilde F^{-1}
\]
Therefore, \(\Eig_1(-\underK'\bigl(\tilde F^{\ext} X'\bigr)^2)=\tilde F(E')=E'\),
hence \( \tilde J' := \underK'\bigl(\tilde F^{\ext} X'\bigr)|_{E'}\) defines
still an orthogonal complex structure on \(E'\).
Consider
\[
  \tilde X \coloneqq Z_0 \oplus \tilde F^{\ext} S' \oplus S''
  \in \z \oplus \v' \oplus \v''
\]
Applying Lemma~\ref{le:reducible_Clifford_module}~\textnormal{(c)} to
\(\tilde X\), we obtain
\[
  \Eig_1\bigl(-\underK(\tilde X)^2\bigr)
  \stackrel{\eqref{eq:common_nu_=_1_eigenspace}}{=}
  \{Z \in E' \cap E'' \mid \tilde J'Z = J''Z\}
\]
By construction,
\begin{equation}\label{eq:Kernel_of_intersection}
  \Kern\bigl(\tilde J' - J''\bigr) = \{0\}
\end{equation}

To see this, let \(Z \in E' \cap E''\) satisfy \(\tilde J'Z = J''Z\).
Write \(Z = Z_E + Z^\perp\) with \(Z_E \in E\) and \(Z^\perp \in E^\perp\). By construction,
\(\tilde F|_E = F\) and \(\tilde F|_{E^\perp}=\Id\), hence
\(\tilde J'|_E = FJF^{-1}\) and \(\tilde J'|_{E^\perp} = J'|_{E^\perp}\). Using
also \(J''|_E = J\), the equation \(\tilde J'Z = J''Z\) becomes
\[
  (FJF^{-1} - J)Z_E = (J'' - J')Z^\perp
\]
The left-hand side lies in \(E\). The right-hand side is orthogonal to
\(E\), because \(J''-J'\) is skew-symmetric on \(E' \cap E''\) and vanishes on \(E\),
so for every \(Y \in E\),
\[
  \langle (J''-J')Z^\perp, Y \rangle
  = -\langle Z^\perp, (J''-J')Y \rangle
  = 0
\]
Hence both sides must vanish. From \eqref{eq:Kernel_of_intersection_on_E}
we obtain \(Z_E=0\). Then \((J''-J')Z^\perp=0\), so \(Z^\perp \in E\). Since also
\(Z^\perp \in  E^\perp\), it follows that \(Z^\perp=0\). Thus \(Z=0\).

By~\eqref{eq:Kernel_of_intersection},
\[
  \Eig_1\bigl(-\underK(\tilde X)^2\bigr) = \{0\}
\]
This contradicts the assumption that \(\nu=1\) is a global eigenvalue of
\(-\underK^2\). We conclude that
\[
  \Eig_1\bigl(-\underK'(X')^2\bigr) = Z_0^\perp
\]
that is, the pair \((\z,\v')\) satisfies the \(J^2\)-condition.
Interchanging the roles of \(\v'\) and \(\v''\), the same holds for
\((\z,\v'')\).
\end{proof}

\subsection{Tensor products of Clifford modules}
Throughout this subsection fix an orthogonal decomposition
\(\z=\z'\oplus\z''\) with \(\dim(z')\ge 1\). Then there is a
canonical isomorphism of \(\bbZ_2\)-graded algebras
\[
  \Cl(\z) \cong \Cl(\z') \,\hat\otimes_\bbR\, \Cl(\z'')
\]
where \(\hat\otimes_\bbR\) denotes the graded tensor product.

Recall that an orthogonal \(\Cl(\z')\)-module \(\v'\) is called \(\bbZ_2\)-graded if
there exists an orthogonal decomposition \(\v'=\v'_+\oplus\v'_-\) such
that
\[
  Z \cbullet \v'_+ \subseteq \v'_-
  \quad\text{and}\quad
  Z \cbullet \v'_- \subseteq \v'_+
\]
for all \(Z \in \z'\); cf.\ \cite[Ch.~1, \S 5, p.~39]{LM}.

Let \(\v'\) be a nontrivial \(\bbZ_2\)-graded orthogonal \(\Cl(\z')\)-module and let \(\v''\)
be a nontrivial (not necessarily \(\bbZ_2\)-graded) orthogonal \(\Cl(\z'')\)-module. Set
\(\v \coloneqq \v'\otimes_\bbR \v''\) and equip \(\v\) with the tensor
product inner product
\begin{equation}\label{eq:tensor_inner_product}
  \langle S'_1 \otimes S''_1,\; S'_2 \otimes S''_2 \rangle
  \coloneqq
  \langle S'_1, S'_2 \rangle \,\langle S''_1, S''_2 \rangle
\end{equation}
extended bilinearly. Then \(\v\) becomes an orthogonal \(\Cl(\z)\)-module via
\begin{equation}\label{eq:tensor_product_multiplication}
  (Z' \oplus Z'') \cbullet (S' \otimes S'')
  \coloneqq
  (Z' \cbullet S') \otimes S''
  + (-1)^{\deg(S')} S' \otimes (Z'' \cbullet S'')
\end{equation}
for all \(Z' \oplus Z'' \in \z' \oplus \z''\), for homogeneous
\(S' \in \v'\), and for arbitrary \(S'' \in \v''\), where
\(\deg(S') \in \{0,1\}\) denotes the parity of \(S'\) with respect to
the \(\bbZ_2\)-grading on \(\v'\). The sign \((-1)^{\deg(S')}\) arises
because \(Z''\) is odd and, in the graded tensor product, must be
commuted past the homogeneous element \(S'\). It is straightforward to
check that \eqref{eq:Clifford_module_1} and \eqref{eq:Clifford_module_2}
hold. We write \(\v = \v'\,\hat\otimes_\bbR\,\v''\).

To obtain a version of~\eqref{eq:tensor_product_multiplication}
valid for arbitrary \(S' \in \v'\), let \(\Gamma\colon \v' \to \v'\)
be the orthogonal grading involution, defined by \(\Gamma|_{\v'_+} = \Id\) and
\(\Gamma|_{\v'_-} = -\Id\). For homogeneous \(S' \in \v'\) we have
\(\Gamma S' = (-1)^{\deg(S')} S'\). Hence the Clifford
action~\eqref{eq:tensor_product_multiplication} 
extends to arbitrary \(S' \in \v'\) as
\begin{equation}\label{eq:tensor_product_Gamma}
  (Z' \oplus Z'') \cbullet (S' \otimes S'')
  =
  (Z' \cbullet S') \otimes S''
  + (\Gamma S') \otimes (Z'' \cbullet S'')
\end{equation}
for all \(Z' \oplus Z'' \in \z' \oplus \z''\) and all
\(S' \in \v'\), \(S'' \in \v''\).

Consider the families of
skew-symmetric operators \(\underK'\) and \(\underK\) associated with
\((\z',\v')\) and \((\z,\v)\), respectively (see~\eqref{eq:def_underK}).

\begin{lemma}\label{le:simple}
In the above situation, let \(S' \in \v'\) and \(S'' \in \v''\) be
nonzero. Let \(Z_0' \in \z'\) be nonzero and set
\[
  X \coloneqq Z_0' \oplus (S' \otimes S'')
    \in \z \oplus \bigl(\v' \hat\otimes_\bbR \v''\bigr),
  \qquad
  X' \coloneqq Z_0' \oplus S'
    \in \z' \oplus \v'
\]
Then the characteristic endomorphism \(\underK(X)\colon \z \to \z\)
associated with \(\bigl(\z,\v' \hat\otimes_\bbR \v''\bigr)\) and \(X\)
coincides with \(\underK'(X')\colon \z' \to \z'\), the endomorphism
associated with \((\z',\v')\) and \(X'\); that is,
\[
  \underK(X) = \underK'(X') \oplus 0
  \quad\text{as an endomorphism of }\z' \oplus \z''
\]
\end{lemma}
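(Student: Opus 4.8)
The plan is to evaluate the defining bilinear form of $\overK(X)$ directly via the tensor-product Clifford action \eqref{eq:tensor_product_Gamma}, show that it is supported on $\z'$ and equals $\norm{S''}^2$ times the bilinear form of $\overK'(X')$, and then divide by the appropriate norms to pass from $\overK$ to $\underK$.

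First I would note that $X_\z = Z_0' \in \z'$ and $X_\v = S' \otimes S''$, so $\norm{X_\z} = \norm{Z_0'}$ and $\norm{X_\v}^2 = \norm{S'}^2 \norm{S''}^2 \neq 0$; in particular $\overK(X)$ is defined. By Proposition~\ref{p:overK}(a) it suffices to compute $\langle Z_1 \cbullet Z_0' \cbullet (S'\otimes S''),\ Z_2 \cbullet (S'\otimes S'')\rangle$ for arbitrary $Z_1 = Z_1' \oplus Z_1''$ and $Z_2 = Z_2' \oplus Z_2''$ in $\z' \oplus \z''$. Since the $\z''$-component of $Z_0'$ vanishes, \eqref{eq:tensor_product_Gamma} gives $Z_0' \cbullet (S' \otimes S'') = (Z_0' \cbullet S') \otimes S''$, and two further applications of \eqref{eq:tensor_product_Gamma} write $Z_1 \cbullet Z_0' \cbullet (S'\otimes S'')$ and $Z_2 \cbullet (S'\otimes S'')$ each as a sum of one term of the form $(\,\cdot\,)\otimes S''$ and one term of the form $(\Gamma\,\cdot\,)\otimes(\,\cdot\,\cbullet S'')$. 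Expanding the inner product produces four terms; I expect three of them—the ones carrying a $\v''$-active factor—to vanish, each because it contains a factor $\langle Z_i'' \cbullet S'',\, S''\rangle$ ($i\in\{1,2\}$) or $\langle Z_0' \cbullet S',\, S'\rangle$, all zero by the skew-symmetry \eqref{eq:Clifford_module_1} of Clifford multiplication; here one uses that $\Gamma$ is orthogonal, so it drops out of these pairings. What remains is $\langle Z_1' \cbullet Z_0' \cbullet S',\, Z_2' \cbullet S'\rangle\,\norm{S''}^2$, which by Proposition~\ref{p:overK}(a) applied to the pair $(\z',\v')$ and $X'$ equals $\norm{S''}^2 \langle \overK'(X') Z_1',\, Z_2'\rangle$.

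Thus $\langle \overK(X) Z_1,\, Z_2\rangle = \norm{S''}^2 \langle \overK'(X') Z_1',\, Z_2'\rangle$ for all $Z_1, Z_2 \in \z$. Specializing $Z_2 \in \z''$ shows $\im \overK(X) \subseteq \z'$; specializing $Z_1 \in \z''$ shows $\z'' \subseteq \Kern \overK(X)$; and for $Z_1, Z_2 \in \z'$ we read off $\overK(X)|_{\z'} = \norm{S''}^2\,\overK'(X')$. Hence $\overK(X) = \norm{S''}^2\,\overK'(X') \oplus 0$ on $\z' \oplus \z''$. Dividing by $\norm{X_\z}\norm{X_\v}^2 = \norm{Z_0'}\,\norm{S'}^2\norm{S''}^2$ cancels the factor $\norm{S''}^2$ and gives $\underK(X) = \tfrac{1}{\norm{Z_0'}\norm{S'}^2}\,\overK'(X') \oplus 0 = \underK'(X') \oplus 0$, as claimed. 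The only point requiring care is the bookkeeping of the grading involution $\Gamma$ when iterating \eqref{eq:tensor_product_Gamma}: one must check that it occurs only inside the $\v''$-active summands that get annihilated, and otherwise cancels by orthogonality; beyond this the computation is routine and there is no genuine obstacle.
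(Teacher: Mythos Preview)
Your proposal is correct and follows essentially the same approach as the paper: both compute the bilinear form of $\overK(X)$ via \eqref{eq:tensor_product_Gamma}, kill the cross terms using the skew-symmetry $\langle Z\cbullet S,\,S\rangle=0$, and then rescale. The only organizational difference is that the paper splits into the three cases $(Z_1,Z_2)\in\z'\times\z'$, $\z'\times\z''$, $\z''\times\z''$ separately, whereas you decompose $Z_1,Z_2$ once and expand into four terms; the content is identical.
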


\begin{proof}
\begin{itemize}
\item If \(\{Z'_1,Z'_2\}\) is an orthonormal pair in \(\z'\), then
\begin{align*}
  \norm{X_\z}\norm{X_\v}^2\,
  \langle \underK(X) Z'_1, Z'_2 \rangle
   &\stackrel{\eqref{eq:def_overK},\eqref{eq:def_underK}}{=}
     \langle Z'_1 \cbullet X_\z \cbullet X_\v,
       Z'_2 \cbullet X_\v \rangle \\
   &= \norm{S''}^2\,
      \langle Z'_1 \cbullet Z_0' \cbullet S',
        Z'_2 \cbullet S' \rangle \\
   &= \norm{Z_0'} \norm{S'}^2 \norm{S''}^2\,
      \langle \underK'(X') Z'_1, Z'_2 \rangle
\end{align*}
(using \eqref{eq:def_underK} for \(X' = Z_0' \oplus S'\)). Because
\(\norm{X_\z} = \norm{Z_0'}\) and
\(\norm{X_\v}^2 = \norm{S'}^2 \norm{S''}^2\), we obtain
\(\langle \underK(X) Z'_1, Z'_2 \rangle
 = \langle \underK'(X') Z'_1, Z'_2 \rangle\).

\item If \(Z'_1 \in \z'\) and \(Z''_2 \in \z''\), then
\begin{align*}
  \norm{X_\z}\norm{X_\v}^2\,
  \langle \underK(X) Z'_1, Z''_2 \rangle
   &= \langle Z'_1 \cbullet X_\z \cbullet X_\v,
      Z''_2 \cbullet X_\v \rangle \\
   &\stackrel{\eqref{eq:tensor_product_Gamma}}{=}
      \langle Z'_1 \cbullet Z_0' \cbullet S',\, \Gamma S' \rangle\,
      \langle S'',\, Z''_2 \cbullet S'' \rangle \\
   &= 0
\end{align*}
since \(Z''_2 \cbullet\) is skew-symmetric on \(\v''\). Moreover,
\[
  \norm{X_\v}^2\,\langle \underK(X) Z''_2, Z'_1 \rangle
  = -\,\norm{X_\v}^2\,\langle \underK(X) Z'_1, Z''_2 \rangle
  = 0
\]
\item If \(\{Z''_1,Z''_2\} \subset \z''\) is orthonormal, then
\begin{align*}
  \norm{X_\z}\norm{X_\v}^2\,
  \langle \underK(X) Z''_1, Z''_2 \rangle
   &= \langle Z''_1 \cbullet X_\z \cbullet X_\v,
      Z''_2 \cbullet X_\v \rangle \\
   &\stackrel{\eqref{eq:tensor_product_Gamma}}{=}
      \langle \Gamma(Z_0' \cbullet S'),\, \Gamma S' \rangle\,
      \langle Z''_1 \cbullet S'',\, Z''_2 \cbullet S'' \rangle \\
   &= \langle Z_0' \cbullet S',\, S' \rangle\,
      \langle Z''_1 \cbullet S'',\, Z''_2 \cbullet S'' \rangle \\
   &= 0
\end{align*}
since \(Z_0' \cbullet\) is skew-symmetric on \(\v'\).
\end{itemize}
Hence \(\underK(X) = \underK'(X') \oplus 0\).
\end{proof}

\begin{corollary}\label{co:tensor_product}
If \(\nu = 1\) is a global eigenvalue of \(-\underK^2\), then the same
holds for \(-(\underK')^2\).
\end{corollary}

\begin{proof}
Let \(\nu = 1\). Let \(Z_0' \in \z'\), \(S' \in \v'\), and
\(S'' \in \v''\) be nonzero, and set
\[
  X \coloneqq Z_0' \oplus (S' \otimes S'')
  \in \z \oplus \bigl(\v' \hat\otimes_\bbR \v''\bigr)
\]
By Lemma~\ref{le:simple}, \(\underK(X) = \underK'(X') \oplus 0\) with
\(X' = Z_0' \oplus S'\). Hence
\[
  \Spec\bigl(-\underK(X)^2\bigr)
  = \Spec\bigl(-\underK'(X')^2\bigr)
\]
If \(\nu\) is a global eigenvalue of \(-\underK^2\), then
\(\nu \in \Spec\bigl(-\underK(X)^2\bigr)\) for all such \(X\), and
therefore \(\nu \in \Spec\bigl(-\underK'(X')^2\bigr)\) for all such
\(X'\). Thus \(\nu\) is a global eigenvalue of \(-(\underK')^2\).
\end{proof}

\section{The gradients of the eigenvalue branches of
\texorpdfstring{\(-\underK^2\)}{-K^2}}
\label{se:gradient}

Let \(\z\) be a Euclidean space of positive dimension \(n\), let
\(\v\) be a nontrivial orthogonal \(\Cl(\z)\)-module, and let
\(\n \coloneqq \z \oplus \v\) be the orthogonal Lie algebra associated
with the pair \((\z,\v)\). Recall that the skew-symmetric operator
\(\underK(X)\colon \z \to \z\) defined in~\eqref{eq:def_underK}
satisfies
\[
  \Spec\bigl(-\underK(X)^2\bigr) \subseteq [0,1]
\]
for all \(X \in \n\) with \(X_\z \ne 0\) and \(X_\v \ne 0\), by
Proposition~\ref{p:overK}. Consequently, every eigenvalue branch
\(\undermu\) of the nonnegative self-adjoint family \(-\underK^2\)
defined in~\eqref{eq:underK_square} satisfies
\(0 \le \undermu \le 1\). We now incorporate the gradients
\(\nabla \undermu\) into our analysis. To this end, recall that
\(\h_X = X_\z^\perp \subseteq \z\) denotes the orthogonal complement
of the line spanned by \(X_\z\), for each \(X \in \n\) with
\(X_\z \ne 0\).

Since \(\overK \equiv 0\) for \(n \le 2\), we will assume throughout
this section that \(n \ge 3\). Then each
\(X \in \n \setminus \ram(\overK^2)\) automatically satisfies
\(X_\z \ne 0\) and \(X_\v \ne 0\), cf.~\eqref{eq:domain}.

\begin{lemma}\label{le:var}
  Let \(X \in \n \setminus \ram(\overK^2)\) and let
  \(\undermu\colon U(X) \to [0,1]\) be an eigenvalue branch of
  \(-\underK^2\) of multiplicity \(m\) on the connected component
  \(U(X) \subseteq \n \setminus \ram(\overK^2)\) containing \(X\).
  If \(\undermu(X) = 0\), then \(m \le 2\). Otherwise \(\undermu(X) > 0\),
  \(m\) is even, and there exists an orthonormal basis
  \(E_1,\ldots,E_m\) of \(\h_X\) such that
  \begin{equation}\label{eq:mu_frame_X}
  \begin{aligned}
    \underK(X) E_{2i-1} &= \sqrt{\undermu(X)}\,E_{2i} \\
    \underK(X) E_{2i}   &= -\sqrt{\undermu(X)}\,E_{2i-1}
  \end{aligned}
  \end{equation}
  for \(i = 1,\ldots,\tfrac{m}{2}\). Moreover, for any such basis
  \begin{equation}\label{eq:Pyth_1}
   E_{2i-1} \cbullet E_{2i} \cbullet X_\z \cbullet X_\v
   = \norm{X_\z}\sqrt{\undermu(X)}\,X_\v
    \oplus \frac{\norm{X_\z}\norm{X_\v}^2}{2}
           \bigl(\left.\nabla\!\sqrt{\undermu}\right|_X\bigr)_\v
  \end{equation}
  gives the components of
  \(E_{2i-1} \cbullet E_{2i} \cbullet X_\z \cbullet X_\v\)
  with respect to the orthogonal splitting
  \(\v = \bbR X_\v \oplus X_\v^\perp\). It follows that
  \begin{equation}\label{eq:grad_3}
    1 = \undermu(X) + \frac{\norm{X_\v}^2}{4}
        \norm{\left.\nabla\!\sqrt{\undermu}\right|_X}_\v^2
  \end{equation}
\end{lemma}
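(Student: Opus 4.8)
The plan is to handle the two alternatives separately, with all the substance in the case $\undermu(X)>0$. If $\undermu(X)=0$, then $\undermu$ must be the constant zero branch on $U(X)$ (on $\n\setminus\ram(\overK^2)$ no nonzero eigenvalue branch meets the zero branch), so $\nu=0$ is a global eigenvalue and its multiplicity is $m_0\le 2$; this gives the first claim (equivalently, $\Eig_0(-\underK(X)^2)=\Kern\overK(X)$ meets $\h_X$ in a subspace of dimension $\le 1$; see also Corollary~\ref{co:mu_=_constant}). So assume $\undermu(X)>0$. Because $\underK(X)$ is skew-symmetric (Proposition~\ref{p:overK}(b)), it commutes with $-\underK(X)^2$ and hence preserves the eigenspace $E:=\Eig_{\undermu(X)}(-\underK(X)^2)$, which is orthogonal to $X_\z$ (since $\overK(X)X_\z=0$ and $\undermu(X)>0$), hence $E\subseteq X_\z^\perp=\h_X$. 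On $E$ one has $\underK(X)^2=-\undermu(X)\,\Id_E$, so $\undermu(X)^{-1/2}\underK(X)|_E$ is an orthogonal complex structure; in particular $m=\dim E$ is even, and the usual inductive construction yields an orthonormal basis $E_1,\dots,E_m$ of $E$ satisfying \eqref{eq:mu_frame_X}. Write $f:=\sqrt{\undermu}$ for brevity.

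Next, \eqref{eq:Pyth_1} is reduced to a gradient identity. Since $\overK(X)=\norm{X_\z}\norm{X_\v}^2\,\underK(X)$, relation \eqref{eq:de_K_5} becomes, for all $Z\in\z$,
\[
  Z\cbullet X_\z\cbullet X_\v = \tilde U(X,Z) + \norm{X_\z}\,\bigl(\underK(X)Z\bigr)\cbullet X_\v,
  \qquad \tilde U(X,Z):=\frac{1}{\norm{X_\v}^2}\,U(X,Z),
\]
where $Z\mapsto\tilde U(X,Z)$ is linear and $\tilde U(X,Z)\in\Kern(X_\v\spinprod)=(\z\cbullet X_\v)^\perp$ by Lemma~\ref{le:K}(a). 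Taking $Z=E_{2i-1}$ (so $\underK(X)E_{2i-1}=f(X)\,E_{2i}$ by \eqref{eq:mu_frame_X}) and applying $E_{2i}\cbullet$ on the left, using $E_{2i}\cbullet E_{2i}\cbullet X_\v=-X_\v$ and $E_{2i}\cbullet E_{2i-1}=-E_{2i-1}\cbullet E_{2i}$, gives
\[
  E_{2i-1}\cbullet E_{2i}\cbullet X_\z\cbullet X_\v = \norm{X_\z}\,f(X)\,X_\v \;-\; E_{2i}\cbullet\tilde U(X,E_{2i-1}).
\]
The first summand lies in $\bbR X_\v$ and the second in $X_\v^\perp$, because $\langle E_{2i}\cbullet\tilde U(X,E_{2i-1}),X_\v\rangle = -\langle\tilde U(X,E_{2i-1}),E_{2i}\cbullet X_\v\rangle = 0$. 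Hence \eqref{eq:Pyth_1} is equivalent to the identity $-E_{2i}\cbullet\tilde U(X,E_{2i-1}) = \tfrac12\norm{X_\z}\norm{X_\v}^2\,(\nabla f|_X)_\v$.

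To prove this, compute $(\nabla f|_X)_\v$ by a first-order perturbation. Fix $Y\in\v$ and set $X_t:=X+tY$, so $(X_t)_\z=X_\z$ and $(X_t)_\v=X_\v+tY$; for small $|t|$ choose a real-analytic unit eigenvector $E_{2i-1}(t)$ of $-\underK(X_t)^2$ for the branch $\undermu(X_t)$ with $E_{2i-1}(0)=E_{2i-1}$ (possible since $X$ lies on a component of $\n\setminus\ram(\underK^2)$ where the eigenvalue multiplicities are locally constant, so the corresponding spectral projection is real-analytic), and put $E_{2i}(t):=f(t)^{-1}\underK(X_t)E_{2i-1}(t)$, a real-analytic unit vector with $E_{2i}(0)=E_{2i}$. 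Differentiating the identity $f(t)=\langle\underK(X_t)E_{2i-1}(t),E_{2i}(t)\rangle$ at $t=0$, the terms containing $\dot E_{2i-1}(0)$ and $\dot E_{2i}(0)$ drop out because $E_{2i-1}(t),E_{2i}(t)$ are unit vectors and because of \eqref{eq:mu_frame_X}, so that $f'(0) = \tfrac{d}{dt}\big|_{t=0}\langle\underK(X_t)E_{2i-1},E_{2i}\rangle$. Differentiating the rational expression \eqref{eq:def_underK} for the right-hand side — numerator $\langle E_{2i-1}\cbullet X_\z\cbullet(X_\v+tY),\,E_{2i}\cbullet(X_\v+tY)\rangle$, denominator $\norm{X_\z}\norm{X_\v+tY}^2$ — and using the Clifford relations \eqref{eq:Clifford_module_1}--\eqref{eq:Clifford_module_3} together with the previous display, both numerator contributions turn out to equal $\norm{X_\z}f(X)\langle X_\v,Y\rangle - \langle E_{2i}\cbullet\tilde U(X,E_{2i-1}),Y\rangle$, while the denominator contributes $-2f(X)\langle X_\v,Y\rangle/\norm{X_\v}^2$ after division by $\norm{X_\z}$; the $\langle X_\v,Y\rangle$ terms cancel and one obtains
\[
  f'(0) = -\frac{2}{\norm{X_\z}\norm{X_\v}^2}\,\langle E_{2i}\cbullet\tilde U(X,E_{2i-1}),\,Y\rangle
\]
for every $Y\in\v$, which is exactly the required identity. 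Since its right-hand side depends only on $\undermu(X),X_\z,X_\v$ and $\nabla f|_X$, the formula \eqref{eq:Pyth_1} holds for every block and every admissible basis. Finally, \eqref{eq:grad_3} follows by taking squared norms in \eqref{eq:Pyth_1}: the left-hand side has squared norm $\norm{X_\z}^2\norm{X_\v}^2$, and the right-hand side, by the orthogonality noted above, has squared norm $\norm{X_\z}^2\undermu(X)\norm{X_\v}^2 + \tfrac14\norm{X_\z}^2\norm{X_\v}^4\,\norm{(\nabla f|_X)_\v}^2$; dividing by $\norm{X_\z}^2\norm{X_\v}^2$ gives \eqref{eq:grad_3}. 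The delicate step is this last perturbation computation: one must push the $t$-derivative of \eqref{eq:def_underK} through with care and notice that the two numerator contributions agree and, with the denominator term, collapse precisely onto the $\tilde U$-term; checking the real-analyticity of $E_{2i-1}(t)$ and dispatching the case $\undermu(X)=0$ are routine by comparison.
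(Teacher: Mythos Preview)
Your argument for the case \(\undermu(X)>0\) is correct and follows essentially the same route as the paper: you produce the \(J\)-adapted basis from the complex structure \(\undermu^{-1/2}\underK(X)\), identify the \(X_\v\)-component of \(E_{2i-1}\cbullet E_{2i}\cbullet X_\z\cbullet X_\v\) via the decomposition \eqref{eq:de_K_5}, and compute the \(X_\v^\perp\)-component by a first-variation argument in which the \(\dot E\)-terms drop out. Your organization via \(\tilde U(X,Z)\) and the full quotient rule (so that the \(\langle X_\v,Y\rangle\) contributions cancel internally) is a neat alternative to the paper's choice of restricting to \(S\perp X_\v\) and passing through \(\overK\); both arrive at the same identity.

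However, your treatment of the case \(\undermu(X)=0\) is circular. You assert that ``\(\nu=0\) is a global eigenvalue and its multiplicity is \(m_0\le 2\)'' and point to Corollary~\ref{co:mu_=_constant}, but that corollary is \emph{deduced from} Lemma~\ref{le:var}; its proof reads ``For \(\nu=0\), Lemma~\ref{le:var} gives \(m_0\le 2\).'' Nothing prior in the paper bounds \(\dim\bigl(\Kern\overK(X)\cap\h_X\bigr)\), and skew-symmetry alone only gives a parity constraint. The paper closes this gap \emph{inside} the present proof: assuming \(\undermu\equiv 0\) and \(m\ge 3\), one has \(\dim\bigl((\Eig_0)_X\cap\h_X\bigr)\ge m-1\ge 2\), so one may choose an orthonormal pair \(\{E_1,E_2\}\subseteq(\Eig_0)_X\cap\h_X\) (for which \eqref{eq:mu_frame_X} holds trivially). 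The same derivation then yields \eqref{eq:Pyth_1} with right-hand side equal to zero, contradicting \(\norm{E_1\cbullet E_2\cbullet X_\z\cbullet X_\v}^2=\norm{X_\z}^2\norm{X_\v}^2\neq 0\). You should replace your appeal to Corollary~\ref{co:mu_=_constant} by this contradiction argument.
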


\begin{proof}
  Suppose first that \(\undermu(X) > 0\). Then
  \((\Eig_{\undermu})_X \subseteq (\Eig_0)_X^\perp \subseteq X_\z^\perp
  = \h_X\). Since \(\underK(X)\) is skew-symmetric, the operator
  \(J_{\undermu}(X) \coloneqq \tfrac{1}{\sqrt{\undermu(X)}}\,\underK(X)\)
  defines an orthogonal complex structure on \((\Eig_{\undermu})_X\).
  Consequently, \(\underK(X)\) splits \((\Eig_{\undermu})_X\) into
  \(\tfrac{m}{2}\) mutually orthogonal \(2\)-planes, on each of which it
  acts by
  \(\begin{psmallmatrix} 0 & -\sqrt{\undermu(X)} \\ \sqrt{\undermu(X)} &
  0 \end{psmallmatrix}\). Choosing an orthonormal basis
  \(E_1,\ldots,E_m\) of \((\Eig_{\undermu})_X \subseteq \h_X\) with
  \(J_{\undermu}(X) E_{2i-1} = E_{2i}\) for
  \(i = 1,\ldots,\tfrac{m}{2}\), we obtain~\eqref{eq:mu_frame_X}. Since
  \(\undermu\) is real-analytic (hence smooth), it remains strictly
  positive on an open neighborhood \(U\) of \(X\), and we can extend
  \(\{E_1,\ldots,E_m\}\) to a smooth local orthonormal frame with the
  same properties at each point of \(U\).

   Suppose now that \(\undermu(X)=0\) and \(m\ge 3\). Since
  \(X_\z \in (\Eig_0)_X\) and \(\h_X=X_\z^\perp\) has codimension \(1\) in
  \(\z\), we have
  \[
    \dim\bigl((\Eig_0)_X \cap \h_X\bigr)\ge m-1 \ge 2
  \]
  Hence there exists an orthonormal pair
  \(\{E_1,E_2\}\subseteq (\Eig_0)_X\cap\h_X\). In particular, the
  relations in~\eqref{eq:mu_frame_X} hold with \(i=1\), i.e.,
  \(\underK(X)E_1=\underK(X)E_2=0\).

  Since \(X \notin \ram(\underK^2)\), the multiplicity of the eigenvalue
  \(0\) is constant on \(U(X)\). Thus a branch with \(\undermu(X)=0\) cannot
  become positive elsewhere on \(U(X)\), and consequently \(\undermu\equiv 0\)
  on \(U(X)\). Thus there exists an open neighborhood \(U\) of \(X\) and a
  smooth orthonormal \(2\)-frame \(\{E_1,E_2\}\) of \((\Eig_0|_U) \cap (\h|_U)\)
  extending the one at \(X\) and having the same properties at each point of \(U\).
  We will use this choice below to derive a contradiction when \(m \ge 3\).

  With these preliminaries, compute
  \[
  \begin{aligned}
    \frac{1}{\norm{X_\z}\norm{X_\v}^2}\,
    \big\langle
    E_{2i-1}\big|_X \cbullet E_{2i}\big|_X \cbullet X_\z
    \cbullet X_\v, X_\v \big\rangle
    &\stackrel{\eqref{eq:Clifford_module_1},\eqref{eq:Clifford_module_3}}{=}
     \frac{1}{\norm{X_\z}\norm{X_\v}^2}\,
     \big\langle
     E_{2i-1}\big|_X \cbullet X_\z \cbullet X_\v,
     E_{2i}\big|_X \cbullet X_\v \big\rangle \\
    &\stackrel{\eqref{eq:def_underK}}{=}
      \big\langle \underK(X) E_{2i-1}\big|_X, E_{2i}\big|_X \big\rangle
      \stackrel{\eqref{eq:mu_frame_X}}{=}\sqrt{\undermu(X)}
  \end{aligned}
  \]
  which gives
  \begin{equation}\label{eq:grad_1}
  \big\langle E_{2i-1}\big|_X \cbullet E_{2i}\big|_X \cbullet X_\z
  \cbullet X_\v, X_\v \big\rangle
  = \norm{X_\z}\norm{X_\v}^2\sqrt{\undermu(X)}
  \end{equation}

  We now compute
  \(\big\langle E_{2i-1}\big|_X \cbullet E_{2i}\big|_X \cbullet X_\z
  \cbullet X_\v, S \big\rangle\) for \(S \in \v\) with
  \(\langle X_\v, S \rangle = 0\). From
  \[
    \big\langle \nabla\big(\norm{X_\z}\norm{X_\v}^2\big), S \big\rangle
    = 2 \norm{X_\z} \langle X_\v, S \rangle = 0
  \]
  we get
  \[
    \norm{X_\z}\norm{X_\v}^2
    \big\langle \left.\nabla\!\sqrt{\undermu}\right|_X, 0 \oplus S
    \big\rangle
    \stackrel{\eqref{eq:undermu_overmu}}{=}
    \big\langle \left.\nabla\!\sqrt{\overmu}\right|_X, 0 \oplus S
    \big\rangle
  \]
  Here
  \[
    \sqrt{\overmu(X)}
    \stackrel{\eqref{eq:def_underK},\eqref{eq:undermu_overmu},\eqref{eq:mu_frame_X}}{=}
    \big\langle \overK(X) E_{2i-1}\big|_X, E_{2i}\big|_X \big\rangle
  \]
  so with the canonical variation \(X(t) \coloneqq X + (0 \oplus t S)\) we
  have
  \begin{align*}
    \norm{X_\z}\norm{X_\v}^2 \big\langle \left.\nabla\!\sqrt{\undermu}
    \right|_X, 0 \oplus S \big\rangle
    &= \left.\tfrac{d}{dt}\right|_{t=0}
       \big\langle \overK\big(X(t)\big) E_{2i-1}\big|_{X(t)},
       E_{2i}\big|_{X(t)} \big\rangle
    = \Big\langle \left.\tfrac{d}{dt}\right|_{t=0}
       \overK\big(X(t)\big) E_{2i-1}\big|_X, E_{2i}\big|_X \Big\rangle\\
    &\quad + \Big\langle \overK(X) \left.\tfrac{d}{dt}\right|_{t=0}
       E_{2i-1}\big|_{X(t)}, E_{2i}\big|_X \Big\rangle 
    \quad + \Big\langle \overK(X) E_{2i-1}\big|_X,
       \left.\tfrac{d}{dt}\right|_{t=0} E_{2i}\big|_{X(t)} \Big\rangle
  \end{align*}
  The last two terms vanish:
  \[
    \begin{aligned}
     &\Big\langle \overK(X)\left.\tfrac{d}{dt}\right|_{t=0}
     E_{2i-1}\big|_{X(t)}, E_{2i}\big|_X \Big\rangle
     = - \Big\langle \left.\tfrac{d}{dt}\right|_{t=0}
       E_{2i-1}\big|_{X(t)}, \overK(X)E_{2i}\big|_X \Big\rangle\\
      &\stackrel{\eqref{eq:def_underK},\eqref{eq:undermu_overmu},\eqref{eq:mu_frame_X}}{=}
        \sqrt{\overmu(X)}
        \Big\langle \left.\tfrac{d}{dt}\right|_{t=0}
        E_{2i-1}\big|_{X(t)}, E_{2i-1}\big|_X \Big\rangle
     = \tfrac{1}{2}\sqrt{\overmu(X)}
        \left.\tfrac{d}{dt}\right|_{t=0}
        \Big\langle E_{2i-1}\big|_{X(t)}, E_{2i-1}\big|_{X(t)}
        \Big\rangle = 0
    \end{aligned}
  \]
  and similarly
  \(\big\langle \overK(X)E_{2i-1}\big|_X,
  \left.\tfrac{d}{dt}\right|_{t=0}E_{2i}\big|_{X(t)} \big\rangle = 0\).
  Also \(\overK(X)\) is quadratic in \(X_\v\) by~\eqref{eq:def_overK}, so
  \begin{align*}
    \Big\langle \left.\tfrac{d}{dt}\right|_{t=0}\overK\big(X(t)\big)
    E_{2i-1}\big|_X, E_{2i}\big|_X \Big\rangle
    &\quad= \big\langle E_{2i-1}\big|_X \cbullet X_\z \cbullet S,
       E_{2i}\big|_X \cbullet X_\v \big\rangle 
    + \big\langle E_{2i-1}\big|_X \cbullet X_\z \cbullet X_\v,
       E_{2i}\big|_X \cbullet S \big\rangle\\
    &\stackrel{\eqref{eq:Clifford_module_1},\eqref{eq:Clifford_module_3}}{=}
       2 \big\langle E_{2i-1}\big|_X \cbullet E_{2i}\big|_X \cbullet X_\z
       \cbullet X_\v, S \big\rangle
  \end{align*}
  This yields
  \begin{equation}
   \label{eq:grad_2}
    \big\langle E_{2i-1}\big|_X \cbullet E_{2i}\big|_X \cbullet X_\z \cbullet X_\v,
    S \big\rangle
    = \frac{\norm{X_\z}\norm{X_\v}^2}{2}
      \big\langle \left.\nabla\!\sqrt{\undermu}\right|_X, 0 \oplus S
      \big\rangle
  \end{equation}
  for all \(S \in \v\) with \(\langle X_\v, S \rangle = 0\).
  Moreover, the branch \(\undermu\) is constant along the radial line
  \(t \mapsto X_\z \oplus tX_\v\) by~\eqref{eq:def_underK}, hence
  \(\langle \left.\nabla\!\sqrt{\undermu}\right|_X, X_\v \rangle = 0\).
  Together with~\eqref{eq:grad_1} and~\eqref{eq:grad_2} we obtain
  \eqref{eq:Pyth_1}.

  For \(\undermu \equiv 0\), the right-hand side of~\eqref{eq:Pyth_1}
  vanishes. Hence
  \(E_{2i-1}\big|_X \cbullet E_{2i}\big|_X \cbullet X_\z \cbullet X_\v
  = 0\), contradicting
  \begin{equation}\label{eq:Pyth_2}
    \norm{E_{2i-1}\big|_X\cbullet E_{2i}\big|_X \cbullet X_\z \cbullet
   X_\v}^2
    = \norm{X_\z}^2\norm{X_\v}^2
  \end{equation}
  Thus \(m \le 2\). For \(\undermu > 0\), combining~\eqref{eq:Pyth_2}
  with~\eqref{eq:Pyth_1} and applying the Pythagorean theorem gives
  \[
    \norm{X_\z}^2 \norm{X_\v}^2
      = \norm{X_\z}^2 \norm{X_\v}^2 \undermu(X)
         + \frac{\norm{X_\z}^2 \norm{X_\v}^4}{4}
           \norm{\left.\nabla\!\sqrt{\undermu}\right|_X}_\v^2
  \]
  which is exactly~\eqref{eq:grad_3}.
\end{proof}

\begin{corollary}\label{co:mu_=_constant}
  Global eigenvalues of \(-\underK^2\) other than \(0\) and \(1\) do
  not exist. Moreover, the multiplicity of \(\nu = 0\) is \(m_0 = 2\)
  or \(m_0 = 1\) on each connected component of
  \(\n \setminus \ram(\overK^2)\), depending on whether \(n\)
  is even (in which case \(m_0 = 2\)) or odd (then \(m_0 = 1\)).
\end{corollary}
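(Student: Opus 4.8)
The plan is to obtain both statements as quick consequences of Lemma~\ref{le:var} together with the standard linear algebra of skew-symmetric operators, so there is no serious obstacle; the only thing to watch is that the relevant eigenvalue branches really fall under the hypotheses of Lemma~\ref{le:var}. When \(n \le 2\) the operator \(\underK\) vanishes identically on \(D\), so \(0\) is the only eigenvalue of \(-\underK(X)^2\) and its multiplicity equals \(\dim\z = n \in \{1,2\}\), in agreement with the claim; hence we may assume \(n \ge 3\), in which case \(\overK \not\equiv 0\) and \(\n \setminus \ram(\underK^2)\) is a nonempty open set contained in \(D\).

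First I would show that a global eigenvalue \(\nu\) of \(-\underK^2\) must be \(0\) or \(1\). Suppose \(\nu \neq 0\). Fix any connected component \(U \subseteq \n \setminus \ram(\underK^2)\); since \(\nu\) is a global eigenvalue, the constant function \(\undermu \equiv \nu\) is an eigenvalue branch of \(-\underK^2\) on \(U\) of some positive multiplicity. Picking \(X \in U\) we have \(\undermu(X) = \nu > 0\), so the ``otherwise'' case of Lemma~\ref{le:var} applies and in particular \eqref{eq:grad_3} holds. As \(\undermu\) is constant on \(U\) we have \(\left.\nabla\!\sqrt{\undermu}\right|_X = 0\), and \eqref{eq:grad_3} degenerates to \(1 = \nu\). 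Hence the only possible global eigenvalues are \(0\) and \(1\).

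Next I would pin down the multiplicity \(m_0\) of the branch \(\nu = 0\) on a fixed connected component \(U \subseteq \n \setminus \ram(\underK^2)\). For \(X \in U\) we have \(X_\z \neq 0\), and \(\underK(X)X_\z = 0\) by Proposition~\ref{p:overK}~(c), so \(X_\z\) is a nonzero eigenvector of \(-\underK(X)^2\) for the eigenvalue \(0\); thus \(m_0 \ge 1\). On the other hand Lemma~\ref{le:var}, applied in the case \(\undermu(X) = 0\), gives \(m_0 \le 2\). Therefore \(m_0 \in \{1,2\}\), and \(m_0\) is constant on \(U\) because \(X \notin \ram(\underK^2)\). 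Finally, \(\underK(X)\) is skew-symmetric on the \(n\)-dimensional Euclidean space \(\z\), so every nonzero eigenvalue of \(-\underK(X)^2\) has even multiplicity --- the corresponding eigenspace carries the orthogonal complex structure obtained by normalizing \(\underK(X)\), exactly as in the proof of Lemma~\ref{le:var}. Adding up the multiplicities over \(\Spec(-\underK(X)^2)\) yields \(n = m_0 + 2k\) for some \(k \in \N_0\), so \(m_0 \equiv n \pmod 2\). Combined with \(m_0 \in \{1,2\}\), this forces \(m_0 = 2\) if \(n\) is even and \(m_0 = 1\) if \(n\) is odd, completing the proof.
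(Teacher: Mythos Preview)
Your proof is correct and follows essentially the same approach as the paper: both arguments use the gradient identity~\eqref{eq:grad_3} from Lemma~\ref{le:var} to force a nonzero constant branch to equal \(1\), then combine the bound \(m_0 \le 2\) from Lemma~\ref{le:var} with \(\underK(X)X_\z=0\) and the parity constraint from skew-symmetry. Your explicit treatment of the degenerate cases \(n\le 2\) and the spelled-out parity argument are welcome additions but do not change the underlying strategy.
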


\begin{proof}
  Let \(\nu\) be a global eigenvalue of \(-\underK^2\).
  If \(\nu > 0\), then \(0 = \nabla\!\sqrt{\nu}\), and together with
  \eqref{eq:grad_3} this forces \(\nu = 1\). Hence no global
  eigenvalues with \(0 < \nu < 1\) exist. For \(\nu = 0\),
  Lemma~\ref{le:var} gives \(m_0 \le 2\), while trivially \(m_0 \ge 1\)
  since \(\underK(X)X_\z = 0\). Thus \(m_0 \in \{1,2\}\) on
  \(\n \setminus \ram(\overK^2)\). Moreover, \(m_0 \equiv \dim \z
  \pmod{2}\) because \(\underK(X)\) is skew-symmetric. This completes
  the proof of the statement about \(m_0\).
\end{proof}

For the following definition, let \((0,1)\) denote the open unit
interval and recall that \(0 \le \undermu \le 1\) holds for every
eigenvalue branch \(\undermu\) of \(-\underK^2\).

\begin{definition}\label{de:U_simple}
  Let \(U_{(0,1)} \subseteq \n \setminus \ram(\overK^2)\) be defined by
  \begin{equation}\label{eq:U_simple}
    X \in U_{(0,1)}
    \quad\Longleftrightarrow\quad
    \forall \undermu \in \branch_{\mathrm{nc}}(U(X))\colon
    \undermu(X) < 1
  \end{equation}
\end{definition}

\begin{corollary}\label{co:U_sharp}
  The set \(U_{(0,1)}\) is open and dense in
  \(\n \setminus \ram(\overK^2)\). Moreover, for each
  \(X \in U_{(0,1)}\) and every
  \(\undermu \in \branch_{\mathrm{nc}}(U(X))\) one has
  \(0 < \undermu(X) < 1\) and
  \(\left.\nabla\undermu\right|_X \neq 0\). In particular, for each
  \(X \in U_{(0,1)}\) there is a natural identification
  \begin{equation}\label{eq:Bnc_ident}
    \branch_{\mathrm{nc}}(U(X)) \cong
    \Spec\bigl(-\underK(X)^2\bigr) \cap (0,1)
  \end{equation}
\end{corollary}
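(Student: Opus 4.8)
The plan is to establish, in turn, openness, density, the two pointwise estimates, and the bijection \eqref{eq:Bnc_ident}, drawing on the eigenvalue-branch formalism already in place together with Lemma~\ref{le:var} and Corollary~\ref{co:mu_=_constant}. First I would fix a connected component $U$ of $\n \setminus \ram(\underK^2)$. By construction the number of distinct eigenvalues of $-\underK(X)^2$ is constant on $U$, so these eigenvalues are realized by finitely many real-analytic functions $0 = \undermu_0 < \undermu_1 < \dots < \undermu_q$ on $U$ that are pointwise distinct (a coincidence of two of them would force a drop in the eigenvalue count, i.e.\ would lie in $\ram(\underK^2)$). In particular, every nonconstant branch $\undermu \in \branch_{\mathrm{nc}}(U)$ satisfies $\undermu \ge \undermu_1 > \undermu_0 = 0$ throughout $U$; combined with the defining inequality $\undermu(X) < 1$ for $X \in U_{(0,1)}$, this already yields $0 < \undermu(X) < 1$ on $U_{(0,1)} \cap U$. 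Openness is then immediate: on $U$, membership in $U_{(0,1)}$ is the finite conjunction of the open conditions $\undermu(X) < 1$ over $\undermu \in \branch_{\mathrm{nc}}(U)$, so $U_{(0,1)} \cap U$ is open in $\n$, and hence so is $U_{(0,1)}$.

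For density, I would observe that for each $\undermu \in \branch_{\mathrm{nc}}(U)$ the level set $\{X \in U : \undermu(X) = 1\}$ is the zero set of the nonzero real-analytic function $\undermu - 1$ on the connected set $U$, hence has empty interior by the identity theorem for real-analytic functions. The finite union $\bigcup_{\undermu \in \branch_{\mathrm{nc}}(U)} \{\undermu = 1\}$ is therefore closed with empty interior in $U$, so its complement $U_{(0,1)} \cap U$ is dense in $U$. Taking the union over all components of $\n \setminus \ram(\underK^2)$ yields density of $U_{(0,1)}$ in $\n \setminus \ram(\underK^2)$. (In the degenerate case $\overK \equiv 0$, i.e.\ $n \le 2$, there are no nonconstant branches and $U_{(0,1)} = \n \setminus \ram(\underK^2)$, so these claims are vacuous.)

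For the gradient estimate, I would fix $X \in U_{(0,1)}$ and $\undermu \in \branch_{\mathrm{nc}}(U(X))$. We have $0 < \undermu(X) < 1$ by the above, and $\undermu(X) > 0$ forces $\underK(X) \ne 0$, hence $X_\z \ne 0$ and $X_\v \ne 0$ by \eqref{eq:def_overK}. Thus Lemma~\ref{le:var} applies and, since $\undermu(X) > 0$, furnishes the Pythagorean identity \eqref{eq:grad_3}, which rearranges to
\[
  \norm{\left.\nabla\!\sqrt{\undermu}\right|_X}_\v^2
  = \frac{4\bigl(1 - \undermu(X)\bigr)}{\norm{X_\v}^2} > 0 .
\]
Hence $\left.\nabla\!\sqrt{\undermu}\right|_X \ne 0$, and since $\undermu > 0$ near $X$ we get $\left.\nabla\undermu\right|_X = 2\sqrt{\undermu(X)}\,\left.\nabla\!\sqrt{\undermu}\right|_X \ne 0$.

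Finally, for \eqref{eq:Bnc_ident} I would take the evaluation map $\undermu \mapsto \undermu(X)$ on $\branch_{\mathrm{nc}}(U(X))$. It takes values in $\Spec(-\underK(X)^2) \cap (0,1)$ by the definition of an eigenvalue branch together with $0 < \undermu(X) < 1$; it is injective because distinct branches on the connected set $U(X)$ are pointwise distinct there; and it is surjective, since any $\nu \in \Spec(-\underK(X)^2) \cap (0,1)$ equals $\undermu_i(X)$ for some branch $\undermu_i$ on $U(X)$, which is not the zero branch (as $\nu > 0$) and not constant (a constant branch would be a global eigenvalue, hence $0$ or $1$ by Corollary~\ref{co:mu_=_constant}, contradicting $\undermu_i(X) = \nu \in (0,1)$), so $\undermu_i \in \branch_{\mathrm{nc}}(U(X))$ maps to $\nu$. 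The only step carrying genuine content is the gradient nonvanishing, which is already encoded in Lemma~\ref{le:var} via \eqref{eq:grad_3}; the main thing to be careful about is rather the correct use of the hypothesis $X \notin \ram(\underK^2)$, since it is precisely what makes the branches honest real-analytic functions with constant multiplicities and pointwise distinct values on each component — the fact underpinning both the density argument (identity theorem) and the bijectivity of \eqref{eq:Bnc_ident}.
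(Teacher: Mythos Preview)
Your proof is correct and follows essentially the same approach as the paper's: openness from continuity of the finitely many branches, density from the identity theorem applied to the nonconstant real-analytic functions $\undermu - 1$, positivity from the fact that a nonconstant branch cannot meet the constant zero branch on a component of $\n \setminus \ram(\underK^2)$, nonvanishing of the gradient from the Pythagorean identity \eqref{eq:grad_3} in Lemma~\ref{le:var}, and the bijection \eqref{eq:Bnc_ident} from evaluation plus Corollary~\ref{co:mu_=_constant}. The only cosmetic difference is that the paper argues the gradient step by contradiction (assume $\nabla\undermu|_X=0$, deduce $\nabla\sqrt{\undermu}|_X=0$, then $\undermu(X)=1$ from \eqref{eq:grad_3}), whereas you solve \eqref{eq:grad_3} directly for $\lVert\nabla\sqrt{\undermu}|_X\rVert_\v^2$; these are the same computation read in opposite directions.
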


\begin{proof}
  Let \(U\) be a connected component of
  \(\n \setminus \ram(\overK^2)\). Eigenvalue branches of
  \(-\underK^2\) are real-analytic, and hence continuous, on \(U\).
  Moreover, \(0 \le \undermu \le 1\) by
  Proposition~\ref{p:overK}~(d). Thus the condition
  \(\undermu(X) < 1\) is open.

  Density follows from the identity theorem. Indeed, if
  \(\undermu\) is a nonconstant branch on \(U\), then
  \[
    \{X \in U \mid \undermu(X) = 1\}
  \]
  has empty interior. Hence its complement is dense in \(U\).
  Since there are only finitely many nonconstant branches on \(U\),
  their common complement is dense. This proves that
  \(U_{(0,1)}\) is open and dense in
  \(\n \setminus \ram(\overK^2)\).

  Let \(X \in U_{(0,1)}\) and
  \(\undermu \in \branch_{\mathrm{nc}}(U(X))\). By definition,
  \(\undermu(X) < 1\). Moreover, \(\undermu(X) > 0\); otherwise
  \(\undermu\) would meet the global eigenvalue \(0\), contradicting
  \(X \notin \ram(\overK^2)\). Hence
  \(0 < \undermu(X) < 1\).

  The identification~\eqref{eq:Bnc_ident} is obtained by evaluating
  branches at \(X\). The previous paragraph shows that every
  nonconstant branch gives an element of
  \(\Spec\bigl(-\underK(X)^2\bigr) \cap (0,1)\). Conversely, let
  \[
    \eta \in \Spec\bigl(-\underK(X)^2\bigr) \cap (0,1)
  \]
  Since \(X \notin \ram(\overK^2)\), the eigenvalue \(\eta\) extends
  uniquely to an eigenvalue branch
  \(\undermu\colon U(X) \to \bbR_{\ge0}\) of \(-\underK^2\) with
  \(\undermu(X) = \eta\). Since \(0 < \eta < 1\), this branch is
  nonconstant by Corollary~\ref{co:mu_=_constant}. This gives the
  claimed natural identification.

  It remains to prove that the gradient does not vanish. Suppose that
  \(X \in U_{(0,1)}\) and
  \(\left.\nabla \undermu\right|_X = 0\). Since
  \(0 < \undermu < 1\) on \(U_{(0,1)}\), the function
  \(\sqrt{\undermu}\) is smooth on \(U_{(0,1)}\), and the chain rule
  gives
  \[
    \nabla \undermu
    =
    2\sqrt{\undermu}\,\nabla\!\sqrt{\undermu}
  \]
  Evaluating at \(X\) and using \(\sqrt{\undermu(X)} > 0\), we obtain
  \[
    \left.\nabla\!\sqrt{\undermu}\right|_X = 0
  \]
  Hence \(\undermu(X) = 1\) by~\eqref{eq:grad_3}, contradicting
  \(\undermu(X) < 1\).
\end{proof}

\subsection{Eigenvalue branches of higher multiplicity}
Continuing with the ideas of Lemma \ref{le:var}, we examine the case
of an eigenvalue branch of multiplicity at least four.

\begin{proposition}\label{p:m_geq_4}
  \begin{enumerate}
    \item Let \(X \in \n \setminus \ram(\overK^2)\). Suppose there is
      an eigenvalue branch \(\undermu\) of multiplicity \(m \ge 4\)
      defined on the connected component \(U(X) \subseteq \n \setminus
      \ram(\overK^2)\) containing \(X\). Then there exists an
      orthonormal system \(\{E_1,\ldots,E_m\}\) of \(\h_X\) with \(m\)
      even such that
      \begin{equation}\label{eq:var_1}
        E_{2i-1}\cbullet E_{2i}\cbullet X_\v
        = E_{2j-1}\cbullet E_{2j}\cbullet X_\v
      \end{equation}
      for all \(1\le i < j\le \tfrac{m}{2}\).
    \item Moreover,
      \begin{equation}\label{eq:var_2}
        \dim\{S \in \v \mid E_1 \cbullet E_2 \cbullet S
          = E_3 \cbullet E_4 \cbullet S\}
        = \frac{1}{2} \dim \v
      \end{equation}
      for every orthonormal system \(\{E_1, E_2, E_3, E_4\}\) of \(\z\).
  \end{enumerate}
\end{proposition}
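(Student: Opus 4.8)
The plan is to obtain part~(a) directly from Lemma~\ref{le:var} and to prove part~(b) as a stand-alone fact about Clifford modules, independent of the eigenvalue-branch machinery.

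For (a), since $m \ge 4 > 2$, Lemma~\ref{le:var} rules out $\undermu(X)=0$, forces $m$ to be even, and produces an orthonormal system $E_1,\ldots,E_m$ of $\h_X$ (a basis of the eigenspace $(\Eig_\undermu)_X$) with $\underK(X)E_{2i-1}=\sqrt{\undermu(X)}\,E_{2i}$ and $\underK(X)E_{2i}=-\sqrt{\undermu(X)}\,E_{2i-1}$, i.e. satisfying~\eqref{eq:mu_frame_X}. Feeding this system into~\eqref{eq:Pyth_1}, I observe that its right-hand side does not involve the index $i$; hence $E_{2i-1}\cbullet E_{2i}\cbullet X_\z\cbullet X_\v$ is the same element of $\v$ for every $i$. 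Because $E_{2i-1},E_{2i}\in X_\z^\perp$, the Clifford relation~\eqref{eq:Clifford_module_3} lets me move $X_\z\cbullet$ to the left, $E_{2i-1}\cbullet E_{2i}\cbullet X_\z\cbullet = X_\z\cbullet E_{2i-1}\cbullet E_{2i}\cbullet$ on $\v$ (two sign changes cancel). Since $\undermu$ is an eigenvalue branch of $-\underK^2$ on $U(X)\subseteq D$, we have $X\in D$, so $X_\z\neq 0$ and $X_\z\cbullet$ is invertible (its square is $-\norm{X_\z}^2\Id$). Cancelling it yields $E_{2i-1}\cbullet E_{2i}\cbullet X_\v = E_{2j-1}\cbullet E_{2j}\cbullet X_\v$ for all $i,j$, which is~\eqref{eq:var_1}.

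For (b), fix an orthonormal system $\{E_1,E_2,E_3,E_4\}$ of $\z$ and set $J \coloneqq E_1\cbullet E_2\cbullet E_3\cbullet E_4\cbullet \in \End{\v}$. From~\eqref{eq:Clifford_module_1}--\eqref{eq:Clifford_module_3}, each $E_a\cbullet$ is orthogonal and skew-symmetric and any two of them anticommute; it follows that $J$ is orthogonal, self-adjoint, and satisfies $J^2=\Id$, so $\v$ decomposes into the $(\pm1)$-eigenspaces of $J$. Rearranging the defining equation, using $(E_3\cbullet E_4\cbullet)^{-1}=-E_3\cbullet E_4\cbullet$ and $E_3\cbullet E_4\cbullet E_1\cbullet E_2\cbullet = J$, identifies the space in~\eqref{eq:var_2} with the $(-1)$-eigenspace of $J$. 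Finally, since $E_1\cbullet$ anticommutes with the product $E_2\cbullet E_3\cbullet E_4\cbullet$ of three factors, $J=-E_2\cbullet E_3\cbullet E_4\cbullet E_1\cbullet$, and cyclicity of the trace gives $\trace J=-\trace J$, hence $\trace J=0$; therefore the two eigenspaces have equal dimension $\tfrac12\dim\v$, which is~\eqref{eq:var_2}.

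The argument is short, so there is no deep obstacle; the one thing requiring care is the sign bookkeeping for Clifford monomials — both when commuting the bivector $E_{2i-1}\cbullet E_{2i}\cbullet$ past $X_\z\cbullet$ in (a) and when permuting and inverting the monomials $E_1\cbullet E_2\cbullet$, $E_3\cbullet E_4\cbullet$ in (b) — together with the easy but necessary check that the system furnished by Lemma~\ref{le:var} is indeed of the type for which~\eqref{eq:Pyth_1} is asserted.
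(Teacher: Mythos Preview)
Your proof is correct. Part~(a) is essentially identical to the paper's argument: both invoke Lemma~\ref{le:var} to get $\undermu(X)>0$, the adapted frame, and the index-independence of the right-hand side of~\eqref{eq:Pyth_1}, then strip off $X_\z\cbullet$ (the paper left-multiplies by $X_\z$, you commute and cancel; same thing).

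For part~(b) the two proofs diverge after the common step of identifying the subspace in~\eqref{eq:var_2} with the $(-1)$-eigenspace of $\omega=E_1\cbullet E_2\cbullet E_3\cbullet E_4\cbullet$. The paper restricts to the subalgebra $\Cl(\z')\cong\Cl(4)\cong\Mat(2,\bbH)$, reduces to the irreducible module $\bbH^2$, and reads off $\dim\v_-=\tfrac12\dim\v$ from the half-spin decomposition. Your route is more elementary: you observe directly that cycling one factor gives $\omega=-E_2\cbullet E_3\cbullet E_4\cbullet E_1\cbullet$, whence $\trace\omega=0$ by cyclicity, so the two eigenspaces have equal dimension. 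Your argument is shorter and needs no representation theory; the paper's version, while heavier, makes the link to the half-spinor splitting explicit, which is thematically relevant since fixed-point groups of spinors drive the subsequent classification in Corollary~\ref{co:fixed_point_group} and Section~\ref{se:eigenvalue_branches}.
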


\begin{proof}
  For the first assertion, since \(m \ge 4\) we have \(\undermu > 0\) by
  Corollary~\ref{co:mu_=_constant}. Thus \(m\) is even and there exists
  an orthonormal system \(\{E_1,\ldots,E_m\}\subseteq \h_X\) satisfying
  \eqref{eq:mu_frame_X}. Applying \eqref{eq:Pyth_1} twice yields
  \begin{equation*}
    E_{2i-1}|_X \cbullet E_{2i}|_X \cbullet X_\z \cbullet X_\v
    \stackrel{\eqref{eq:Pyth_1}}{=}
      \norm{X_\z}\sqrt{\undermu(X)}\,X_\v
      \oplus \frac{\norm{X_\z}\norm{X_\v}^2}{2}
             \bigl(\left.\nabla\!\sqrt{\undermu}\right|_X\bigr)_\v
    \stackrel{\eqref{eq:Pyth_1}}{=}
    E_{2j-1}|_X \cbullet E_{2j}|_X \cbullet X_\z \cbullet X_\v
  \end{equation*}
  Left-multiplying by \(X_\z\) and using~\eqref{eq:Clifford_module_3}
  twice (and \(E_r \perp X_\z\)) gives~\eqref{eq:var_1}.

  For the second assertion, let
  \(\z' \coloneqq \mathrm{span}_\bbR\{E_1, E_2, E_3, E_4\} \subseteq \z\).
  Then \(\Cl(\z')\) is a subalgebra of \(\Cl(\z)\). Hence \(\v\) is an
  orthogonal \(\Cl(\z')\)-module as well, and we may therefore assume
  \(\z = \bbR^4\). In the Euclidean real case one has
  \(\Cl(4) \cong \Mat(2,\bbH)\), so there is a unique irreducible orthogonal
  \(\Cl(4)\)-module, namely \(\bbH^2\) of real dimension \(8\). Thus \(\v\) is 
  isomorphic as an orthogonal \(\Cl(\z')\)-module to an orthogonal direct sum of
  isomorphic copies of \(\bbH^2\), and it suffices to consider the irreducible
  orthogonal \(\Cl(4)\)-module \(\v \coloneqq \bbH^2\).

  Here, the volume element is
  \(\omega \coloneqq E_1 \cbullet E_2 \cbullet E_3 \cbullet E_4\).
    Then \(\omega^2 = \Id\), and there is an orthogonal splitting
  \(\v = \v_+ \oplus \v_-\) into two nonisomorphic
  \(4\)-dimensional \(\Cl^0(4)\cong \Cl(3)\)-submodules, namely the
  eigenspaces of \(\omega\):
  \(\omega|_{\v_\pm} = \pm \Id\)

  Left-multiplying by \(E_1 \cbullet E_2\) and using
\((E_1 \cbullet E_2)^2 = -\Id\) and
\((E_1 \cbullet E_2)(E_3 \cbullet E_4)=\omega\), we obtain
\[
  E_1 \cbullet E_2 \cbullet S
          = E_3 \cbullet E_4 \cbullet S
  \quad\Longleftrightarrow\quad
  \omega S = -S
\]
Thus the subspace in \eqref{eq:var_2} coincides with \(\v_-\), which has
dimension \(\tfrac12 \dim \v\).
\end{proof}

For every orthonormal system \(\{E_1, \ldots, E_m\}\) of \(\z\) with
\(m\) even, the operators \(J_i \coloneqq E_{2i-1} \cbullet E_{2i}
\cbullet\) and \(J_j \coloneqq E_{2j-1} \cbullet E_{2j} \cbullet\) are
commuting orthogonal complex structures on \(\v\) for
\(1 \le i < j \le \tfrac{m}{2}\) according to
\eqref{eq:Clifford_module_3}. Moreover, each \(J_i\) lies in the Lie
algebra of \(\Spin(\z)\) by~\cite[Ch.~1, Prop.~6.1]{LM}, so
\begin{equation}\label{eq:torus}
  \t \coloneqq \mathrm{span}_\bbR\{J_i - J_j \mid 1 \le i < j \le \tfrac{m}{2}\}
\end{equation}
is an Abelian subalgebra of \(\mathfrak{spin}(\z)\); i.e., \(\exp(\t)\)
is a torus of \(\Spin(\z)\). Under the canonical isomorphism
\(\mathfrak{spin}(\z) \cong \so(\z)\), the operator \(J_i\) corresponds
to \(2 E_{2i-1} \wedge E_{2i}\)~\cite[Ch.~1, Prop.~6.2]{LM}, yielding
\(\dim \t = \frac{m}{2} - 1\).

Let \(Z_0\) be a unit vector in \(\z\) and set \(\z' \coloneqq Z_0^\perp\),
the orthogonal complement of \(Z_0\) in \(\z\). For \(S \in \v\), define
\begin{equation}\label{eq:fixed_point_group}
  \Fix(S, \Spin(\z')) \coloneqq \{F \in \Spin(\z') \mid F S = S\}
\end{equation}
referred to as the fixed point group of \(S\) in \(\Spin(\z')\).

\begin{corollary}\label{co:fixed_point_group}
  Suppose that there exists an eigenvalue branch \(\undermu\) of
  \(-\underK^2\) of multiplicity \(m \ge 4\), defined on a connected
  component \(U\) of \(\n \setminus \ram(\overK^2)\). Let
  \(Z_0 \in \z\) be a unit vector and set \(\z' \coloneqq Z_0^\perp\).
  Consider the slice
  \begin{equation}\label{eq:U_sup_v_slice}
    U^{\v} \coloneqq \{ S \in \v \mid Z_0\oplus S \in U \}
  \end{equation}
  Then \(U^{\v}\) is a nonempty open subset of \(\v\), and for every
  \(S \in U^{\v}\) the fixed-point set \(\Fix(S, \Spin(\z'))\) has rank
  at least \(\frac{m}{2} - 1\).
\end{corollary}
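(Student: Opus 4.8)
The plan is to deduce both assertions from Proposition~\ref{p:m_geq_4}(a) together with two connectedness-of-symmetry arguments; no new calculation is required beyond what is already in that proposition. First, $U^\v$ is open, being the preimage of the open set $U$ under the continuous map $\v\to\n$, $S\mapsto Z_0\oplus S$. For nonemptiness I would note that $U$ is stable under two connected groups acting on $\n$. On one hand, $\Spin(\z)$ acts on $\n=\z\oplus\v$ by automorphisms of $(\n,g)$ — through the vector representation on $\z$ and the Clifford-module representation on $\v$, cf.\ the inclusion $\Aut(N,g)\supseteq\Spin(\z)\times\cdots$ — and for $F\in\Spin(\z)$ with vector part $\bar F\in\SO(\z)$ one checks from \eqref{eq:def_underK}, using that $\bar F$ and its spin lift are isometries, that $\underK(FX)=\bar F\circ\underK(X)\circ\bar F^{-1}$. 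On the other hand, $\bbR_{>0}\times\bbR_{>0}$ acts on $\n$ by the rescalings $(\lambda,\mu)\cdot(X_\z\oplus X_\v)\coloneqq(\lambda X_\z)\oplus(\mu X_\v)$, under which $\underK$ is invariant because by \eqref{eq:def_underK} it depends only on $X_\z/\norm{X_\z}$ and $X_\v/\norm{X_\v}$. In both cases $-\underK^2$ at the transformed point is orthogonally conjugate to $-\underK^2$ at the original point, so the eigenvalue-multiplicity pattern of $-\underK^2$, hence the set $\n\setminus\ram(\underK^2)$, is preserved. Since each of these groups is connected and acts by homeomorphisms, it maps every connected component of $\n\setminus\ram(\underK^2)$ to itself; in particular $U$ is invariant under both. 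Consequently, starting from any $X\in U$, I first rescale the $\z$-variable by $1/\norm{X_\z}$ and then apply an $F\in\Spin(\z)$ whose vector part carries $X_\z/\norm{X_\z}$ to $Z_0$, arriving at a point $Z_0\oplus S\in U$; thus $S\in U^\v$ and $U^\v\neq\emptyset$.

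For the rank estimate I would fix $S\in U^\v$ and put $X\coloneqq Z_0\oplus S\in U$, so that $\h_X=Z_0^\perp=\z'$ since $Z_0$ is a unit vector. Proposition~\ref{p:m_geq_4}(a) then supplies an orthonormal system $\{E_1,\dots,E_m\}\subseteq\z'$ with $m$ even and $E_{2i-1}\cbullet E_{2i}\cbullet S=E_{2j-1}\cbullet E_{2j}\cbullet S$ for all $1\le i<j\le\tfrac{m}{2}$. Writing $J_i\coloneqq E_{2i-1}\cbullet E_{2i}\cbullet$, the content of these identities is that $(J_i-J_j)S=0$; and, as recalled after Proposition~\ref{p:m_geq_4}, each $J_i$ lies in $\mathfrak{spin}(\z')\subseteq\Cl^0(\z')$ and acts on $\v$, through the restricted spin representation, exactly as the displayed endomorphism, while $\t\coloneqq\Span_\bbR\{J_i-J_j\mid 1\le i<j\le\tfrac{m}{2}\}$ is an abelian subalgebra of $\mathfrak{spin}(\z')$ of dimension $\tfrac{m}{2}-1$. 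Since every $\xi\in\t$ therefore satisfies $\xi S=0$, we get $\exp(\xi)S=S$, so the torus $\exp(\t)\subseteq\Spin(\z')$ — of dimension $\tfrac{m}{2}-1$, to be replaced by its closure if it is not already closed — is contained in $\Fix(S,\Spin(\z'))$. As a compact subgroup containing a $d$-dimensional torus has rank at least $d$, this gives $\rank\Fix(S,\Spin(\z'))\ge\tfrac{m}{2}-1$.

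The routine work is all inside Proposition~\ref{p:m_geq_4}; the one point needing genuine care is the nonemptiness of $U^\v$, namely the claim that the \emph{connected} groups $\Spin(\z)$ and $\bbR_{>0}\times\bbR_{>0}$ cannot carry the component $U$ to a different component of $\n\setminus\ram(\underK^2)$ — equivalently, that $\pr_\z(U)$ is an $\SO(\z)$-invariant open cone and hence equals $\z\setminus\{0\}$, so that $Z_0\oplus S\in U$ for some $S$. Everything in the second step is a direct reinterpretation of \eqref{eq:var_1} in terms of the torus $\exp(\t)$ introduced in \eqref{eq:torus}.
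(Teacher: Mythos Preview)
Your proof is correct and follows essentially the same route as the paper: openness is immediate, nonemptiness uses invariance of $U$ under the connected groups $\Spin(\z)$ and $(\bbR_{>0})^2$, and the rank bound comes from applying Proposition~\ref{p:m_geq_4}(a) at $X=Z_0\oplus S$ and observing that the abelian subalgebra $\t$ of \eqref{eq:torus} annihilates $S$, so $\exp(\t)\subseteq\Fix(S,\Spin(\z'))$. Your parenthetical about passing to the closure of $\exp(\t)$ is a welcome clarification that the paper leaves implicit.
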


\begin{proof}
  Openness of \(U^{\v}\) is immediate from the product topology.
  The set \(\n \setminus \ram(\overK^2)\) is invariant under the
  \(\Spin(\z)\)-action
  \(X_\z \oplus X_\v \mapsto \rho(F^\ext)X_\z \oplus F^\ext X_\v\)
  and under rescalings by positive factors in each component. Since
  \(\Spin(\z)\) and \((\bbR_{>0})^2\) are connected, they preserve the
  connected component \(U\). Hence there exists \(X = X_\z \oplus X_\v \in U\)
  with \(\norm{X_\z}=1\) and \(F^\ext \in \Spin(\z)\) such that
  \(\rho(F^\ext)X_\z = Z_0\). Then \(Z_0 \oplus F^\ext X_\v \in U\),
  proving \(U^{\v}\neq\emptyset\).

  For \(S \in U^{\v}\), set \(X \coloneqq Z_0 \oplus S\), so \(X \in U\).
  By Proposition~\ref{p:m_geq_4}, there is an orthonormal system
  \(\{E_1,\ldots,E_m\}\subseteq \z'=\h_X\) such that
  \(J_i S = J_j S\) for all \(1\le i<j\le \tfrac{m}{2}\), where
  \(J_i \coloneqq E_{2i-1}\cbullet E_{2i}\cbullet\).
  Hence \((J_i-J_j)S=0\) for all \(i<j\), and therefore every element of
  the Abelian subalgebra \(\t\) in~\eqref{eq:torus} annihilates \(S\).
  Consequently, \(\exp(\t)\subseteq \Fix(S,\Spin(\z'))\).
  Since \(\dim\t = \tfrac{m}{2}-1\), the fixed-point group has rank at least
  \(\tfrac{m}{2}-1\).
\end{proof}

The above corollary will be helpful for small \(n = \dim \z\), where
fixed-point groups of spinors are well understood. By contrast, the
following corollary is useful when \(n\) is large, in particular for
\(n \ge 10\).

\begin{corollary}\label{co:m_geq_4}
  Let \(\z\) be a Euclidean vector space of
  dimension \(n \ge 6\) and \(\v\) be an orthogonal \(\Cl(\z)\)-module.
  Suppose that
  \begin{equation}\label{eq:dimensions_abschaetzung_1}
     \dim \v > 8n - 28
  \end{equation}
  holds. Then all the nonzero eigenvalue branches of \(-\underK^2\)
  have multiplicity \(2\).
\end{corollary}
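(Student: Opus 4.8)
The plan is to rule out multiplicity $m\ge 4$ by a dimension count resting on the two parts of Proposition~\ref{p:m_geq_4}, and then to conclude from the fact that nonzero eigenvalue branches have even multiplicity. So I would argue by contradiction: suppose some connected component $U$ of $\n\setminus\ram(\underK^2)$ carries an eigenvalue branch $\undermu$ of multiplicity $m\ge 4$. Fix a unit vector $Z_0\in\z$ and set $\z':=Z_0^\perp$, so $\dim\z'=n-1\ge 5$. By Corollary~\ref{co:fixed_point_group} the slice $U^{\v}=\{S\in\v\mid Z_0\oplus S\in U\}$ is nonempty and open in $\v$, and for every $S\in U^{\v}$ Proposition~\ref{p:m_geq_4}(a), applied at $X=Z_0\oplus S\in U$ (so $U(X)=U$ and $\h_X=\z'$), produces an orthonormal system $\{E_1,\dots,E_m\}$ of $\z'$ satisfying~\eqref{eq:var_1}; in particular $(E_1,E_2,E_3,E_4)$ is an orthonormal $4$-frame in $\z'$ and $E_1\cbullet E_2\cbullet S=E_3\cbullet E_4\cbullet S$.

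Next I would package these data into an incidence manifold. For $\mathbf E=(E_1,\dots,E_4)$ in the Stiefel manifold $V_4(\z')$ of orthonormal $4$-frames, put $\Phi(\mathbf E):=E_1\cbullet E_2\cbullet(\,\cdot\,)-E_3\cbullet E_4\cbullet(\,\cdot\,)\in\End{\v}$; since Clifford multiplication is bilinear, $\mathbf E\mapsto\Phi(\mathbf E)$ is smooth. Proposition~\ref{p:m_geq_4}(b) says $\dim\Kern\Phi(\mathbf E)=\tfrac12\dim\v$ for \emph{every} such $\mathbf E$, so $\Kern\Phi$ is a smooth subbundle of constant rank $\tfrac12\dim\v$ of the trivial bundle $V_4(\z')\times\v$. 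Its total space
\[
  I:=\bigl\{(\mathbf E,S)\in V_4(\z')\times\v \ \bigm|\ S\in\Kern\Phi(\mathbf E)\bigr\}
\]
is therefore a smooth manifold with
\[
  \dim I=\dim V_4(\z')+\tfrac12\dim\v=\bigl(4(n-1)-\tbinom{5}{2}\bigr)+\tfrac12\dim\v=(4n-14)+\tfrac12\dim\v .
\]
Here $V_4(\z')\ne\emptyset$ since $n\ge 6$. The previous paragraph shows $U^{\v}\subseteq p_2(I)$, where $p_2\colon I\to\v$, $(\mathbf E,S)\mapsto S$, is smooth.

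The finishing move is soft. Since $U^{\v}$ is open and nonempty, $p_2(I)$ has nonempty interior in $\v$. If $\dim I<\dim\v$, then every point of $I$ is a critical point of $p_2$, so $p_2(I)$ has measure zero by Sard's theorem — contradiction. Hence $\dim I\ge\dim\v$, i.e.\ $\tfrac12\dim\v\le 4n-14$, that is $\dim\v\le 8n-28$, contradicting~\eqref{eq:dimensions_abschaetzung_1}. Therefore no eigenvalue branch of multiplicity $m\ge 4$ exists; and since $\underK(X)$ is skew-symmetric, every nonzero eigenvalue branch of $-\underK^2$ has even multiplicity (Lemma~\ref{le:var}), hence multiplicity exactly $2$, which is the assertion.

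I do not expect a genuine obstacle here: the only load-bearing input, the constant-rank identity $\dim\Kern\Phi(\mathbf E)=\tfrac12\dim\v$, is precisely Proposition~\ref{p:m_geq_4}(b) and is already available, and the rest is dimension bookkeeping plus a transversality remark. The two points to handle carefully are (i) the nonemptiness of the slice $U^{\v}$ — this uses the $\Spin(\z)$- and scaling-equivariance of $\n\setminus\ram(\underK^2)$, exactly as in the proof of Corollary~\ref{co:fixed_point_group} — and (ii) that one must work over $\z'=Z_0^\perp$ rather than over $\z$: it is the factor $4(\dim\z')=4(n-1)$, not $4n$, that makes the bound come out to precisely $8n-28$ as in~\eqref{eq:dimensions_abschaetzung_1}.
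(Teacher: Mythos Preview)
Your proof is correct and follows the same overall strategy as the paper: a Sard-type dimension count built on the constant-rank identity $\dim\Kern\Phi(\mathbf E)=\tfrac12\dim\v$ from Proposition~\ref{p:m_geq_4}(b), yielding $\tfrac12\dim\v\le 4n-14$. The packaging differs: the paper fixes one orthonormal $4$-frame $(E_1,\dots,E_4)$ in $\z'$, parametrizes all others via the map $\Spin(\z')\times\Kern(J_1-J_2)\to\v$, $(F^{\mathrm{ext}},S)\mapsto F^{\mathrm{ext}}S$, and then quotients by the free $\Spin(\tilde\z)$-action (where $\tilde\z=\{E_0,\dots,E_4\}^\perp$) to bring the domain down to dimension $\dim\so(n-1)-\dim\so(n-5)+\tfrac12\dim\v=4n-14+\tfrac12\dim\v$. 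Your Stiefel-manifold incidence variety $I\subseteq V_4(\z')\times\v$ reaches the same dimension directly and bypasses the auxiliary verifications the paper needs (that $\Kern(J_1-J_2)$ is $\Spin(\tilde\z)$-invariant and that the action is free), at the cost of making the spin-equivariance less visible. Either way the arithmetic is the same: $\dim V_4(\bbR^{n-1})=\dim\so(n-1)-\dim\so(n-5)=4n-14$.
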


\begin{proof}
  We prove the inverse implication, namely, that the existence of an
  eigenvalue branch of \(-\underK^2\) of multiplicity at least \(4\)
  implies
  \begin{equation}\label{eq:dimensions_abschaetzung_2}
    \dim \v \;\le\;  8n - 28
  \end{equation}
  Choose a fixed orthonormal system \(\{E_0, \ldots, E_4\} \subseteq
  \z\), set
  \[
    \z' \coloneqq E_0^\perp \subseteq \z \quad\text{and}\quad
    \tilde{\z} \coloneqq \{E_0, \ldots, E_4\}^\perp \subseteq \z'.
  \]
  Then \(\tilde{\z} \subseteq \z' \subseteq \z\) is a chain of
  subspaces, giving inclusions \(\SO(\tilde{\z}) \subseteq \SO(\z')
  \subseteq \SO(\z)\). Passing to spin coverings yields
  \(\Spin(\tilde{\z}) \subseteq \Spin(\z') \subseteq \Spin(\z)\).
  Furthermore, \(J_1 \coloneqq E_1 \cbullet E_2 \cbullet\) and
  \(J_2 \coloneqq E_3 \cbullet E_4 \cbullet\) are orthogonal complex
  structures on \(\v\) such that, by Proposition~\ref{p:m_geq_4},
  \begin{equation}\label{eq:var_3}
    \Kern(J_1 - J_2) \coloneqq \{S \in \v \mid J_1 S = J_2 S\}
  \end{equation}
  has dimension \(\tfrac12 \dim \v\).

  We claim that \(\Kern(J_1 - J_2)\) is invariant under the action of
  \(\Spin(\tilde{\z})\): recall that Clifford multiplication is
  \(\Spin(\z)\)-equivariant, with \(\Spin(\z)\) acting on \(\z\) by
  the vector representation and on \(\v\) by the spin representation.
  If \(S \in \Kern(J_1 - J_2)\), then
  \begin{equation}\label{eq:var_4}
    E_1 \cbullet E_2 \cbullet S = E_3 \cbullet E_4 \cbullet S.
  \end{equation}
  For \(F \in \SO(\tilde{\z})\), we have \(F E_i = E_i\) for
  \(i = 1,\ldots,4\), hence any lift \(F^\ext\) of \(F\) to the spin
  covering satisfies
  \[
    E_1 \cbullet E_2 \cbullet F^\ext S
      = F^\ext(E_1 \cbullet E_2 \cbullet S)
      \stackrel{\eqref{eq:var_4}}{=}
      F^\ext(E_3 \cbullet E_4 \cbullet S)
      = E_3 \cbullet E_4 \cbullet F^\ext S.
  \]
  This shows \(F^\ext S \in \Kern(J_1 - J_2)\) and hence that
  \(\Kern(J_1 - J_2)\) is invariant under the action of
  \(\Spin(\tilde{\z})\).

  Consider also the map
  \begin{equation}\label{eq:surjective_map}
    \Spin(\z') \times \Kern(J_1 - J_2) \;\longrightarrow\; \v,\qquad
    (F^\ext, S) \longmapsto F^\ext S.
  \end{equation}
  Now suppose that some nonzero eigenvalue branch of \(-\underK^2\)
  has multiplicity at least \(4\) on a connected component \(U \subseteq
  \n \setminus \ram(\overK^2)\). We claim that the image
  of~\eqref{eq:surjective_map} contains the subset \(U^{\v} \subseteq
  \v\) defined in~\eqref{eq:U_sup_v_slice}.

  Let \(S \in U^{\v}\) and set \(X \coloneqq E_0 \oplus S \in U\).
  By Proposition~\ref{p:m_geq_4}, there exists an orthonormal system
  \(\{E_1', E_2', E_3', E_4'\} \subseteq \z'\) such that
  \begin{equation}\label{eq:var_5}
    E_1' \cbullet E_2' \cbullet S = E_3' \cbullet E_4' \cbullet S.
  \end{equation}
  Since \(\dim \z' = n - 1 \ge 5\), \(\SO(\z')\) acts transitively on
  orthonormal \(4\)-frames in \(\z'\). Hence there exists
  \(F \in \SO(\z')\) with \(F E_i' = E_i\) for
  \(i = 1,\ldots,4\). Then, for any such \(F\) and a lift \(F^\ext \in
  \Spin(\z')\) to the spin covering,
  \begin{align*}
    E_1 \cbullet E_2 \cbullet F^\ext S
      &= F E_1' \cbullet F E_2' \cbullet F^\ext S \\
      &= F^\ext(E_1' \cbullet E_2' \cbullet S) \\
      &\stackrel{\eqref{eq:var_5}}{=}
        F^\ext(E_3' \cbullet E_4' \cbullet S) \\
      &= E_3 \cbullet E_4 \cbullet F^\ext S.
  \end{align*}
  This shows that \(\tilde{S} \coloneqq F^\ext S\) lies in
  \(\Kern(J_1 - J_2)\). Since we can conversely write
  \(S = (F^\ext)^{-1} \tilde{S}\), we conclude that \(S\) belongs to
  the image of~\eqref{eq:surjective_map}.

  Moreover, since \(\Kern(J_1 - J_2)\) is \(\Spin(\tilde{\z})\)–
  invariant, the group \(\Spin(\tilde{\z})\) acts from the right on
  \(\Spin(\z') \times \Kern(J_1 - J_2)\) by
  \begin{equation}\label{eq:action_1}
    (F^\ext, S)H \coloneqq (F^\ext \circ H, H^{-1} S), \quad H \in \Spin(\tilde{\z}).
  \end{equation}
  Because \((F^\ext \circ H)(H^{-1} S) = F^\ext S\),  the map~\eqref{eq:surjective_map}
  descends to the quotient space
  \begin{equation}\label{eq:quotient_space}
    \bigl(\Spin(\z') \times \Kern(J_1 - J_2)\bigr) / \Spin(\tilde{\z}).
  \end{equation}

  Finally, combining the facts that~\eqref{eq:action_1} is a free
  action, that \(\dim U^{\v} = \dim \v\), and that the latter is twice
  the dimension of \(\Kern(J_1 - J_2)\) from~\eqref{eq:var_3}, the
  main corollary to Sard's theorem implies
  \[
   \frac{\dim \v}{2} \;\le\;  \dim(\so(n - 1)) - \dim(\so(n - 5))
  \]
  Moreover,
  \begin{align*}
    \dim(\so(n - 1)) - \dim(\so(n - 5))
      &= \frac{(n - 1)(n - 2)}{2} - \frac{(n - 5)(n - 6)}{2} \\
      &= \frac{(11 - 3)n - (30 - 2)}{2} \\
      &= 4n - 14.
  \end{align*}
  Multiplying by \(2\) yields \(\dim \v \le  8 n - 28\),
  i.e.~\eqref{eq:dimensions_abschaetzung_2}.
\end{proof}

Using the classification of irreducible Clifford modules (in
particular, the eightfold periodicity of Clifford algebras and their
representations), it is straightforward that
\eqref{eq:dimensions_abschaetzung_2} is automatically true for all
\(n \ge 10\); see Proposition~\ref{p:mult_ge_10} of
Section~\ref{se:n_ge_10}.

\section{\texorpdfstring{Classification of the eigenvalue branches of \(-\underK^2\)}%
  {Classification of the eigenvalue branches of (-K^2)}}\label{se:eigenvalue_branches}

In this section we establish the classification of the number and type of
eigenvalue branches of the family \(-\underK^2\) of nonnegative
self-adjoint operators defined in~\eqref{eq:underK_square}, associated
with a pair \((\z, \v)\) where \(\z\) is an \(n\)-dimensional Euclidean space with \(n\ge 1\)
and \(\v\) is a nontrivial orthogonal \(\Cl(\z)\)-module. This classification leads to
Theorem~\ref{th:main_2}. 

By Corollary~\ref{co:mu_=_constant}, the global eigenvalue \(0\) 
has multiplicity \(m_0 \in \{1,2\}\) with  \(m_0 \equiv n \pmod{2}\),
and the only other possibility for a global eigenvalue is \(1\).
Hence it remains to determine the nonzero eigenvalue branches and to
check whether or not \(1\) is a global eigenvalue. 

In case \(n \in \{1,2\}\) there is only the global eigenvalue \(0\). 
Thus we may start our analysis with \(n =3\).

\subsection{\texorpdfstring{Eigenvalue branches of \(-\underK^2\) for
\(n = 3\)}{Eigenvalue branches of (-K^2) for dim z = 3}}%
\label{se:n_=_3}

The \(4\)-dimensional Euclidean space \(\bbR^4\) carries the structure of
the associative, normed division algebra \(\bbH\) of quaternions. For
\(S, \tilde{S} \in \Im(\bbH)\), the product decomposes as
\[
  S \tilde{S}
  = -\langle S, \tilde{S} \rangle 1_{\bbH} + \Im_{\bbH}(S \tilde{S})
\]
and the imaginary part defines the standard vector cross product in
dimension \(3\):
\begin{equation}\label{eq:quaternionic_vector_cross_product}
  \Im(\bbH) \times \Im(\bbH) \to \Im(\bbH),\quad
  (S, \tilde{S}) \longmapsto
  S \times \tilde{S} \coloneqq \Im_{\bbH}(S \tilde{S})
\end{equation}

\begin{proposition}\label{p:n_=_3}
  Suppose that \(\dim \z = 3\), and let \(\v\) be a nontrivial
  orthogonal \(\Cl(\z)\)-module. The nonzero eigenvalue branches of the
  family \(-\underK^2\) of nonnegative self-adjoint operators defined
  in~\eqref{eq:underK_square} and associated with the pair \((\z,\v)\)
  are as follows:
  \begin{itemize}
  \item If \(\v\) is an isotypic \(\Cl(\z)\)-module, then the
    \(J^2\)-condition holds. In this case, \(\nu \coloneqq 1\) is a
    global eigenvalue of \(-\underK^2\) of multiplicity \(m_\nu = 2\).
  \item Otherwise, there is a nonconstant eigenvalue branch
    \(\undermu\) of multiplicity \(m_{\undermu} = 2\), described
    by~\eqref{eq:under_mu_for_n_=_3}.
  \end{itemize}
\end{proposition}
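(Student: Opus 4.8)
The plan is to reduce the whole statement to an explicit description of $\overK(X)$, exploiting that $\dim\z=3$ forces $\overK(X)$ to be very rigid. Since $\Cl(\z)$ is semisimple for $n=3$, with exactly two isomorphism classes of irreducible orthogonal modules, namely the quaternionic modules $\bbH$ and $\overline{\bbH}$ of \eqref{eq:Clifford_multiplication_for_n_=_3_+}--\eqref{eq:Clifford_multiplication_for_n_=_3_-} (distinguished by the sign with which the oriented volume element acts), the module $\v$ splits orthogonally as $\v=\v_+\oplus\v_-$ into its two isotypic components. The first step is to compute $\overK(X)$ on a single irreducible summand. Fix $X=X_\z\oplus X_\v\in D$ and complete $E_0\coloneqq X_\z/\norm{X_\z}$ to a positively oriented orthonormal basis $\{E_0,E_1,E_2\}$ of $\z\cong\bbR^3$. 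By Proposition~\ref{p:overK}(c), $\overK(X)$ is a skew-symmetric endomorphism of $\z$ annihilating $X_\z$, hence a scalar multiple of the cross product $X_\z\times(\cdot)$; using \eqref{eq:def_overK}, the Clifford relation $E_1\cbullet E_0=-E_0\cbullet E_1=\pm E_2\cbullet$ on such a summand (the sign being that with which the volume element $E_0\cbullet E_1\cbullet E_2$ acts), and \eqref{eq:Clifford_module_4}, one reads off $\overK(X)Z=\pm\norm{X_\v}^2\,(X_\z\times Z)$ on that summand, with sign $+$ on $\bbH$-type summands and $-$ on $\overline{\bbH}$-type summands. (The same conclusion follows from a direct quaternion computation via \eqref{eq:de_K_4}.)

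Next I would assemble the general case. Writing $X_\v=S_+\oplus S_-$ according to $\v=\v_+\oplus\v_-$ and decomposing each $\v_\pm$ further into irreducibles, Lemma~\ref{le:reducible_Clifford_module}(a) exhibits $\overK(X)$ as the sum of the summand operators; since all of these are scalar multiples of the single operator $X_\z\times(\cdot)$, they add up to
\[
  \overK(X)Z=\bigl(\norm{S_+}^2-\norm{S_-}^2\bigr)\,(X_\z\times Z).
\]
Because $X_\z\times(X_\z\times Z)=-\norm{X_\z}^2 Z$ for $Z\in\h_X$, the operator $-\overK(X)^2$ acts on $\h_X$ as $\norm{S_0}^2(\norm{S_+}^2-\norm{S_-}^2)^2\,\Id$ and annihilates $\bbR X_\z$; thus $-\overK(X)^2$ has, whenever it is nonzero, the single nonzero eigenvalue $\overmu(X)=\norm{S_0}^2(\norm{S_+}^2-\norm{S_-}^2)^2$ of multiplicity $\dim\h_X=2$, in agreement with \eqref{eq:over_mu_for_n_=_3}. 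Dividing by $\norm{X_\z}^2\norm{X_\v}^4=\norm{S_0}^2(\norm{S_+}^2+\norm{S_-}^2)^2$ as in \eqref{eq:undermu_overmu} yields the eigenvalue $\undermu(X)=(\norm{S_+}^2-\norm{S_-}^2)^2/(\norm{S_+}^2+\norm{S_-}^2)^2$ of $-\underK(X)^2$, that is, \eqref{eq:under_mu_for_n_=_3}.

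Finally I would distinguish the two cases. If $\v$ is isotypic, then $S_-\equiv 0$ (or $S_+\equiv 0$), so $\undermu\equiv 1$; equivalently $-\underK(X)^2|_{\h_X}=\Id$ for every $X$ with $X_\z\neq 0$ and $X_\v\neq 0$, which is exactly the $J^2$-condition, and $\nu=1$ is a global eigenvalue of $-\underK^2$ with multiplicity $m_\nu=\dim\h_X=2$. If $\v$ is not isotypic, both $\v_\pm$ are nonzero, so the function $X\mapsto(\norm{S_+}^2-\norm{S_-}^2)^2/(\norm{S_+}^2+\norm{S_-}^2)^2$ is nonconstant on each connected component of $D\setminus\ram(\underK^2)$ --- within each component it attains the value $1$ (at points with $S_+=0$ or $S_-=0$) as well as values strictly less than $1$, all with the same multiplicity pattern, so these points indeed lie off the ramification locus --- while having constant multiplicity $2$ there. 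Since for $n=3>2$ one has $\n\setminus\ram(\overK^2)\subseteq D$ with $D\setminus\ram(\underK^2)=\n\setminus\ram(\overK^2)$, and eigenvalue branches of $-\underK^2$ are real-analytic on each such component, $\undermu$ is the claimed nonconstant eigenvalue branch of multiplicity $2$.

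The only genuine work is the evaluation of $\overK$ on an irreducible summand --- in particular keeping track of the orientation sign contributed by each isotypic component (equivalently, that the two irreducible $\Cl_3$-modules are separated by the sign of the volume element) --- after which the elementary linear algebra of skew-symmetric operators on $\bbR^3$ finishes the argument.
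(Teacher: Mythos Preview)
Your proposal is correct and follows essentially the same approach as the paper: compute $\overK$ on each irreducible summand as $\pm\norm{X_\v}^2(X_\z\times\,\cdot\,)$, add the contributions via Lemma~\ref{le:reducible_Clifford_module}(a), and read off the eigenvalue branch \eqref{eq:under_mu_for_n_=_3}. The only cosmetic difference is that the paper evaluates $\overK$ on irreducibles by direct quaternion multiplication through \eqref{eq:def_K_2}, whereas you extract the sign from the action of the volume element via \eqref{eq:def_overK} and the Clifford relations; both routes are equivalent and the remaining case distinction (isotypic versus non-isotypic) is handled identically.
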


\begin{proof}
Set \(\z \coloneqq \Im(\bbH)\). For each nontrivial orthogonal \(\Cl(\z)\)-module
\(\v\), we write
\[
  \overK_{\v} \coloneqq \overK_{(\z,\v)},
  \qquad
  \underK_{\v} \coloneqq \underK_{(\z,\v)}
\]
for the operator families from \eqref{eq:def_overK} and
\eqref{eq:def_underK}.

Assume first that \(\v \in \{\bbH,\overline{\bbH}\}\), where \(\bbH\) and
\(\overline{\bbH}\) denote the two irreducible \(\Cl(\z)\)-modules
(cf.\ Section~\ref{se:explicit_formulas}). Using the Clifford
multiplication formulas \eqref{eq:Clifford_multiplication_for_n_=_3_+}
and \eqref{eq:Clifford_multiplication_for_n_=_3_-}, together with
\eqref{eq:def_K_2} and associativity of \(\bbH\), we obtain
\begin{subequations}\label{eq:overK_quaternions}
\begin{align}
  \overK_{\bbH}(S_0 \oplus S)\tilde S
  &= \Im_{\bbH}\bigl(S^* S\, S_0 \tilde S\bigr)
    = \norm{S}^2\,(S_0 \times \tilde S)
  \label{eq:overK_quaternions_+}\\
  \overK_{\overline{\bbH}}(S_0 \oplus S)\tilde S
  &= -\,\Im_{\bbH}\bigl(S^* S\, S_0 \tilde S\bigr)
   = -\,\norm{S}^2\,(S_0 \times \tilde S)
  \label{eq:overK_quaternions_-}
\end{align}
\end{subequations}
for all \(S_0,\tilde S \in \Im(\bbH)\) and \(S \in \bbH\), where
\(S_0 \times \tilde S \coloneqq \Im_{\bbH}(S_0\tilde S)\) is the vector
cross product from \eqref{eq:quaternionic_vector_cross_product}.

Now let \(\v=\v_+\oplus\v_-\) be the decomposition into isotypic
components, where \(\v_+\) is a direct sum of copies of \(\bbH\) and
\(\v_-\) is a direct sum of copies of \(\overline{\bbH}\). For
\(S_0 \oplus S_+ \oplus S_- \in \Im(\bbH)\oplus\v_+\oplus\v_-\),
Lemma~\ref{le:reducible_Clifford_module}~(a) gives
\begin{align*}
  \overK_{\v}(S_0 \oplus S_+ \oplus S_-)
  &= \overK_{\v_+}(S_0 \oplus S_+) + \overK_{\v_-}(S_0 \oplus S_-) \\
  &\stackrel{\eqref{eq:overK_quaternions}}{=}
     \bigl(\norm{S_+}^2 - \norm{S_-}^2\bigr)\, S_0 \times \Box
     \colon \Im(\bbH) \to \Im(\bbH)
\end{align*}
In particular, for \(S_0 \neq 0\), 
\[
  \underK_{\v}(S_0 \oplus S_+ \oplus S_-)
  =
  \Bigl(\frac{\norm{S_+}^2 - \norm{S_-}^2}{\norm{S_+}^2 + \norm{S_-}^2}\Bigr)
  \frac{S_0}{\norm{S_0}} \times \Box
  \colon \Im(\bbH) \to \Im(\bbH)
\]
The map \(S \mapsto \frac{S_0}{\norm{S_0}} \times S\) is an orthogonal complex
structure on \(S_0^\perp\), hence \(-\underK_{\v}^2\) has the eigenvalue
\[
  \undermu(S_0 \oplus S_+ \oplus S_-)
  \coloneqq
  \frac{\bigl(\norm{S_+}^2 - \norm{S_-}^2\bigr)^2}
       {\bigl(\norm{S_+}^2 + \norm{S_-}^2\bigr)^2}
\]
with multiplicity \(m_{\undermu}=2\). Moreover, \(\undermu \equiv 1\) if and
only if \(\v_+=\{0\}\) or \(\v_-=\{0\}\), i.e., if and only if \(\v\) is
isotypic. In that case \(\nu = 1\) is a global eigenvalue of \(-\underK^2\)
with multiplicity \(m_\nu = 2\). Otherwise, \(\undermu\) is nonconstant and
this yields \eqref{eq:under_mu_for_n_=_3}.
\end{proof}

\subsection{\texorpdfstring{Eigenvalue branches of \(-\underK^2\) for
\(4 \le n \le 9\)}{Eigenvalue branches of (-K^2) for
4 ≤ n ≤ 9}}%
\label{se:4_leq_n_leq_9}

Recall that \(\bbR^8\) carries the structure of the \(8\)-dimensional
normed division algebra \(\bbO\) of octonions; see~\cite{Ba}. As with
multiplication in \(\bbH\), the imaginary part of the octonionic product
\begin{equation}\label{eq:octonionic_product}
  S\tilde{S}
  = -\langle S, \tilde{S} \rangle 1_{\bbO} + \Im_{\bbO}(S\tilde{S})
\end{equation}
of two purely imaginary octonions \(S\) and \(\tilde{S}\) defines the
vector cross product in dimension \(7\); cf.\ also~\cite[Ch.~2]{Gr}:
\begin{equation}\label{eq:octonionic_vector_cross_product}
  \Im(\bbO) \times \Im(\bbO) \to \Im(\bbO),\quad
  (S, \tilde{S}) \longmapsto
  S \times \tilde{S} \coloneqq \Im_{\bbO}(S \tilde{S})
\end{equation}

\begin{proposition}\label{p:n_=_7}
  Suppose that \(\dim \z = 7\), and let \(\v\) be a nontrivial
  orthogonal \(\Cl(\z)\)-module. The nonzero eigenvalue branches of the
  family \(-\underK^2\) of nonnegative self-adjoint operators defined in
  \eqref{eq:underK_square} and associated with the pair \((\z,\v)\) are
  as follows:
  \begin{itemize}
  \item If \(\v\) is an irreducible \(\Cl(\z)\)-module, then the
    \(J^2\)-condition holds. In this case, \(\nu \coloneqq 1\) is a
    global eigenvalue of \(-\underK^2\) of multiplicity \(m_\nu = 6\).

  \item If \(\v\) splits as the orthogonal direct sum
    \(\v_1 \oplus \v_2\) of two isomorphic irreducible
    \(\Cl(\z)\)-submodules, then \(\nu \coloneqq 1\) is a global
    eigenvalue of \(-\underK^2\) of multiplicity \(m_\nu = 2\).
    In addition, there is a nonconstant eigenvalue branch \(\undermu\)
    of multiplicity \(m_{\undermu} = 4\), obtained by rescaling the
    polynomial function defined in~\eqref{eq:over_mu_for_n_=_7_1}.

  \item If \(\v\) splits as the orthogonal direct sum
    \(\v_1 \oplus \v_2\) of two nonisomorphic irreducible
    \(\Cl(\z)\)-submodules, then there are two nonconstant eigenvalue
    branches \(\undermu_1\) and \(\undermu_2\), of multiplicities
    \(m_{\undermu_1} = 2\) and \(m_{\undermu_2} = 4\), respectively,
    obtained by rescaling the polynomial functions defined
    in~\eqref{eq:over_mu_for_n_=_7_2}
    and~\eqref{eq:over_mu_for_n_=_7_3}.

  \item In all other cases, \(\v\) splits as the orthogonal direct sum
    of at least three irreducible \(\Cl(\z)\)-submodules, and there are
    three distinct nonconstant eigenvalue branches, each of multiplicity
    \(2\).
  \end{itemize}
\end{proposition}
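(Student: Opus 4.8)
The plan is to handle the four cases in turn, using throughout the decomposition $\overK_{\v}=\sum_i\overK_{\v_i}$ of Lemma~\ref{le:reducible_Clifford_module}(a), the description of the $\nu=1$–eigenspace in Proposition~\ref{p:overK}(e) and Lemma~\ref{le:reducible_Clifford_module}(c), and the equivariance/homogeneity of $\overK$. For the \emph{irreducible case} I would first observe that $\Spin(\z)=\Spin(7)$ acts transitively on the product $\rmS^{6}\times\rmS^{7}\subseteq\z\times\v$ of unit spheres: $\Spin(7)\to\SO(7)$ is surjective, hence transitive on $\rmS^{6}\subset\z$, and the stabilizer of a unit vector of $\z$ is $\Spin(6)\cong\SU(4)$, which acts on $\v=\bbO\cong\bbR^{8}\cong\bbC^{4}$ as the realification of the standard representation and so transitively on $\rmS^{7}$. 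Since $\underK$ is $\Aut(N,g)$–equivariant by~\eqref{eq:K_Aut_equivariance} and invariant under independent positive rescalings of $X_\z$ and $X_\v$, this forces the conjugacy class of $-\underK(X)^{2}$ to be constant on $D$, so every eigenvalue branch of $-\underK^{2}$ is constant, i.e.\ a global eigenvalue. By Corollary~\ref{co:mu_=_constant} the only global eigenvalues are $0$ and $1$, and $m_0=1$ since $n=7$ is odd; hence the remaining six–dimensional eigenspace is $\Eig_1$, that is, $-\underK(X)^{2}|_{\h_X}=\Id$. This is the $J^2$–condition with $m_1=6$, and it simultaneously provides the independent proof, promised in Section~\ref{se:main_result}, that for $n=7$ the $J^2$–condition holds precisely in the irreducible case.

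For the \emph{reducible cases} write $\v=\bigoplus_{i=1}^{r}\v_i$ with each $\v_i$ isomorphic to $\bbO$ or $\overline{\bbO}$ (Clifford action by right octonionic multiplication with sign $\epsilon_i=+1$, resp.\ $-1$, cf.~\eqref{eq:Clifford_multiplication_for_n_=_7_1}, \eqref{eq:Clifford_multiplication_for_n_=_7_2}), and fix $X=S_0\oplus S_1\oplus\dots\oplus S_r$ with all components nonzero. The $J^2$–condition just established, applied to each $\v_i$, gives $\overK_{\v_i}(S_0\oplus S_i)=\epsilon_i\,\|S_0\|\,\|S_i\|^{2}J_{S_i}$ on $\h_X=S_0^{\perp}$ and $0$ on $\bbR S_0$, where $\{J_{S}\}_{S\in\rmS^{7}}$ is a $\Spin(6)$–equivariant family of orthogonal complex structures on $S_0^{\perp}$ attached to the positive pinor module; from~\eqref{eq:def_K_2} and octonionic alternativity one checks $J_{S}Z=\|S_0\|^{-1}\|S\|^{-2}\Im\bigl((S_0\bar S)(SZ)\bigr)$, and that the $\overline{\bbO}$–contribution is the negative of the $\bbO$–contribution for the same underlying vector. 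Hence $\underK_{\v}(X)|_{S_0^{\perp}}=\|X_\v\|^{-2}\sum_i\epsilon_i\|S_i\|^{2}J_{S_i}$, while iterated use of Lemma~\ref{le:reducible_Clifford_module}(c) gives $\Eig_1\bigl(-\underK_\v(X)^2\bigr)=\{\,Z\in S_0^{\perp}\mid \epsilon_1 J_{S_1}Z=\dots=\epsilon_r J_{S_r}Z\,\}$. The decisive algebraic input needed is the ``relative position'' of two octonionic complex structures: reducing by $\Spin(6)$ one may take $S_1$ to be the real unit $1\in\bbO$, so $J_{1}Z=\tfrac{S_0}{\|S_0\|}\times Z$ is the cross–product complex structure, and reducing further by the stabilizer $\SU(3)$ of $1$ in $\Spin(6)$ one may take $S_2=a\cdot1+b\,S_0+c\,v$ with $v\perp 1,S_0$ a fixed unit vector and $a^2+b^2+c^2=1$; a short computation inside the quaternion subalgebra generated by $S_0$ and $v$ shows that $J_{S_1}$ and $J_{S_2}$ agree on the $J_{1}$–complex line $\langle v,\ S_0\times v\rangle$ — so $\dim\Kern(J_{S_1}-J_{S_2})\ge 2$ always — and that on the complementary four–dimensional subspace the orthogonal operator $J_{S_1}J_{S_2}$ is a rotation by a single angle $\phi=\phi(a,b,c)\in(0,\pi)$ (equal on two $2$–planes), so its spectrum is $\{-1,-1,e^{\pm\i\phi},e^{\pm\i\phi}\}$.

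With this in hand the \emph{case $r=2$} is immediate: $\underK_\v(X)|_{S_0^{\perp}}=tA+(1-t)B$ with $A=J_{S_1}$, $B=\epsilon_1\epsilon_2\,J_{S_2}$, $t=\|S_1\|^{2}/(\|S_1\|^{2}+\|S_2\|^{2})\in(0,1)$, hence $-\underK_\v(X)^{2}=(t^{2}+(1-t)^{2})\Id-t(1-t)(AB+BA)$, whose eigenvalues are $t^{2}+(1-t)^{2}-2t(1-t)\cos\theta_k$ with $e^{\pm\i\theta_k}$ the eigenvalues of $O:=AB$. If $\v_1\cong\v_2$ then $O=J_{S_1}J_{S_2}$: the $(-1)$–eigenspace contributes the constant value $1$ with multiplicity $2$ (so $\nu=1$ is a global eigenvalue, $m_1=2$), and the $e^{\pm\i\phi}$–part contributes $t^{2}+(1-t)^{2}-2t(1-t)\cos\phi$ with multiplicity $4$, which takes values in $(0,1)$ as $t$ varies and is therefore non–constant by Corollary~\ref{co:mu_=_constant}; after rescaling it is~\eqref{eq:over_mu_for_n_=_7_1}. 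If $\v_1\not\cong\v_2$ then $O=-J_{S_1}J_{S_2}$ has spectrum $\{1,1,e^{\pm\i(\pi-\phi)},e^{\pm\i(\pi-\phi)}\}$, producing the two non–constant branches $(1-2t)^{2}$ of multiplicity $2$ (which rescales to~\eqref{eq:over_mu_for_n_=_7_2}, exactly as in the quaternionic case $n=3$, cf.~\eqref{eq:under_mu_for_n_=_3}) and $t^{2}+(1-t)^{2}+2t(1-t)\cos\phi$ of multiplicity $4$ (rescaling to~\eqref{eq:over_mu_for_n_=_7_3}); in particular $\nu=1$ is not global here.

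For the \emph{case $r\ge 3$}, grouping $\v=\v_1\oplus(\v_2\oplus\dots\oplus\v_r)$ and invoking Corollary~\ref{co:reducible_Clifford_module}, if $\nu=1$ were a global eigenvalue of $-\underK_\v^2$ then $\v_2\oplus\dots\oplus\v_r$ would satisfy the $J^2$–condition; but a direct sum of at least two irreducible $\Cl(7)$–modules has $m_1\le 2<6=n-1$ by the $r=2$ analysis, so it does not. Hence $\nu=1$ is not global, and together with $m_0=1$ this means the entire six–dimensional nonzero part of $\Spec\bigl(-\underK(X)^2|_{\h_X}\bigr)$ lies in non–constant branches. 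Finally, restricted to $\Spin(\z')\cong\Spin(6)\cong\SU(4)$ with $\z'=Z_0^{\perp}$, the module $\v$ is a sum of $r\ge 3$ copies of the realified standard $\bbC^{4}$, so a generic spinor $S\in\v$ has trivial fixed–point group $\Fix(S,\Spin(\z'))$; by Corollary~\ref{co:fixed_point_group} no eigenvalue branch can have multiplicity $\ge 4$. Therefore the six–dimensional part splits into exactly three non–constant branches of multiplicity $2$ each, necessarily distinct (two coinciding ones would yield a multiplicity–$4$ value). The main obstacle in this programme is the $r=2$ step: establishing that two octonionic complex structures over a fixed $Z_0$ always share an invariant complex line and, on its orthogonal complement, differ by a \emph{single} rotation angle — this is the octonionic computation underlying the closed forms~\eqref{eq:over_mu_for_n_=_7_1}–\eqref{eq:over_mu_for_n_=_7_3}, and it is where triality and the $G_2$–structure are needed to control the bookkeeping.
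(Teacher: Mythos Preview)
Your overall architecture matches the paper's proof exactly: transitivity of $\Spin(7)$ on $\rmS^6\times\rmS^7$ for the irreducible case, the decomposition $\underK_\v=\sum_i t_i\,\epsilon_i\,J_{S_i}$ for the reducible cases, Corollary~\ref{co:reducible_Clifford_module} to rule out $\nu=1$ when $r\ge 3$, and Corollary~\ref{co:fixed_point_group} together with the isotropy chain $\Spin(6)\supset\SU(3)\supset\SU(2)\supset\{e\}$ to exclude multiplicity~$\ge 4$ branches for $r\ge 3$. These parts are essentially identical to the paper.

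The one substantive divergence is in the $r=2$ analysis. You propose to normalise $S_1=1$ via $\Spin(6)$, then use $\SU(3)$ to put $S_2$ into a three-parameter normal form, and finally compute $J_{S_1}J_{S_2}$ directly in an octonionic quaternion subalgebra. The paper instead avoids this computation entirely: it first observes that $J_{S_1}$ and $J_{S_2}$ are \emph{equally oriented} complex structures on $\bbR^6$ (via $\Spin(6)$-equivariance), then invokes the topological result of \cite{FGM} that any two such share a common complex line $W$. On the four-dimensional complement $W^\perp$, the paper uses the identification of orientation-preserving complex structures with $\SO(4)/\U(2)\cong\rmS^2$ (unit imaginary quaternions acting by left multiplication), so that $J_i|_{W^\perp}=\rmL_{w_i}$ and the combination $\|S_1\|^2 J_1\pm\|S_2\|^2 J_2$ restricts to $\rmL_{\|S_1\|^2 w_1\pm\|S_2\|^2 w_2}$, automatically yielding a single eigenvalue of multiplicity~$4$. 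This cleanly bypasses exactly the step you flag as the ``main obstacle''; the explicit formulas \eqref{eq:over_mu_for_n_=_7_1}--\eqref{eq:over_mu_for_n_=_7_3} are then recovered from trace computations and triality, not from the block decomposition itself. Your direct route should also work, but the paper's argument is both shorter and more conceptual, and it explains \emph{why} the multiplicity-$4$ phenomenon occurs (it is the rank of $\SO(4)/\U(2)$) rather than just verifying it.
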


\begin{proof}
In the following, we fix \(\z \coloneqq \Im(\bbO)\) and let \(\v\)
be a nontrivial orthogonal \(\Cl(\z)\)-module. For brevity, set
\[
  \overK_{\v} \coloneqq \overK_{(\z,\v)},
  \qquad
  \underK_{\v} \coloneqq \underK_{(\z,\v)}
\]
as in \eqref{eq:def_overK} and \eqref{eq:def_underK}.

Assume first that \(\v \in \{\bbO,\overline{\bbO}\}\), where \(\bbO\) and
\(\overline{\bbO}\) denote the two irreducible \(\Cl(\z)\)-modules
(cf.\ Section~\ref{se:explicit_formulas}). In particular, we
write \(\overK_{\bbO}\coloneqq \overK_{(\z,\bbO)}\) and
\(\underK_{\bbO}\coloneqq \underK_{(\z,\bbO)}\). Then
\begin{align}\label{eq:overK_bbO}
  \overK_{\bbO}(S_0 \oplus S_1)\tilde{S}
  &\stackrel{\eqref{eq:Clifford_multiplication_for_n_=_7_1},
             \eqref{eq:def_K_2}}{=}
  \Im_{\bbO}\bigl(S_1^*\bigl((S_1 S_0)\tilde{S}\bigr)\bigr)
\end{align}
for all \(S_0 \oplus S_1 \in \z \oplus \bbO\) and \(\tilde{S} \in \z\).
If we replace \(\bbO\) by \(\overline{\bbO}\), then the above formulas
change sign by \eqref{eq:Clifford_multiplication_for_n_=_7_2}, and hence
\(\overK_{\overline{\bbO}} = -\,\overK_{\bbO}\).

By \(\Spin(7)\)-equivariance of \(\underK_{\bbO}\) (cf.\
\eqref{eq:K_Aut_equivariance}), the spectrum of \(-\underK_{\bbO}(X)^2\)
is constant along \(\Spin(7)\)-orbits. Since \(\Spin(7)\) acts
transitively on \(\rmS^6 \times \rmS^7 \subseteq \Im(\bbO)\times\bbO\),
it follows that the eigenvalue branches of \(-\underK_{\bbO}^2\) are
constant. By Corollary~\ref{co:mu_=_constant}, \(-\underK_{\bbO}^2\) has
the global eigenvalue \(\nu = 1\) with multiplicity \(m_\nu = 6\).

This is also clear directly from \eqref{eq:overK_bbO}. For \(S_0 \neq 0\),
\begin{equation}\label{eq:K_=_J_1}
  \underK_{\bbO}(S_0 \oplus 1_{\bbO})
  \stackrel{\eqref{eq:octonionic_vector_cross_product},
            \eqref{eq:overK_bbO}}{=}
  \frac{S_0}{\norm{S_0}} \times \Box
  \colon \Im(\bbO) \longrightarrow \Im(\bbO)
\end{equation}
defines an orthogonal complex structure on \(S_0^\perp \subseteq
\Im(\bbO)\). Hence \(-\underK_{\bbO}(S_0 \oplus 1_{\bbO})^2\bigl|_{S_0^\perp}=\Id\)
has the single eigenvalue \(1\) of multiplicity \(6\). Since
\(\underK_{\overline{\bbO}} = -\,\underK_{\bbO}\), the same conclusion
holds for \(-\underK_{\overline{\bbO}}^2\). This clarifies the
irreducible case.

Moreover, with these preliminaries in place, we handle the case
\(\v = \v_1 \oplus \v_2\), where \(\v_1\) and \(\v_2\) are irreducible
\(8\)-dimensional \(\Cl(\Im(\bbO))\)-modules. Fix the standard models
\(\bbO\) and \(\overline{\bbO}\), and identify each \(\v_i\) with one of
these. Put
\[
  \varepsilon_i \coloneqq
  \begin{cases}
    +1, & \v_i = \bbO,\\
    -1, & \v_i = \overline{\bbO}
  \end{cases}
\]
so that \(\overK_{\v_i} = \varepsilon_i\,\overK_{\bbO}\) and
\(\underK_{\v_i} = \varepsilon_i\,\underK_{\bbO}\).

By Lemma~\ref{le:reducible_Clifford_module}~(a), we have
\[
  \overK_{\v}(S_0 \oplus S_1 \oplus S_2)
  = \varepsilon_1\,\overK_{\bbO}(S_0 \oplus S_1)
    + \varepsilon_2\,\overK_{\bbO}(S_0 \oplus S_2)
\]
for all \(S_0 \oplus S_1 \oplus S_2 \in \z \oplus \bbO^2\).

For the rescaled family, Part~(b) of Lemma~\ref{le:reducible_Clifford_module}
yields, whenever \((S_1,S_2)\neq (0,0)\),
\begin{equation}\label{eq:K_6_7_eps}
  \underK_{\v}(S_0 \oplus S_1 \oplus S_2)
  =
  \frac{\norm{S_1}^2\,\varepsilon_1\,\underK_{\bbO}(S_0 \oplus S_1)
        + \norm{S_2}^2\,\varepsilon_2\,\underK_{\bbO}(S_0 \oplus S_2)}
       {\norm{S_1}^2 + \norm{S_2}^2}
\end{equation}

For simplicity assume \(S_0 = E_7\), so that \(S_0^\perp \cong \bbR^6\).
Assume \(S_1,S_2 \neq 0\) (otherwise we are in the irreducible case).
Since \(\Spin(6)\) acts transitively on the unit sphere in the spin
representation, there exists \(F \in \Spin(6)\) such that
\(S_1\) is a multiple of \(F^{\mathrm{ext}} S_2\). By
\(\Spin(6)\)-equivariance of \(\underK_{\bbO}\) (cf.\ \eqref{eq:K_Aut_equivariance}),
it follows that
\[
  \underK_{\bbO}(E_7 \oplus S_1)|_{\bbR^6}
  = F \circ \underK_{\bbO}(E_7 \oplus S_2)|_{\bbR^6} \circ F^{-1}
\]
Hence \(J_1 \coloneqq \underK_{\bbO}(E_7 \oplus S_1)|_{\bbR^6}\) and
\(J_2 \coloneqq \underK_{\bbO}(E_7 \oplus S_2)|_{\bbR^6}\) define equally
oriented orthogonal complex structures on \(\bbR^6\).

By the topological argument given after Proposition~2.1 in~\cite{FGM}, any two
equally oriented orthogonal complex structures \(J_1\) and \(J_2\) on \(\bbR^6\)
share a common complex line. That is, there exists a \(2\)-dimensional subspace
\(W \subseteq \bbR^6\) such that \(J_i(W) \subseteq W\) for \(i = 1,2\) and
\(J_1|_{W} = J_2|_{W}\). Hence, on \(W\),
\[
  \underK_{\v}(E_7 \oplus S_1 \oplus S_2)\bigl|_{W}
  =
  \begin{cases}
    \pm\,J_1|_{W}, & \text{if } \v_1 \cong \v_2,\\[4pt]
    \pm\,\dfrac{\norm{S_1}^2 - \norm{S_2}^2}{\norm{S_1}^2 + \norm{S_2}^2}\,J_1|_{W},
      & \text{otherwise,}
  \end{cases}
\]
and therefore
\begin{align*}
  \nu(X) &\coloneqq 1 && \text{if } \v_1 \cong \v_2,\\
  \undermu_1(X) &\coloneqq
    \frac{(\norm{S_1}^2 - \norm{S_2}^2)^2}{(\norm{S_1}^2 + \norm{S_2}^2)^2}
    && \text{otherwise}
\end{align*}
defines, in each case, an eigenvalue branch of \(-\underK_{\v}^2\) of multiplicity
at least \(2\). In the case \(\v_1 = \bbO\) and \(\v_2 = \overline{\bbO}\), this yields
\eqref{eq:over_mu_for_n_=_7_2}. Note the similarity with the nonconstant eigenvalue
branch~\eqref{eq:under_mu_for_n_=_3} in the case \(n = 3\).

In order to show that there is a second nonconstant eigenvalue branch \(\undermu_2\)
of multiplicity \(4\), note that \(J_1|_{W^\perp}\) and \(J_2|_{W^\perp}\) induce the
same complex orientation on \(W^\perp\). The space of orientation-preserving
orthogonal complex structures on \(W^\perp \cong \bbR^4\) is isomorphic to
\(\SO(4)/\U(2)\), and hence has dimension \(2\). Therefore it can be identified with
the \(2\)-sphere of purely imaginary unit quaternions acting by left multiplication
as orientation-preserving orthogonal complex structures on \(\bbR^4 \cong \bbH\)
(see, for example,~\cite{GM}). Consequently, there exist purely imaginary unit
quaternions \(w_1\) and \(w_2\) such that
\[
  J_1|_{W^\perp} = \rmL_{w_1},
  \qquad
  J_2|_{W^\perp} = \rmL_{w_2},
\]
where \(\rmL_{w}\) denotes left multiplication by \(w\). Then the restriction of
\(\norm{S_1}^2 J_1 \pm \norm{S_2}^2 J_2\) to \(W^\perp\) is given by left multiplication
with \(\norm{S_1}^2 w_1 \pm \norm{S_2}^2 w_2\), which is again a purely imaginary
quaternion. Hence \(-\underK_{\v}(X)^2\) has a second eigenvalue \(\undermu_2(X)\)
of multiplicity \(m_{\undermu_2} = 4\), characterized by
\begin{equation}\label{eq:under_mu_for_n_=_7_4}
  4\,\undermu_2(X)
  = -\trace\bigl(\underK_{\v}(X)^2|_{W^\perp}\bigr)
  =
  \begin{cases}
    -\trace\bigl(\underK_{\v}(X)^2\bigr) - 2, &
      \text{if } \v_1 \cong \v_2,\\[4pt]
    -\trace\bigl(\underK_{\v}(X)^2\bigr) - 2\,\undermu_1(X), &
      \text{otherwise.}
  \end{cases}
\end{equation}
Moreover,
\begin{equation}\label{eq:under_mu_for_n_=_7_5}
  -\trace\bigl(\overK_{\v}(S_0 \oplus S_1 \oplus S_2)^2\bigr)
  = 6\,\norm{S_0}^2\bigl(\norm{S_1}^4 + \norm{S_2}^4\bigr)
    \mp 2\,\trace\bigl(\overK_{\bbO}(S_0 \oplus S_1)\circ
    \overK_{\bbO}(S_0 \oplus S_2)\bigr),
\end{equation}
where the upper sign is taken if \(\v_1 \cong \v_2\) and the lower sign otherwise.

We claim that
\begin{equation}\label{eq:under_mu_for_n_=_7_6}
  -\trace\bigl(\overK_{\bbO}(S_0 \oplus S_1)\circ \overK_{\bbO}(S_0 \oplus S_2)\bigr)
  = 2\,\norm{S_0}^2 \norm{S_1}^2 \norm{S_2}^2
    + 4\,\bigl\langle S_2^*(S_1 S_0),\, (S_0 S_2^*)S_1 \bigr\rangle
\end{equation}
for all \(S_0 \oplus S_1 \oplus S_2 \in \Im(\bbO) \oplus \bbO^2\).

For this, first assume that \(S_2 = 1_{\bbO}\). Then a direct
calculation shows that
\begin{equation}\label{eq:under_mu_for_n_=_7_7}
  -\trace\bigl(\overK_{\bbO}(S_0 \oplus S_1)\circ \overK_{\bbO}(S_0 \oplus 1_{\bbO})\bigr)
  = 2\,\norm{S_0}^2\norm{S_1}^2 + 4\,\langle S_1 S_0, S_0 S_1 \rangle
\end{equation}
as follows.

The subalgebra of \(\bbO\) generated by \(\Im_\bbO(S_0)\) and
\(\Im_\bbO(S_1)\) is contained in a quaternionic subalgebra. Hence,
after replacing \(\bbH\) by this quaternionic subalgebra, we may assume
\(\mathrm{span}_\bbR\{S_0, S_1\} \subseteq \bbH \subseteq \bbO\).
Fix a unit \(u \in \bbH^{\perp}\). Right multiplication by \(u\) gives
an isometric isomorphism \(\bbH \to \bbH^{\perp}\), \(h \mapsto h\,u\).
Thus every \(x \in \bbO\) can be written uniquely as \(x = a + b\,u\)
with \(a, b \in \bbH\). Via the identification
\(a + b\,u \leftrightarrow a \oplus b\) (so \(u \leftrightarrow 0 \oplus 1\)),
we view \(\bbO\) as the vector space \(\bbH \oplus \bbH\), with
\begin{equation}\label{eq:Cayley_Dickson}
  (a \oplus b)(c \oplus d) = (ac - d^{*} b) \oplus (da + b\,c^{*}),
  \qquad
  (a \oplus b)^{*} = a^{*} \oplus (-b)
\end{equation}
where \(^{*}\) on \(a, b, c, d\) denotes quaternionic conjugation on
\(\bbH\); we use the same symbol \(^{*}\) for the induced octonionic
conjugation on \(\bbO\). This is the Cayley–Dickson construction.

Next, let \(\{E_1, E_2, E_3, E_4\}\) be an orthonormal system of \(\bbH\)
(for example, \(\{1, \i, \j, \k\}\)). Then
\(\{0 \oplus E_1, 0 \oplus E_2, 0 \oplus E_3, 0 \oplus E_4\}\) is an
orthonormal system of \(\bbH^\perp\). Set
\(E_i \coloneqq 0 \oplus E_{i-4}\) for \(i = 5, 6, 7, 8\). Thus
\(\{E_1, \ldots, E_8\}\) is an orthonormal basis of \(\bbO\), and
\begin{align*}
  \sum_{i=1}^8 \bigl\langle S_0 E_i,\,
    S_1^*\bigl((S_1 S_0)E_i\bigr) \bigr\rangle
  &= \sum_{i=1}^8 \bigl\langle S_1(S_0 E_i),\,
    (S_1 S_0)E_i \bigr\rangle
\end{align*}
By associativity of \(\bbH\),
\begin{align*}
  \sum_{i=1}^4 \bigl\langle S_1(S_0 E_i),\,
    (S_1 S_0)E_i \bigr\rangle
  &= \sum_{i=1}^4 \bigl\langle (S_1 S_0)E_i,\,
    (S_1 S_0)E_i \bigr\rangle \\
  &= 4\,\norm{S_1 S_0}^2 \\
  &= 4\,\norm{S_0}^2\norm{S_1}^2
\end{align*}
where, for the first equality, we used that right multiplication
\(\rmR_{E_i}\) is an isometry of \(\bbH\). For \(i = 5, \ldots, 8\),
\begin{align*}
  (S_1 S_0)E_i
    &= (S_1 S_0 \oplus 0)(0 \oplus E_{i-4})
     = 0 \oplus E_{i-4} S_1 S_0, \\
  S_1(S_0 E_i)
    &= (S_1 \oplus 0)\bigl(0 \oplus E_{i-4} S_0\bigr)
     = 0 \oplus E_{i-4} S_0 S_1
\end{align*}
Therefore,
\[
  \sum_{i=5}^8 \bigl\langle S_1(S_0 E_i),\,
    (S_1 S_0)E_i \bigr\rangle
  = \sum_{i=1}^4 \bigl\langle E_i S_0 S_1,\,
    E_i S_1 S_0 \bigr\rangle
  = 4\,\langle S_1 S_0,\, S_0 S_1 \rangle
\]
since left multiplication \(\rmL_{E_i}\) is an isometry of \(\bbH\) as
well. We conclude that
\begin{equation}\label{eq:trace}
  \sum_{i=1}^8 \bigl\langle S_0 E_i,\,
    S_1^*\bigl((S_1 S_0)E_i\bigr) \bigr\rangle
  = 4\,\norm{S_1}^2\norm{S_0}^2
    + 4\,\langle S_1 S_0,\, S_0 S_1 \rangle
\end{equation}
Moreover,~\eqref{eq:trace} holds for every orthonormal basis of \(\bbO\),
since it computes the trace of a bilinear form. Hence, we may repeat the
previous computation in a second orthonormal basis
\(\{E_1', \ldots, E_8'\}\) of \(\bbO\) with \(E_1' = 1_{\bbO}\) and
\(S_0 = \norm{S_0}\,E_2'\). Then
\begin{equation}\label{eq:Beitrag_von_i_=_1_und_2}
  \sum_{i=1}^2 \bigl\langle S_0 E_i',\,
    S_1^*\bigl((S_1 S_0)E_i'\bigr) \bigr\rangle
  = 2\,\norm{S_1}^2\norm{S_0}^2
\end{equation}
Also \(\{E_3',\dots,E_8'\}\) is an orthonormal basis of
\(\z' \coloneq S_0^\perp \subseteq \Im(\bbO)\), and for
every \(S \in \z'\) we have \(\Re(S_0 S)
\stackrel{\eqref{eq:octonionic_product}}{=} -\langle S_0, S\rangle = 0\),
so \(S_0 S \in \Im(\bbO)\). In particular,
\(\Im_{\bbO}(S_0 E_i') = S_0 E_i'\) for \(i=3,\dots,8\). Therefore,
\begin{align*}
  -\trace\bigl(\overK_{\bbO}(S_0 \oplus S_1)
      \circ \overK_{\bbO}(S_0 \oplus 1_{\bbO})\bigr)
    &\stackrel{\eqref{eq:overK_bbO}}{=}
      \sum_{i=3}^8 \bigl\langle
        \Im_\bbO(S_0 E_i'),\,
        \Im_\bbO\bigl(S_1^*((S_1 S_0)E_i')\bigr) \bigr\rangle \\
    &= \sum_{i=3}^8 \bigl\langle S_0 E_i',\,
        S_1^*\bigl((S_1 S_0)E_i'\bigr) \bigr\rangle \\
    &\stackrel{\eqref{eq:Beitrag_von_i_=_1_und_2}}{=}
      \sum_{i=1}^8 \bigl\langle S_0 E_i',\,
        S_1^*\bigl((S_1 S_0)E_i'\bigr) \bigr\rangle
      - 2\,\norm{S_1}^2\norm{S_0}^2 \\
    &\stackrel{\eqref{eq:trace}}{=}
      2\,\norm{S_1}^2\norm{S_0}^2
      + 4\,\langle S_1 S_0,\, S_0 S_1 \rangle
\end{align*}
which yields~\eqref{eq:under_mu_for_n_=_7_7}.

Now recall the triality description of \(\Spin(8)\). Via its three
irreducible \(8\)-dimensional representations on \(\bbO\) (the vector
representation and the two half-spin representations), each
\(g \in \Spin(8)\) corresponds to a triple
\((g_0, g_+, g_-) \in \SO(\bbO) \times \SO(\bbO) \times \SO(\bbO)\)
characterized by
\begin{equation}\label{eq:triality}
  g_+(S_1 S_2) = g_-(S_1)g_0(S_2)
\end{equation}
for all \(S_1, S_2 \in \bbO\). Conversely, any triple \((g_0, g_+, g_-)\)
satisfying~\eqref{eq:triality} arises from a unique \(g \in \Spin(8)\);
see the Triality Theorem \cite[Thm.~14.19]{Ha}.

In this model, the subgroup \(\Spin(7) \subseteq \Spin(8)\) is
characterized by the condition \(g_+ = g_-\) or, equivalently,
\(g_0(1_{\bbO}) = 1_{\bbO}\) \cite[Cor.~14.64]{Ha}. If, moreover,
\((g_0, g_+, g_-)\) represents an element of \(\Gtwo\), the stabilizer of
\(1_{\bbO}\) in the spin representation of \(\Spin(7)\), then also
\(g_+(1_{\bbO}) = 1_{\bbO}\) and \(g_+ = g_-\) hold. In particular,
\(g_0\) and \(g_+\) preserve the decomposition
\(\bbO = \bbR 1_{\bbO} \oplus \Im(\bbO)\), and hence commute with the
octonionic conjugation \(\mathrm{c}\). Therefore \(g_0' = g_0\) and
\(g_+' = g_+\), and since \(g_+ = g_-\) we also have \(g_-' = g_-\).

Furthermore, there exists an involution \(\alpha\) of \(\Spin(8)\)
mapping \((g_0, g_+, g_-)\) to \((g_-', g_+', g_0')\), where
\(g' \coloneqq \mathrm{c} \circ g \circ \mathrm{c}\)
\cite[p.~279]{Ha}. Thus
\begin{align*}
  \langle S_1 S_0,\, S_0 S_1 \rangle
  &\stackrel{g_+ \,\in\, \SO(\bbO)}{=}
    \bigl\langle g_+(S_1 S_0),\, g_+(S_0 S_1) \bigr\rangle \stackrel{\Gtwo}{=}
    \bigl\langle g_+(S_1 S_0),\, g_+'(S_0 S_1) \bigr\rangle \\
  &\overset{(g_0, g_+, g_-)}{=}
    \bigl\langle g_-(S_1)g_0(S_0),\, g_+'(S_0 S_1) \bigr\rangle \overset{(g_-', g_+', g_0')}{=}
    \bigl\langle g_-(S_1)g_0(S_0),\, g_0'(S_0)g_-'(S_1) \bigr\rangle \\
  &\quad\stackrel{\Gtwo}{=}
    \bigl\langle g_+(S_1)g_0(S_0),\, g_0(S_0)g_+(S_1) \bigr\rangle
\end{align*}
which shows that \(\langle S_1 S_0,\, S_0 S_1 \rangle\) is \(\Gtwo\)-invariant.

We next show that the right-hand side of~\eqref{eq:under_mu_for_n_=_7_6}
defines a \(\Spin(7)\)-invariant polynomial in
\(S_0 \oplus S_1 \oplus S_2 \in \Im(\bbO) \oplus \bbO^2\) and that it
agrees with~\eqref{eq:under_mu_for_n_=_7_7} when \(S_2 \coloneqq 1_{\bbO}\):
The agreement is immediate, since substituting \(S_2 \coloneqq 1_{\bbO}\)
into the right-hand side of~\eqref{eq:under_mu_for_n_=_7_6} yields
\(2\,\norm{S_0}^2\norm{S_1}^2 + 4\,\langle S_1 S_0,\, S_0 S_1 \rangle\).
To verify \(\Spin(7)\)-invariance, recall the \emph{triality
automorphism} \(\tau\) of \(\Spin(8)\), an outer automorphism of order
three (\(\tau^3 = \Id\)) that cyclically permutes the vector and the two
half-spin representations:
\begin{equation}\label{eq:tau}
  (g_0, g_+, g_-) \overset{\tau}{\longmapsto} (g_+', g_-', g_0)
  \overset{\tau}{\longmapsto} (g_-, g_0', g_+')
\end{equation}
so that \(\tau^2\) maps \((g_0, g_+, g_-)\) to
\((g_-, g_0', g_+')\); cf.\ \cite[p.~279]{Ha}.

If \((g_0, g_+, g_-)\in \Spin(7)\), then \(g_0(1_{\bbO})=1_{\bbO}\).
Since \(g_0\in \SO(\bbO)\), it follows that \(g_0\) preserves
\(1_{\bbO}^{\perp}=\Im(\bbO)\), and hence commutes with the octonionic
conjugation \(\mathrm{c} \coloneqq (\,\cdot\,)^*\). In particular, \(g_0' = g_0\).
Moreover \(g_+=g_-\) for \(\Spin(7)\). Thus
\begin{align*}
  \bigl\langle S_2^*(S_1 S_0),\, (S_0 S_2^*) S_1 \bigr\rangle
  &\stackrel{g_0'\,\in\,\SO(\bbO)}{=}
    \bigl\langle g_0'(S_2^*(S_1 S_0)),\,
      g_0'\bigl((S_0 S_2^*) S_1\bigr) \bigr\rangle \\
  &\overset{(g_-, g_0', g_+')}{=}
    \bigl\langle g_+'(S_2^*)\,g_-(S_1 S_0),\,
      g_+'(S_0 S_2^*)\,g_-(S_1) \bigr\rangle \\
  &\stackrel{\mathrm{def.\;'} }{=}
    \bigl\langle g_+(S_2)^*\,g_-(S_1 S_0),\,
      g_+(S_2 S_0^*)^*\,g_-(S_1) \bigr\rangle \\
  &\stackrel{\Spin(7)}{=}
    \bigl\langle g_+(S_2)^*\,g_+(S_1 S_0),\,
      g_+(S_2 S_0^*)^*\,g_+(S_1) \bigr\rangle \\
  &\overset{(g_0, g_+, g_-)}{=}
    \bigl\langle g_+(S_2)^*\bigl(g_-(S_1)g_0(S_0)\bigr),\,
      \bigl(g_-(S_2)g_0(S_0^*)\bigr)^*\,g_+(S_1) \bigr\rangle \\
  &\stackrel{\mathrm{def.\;'} }{=}
    \bigl\langle g_+(S_2)^*\bigl(g_-(S_1)g_0(S_0)\bigr),\,
      \bigl(g_0'(S_0)g_-(S_2)^*\bigr)g_+(S_1) \bigr\rangle \\
  &\stackrel{\Spin(7)}{=}
    \bigl\langle g_+(S_2)^*\bigl(g_+(S_1)g_0(S_0)\bigr),\,
      \bigl(g_0(S_0)g_+(S_2)^*\bigr)g_+(S_1) \bigr\rangle
\end{align*}
where the equalities marked with \((g_0, g_+, g_-)\) and
\((g_-, g_0', g_+')\) use that these triples represent elements of
\(\Spin(8)\), whereas those marked with \(\Spin(7)\) hold because
\((g_0, g_+, g_-)\in \Spin(7)\). The steps labelled “\(\mathrm{def.\;'}\)”
use the definition \(g' \coloneqq \mathrm{c} \circ g \circ \mathrm{c}\).
Thus the right-hand side of~\eqref{eq:under_mu_for_n_=_7_6} is
\(\Spin(7)\)-invariant.

Since \eqref{eq:under_mu_for_n_=_7_6} holds for \(S_2=1_{\bbO}\) and both
sides are \(\Spin(7)\)-invariant, transitivity of the \(\Spin(7)\)-action
on the unit sphere \(\rmS^7 \subseteq \bbO\) (via the spin representation)
implies that \eqref{eq:under_mu_for_n_=_7_6} holds for all \(S_2 \in \rmS^7\).
Because both sides of \eqref{eq:under_mu_for_n_=_7_6} are homogeneous of
degree \(2\) in \(S_2\), the identity holds for all \(S_2 \in \bbO\).
Substituting~\eqref{eq:under_mu_for_n_=_7_5} and~\eqref{eq:under_mu_for_n_=_7_6}
into~\eqref{eq:under_mu_for_n_=_7_4}, we conclude that
\eqref{eq:over_mu_for_n_=_7_1} and~\eqref{eq:over_mu_for_n_=_7_3} hold.

Finally, suppose that \(\v\) splits as
\(\v = \v_1 \oplus \v_2 \oplus \v_3 \oplus \v_4\), where the first three
summands are irreducible \(\Cl(\Im(\bbO))\)-modules (isomorphic to
\(\bbO\) or \(\overline{\bbO}\)), while \(\v_4\) is arbitrary, possibly
trivial. Fix \(S_0 \coloneqq E_7 \in \Im(\bbO)\) and identify
\(\Spin(6) \cong \Spin(S_0^\perp) \subseteq \Spin(7)\) with its action on
each summand.

Assume, for contradiction, that there is an eigenvalue branch of
multiplicity at least four on some connected component
\(U \subseteq \n \setminus \ram(\overK_{\v}^2)\). By
Corollary~\ref{co:fixed_point_group}, there exists a nonempty open subset
\(U^{\v} \subseteq \v\) such that for each \(S \in U^{\v}\) the rank of
\[
  \Fix(S, \Spin(6)) \coloneqq \{\,F \in \Spin(6) \mid F S = S\,\}
\]
is at least one. Let \(\pi \colon \v \to \v_1 \oplus \v_2 \oplus \v_3\)
denote the projection onto the first three components. Since \(\pi\) is
open and \(U^{\v}\) is nonempty and open, \(\pi(U^{\v})\) is a nonempty
open subset of \(\v_1 \oplus \v_2 \oplus \v_3\). Hence \(\pi(U^{\v})\)
meets the open dense subset of \(\v_1 \oplus \v_2 \oplus \v_3\)
consisting of triples with trivial common stabilizer in \(\Spin(6)\).

To see that the common stabilizer is generically trivial, identify
\(\v_1\), \(\v_2\), and \(\v_3\) with the standard real spin
representation \(\bbO\) of \(\Spin(6)\), so that
\(\v_1 \oplus \v_2 \oplus \v_3 \cong \bbO^3\). View \(\bbO\) as the
underlying real vector space of \(\bbC^4\) with the standard
\(\SU(4)\)-action, and recall that \(\Spin(6) \cong \SU(4)\) acts
transitively on \(\rmS^7 \subseteq \bbO\). Hence we may assume that
\(S_1 \coloneqq 1_{\bbO}\). Then \(\Fix(S_1,\Spin(6)) \cong \SU(3)\),
which fixes the complex line \(\bbC 1_{\bbO}\) and acts by the standard
representation on its orthogonal complement \(\bbC^3\). For a generic
second spinor \(S_2\) with nonzero projection to \(\bbC^3\), its
stabilizer inside \(\SU(3)\) is \(\SU(2)\). The residual \(\SU(2)\) acts
on the remaining orthogonal complement \(\bbC^2\), and this action is
free on the unit sphere \(\rmS^3 \subseteq \bbC^2\). Hence, for a
generic third spinor \(S_3\) with nonzero projection to \(\bbC^2\), the
common stabilizer is trivial.

In other words, in the generic case the isotropy chain is
\[
  \Spin(6) \stackrel{\Fix(S_1)}{\longrightarrow} \SU(3)
  \stackrel{\Fix(S_2)}{\longrightarrow} \SU(2)
  \stackrel{\Fix(S_3)}{\longrightarrow} \{e\}
\]
Thus we may choose \(S = S_1 \oplus S_2 \oplus S_3 \oplus S_4 \in U^{\v}\)
such that \((S_1,S_2,S_3)\) has trivial common stabilizer in \(\Spin(6)\).
Therefore,
\[
  \Fix(S,\Spin(6))
  \subseteq \Fix(\{S_1,S_2,S_3\},\Spin(6)) = \{e\}
\]
a contradiction.

Moreover, the direct sum of two irreducible \(\Cl(\Im(\bbO))\)-modules,
as considered above, does not satisfy the \(J^2\)-condition (even though
\(\nu = 1\) is a global eigenvalue in the isotypic two-summand case).
Therefore, if \(\v\) contains at least three irreducible summands, write
\(\v = \v' \oplus \v''\), where \(\v' \coloneqq \v_1 \oplus \v_2\) and
\(\v''\) is the direct sum of the remaining summands. Since \((\z,\v')\)
fails the \(J^2\)-condition, Corollary~\ref{co:reducible_Clifford_module}
implies that \(\nu = 1\) is not a global eigenvalue of \(-\underK_{\v}^2\).
Also \(m_0 = 1\), and the previous implies that the remaining muliplicity \(6\)
splits up into three distinct nonconstant eigenvalue branches of multiplicity
\(2\), each of multiplicity \(2\).
\end{proof}

\subsubsection{\texorpdfstring{Eigenvalue branches of \(-\underK^2\) for
\(4 \le n \le 6\)}{Eigenvalue branches of (-K^2) for 4 \le n \le 6}}
\label{se:4_leq_n_leq_6}

Here we obtain an orthogonal \(\Cl(\z)\)-module structure on \(\v \coloneqq \bbO\)
by restricting the Clifford multiplication%
~\eqref{eq:Clifford_multiplication_for_n_=_7_1} to \(\z \times \v\),
where \(\z\) is any \(n\)-dimensional subspace of \(\Im(\bbO)\). By the
uniqueness (up to isomorphism) of irreducible \(8\)-dimensional
\(\Cl(\z)\)-modules for \(4 \le n \le 6\), these are the only possibilities
up to isomorphism (and hence independent of the choice of \(\z\) up to isomorphism
The corresponding Clifford algebras are 
\(\Cl(\bbR^4) \cong \Mat_2(\bbH)\), \(\Cl(\bbR^5) \cong \Mat_4(\bbC)\), and
\(\Cl(\bbR^6) \cong \Mat_8(\bbR)\), namely the algebras of quaternionic \(2 \times 2\),
complex \(4 \times 4\), and real \(8 \times 8\) matrices, respectively.\footnote{In particular, 
for \(n = 4\) under an isometric identification \(\z \cong \bbR^4\),
this model of an irreducible orthogonal \(\Cl(\z)\)-module becomes isomorphic to the
irreducible orthogonal \(\Cl(\bbR^4)\)-module \(\bbH^2\) considered in the proof 
of Proposition~\ref{p:m_geq_4}~(b).}

\begin{proposition}\label{p:n_=_4_5_6}
Suppose that \(4 \le n \le 6\) and let \(\v\) be a nontrivial orthogonal
  \(\Cl(\z)\)-module. Nonzero eigenvalue branches of the family
  \(-\underK^2\) of nonnegative self-adjoint operators defined in
  \eqref{eq:underK_square} and associated with the pair \((\z, \v)\) are
  as follows:
  \begin{enumerate}
  \item For \(n = 4\), there is exactly one nonconstant eigenvalue
    branch \(\undermu\), of multiplicity \(m_{\undermu} = 2\), 
    obtained by rescaling~\eqref{eq:over_mu_for_n_=_4}.
  \item For \(n = 5\):
    \begin{itemize}
    \item If \(\v\) is an irreducible \(\Cl(\z)\)-module, then \(1\) is a
      global eigenvalue of \(-\underK^2\) of multiplicity \(2\). In
      addition, there exists a nonconstant eigenvalue branch
      \(\undermu\), also of multiplicity \(m_{\undermu} = 2\), obtained
      by rescaling~\eqref{eq:over_mu_for_n_=_5}.
    \item Otherwise, if \(\v\) splits as the orthogonal direct sum of at
      least two irreducible \(\Cl(\z)\)-submodules, there are two distinct
      nonconstant eigenvalue branches \(\undermu_1\) and \(\undermu_2\), 
      each of multiplicity \(2\).
    \end{itemize}
  \item For \(n = 6\):
    \begin{itemize}
    \item If \(\v\) is an irreducible \(\Cl(\z)\)-module, then \(1\) is a
      global eigenvalue of \(-\underK^2\) of multiplicity \(4\).
    \item Otherwise, there are two nonconstant eigenvalue branches \(\undermu_1\) and
      \(\undermu_2\), each of multiplicity \(2\).
    \end{itemize}
  \end{enumerate}
\end{proposition}

\begin{proof}
Assume first that \(\v\) is an irreducible \(\Cl(\z)\)-module. As noted
earlier, we may assume that \(\v = \bbO\) and that \(\z\) is an
\(n\)-dimensional subspace of \(\Im(\bbO) \cong \bbR^7\).  Set
\[
  \overK_{\z} \coloneqq \overK_{(\z,\bbO)},\qquad
  \underK_{\z} \coloneqq \underK_{(\z,\bbO)},\qquad
  \overK_{\bbO} \coloneqq \overK_{(\Im(\bbO),\bbO)},\qquad
  \underK_{\bbO} \coloneqq \underK_{(\Im(\bbO),\bbO)}
\]
Then \(\overK_{\z}(S_0 \oplus S_1) \colon \z \to \z\), the linear operator
defined in~\eqref{eq:def_overK} and associated with the pair
\((\z,\bbO)\) and \(S_0 \oplus S_1 \in \z \oplus \bbO\), is obtained by
restricting \(\overK_{\bbO}(S_0 \oplus S_1)\) from ~\eqref{eq:overK_bbO}
to \(\z\) and projecting back to \(\z\):
\begin{equation*}
  \bigl\langle \overK_{\z}(S_0 \oplus S_1)S,\,\tilde S \bigr\rangle
    = \bigl\langle \overK_{\bbO}(S_0 \oplus S_1)S,\,\tilde S \bigr\rangle
\end{equation*}
for all \(S_0 \oplus S_1 \in \z \oplus \bbO\) and \(S, \tilde S \in \z\).
Recall also that \(\underK_{\bbO}(S_0 \oplus S_1)S_0 = 0\), and that the
restriction
\[
  J := \underK_{\bbO}(S_0 \oplus S_1)|_{S_0^\perp} \colon S_0^\perp \to S_0^\perp
\] defines an orthogonal complex structure on the orthogonal
complement \(S_0^\perp \subseteq \Im(\bbO)\) of \(S_0\) in \(\Im(\bbO)\).

For \(n = 6\), let \(\z \coloneqq \bbR^6\). Since \(\SO(6)\)
acts transitively on \(\rmS^5 \subseteq \z\)---and hence also \(\Spin(6)\)
via the vector representation---we may assume \(S_0 = E_6\).
Because \(\Spin(5)\) still acts transitively
on \(\rmS^7 \subseteq \v\), we can argue as in the case \(n = 7\)
to conclude that the eigenvalue branches of \(\underK_{\z}\), and hence
of \(-\underK_{\z}^2\), are constant on
\[
  \Spin(6)/\Spin(5) \times \Spin(5)/\SU(2)
  = \rmS^5 \times \rmS^7 \subseteq \z \oplus \v
\]
By Corollary~\ref{co:mu_=_constant}, \(-\underK_{\z}^2\) has the global eigenvalues \(0\) and
\(1\) with multiplicities \(m_0 = 2\) and \(m_1 = 4\), respectively.

For \(n = 5\), consider the complex vector space \((S_0^\perp,J)\). Then
\(\z' \coloneqq S_0^\perp \cap \z\) is a \(4\)-dimensional real subspace,
so \((\z')^\perp \subseteq S_0^\perp\) is a real \(2\)-plane. Consider its
complex span in \((S_0^\perp,J)\),
\[
  \bbC(\z')^\perp \coloneqq (\z')^\perp + J(\z')^\perp
\]
Then \(\dim_{\bbC}\bigl(\bbC(\z')^\perp\bigr) \le 2\). Since
\(\dim_{\bbC}(S_0^\perp)=3\), the orthogonal complement
\[
  \bigl(\bbC(\z')^\perp\bigr)^\perp \subseteq S_0^\perp
\]
has complex dimension \(\ge 1\). Moreover,
\(\bigl(\bbC(\z')^\perp\bigr)^\perp \subseteq \bigl((\z')^\perp\bigr)^\perp = \z'\),
because \((\z')^\perp \subseteq \bbC(\z')^\perp\). Thus \(\z'\) contains a
\(J\)-invariant real \(2\)-plane, i.e.\ a complex line. Therefore \(\nu = 1\)
is a global eigenvalue of \(-\underK_{\z}^2\) with multiplicity \(m_1 \ge 2\).

To show that \(m_1 = 2\) and to identify the second, nonconstant
eigenvalue branch, remember that \(\underK_{\bbO}(S_0 \oplus S_1)S_0 = 0\) and
\(\underK_{\bbO}(S_0 \oplus S_1)\bigl|_{S_0^\perp} = J\). Since \(J\) is an
orthogonal complex structure on \(S_0^\perp\), we have \(J^2 = -\Id\) on
\(S_0^\perp\), and hence
\[
  -\trace\bigl(\underK_{\bbO}(S_0 \oplus S_1)^2\bigr) = 6
\]
Let us now compute this trace by means of an orthonormal basis
\(\{E_1, \ldots, E_7\}\) of \(\Im(\bbO)\) with \(S_0 = \norm{S_0}\,E_7\) and
\(\z' = \mathrm{span}_\bbR\{E_3, \ldots, E_6\}\). Using Parseval’s
identity and skew-symmetry,
\begin{align*}
  6 = -\trace\bigl(\underK_{\bbO}(S_0 \oplus S_1)^2\bigr)
  &= \sum_{i=1}^7 \bigl\langle \underK_{\bbO}(S_0 \oplus S_1) E_i,\,
      \underK_{\bbO}(S_0 \oplus S_1)E_i \bigr\rangle \\
  &= \sum_{i=1}^6 \bigl\langle J E_i,\,
     J E_i \bigr\rangle  = \sum_{\substack{i,j=1 \\ i \ne j}}^6
      \bigl\langle J E_i,\,
        E_j \bigr\rangle^2
\end{align*}
Moreover,
\begin{align*}
  \sum_{j=3}^6 \bigl\langle J E_1,\,
    E_j \bigr\rangle^2
  = 1 - \bigl\langle J E_1,\,
  E_{2} \bigr\rangle^2 = 1 - \bigl\langle J E_2,\,
    E_{1} \bigr\rangle^2 =
  \sum_{j=3}^6 \bigl\langle J E_2,\,
    E_j \bigr\rangle^2
\end{align*}
 Hence,
\begin{align*}
  -\trace\bigl(\underK_{\bbO}(S_0 \oplus S_1)^2\bigr)
  &= 2\,\bigl\langle J E_1,\,
       E_2 \bigr\rangle^2
     + 2\sum_{i=1}^2 \sum_{j=3}^6
       \bigl\langle J E_i,\,
         E_j \bigr\rangle^2  + \sum_{\substack{i,j=3 \\ i \ne j}}^6
       \bigl\langle J E_i,\,
         E_j \bigr\rangle^2 \\
  &= 4 - 2\,\bigl\langle J E_1,\,
       E_2 \bigr\rangle^2
     + \sum_{\substack{i,j=3 \\ i \ne j}}^6
       \bigl\langle J E_i,\,
         E_j \bigr\rangle^2
\end{align*}

Therefore
\begin{align*}
  -\trace\bigl(\underK_{\z}(S_0 \oplus S_1)^2\bigr)
  &= \sum_{\substack{i,j=3 \\ i \ne j}}^6
       \bigl\langle J E_i,\,
         E_j \bigr\rangle^2 \\
  &= 2 + 2\,\bigl\langle J E_1,\,
         E_2 \bigr\rangle^2
\end{align*}
On the other hand, since \(\underK_{\z}(S_0 \oplus S_1)\) is skew-symmetric on the
\(5\)-dimensional real vector space \(\z\), its nonzero eigenvalues occur
in purely imaginary conjugate pairs. Hence \(-\underK_{\z}(S_0 \oplus S_1)^2\)
has spectrum of the form
\[
  \{\,0,\; \nu,\; \nu,\; \undermu(S_0 \oplus S_1),\; \undermu(S_0 \oplus S_1)\,\}
\]
(counted with multiplicity), where \(\nu = 1\) is the global eigenvalue.
Consequently,
\[
  -\trace\bigl(\underK_{\z}(S_0 \oplus S_1)^2\bigr)
  = 2\,\nu + 2\,\undermu(S_0 \oplus S_1)
  = 2 + 2\,\undermu(S_0 \oplus S_1)
\]
Hence
\begin{align*}
  \undermu(S_0 \oplus S_1)
  &= \bigl\langle J E_1,\,
      E_2 \bigr\rangle^2 = \frac{1}{\|S_0\|^2\|S_1\|^4} \bigl\langle \overK_{\bbO}(S_0 \oplus S_1)E_1,\,
      E_2 \bigr\rangle^2 \\
  &\stackrel{\eqref{eq:overK_bbO}}{=}
     \frac{\bigl\langle (S_1 S_0)E_1,\, S_1 E_2 \bigr\rangle^2}{\|S_0\|^2\|S_1\|^4}
\end{align*}
Clearing the denominator yields~\eqref{eq:over_mu_for_n_=_5}. 
For \(n = 4\), an analogous calculation yields~\eqref{eq:over_mu_for_n_=_4}.

We now treat the reducible case and claim that there are no eigenvalue
branches of multiplicity greater than \(2\). Suppose that \(\v\) splits
as \(\v = \v_1 \oplus \v_2 \oplus \v_3\) into three
\(\Cl(\z)\)-submodules, where \(\v_1\) and \(\v_2\) are irreducible and
\(\v_3\) is arbitrary (possibly trivial). 
By uniqueness of irreducible \(\Cl(\z)\)-modules for \(4 \le n \le 6\), we have
\(\v_1 \cong \v_2 \cong \bbO\). Assume, for the sake of contradiction, that there
exists an eigenvalue branch of multiplicity at least \(4\) on some
connected component \(U \subseteq \n \setminus \ram(\overK^2)\).
Set \(\z' \coloneqq S_0^\perp \cap \z\). By
Corollary~\ref{co:fixed_point_group}, there exists a nonempty open
subset \(U^{\v} \subseteq \v\) such that the fixed-point subgroup
\[
  \Fix(S, \Spin(\z')) \coloneqq \{\,F \in \Spin(\z') \mid F S = S\,\}
\]
has rank at least one for each \(S \in U^{\v}\).
Let \(\pi\colon \v=\v_1\oplus\v_2\oplus\v_3 \to \v_1\oplus\v_2\)
denote the projection onto the first two summands. Since \(\pi\) is
open and \(U^{\v}\subseteq \v\) is nonempty and open, \(\pi(U^{\v})\)
is a nonempty open subset of \(\v_1\oplus\v_2\). Hence \(\pi(U^{\v})\)
meets the open dense subset of \(\v_1 \oplus \v_2 \) consisting of pairs
\((S_1,S_2)\) with trivial common stabilizer in \(\Spin(\z')\).

To see that this stabilizer is generically trivial, choose a
\(5\)-dimensional subspace \(\tilde\z \subseteq S_0^\perp\) with
\(\z' \subseteq \tilde\z\). Choose an orthonormal basis
\(E_1,\dots,E_6\) of \(S_0^\perp\) such that
\(\z' = \mathrm{span}_\bbR\{E_1,\dots,E_{n-1}\}\) and set
\(\tilde\z \coloneqq \mathrm{span}_\bbR\{E_1,\dots,E_5\}\). Then
\(\Spin(\tilde\z) \cong \Spin(5)\).

Consider the unit sphere \(\rmS^7 \subseteq \bbO\). The stabilizer of a
unit spinor \(S_1\) in \(\Spin(5)\cong \Sp(2)\) is \(\Sp(1)\cong \SU(2)\),
and this \(\Sp(1)\) fixes precisely the quaternionic line \(\bbH S_1\).
Therefore \(S_2 \notin \bbH S_1\) implies that the common stabilizer of
\((S_1,S_2)\) in \(\Spin(5)\) is trivial, i.e.,
\[
  \Spin(5) \stackrel{\Fix(S_1)}{\longrightarrow} \Sp(1)
  \stackrel{\Fix(S_2)}{\longrightarrow} \{e\}
\]
is the generic isotropy chain. Choose
\(S = S_1 \oplus S_2 \oplus S_3 \in U^{\v}\) such that \((S_1,S_2)\) has
trivial common stabilizer in \(\Spin(5)\). Since
\(\Spin(\z') \subseteq \Spin(\tilde\z) \cong \Spin(5)\), we obtain
\[
  \Fix(S,\Spin(\z'))
  \subseteq \Fix(\{S_1,S_2\},\Spin(\z'))
  \subseteq \Fix(\{S_1,S_2\},\Spin(5)) = \{e\}
\]
a contradiction.

Finally, Corollary~\ref{co:reducible_Clifford_module} rules out the case
that \(\nu = 1\) is a global eigenvalue of \(-\underK^2\) for reducible
\(\v\), since for \(4 \le n \le 6\) no nontrivial \(\Cl(\z)\)-summand
satisfies the \(J^2\)-condition. Therefore \(\z\) splits into \(\tfrac{n - m_0}{2}\) 
distinct nonconstant eigenvalue branches of multiplicity \(2\) where 
\(m_0 \in \{1,2\}\) with \(m_0 \equiv n \pmod{2}\) is the multiplicity
of the global eigenvalue \(0\). This completes the argument for \(4 \le n \le 6\).
\end{proof}

\subsubsection{\texorpdfstring{Eigenvalue branches of \(-\underK^2\) for \(n = 8\)}
  {Eigenvalue branches of (-K^2) for n = 8}}
\label{se:n_=_8}

For \(\dim \z = 8\), the Clifford algebra \(\Cl(\z)\) is isomorphic to
\(\Mat_{16}(\bbR)\), the algebra of \(16 \times 16\) real matrices. Hence
\(\Cl(\z)\) has, up to isomorphism, a unique \(16\)-dimensional irreducible
representation. In the model \(\z \coloneqq \bbO\) and \(\v \coloneqq \bbO^2\),
the Clifford multiplication is described by
\eqref{eq:Clifford_multiplication_for_n_=_8}.

The natural splitting \(\bbO^2 = \bbO \oplus \bbO\) is a
\(\mathbb{Z}_2\)-grading. In particular, each summand \(\bbO \oplus 0\) and
\(0 \oplus \bbO\) is \(\Spin(8)\)-stable, where we put
\(\Spin(8) \coloneqq \Spin(\bbO)\). This yields the \(\Spin(8)\)-modules
of positive and negative (right-handed and left-handed) spinors; cf.\
\cite[Ch.~1, \S~8]{LM}.

\begin{proposition}\label{p:n_=_8}
  Suppose that \(\dim \z = 8\), and let \(\v\) be a nontrivial
  orthogonal \(\Cl(\z)\)-module. The nonzero eigenvalue branches of the
  family \(-\underK^2\) of nonnegative self-adjoint operators defined in
  \eqref{eq:underK_square} and associated with the pair \((\z,\v)\) are
  as follows:
  \begin{itemize}
  \item If \(\v\) is an irreducible \(\Cl(\z)\)-module, then there is a
  single nonconstant eigenvalue branch \(\undermu\), of multiplicity
  \(m_{\undermu} = 6\), obtained by rescaling the polynomial function
  defined in~\eqref{eq:over_mu_for_n_=_8}.
  \item Otherwise, there are three distinct nonconstant eigenvalue
    branches, each of multiplicity \(2\).
  \end{itemize}
\end{proposition}

\begin{proof}
Assume first that \(\v\) is an irreducible \(\Cl(\z)\)-module.
As noted earlier, we may assume that \(\z = \bbO\) and \(\v = \bbO^2\).
Let \(S_0 \oplus S_+ \oplus S_- \in \z \oplus \v = \bbO^3\). First, consider the special case
\(S_0 = 1_\bbO\). For every orthonormal pair \(\{Z_1, Z_2\} \subseteq
\Im(\bbO) \cong \bbR^7\), we have
\begin{equation}\label{eq:tilde_K_bbO_oplus_bbO}
\begin{aligned}
  \langle \overK(1_\bbO \oplus S_+ \oplus S_-) Z_1,\, Z_2 \rangle
  &\stackrel{\eqref{eq:def_overK}}{=}
    \langle Z_2 \cbullet (S_+ \oplus S_-),\,
      Z_1 \cbullet 1_\bbO \cbullet (S_+ \oplus S_-) \rangle \\
  &\stackrel{\eqref{eq:Clifford_multiplication_for_n_=_8}}{=}
    \langle Z_2 \cbullet (S_+ \oplus S_-),\,
      Z_1 \cbullet (S_- \oplus -\,S_+) \rangle \\
  &= \langle S_- Z_2 \oplus S_+ Z_2,\, -\,S_+ Z_1 \oplus S_- Z_1 \rangle \\
  &= \langle S_+ Z_2,\, S_- Z_1 \rangle - \langle S_- Z_2,\, S_+ Z_1 \rangle \\
  &= \langle Z_2,\, S_+^*(S_- Z_1) \rangle - \langle Z_2,\, S_-^*(S_+ Z_1) \rangle
\end{aligned}
\end{equation}
where we used \(\langle AB,\, C\rangle = \langle B,\, A^* C\rangle\), since
left multiplication \(\rmL_{A^*}\) by \(A^*\) is the adjoint of left
multiplication \(\rmL_A\) by \(A\). Thus
\begin{equation}\label{eq:K_on_ImO_1}
  \langle \overK(1_\bbO \oplus S_+ \oplus S_-) Z_1,\, Z_2 \rangle
  =
  \langle (\rmL_{S_+^*}\rmL_{S_-} - \rmL_{S_-^*}\rmL_{S_+})Z_1,\, Z_2 \rangle
\end{equation}

In this model, we identify
\(\Spin(7) \coloneqq \Spin(\Im(\bbO))\) with the stabilizer of
\(1_{\bbO} \in \z = \bbO\). This group acts transitively on the unit
sphere of the first summand
\(\bbO \oplus 0 \subseteq \v = \bbO^2\), via the spin representation on
\(\bbO \cong \bbR^8\). Hence we may assume that
\(S_+ \in \bbR_{>0} 1_{\bbO}\). For the remainder of this calculation,
we further specialize to \(S_+ = 1_{\bbO}\).
 
Then
\begin{equation}\label{eq:S_+_=_1_n=_8}
  \rmL_{S_+^*}\,\rmL_{S_-} - \rmL_{S_-^*}\,\rmL_{S_+}
  = \rmL_{S_- - S_-^*} = 2\,\rmL_{\Im_\bbO(S_-)}
\end{equation}
where \(\Im_\bbO(S_-)\) denotes the imaginary part of \(S_- \in \bbO\).
Substituting~\eqref{eq:S_+_=_1_n=_8} into~\eqref{eq:K_on_ImO_1} gives
\begin{equation}\label{eq:K_on_ImO_2}
  \overK(1_\bbO \oplus 1_\bbO \oplus S_-)\bigl|_{\Im(\bbO)}
  = 2\,\Im_\bbO(S_-) \times \Box
  \;\colon\; \Im(\bbO) \longrightarrow \Im(\bbO)
\end{equation}

On \(\Im(\bbO)\), the map \(S \mapsto \Im_\bbO(S_-)\times S\) has kernel
\(\mathrm{span}_\bbR\{\Im_\bbO(S_-)\}\). On the orthogonal complement
\(\Im_\bbO(S_-)^\perp\), it satisfies
\[
  \bigl(\Im_\bbO(S_-)\times \Box\bigr)^2
  = -\,\norm{\Im_\bbO(S_-)}^{2}\,\Id
\]
and hence \(\Im_\bbO(S_-)\times \Box\) is a scaled orthogonal complex structure
on \(\Im_\bbO(S_-)^\perp\), with scale factor \(\norm{\Im_\bbO(S_-)}\).

Moreover, \(\overK(1_\bbO \oplus 1_\bbO \oplus S_-)\,1_\bbO = 0\) (equivalently,
\(\overK(X)S_0 = 0\) for all \(X\)). Hence
\(-\overK(1_\bbO \oplus 1_\bbO \oplus S_-)^2\) has a single nonzero eigenvalue
\begin{equation}\label{eq:n_=_8_4}
  \overmu(1_\bbO \oplus 1_\bbO \oplus S_-) = 4\norm{\Im_\bbO(S_-)}^2
\end{equation}
on \(\z = \bbO\), with multiplicity \(6\), and the zero eigenvalue has
multiplicity \(2\). Passing to the rescaled operator \(\underK\) introduces
the factor \((1 + \norm{S_-}^2)^{-2}\), so
\begin{equation}\label{eq:n_=_8_5}
  \undermu(1_\bbO \oplus 1_\bbO \oplus S_-)
  = 4\,\frac{\norm{\Im_\bbO(S_-)}^2}{(1 + \norm{S_-}^2)^2}
\end{equation}
is nonconstant. In particular, \(\nu = 1\) is not a global eigenvalue of
\(-\underK^2\) in the irreducible case.

So far, we have treated the special case \(S_0 = 1_\bbO\) and \(S_+ = 1_\bbO\).
For arbitrary \(S_+ \in \bbO\), set
\begin{equation}\label{eq:n_=_8_6}
  \overmu(1_\bbO \oplus S_+ \oplus S_-)
  \coloneqq 4\bigl(\norm{S_+}^2 \norm{S_-}^2 - \langle S_+, S_- \rangle^2\bigr)
\end{equation}
This expression is \(\Spin(\Im(\bbO))\)-invariant, since the two half-spin
representations coincide on \(\Spin(\Im(\bbO)) \subseteq \Spin(\bbO)\)
\cite[p.~282]{Ha}. Moreover, \eqref{eq:n_=_8_4} is \(\Gtwo\)-invariant and
agrees with \eqref{eq:n_=_8_6} when \(S_+ = 1_\bbO\). Hence \eqref{eq:n_=_8_6}
is the unique \(\Spin(\Im(\bbO))\)-invariant extension of \eqref{eq:n_=_8_4}\!,
because this group acts transitively on the first \(\bbO\)-factor via the
positive spin representation.

Finally, to allow arbitrary \(S_0 \in \bbO\) as well, consider the
\(\Spin(\bbO)\)-invariant polynomial \(\overmu\) described by
\eqref{eq:over_mu_for_n_=_8}. It coincides with \eqref{eq:n_=_8_6} for
\(S_0 = 1_\bbO\), and therefore yields a homogeneous polynomial eigenvalue
branch of \(-\overK^2\) on \(\z = \bbO\), with multiplicity \(6\).
Rescaling gives the corresponding nonconstant eigenvalue branch \(\undermu\)
of \(-\underK^2\) on \(U\).

Assume now that \(\v\) is reducible, so it contains at least two irreducible
\(\Cl(\z)\)-submodules. Choose two such summands \(\v_1,\v_2\) and write
\(\v=\v_1\oplus\v_2\oplus\v_3\), where \(\v_3\) is a (possibly trivial)
complement. By uniqueness of the irreducible \(\Cl(\z)\)-module structure, we have
\(\v_1 \cong \v_2 \cong \bbO^2\), with the \(\Cl(\bbO)\)-module structure on
each summand \(\bbO^2\) as above.

Assume, for contradiction, that there is an eigenvalue branch of multiplicity
at least four on some connected component \(U \subseteq \n \setminus \ram(\overK^2)\).
Let \(\z' \coloneqq \Im(\bbO)\), so \(\Spin(\z') \cong \Spin(7)\).
By Corollary~\ref{co:fixed_point_group}, there exists a nonempty open subset
\(U^{\v} \subseteq \v\) such that for every \(S \in U^{\v}\) the fixed-point group
\[
  \Fix(S, \Spin(\z')) \coloneqq \{\,F \in \Spin(\z') \mid F S = S\,\}
\]
has positive rank. Let \(\pi\colon \v \to \v_1 \oplus \v_2\) denote the
projection onto the first two summands. Since \(\pi\) is open and \(U^{\v}\)
is nonempty and open, \(\pi(U^{\v})\) is a nonempty open subset of
\(\v_1 \oplus \v_2\).

On the other hand, for a generic tuple
\[
  (S_+ \oplus S_-) \oplus (\tilde S_+ \oplus \tilde S_-)
  \in \bbO^2 \oplus \bbO^2
\]
the isotropy chain in \(\Spin(7)\) is
\[
  \Spin(7)
  \stackrel{\Fix(S_+ \oplus S_-)}{\longrightarrow} \SU(3)
  \stackrel{\Fix(\tilde S_+ \oplus \tilde S_-)}{\longrightarrow} \{e\}
\]
and in particular the common stabilizer is trivial. Hence \(\pi(U^{\v})\)
meets the open dense subset of \(\v_1 \oplus \v_2\) consisting of pairs with
trivial common stabilizer in \(\Spin(\z')\). Choose \(S \in U^{\v}\) such that
\(\pi(S)\) has trivial common stabilizer. Then
\[
  \Fix(S, \Spin(\z'))
  \subseteq \Fix(\pi(S), \Spin(\z')) = \{e\}
\]
a contradiction. Therefore no eigenvalue branch of multiplicity \(\ge 4\) can
occur in the reducible case.

Finally, in the irreducible case, \(\nu = 1\) is not a global eigenvalue
of \(-\underK^2\), so the \(J^2\)-condition does not hold. Hence, if
\(\v\) is reducible, Corollary~\ref{co:reducible_Clifford_module} rules
out the possibility that \(\nu = 1\) is a global eigenvalue of
\(-\underK^2\). Moreover, \(0\) is a global eigenvalue of multiplicity
\(m_0 = 2\) by Corollary~\ref{co:mu_=_constant}. Therefore the remaining
multiplicity \(6\) splits into three distinct nonconstant eigenvalue branches
of multiplicity \(2\). This completes the argument for \(n = 8\).
\end{proof}

\subsubsection{\texorpdfstring{Eigenvalue branches of \(-\underK^2\) for
  \(n = 9\)}{Eigenvalue branches of (-\underK^2) for n = 9}}
\label{se:n_=_9}

Assume \(\dim \z = 9\). Up to isomorphism, there is a unique irreducible
orthogonal \(\Cl(\z)\)-module \(\v\) of real dimension \(32\). A concrete model is
the orthogonal direct sum \(\z = \bbR \oplus \bbO\) and \(\v = \bbO^2 \otimes_{\bbR} \bbC\),
with Clifford multiplication as in \eqref{eq:Clifford_multiplication_for_n_=_9}.
The subgroup \(\Spin(\bbO) \subseteq \Spin(\bbR \oplus \bbO)\) acts on
both \(\Re(S_+)\) and \(\Im(S_+)\) via the positive half-spin
representation, and on both \(\Re(S_-)\) and \(\Im(S_-)\) via the
negative half-spin representation. Hence the spin representation
commutes with taking real and imaginary parts of spinors; cf.\
\cite[Lemma~14.77]{Ha}.

\begin{proposition}\label{p:n_=_9}
  Suppose that \(\dim \z = 9\) and let \(\v\) be a nontrivial
  orthogonal \(\Cl(\z)\)-module. Nonzero eigenvalue branches of the family
  \(-\underK^2\) of nonnegative self-adjoint operators defined in
  \eqref{eq:underK_square} and associated with the pair \((\z, \v)\) are
  as follows:
  \begin{itemize}
    \item If \(\v\) is irreducible, then \(-\underK^2\) has three
          nonconstant eigenvalue branches: \(\undermu_1\) and
          \(\undermu_2\) (each of multiplicity \(2\)), and \(\undermu_3\)
          (of multiplicity \(4\)), see~\eqref{eq:over_sigma_1_for_n_=_9}--\eqref{eq:over_mu_3_for_n_=_9_1}.
    \item Otherwise, \(-\underK^2\) has four distinct nonconstant
          eigenvalue branches, each of multiplicity \(2\).
  \end{itemize}
\end{proposition}
\begin{proof}
Assume first \(\v\) irreducible; then we use the canonical model
\(\z \coloneqq \bbR \oplus \bbO\) and \(\v \coloneqq \bbO^2\otimes_{\bbR}\bbC
= \bbO\otimes_{\bbR}\bbC \oplus \bbO\otimes_{\bbR}\bbC\) mentioned before.
Let
\[
 S_0 \oplus S_+ \oplus S_-\in \z \oplus \v = (\bbR \oplus \bbO) \oplus
 \bbO\otimes_{\bbR}\bbC \oplus \bbO\otimes_{\bbR}\bbC
\]
We first treat the special case \(S_0 = 1_\bbR\), where we identify
\(1_\bbR \in \bbR\) with the base point \(1 \oplus 0 \in \bbR \oplus
\bbO\). Write \(S_\pm = \Re(S_\pm) + \i\,\Im(S_\pm) \in \bbO \oplus \i\,\bbO\)
for \(\pm \in \{+,-\}\). In order to compute \(\overK(1_\bbR \oplus S_+ \oplus S_-)\), we use
\eqref{eq:Clifford_multiplication_for_n_=_9} to obtain
\begin{equation}\label{eq:multiplication_by_E_9}
  1_\bbR \cbullet (S_+ \oplus S_-) = \i\,S_+ \oplus -\,\i\,S_-
\end{equation}
Therefore, for all \(Z_1,Z_2 \in \bbO\),
\begin{align*}
  \langle \overK(1_\bbR \oplus S_+ \oplus S_-)Z_1,\,Z_2 \rangle
  &\stackrel{\eqref{eq:def_overK}}{=}
    \langle Z_2 \cbullet (S_+ \oplus S_-),\,
            Z_1 \cbullet 1_\bbR \cbullet (S_+ \oplus S_-) \rangle \\
  &\stackrel{\eqref{eq:multiplication_by_E_9}}{=}
    \langle Z_2 \cbullet (S_+ \oplus S_-),\,
            Z_1 \cbullet (\i\,S_+ \oplus -\,\i\,S_-) \rangle
\end{align*}
Furthermore,
\begin{align*}
  Z_2 \cbullet (S_+ \oplus S_-)
    &= \i\,S_- Z_2 \oplus \i\,S_+ Z_2^* \\
  Z_1 \cbullet (\i\,S_+ \oplus -\,\i\,S_-)
    &= S_- Z_1 \oplus -\,S_+ Z_1^*
\end{align*}
It follows that
\begin{align*}
  \langle \overK(1_\bbR \oplus S_+ \oplus S_-)Z_1,\,Z_2 \rangle
  &= \langle \i\,S_- Z_2 \oplus \i\,S_+ Z_2^*,\,
            S_- Z_1 \oplus -\,S_+ Z_1^* \rangle \\
  &= \langle \i\,S_- Z_2,\,S_- Z_1 \rangle
     - \langle \i\,S_+ Z_2^*,\,S_+ Z_1^* \rangle
\end{align*}
Moreover,
\begin{align*}
  \langle \i\,S_- Z_2,\,S_- Z_1 \rangle
  &= \langle \Re(S_-)Z_2,\,\Im(S_-)Z_1 \rangle
     - \langle \Im(S_-)Z_2,\,\Re(S_-)Z_1 \rangle \\
  &= \langle Z_2,\,\Re(S_-)^*\Im(S_-)Z_1 \rangle
     - \langle Z_2,\,\Im(S_-)^*\Re(S_-)Z_1 \rangle \\
  \langle \i\,S_+ Z_2^*,\,S_+ Z_1^* \rangle
  &= \langle \Re(S_+)Z_2^*,\,\Im(S_+)Z_1^* \rangle
     - \langle \Im(S_+)Z_2^*,\,\Re(S_+)Z_1^* \rangle \\
  &= \langle Z_2^*,\,\Re(S_+)^*(\Im(S_+)Z_1^*) \rangle
     - \langle Z_2^*,\,\Im(S_+)^*(\Re(S_+)Z_1^*) \rangle \\
  &= \langle Z_2,\,(Z_1\,\Im(S_+)^*)\Re(S_+) \rangle
     - \langle Z_2,\,(Z_1\,\Re(S_+)^*)\Im(S_+) \rangle
\end{align*}
We conclude that
\begin{align*}
  \overK(1_\bbR \oplus S_+ \oplus S_-)
  &= \rmR_{\Im(S_+)} \circ \rmR_{\Re(S_+)^*}
     - \rmR_{\Re(S_+)} \circ \rmR_{\Im(S_+)^*} \\
  &\quad
     + \rmL_{\Re(S_-)^*} \circ \rmL_{\Im(S_-)}
     - \rmL_{\Im(S_-)^*} \circ \rmL_{\Re(S_-)}
\end{align*}
Define
\begin{align}
  \overK_+(1_\bbR \oplus S_+)
  &\coloneqq
    \rmR_{\Im(S_+)} \circ \rmR_{\Re(S_+)^*}
    - \rmR_{\Re(S_+)} \circ \rmR_{\Im(S_+)^*}
    \label{eq:tilde_K_+_1} \\
  \overK_-(1_\bbR \oplus S_-)
  &\coloneqq
    \rmL_{\Re(S_-)^*} \circ \rmL_{\Im(S_-)}
    - \rmL_{\Im(S_-)^*} \circ \rmL_{\Re(S_-)}
    \label{eq:tilde_K_-_1}
\end{align}
where \(\rmR_S\) and \(\rmL_S\) denote right and left multiplication by
\(S\), respectively. By definition,
\[
  \overK(1_\bbR \oplus S_+ \oplus S_-)
  = \overK_+(1_\bbR \oplus S_+) + \overK_-(1_\bbR \oplus S_-)
\]

\smallskip

\noindent
By \cite[Theorem 14.69]{Ha} there exists \(g=(g_0,g_+,g_-) \in \Spin(\bbO)\)
with \(\Re(g_+\,S_+) = \norm{\Re(S_+)}1_\bbO\) and \(\Re(g_-\,S_-) =  \norm{\Re(S_-)}1_\bbO\).
Therefore, let us assume, for the moment, that both \(\Re(S_+) = \norm{\Re(S_+)}1_\bbO\)
and \(\Re(S_-) = \norm{\Re(S_-)}1_\bbO\) hold. Then we can rewrite \(\overK_\pm\) as
single multiplication operators. Namely,
\begin{align}
  \overK_+(1_\bbR \oplus S_+)
  &= \rmR_{\Re(S_+)^*\Im(S_+) - \Im(S_+)^*\Re(S_+)}
     \notag \\
  &= 2\,\rmR_{a(S_+)}
     \label{eq:Kplus_as_Ra}
\end{align}
where
\begin{equation}\label{eq:def_aSplus}
  a(S_+) \coloneqq \norm{\Re(S_+)}\,\Im_{\bbO}\bigl(\Im(S_+)\bigr)
  \in \Im(\bbO)
\end{equation}
and similarly
\begin{align}
  \overK_-(1_\bbR \oplus S_-)
  &= \rmL_{\Re(S_-)^*\Im(S_-) - \Im(S_-)^*\Re(S_-)}
     \notag \\
  &= 2\,\rmL_{b(S_-)}
     \label{eq:Kminus_as_Lb}
\end{align}
with
\begin{equation}\label{eq:def_bSminus}
  b(S_-) \coloneqq \norm{\Re(S_-)}\,\Im_{\bbO}\bigl(\Im(S_-)\bigr)
  \in \Im(\bbO)
\end{equation}
Consequently,
\begin{equation}\label{eq:K_as_Ra_plus_Lb}
  \overK(1_\bbR \oplus S_+ \oplus S_-)
  = 2\,\rmR_{a(S_+)} + 2\,\rmL_{b(S_-)}
\end{equation}

The following lemma isolates the algebraic mechanism behind the
appearance of three eigenvalue branches in the irreducible case.

\begin{lemma}[Three-Branch Lemma]\label{le:three_branches}
Let \(a,b \in \Im(\bbO)\). Set \(\overK_+ \coloneqq 2\,\rmR_a\) and
\(\overK_- \coloneqq 2\,\rmL_b\). Then the operator
\(-(\overK_+ + \overK_-)^2\) on \(\bbO\) has eigenvalues
\[
  4\bigl(\norm{a} \pm \norm{b}\bigr)^2
\]
each with multiplicity \(2\), and
\[
  4\norm{a - b}^2
\]
with multiplicity \(4\). In particular, for generic \(a,b\) these give
three distinct eigenvalues.
\end{lemma}

\begin{proof}
Choose a quaternionic subalgebra \(\bbH \subseteq \bbO\) containing both
\(a\) and \(b\). (For instance, the subalgebra generated by \(a\) and
\(b\) is associative and hence contained in some \(\bbH\).) Let
\(u \in \Im(\bbO)\) be a unit with \(u \perp \bbH\). Then
\(\bbO = \bbH \oplus \bbH u\), corresponding to the Cayley--Dickson
description~\eqref{eq:Cayley_Dickson} with \(u \leftrightarrow 0 \oplus 1\).
Hence for \(x,y,h \in \bbH\),
\[
  (x + y u)h = (x h) + (y h^*)u,
  \qquad
  h(x + y u) = (h x) + (y h)u
\]
In particular, for \(y \in \bbH\),
\[
  \rmR_a(y u) = (y a^*)u,
  \qquad
  \rmL_b(y u) = (y b)u
\]
where \(\rmR_a\) and \(\rmL_b\) denote right and left multiplication by
\(a\) and \(b\), respectively.

Using \((\rmR_a + \rmL_b)^2 = \rmR_{a^2} + \rmL_{b^2}
  + \rmR_a\rmL_b + \rmL_b\rmR_a\) together with
\(a^2 = -\norm{a}^2\) and \(b^2 = -\norm{b}^2\), one obtains on \(\bbO\)
\[
  -\bigl(\rmR_a + \rmL_b\bigr)^2
  = \bigl(\norm{a}^2 + \norm{b}^2\bigr)I
    - \bigl(\rmR_a\rmL_b + \rmL_b\rmR_a\bigr)
\]
On \(\bbH\), associativity implies \(\rmR_a\rmL_b = \rmL_b\rmR_a\). Thus
\(\rmR_a\) and \(\rmL_b\) commute on \(\bbH\), and they satisfy
\(\rmR_a^2 = -\norm{a}^2 I\) and \(\rmL_b^2 = -\norm{b}^2 I\). Hence
\(\bbH\) splits into a direct sum of common real \(2\)-planes on which
\(\rmR_a\) and \(\rmL_b\) act as commuting complex structures scaled by
\(\norm{a}\) and \(\norm{b}\). On such a plane, the eigenvalues of
\(-(\rmR_a + \rmL_b)^2\) are \((\norm{a} \pm \norm{b})^2\), and each sign
occurs with multiplicity \(2\) on \(\bbH\).

On \(\bbH u\),
\[
  (\rmL_b\rmR_a + \rmR_a\rmL_b)(y u)
  = \bigl(y(a^* b + b a^*)\bigr)u
\]
Since \(a,b \in \Im(\bbH)\), one has \(a^* = -a\) and \(b^* = -b\), and
therefore \(a^* b + b a^* = 2\langle a,b\rangle\). Consequently,
\[
  (\rmL_b\rmR_a + \rmR_a\rmL_b)\big|_{\bbH u}
  = 2\langle a,b\rangle\,I
\]
and thus
\[
  -\bigl(\rmR_a + \rmL_b\bigr)^2\big|_{\bbH u}
  = \bigl(\norm{a}^2 + \norm{b}^2 - 2\langle a,b\rangle\bigr)I
  = \norm{a - b}^2\,I
\]
Finally, since \(\overK_+ = 2\,\rmR_a\) and \(\overK_- = 2\,\rmL_b\), we
have
\(-(\overK_+ + \overK_-)^2 = 4\bigl(-(\rmR_a + \rmL_b)^2\bigr)\), which
multiplies all eigenvalues by \(4\).
\end{proof}
Applying Lemma~\ref{le:three_branches} directly to \eqref{eq:K_as_Ra_plus_Lb}
with \(a \coloneqq a(S_+)\) and \(b \coloneqq b(S_-)\), this yields the
eigenvalues of \(-\overK(1_\bbR \oplus S_+ \oplus S_-)^2\) on \(\bbO\) as
\begin{align}
  \overmu_1(1_\bbR \oplus S_+ \oplus S_-)
  &\coloneqq 4\bigl(\norm{a(S_+)} + \norm{b(S_-)}\bigr)^2
  \label{eq:def_mu_1_n_=_9} \\
  \overmu_2(1_\bbR \oplus S_+ \oplus S_-)
  &\coloneqq 4\bigl(\norm{a(S_+)} - \norm{b(S_-)}\bigr)^2
  \label{eq:def_mu_2_n_=_9} \\
  \overmu_3(1_\bbR \oplus S_+ \oplus S_-)
  &\coloneqq 4\,\norm{a(S_+) - b(S_-)}^2
  \label{eq:def_mu_3_n_=_9}
\end{align}
under the slice condition \(\Re(S_+) = \norm{\Re(S_+)}1_\bbO\) and \(\Re(S_-) =  \norm{\Re(S_-)}1_\bbO\).
Here \(\overmu_1\) and \(\overmu_2\) have multiplicity \(2\), and
\(\overmu_3\) has multiplicity \(4\). 

\smallskip

\noindent
The auxiliary quantities \(a(S_+)\) and \(b(S_-)\) were introduced only on the normalized slice
\(\Re(S_+) = \norm{\Re(S_+)}1_\bbO\) and \(\Re(S_-) = \norm{\Re(S_-)}1_\bbO\);
off the slice~\eqref{eq:def_mu_1_n_=_9}--\eqref{eq:def_mu_3_n_=_9} fail to hold.
To obtain global expressions for the eigenvalue branches,
we now rewrite \(\overmu_1\), \(\overmu_2\), and \(\overmu_3\)
in a manifestly \(\Spin(\bbO)\)-invariant form.

For this, consider the Gramians
\begin{equation}\label{eq:Gram}
  \overmu_\pm(1_\bbR \oplus S_\pm)
  \coloneqq 4\,\norm{\Re(S_\pm)\wedge \Im(S_\pm)}^2
\end{equation}
for \(\pm \in \{+,-\}\). More explicitly,
\begin{align}
  \overmu_+(1_\bbR \oplus S_+)
  &= 4\bigl(
      \norm{\Re(S_+)}^2 \norm{\Im(S_+)}^2
      - \langle \Re(S_+),\,\Im(S_+) \rangle^2
    \bigr)
    \label{eq:over_mu_+} \\
  \overmu_-(1_\bbR \oplus S_-)
  &= 4\bigl(
      \norm{\Re(S_-)}^2 \norm{\Im(S_-)}^2
      - \langle \Re(S_-),\,\Im(S_-) \rangle^2
    \bigr)
    \label{eq:over_mu_-}
\end{align}
are \(\Spin(\bbO)\)-invariant terms.
Under the condition \(\Re(S_+) = \norm{\Re(S_+)}1_\bbO\) and \(\Re(S_-) =  \norm{\Re(S_-)}1_\bbO\), we have
\[
  \sqrt{\overmu_+(1_\bbR \oplus S_+)} = 2\norm{a(S_+)},
  \qquad
  \sqrt{\overmu_-(1_\bbR \oplus S_-)} = 2\norm{b(S_-)}
\]
Therefore \eqref{eq:def_mu_1_n_=_9} and \eqref{eq:def_mu_2_n_=_9} can be
rewritten in a \(\Spin(\bbO)\)-invariant form as
\begin{align}
  \overmu_1(1_\bbR \oplus S_+ \oplus S_-)
  &=
    \Bigl(
      \sqrt{\overmu_+(1_\bbR \oplus S_+)}
      + \sqrt{\overmu_-(1_\bbR \oplus S_-)}
    \Bigr)^2
    \label{eq:over_mu_1} \\
  \overmu_2(1_\bbR \oplus S_+ \oplus S_-)
  &=
    \Bigl(
      \sqrt{\overmu_+(1_\bbR \oplus S_+)}
      - \sqrt{\overmu_-(1_\bbR \oplus S_-)}
    \Bigr)^2
    \label{eq:over_mu_2}
\end{align}

For later use, we remark that by the binomial identities,
the elementary symmetric functions are
\begin{align}
\overmu_1(1_\bbR \oplus S_+ \oplus S_-)
     + \overmu_2(1_\bbR \oplus S_+ \oplus S_-)
     &= 2\Bigl(\overmu_+(1_\bbR \oplus S_+) + \overmu_-(1_\bbR \oplus
     S_-)\Bigr)
     \label{eq:n_=_9_binom_1}\\
\overmu_1(1_\bbR \oplus S_+ \oplus S_-)\overmu_2(1_\bbR \oplus S_+
\oplus S_-)
     &= \Bigl(\overmu_+(1_\bbR \oplus S_+) - \overmu_-(1_\bbR \oplus
     S_-)\Bigr)^2
     \label{eq:n_=_9_binom_2}
\end{align}

In order to obtain an explicit \(\Spin(\bbO)\)-invariant expression
for the third branch \(\overmu_3\), define the mixed term
\begin{equation}\label{eq:over_mu_mix}
  \overmu_{\mathrm{mix}}(S_+,S_-)
  \coloneqq 4\bigl\langle \Re(S_+)^*\Im(S_-),\,
    \Im(S_+)^*\Re(S_-)
  \bigr\rangle
  - 4\bigl\langle
    \Re(S_+)^*\Re(S_-),\, \Im(S_+)^*\Im(S_-)
  \bigr\rangle
\end{equation}
We claim that \(\overmu_{\mathrm{mix}}\) is \(\Spin(\bbO)\)-invariant:
Recall the triality description of \(\Spin(\bbO)\) by triples
\((g_0,g_+,g_-)\) satisfying~\eqref{eq:triality}, and the triality
automorphism \(\tau\) from~\eqref{eq:tau}. For \(g=(g_0,g_+,g_-)\),
the triple \( (g_-,g_0',g_+') = \tau^2 g\) also lies in \(\Spin(\bbO)\).
Using the triality relation, \(g_+'(x^*) = g_+(x)^*\),
and \(g_0'\in\SO(\bbO)\), we obtain
\begin{align*}
  \bigl\langle
    \Re(S_+)^*\Re(S_-),\,\Im(S_+)^*\Im(S_-)
  \bigr\rangle
  &=
  \bigl\langle
    \Re(g_+S_+)^*\Re(g_-S_-),\, \Im(g_+S_+)^*\Im(g_-S_-)
  \bigr\rangle
\end{align*}
and similarly
\[
  \bigl\langle
  \Re(S_+)^*\Im(S_-),\, \Im(S_+)^*\Re(S_-)
  \bigr\rangle
  =
  \bigl\langle
    \Re(g_+S_+)^*\Im(g_-S_-),\,\Im(g_+S_+)^*\Re(g_-S_-)
  \bigr\rangle
\]
because the \(\Spin(\bbO)\)-action commutes with taking real and
imaginary parts. Therefore \(\overmu_{\mathrm{mix}}\) is \(\Spin(\bbO)\)-invariant.

\begin{lemma}\label{le:mu3_explicit_formula}
The third eigenvalue branch \(\overmu_3\) from
\eqref{eq:def_mu_3_n_=_9} admits the equivalent expression
\begin{equation}\label{eq:over_mu_3_=_over_mu_mix}
  \overmu_3(1_\bbR \oplus S_+ \oplus S_-)
  = \overmu_+(1_\bbR \oplus S_+)
    + \overmu_-(1_\bbR \oplus S_-)
    + \overmu_{\mathrm{mix}}(S_+,S_-)
\end{equation}
\end{lemma}

\begin{proof}
Since the involved terms are \(\Spin(\bbO)\)-invariant, it suffices to
verify \eqref{eq:over_mu_3_=_over_mu_mix} on the slice
\(\Re(S_+) \in \bbR_{>0}\,1_{\bbO}\) and
\(\Re(S_-) \in \bbR_{>0}\,1_{\bbO}\). On this slice, expanding
\(\norm{a(S_+)-b(S_-)}^2\) gives
\begin{equation}\label{eq:over_mu_3_on_slice}
\begin{aligned}
\overmu_3(1_{\bbR}\oplus S_+\oplus S_-)
  &\stackrel{\eqref{eq:def_mu_3_n_=_9}}{=} 4\norm{a(S_+) - b(S_-)}^2\\
  &\stackrel{\eqref{eq:def_aSplus}\eqref{eq:def_bSminus}}{=}
    4\Bigl\lVert
      \norm{\Re(S_+)}\,\Im_{\bbO}\bigl(\Im(S_+)\bigr)
      - \norm{\Re(S_-)}\,\Im_{\bbO}\bigl(\Im(S_-)\bigr)
    \Bigr\rVert^2\\
  &\stackrel{\eqref{eq:over_mu_+},\eqref{eq:over_mu_-}}{=}
    \overmu_+(1_\bbR \oplus S_+)
    + \overmu_-(1_\bbR \oplus S_-)\\
  &\qquad\qquad\qquad\qquad
    - 8\,\norm{\Re(S_+)}\norm{\Re(S_-)}
      \bigl\langle
        \Im_{\bbO}\bigl(\Im(S_+)\bigr),\,
        \Im_{\bbO}\bigl(\Im(S_-)\bigr)
      \bigr\rangle
\end{aligned}
\end{equation}
Moreover, on this slice we have
\[
  \Im_{\bbO}\bigl(\Im(S_+)\bigr)
  = \tfrac12\bigl(\Im(S_+) - \Im(S_+)^*\bigr),
  \qquad
  \Im_{\bbO}\bigl(\Im(S_-)\bigr) \perp 1_{\bbO}
\]
so we may replace \(\Im(S_-)\) by \(\Im_{\bbO}(\Im(S_-))\) inside the
inner product. Thus
\begin{equation*}
\begin{aligned}
  \overmu_3(1_\bbR \oplus S_+ \oplus S_-)
  &- \overmu_+(1_\bbR \oplus S_+)
   - \overmu_-(1_\bbR \oplus S_-)\\
  &\stackrel{\eqref{eq:over_mu_3_on_slice}}{=}
   - 8\,\norm{\Re(S_+)}\norm{\Re(S_-)}
     \bigl\langle
       \Im_{\bbO}\bigl(\Im(S_+)\bigr),\,
       \Im_{\bbO}\bigl(\Im(S_-)\bigr)
     \bigr\rangle\\
  &= - 8\,\norm{\Re(S_+)}\norm{\Re(S_-)}
     \Bigl\langle
       \Im_{\bbO}\bigl(\Im(S_+)\bigr),\,
       \Im(S_-)
     \Bigr\rangle\\
  &= - 4\,\norm{\Re(S_+)}\norm{\Re(S_-)}
     \Bigl\langle
       \Im(S_+) - \Im(S_+)^*,\,
       \Im(S_-)
     \Bigr\rangle\\
  &= 4\norm{\Re(S_+)}\norm{\Re(S_-)}
     \Bigl(
       \bigl\langle \Im(S_-),\,\Im(S_+)^*\bigr\rangle
       - \bigl\langle \Im(S_+),\,\Im(S_-)\bigr\rangle
     \Bigr)\\
  &= 4\bigl\langle
       \Re(S_+)^*\Im(S_-),\,\Im(S_+)^*\Re(S_-)
     \bigr\rangle
     - 4\bigl\langle
       \Re(S_+)^*\Re(S_-),\,\Im(S_+)^*\Im(S_-)
     \bigr\rangle\\
  &\stackrel{\eqref{eq:over_mu_mix}}{=}
    \overmu_{\mathrm{mix}}(S_+,S_-)
\end{aligned}
\end{equation*}
This proves \eqref{eq:over_mu_3_=_over_mu_mix}.
\end{proof}

\smallskip

\noindent
So far, we have worked at the base point \(S_0 = 1_\bbR\).
Since \eqref{eq:over_sigma_1_for_n_=_9}, \eqref{eq:over_sigma_2_for_n_=_9},
and \eqref{eq:over_mu_3_for_n_=_9_1} are \(\Spin(\bbR\oplus\bbO)\)-invariant
and homogeneous of degree \(2\) in \(S_0\), it suffices to verify
that they restrict to the base-point identities \eqref{eq:n_=_9_binom_1},
\eqref{eq:n_=_9_binom_2}, and \eqref{eq:over_mu_3_=_over_mu_mix} when \(S_0=1_\bbR\).
Indeed, for any \(S_0\neq 0\) there exists \(g\in\Spin(\bbR\oplus\bbO)\) with
\(g\,S_0 = \|S_0\|\,1_\bbR\), and homogeneity reduces further to \(S_0=1_\bbR\).

\smallskip

\noindent
For this, note that \(M(1_\bbR)=\mathrm{diag}(1,-1)\) holds
for the matrix defined in~\eqref{eq:M_matrix} (with \(S_0 = 1_\bbR\)).
Hence, writing \(S = S_+ \oplus S_-\), we have
\[
  SM(1_\bbR) = (S_+ \oplus S_-)\,\mathrm{diag}(1,-1)
  = S_+ \oplus (-\,S_-)
\]
Thus,
\begin{align*}
  2\bigl(\norm{\Re(S_+)}^2 \norm{\Im(S_+)}^2
    &+ \norm{\Re(S_-)}^2 \norm{\Im(S_-)}^2\bigr)\\
   &= \bigl(\norm{\Re(S_+)}^2 + \norm{\Re(S_-)}^2\bigr)
     \bigl(\norm{\Im(S_+)}^2 + \norm{\Im(S_-)}^2\bigr) \\
   &\qquad\qquad\qquad\qquad + \bigl(\norm{\Re(S_+)}^2 - \norm{\Re(S_-)}^2\bigr)
     \bigl(\norm{\Im(S_+)}^2 - \norm{\Im(S_-)}^2\bigr) \\
  &= \norm{\Re(S)}^2 \norm{\Im(S)}^2
     + \langle \Re(S), \Re(SM(1_{\bbR})) \rangle\,
       \langle \Im(S), \Im(SM(1_{\bbR})) \rangle 
\end{align*}
Furthermore, using the first two binomial formulas,
\begin{align*}
  2\bigl(\langle \Re(S_+), \Im(S_+) \rangle^2
          &+ \langle \Re(S_-), \Im(S_-) \rangle^2\bigr)
  = \bigl(\langle \Re(S_+), \Im(S_+) \rangle
     + \langle \Re(S_-), \Im(S_-) \rangle\bigr)^2 \\
   &\qquad\qquad\qquad\qquad\qquad + \bigl(\langle \Re(S_+), \Im(S_+) \rangle
     - \langle \Re(S_-), \Im(S_-) \rangle\bigr)^2 \\
  &= \langle \Re(S), \Im(S) \rangle^2
     + \langle \Re(S), \Im(SM(1_{\bbR})) \rangle^2
\end{align*}
Substituting this into~\eqref{eq:n_=_9_binom_1} yields the following
alternative description of \(\overmu_1 + \overmu_2\):
\begin{equation*}
\begin{aligned}
  (\overmu_1 + \overmu_2)&(1_\bbR \oplus S)
  \stackrel{\eqref{eq:over_mu_+},\eqref{eq:over_mu_-}}{=} 4\Bigl(
  \underbrace{\norm{\Re(S)}^2 \norm{\Im(S)}^2
  - \langle \Re(S), \Im(S) \rangle^2}_{=\det(G(S))}\\
  &\qquad+ \underbrace{\langle \Re(S), \Re(SM(1_{\bbR})) \rangle\,
         \langle \Im(S), \Im(SM(1_{\bbR})) \rangle 
     - \langle \Re(S), \Im(SM(1_{\bbR})) \rangle^2
     }_{=\det(H(1_{\bbR},S))}\Bigr)
\end{aligned}
\end{equation*}
for all \(S \in \bbO^2 \otimes_\bbR \bbC\). Here we used 
that \(M(1_{\bbR}) = \begin{psmallmatrix}1&0\\[2pt]0&-1\end{psmallmatrix}\) is real and symmetric, so
\begin{align*}
\det(H(1_{\bbR},S)) &\stackrel{\eqref{eq:def_H}}{=} \langle \Re(S), \Re(SM(1_{\bbR})) \rangle\,
         \langle \Im(S), \Im(SM(1_{\bbR})) \rangle \\
                  &- \langle \Re(S),\,\Im( SM(1_{\bbR})) \rangle\,\langle \Im(S),\, \Re(SM(1_{\bbR}) )\rangle \\
  &= \langle \Re(S), \Re(SM(1_{\bbR})) \rangle\,
         \langle \Im(S), \Im(SM(1_{\bbR})) \rangle 
     - \langle \Re(S), \Im(SM(1_{\bbR})) \rangle^2
\end{align*}
Hence, the binomial relation \eqref{eq:n_=_9_binom_1}
rewrites as
\[
  (\overmu_1+\overmu_2)(1_\bbR\oplus S)
  = 4\bigl(\det G(S) + \det H(1_\bbR,S)\bigr)
\]
which is \eqref{eq:over_sigma_1_for_n_=_9} at \(S_0=1_\bbR\) and \(S_1 = S\).

Likewise, 
\begin{equation}\label{eq:n_=_9_9}
\begin{aligned}
  &\overmu_1(1_\bbR \oplus S_+ \oplus S_-)\,
   \overmu_2(1_\bbR \oplus S_+ \oplus S_-) \\
  &=
    16\Bigl(
      \norm{\Re(S_+)}^2 \norm{\Im(S_+)}^2
      - \norm{\Re(S_-)}^2 \norm{\Im(S_-)}^2 \\
  &\qquad
      - \langle \Re(S_+), \Im(S_+) \rangle^2
      + \langle \Re(S_-), \Im(S_-) \rangle^2
    \Bigr)^2
\end{aligned}
\end{equation}
Here,
\begin{align*}
  2\Bigl(
    \norm{\Re(S_+)}^2 \norm{\Im(S_+)}^2
    &- \norm{\Re(S_-)}^2 \norm{\Im(S_-)}^2
  \Bigr) \\
  &= \bigl(
       \norm{\Re(S_+)}^2 + \norm{\Re(S_-)}^2
     \bigr)\bigl(
       \norm{\Im(S_+)}^2 - \norm{\Im(S_-)}^2
     \bigr) \\
  &\quad
     + \bigl(
         \norm{\Re(S_+)}^2 - \norm{\Re(S_-)}^2
       \bigr)\bigl(
         \norm{\Im(S_+)}^2 + \norm{\Im(S_-)}^2
       \bigr) \\
  &= \norm{\Re(S)}^2 \,
     \langle \Im(S), \Im(SM(1_{\bbR})) \rangle
     + \norm{\Im(S)}^2 \,
       \langle \Re(S), \Re(SM(1_{\bbR})) \rangle \\
\end{align*}
and, by the third binomial formula,
\begin{align*}
  \langle \Re(S_+), \Im(S_+) \rangle^2
  &- \langle \Re(S_-), \Im(S_-) \rangle^2 \\
  &= \bigl(
       \langle \Re(S_+), \Im(S_+) \rangle
       + \langle \Re(S_-), \Im(S_-) \rangle
     \bigr)\\
     &\qquad\qquad\times\bigl(
       \langle \Re(S_+), \Im(S_+) \rangle
       - \langle \Re(S_-), \Im(S_-) \rangle
     \bigr) \\
  &= \langle \Re(S), \Im(S) \rangle \,
    \langle \Re(S), \Im(SM(1_{\bbR})) \rangle
\end{align*}
Therefore,
\begin{equation*}
\begin{aligned}
  \bigl(
     \overmu_+(1_\bbR \oplus S_+)
     &- \overmu_-(1_\bbR \oplus S_-)
   \bigr)^2 \\
  &= 4\Bigl(
       \norm{\Im(S)}^2 \,
       \langle \Re(S), \Re(SM(1_\bbR)) \rangle
       + \norm{\Re(S)}^2 \,
         \langle \Im(S), \Im(SM(1_\bbR)) \rangle \\
  &\qquad\qquad\qquad\qquad\qquad\qquad
       - 2\,\langle \Re(S), \Im(S) \rangle \,
           \langle \Re(S), \Im(SM(1_\bbR)) \rangle
    \Bigr)^2\\
  &= 4\,\bigl(\trace \bigl(G(S)\,J\,H(1_{\bbR},S)\bigr)\bigr)^{2}
\end{aligned}
\end{equation*}
Here we used that
\begin{align*}
 -\trace(G(S)J H(1_{\bbR},S)) &=  \norm{\Re(S)}^{2}\bigl\langle \Im(S),\; \Im(S M(1_{\bbR}))\bigr\rangle\\
 &\quad- \bigl\langle \Re(S),\; \Im(SM(1_{\bbR}))\bigr\rangle
      \bigl\langle \Re(S),\Im(S)\bigr\rangle\\
 &\quad- \bigl\langle \Re(S), \Im(S)\bigr\rangle \underbrace{\bigl\langle \Im(S),\;
 \Re(S M(1_{\bbR}))\bigr\rangle}_{=\bigl\langle \Im(S M(1_{\bbR})),\; \Re(S)\bigr\rangle}\\
 &\quad+ \norm{\Im(S)}^{2} \bigl\langle \Re(S),\; \Re(S\,M(1_{\bbR}))\bigr\rangle
\end{align*}
Therefore \eqref{eq:n_=_9_binom_2} becomes
\[
  (\overmu_1\overmu_2)(1_\bbR\oplus S)
  = 4\bigl(\trace\bigl(G(S)\,J\,H(1_\bbR,S)\bigr)\bigr)^2
\]
which is \eqref{eq:over_sigma_2_for_n_=_9} at \(S_0=1_\bbR\) and \(S_1 = S\).

\smallskip

\noindent
Finally, in order to show that~\eqref{eq:over_mu_3_for_n_=_9_1}
corresponds to~\eqref{eq:over_mu_3_=_over_mu_mix}, note that
\[
  \tilde S_+ \coloneqq S + SM(1_\bbR) = 2(S_+ \oplus 0),
  \qquad
  \tilde S_- \coloneqq S - SM(1_\bbR) = 2(0 \oplus S_-)
\]
Since \(J_0 = \begin{psmallmatrix}0&1\\[2pt]1&0\end{psmallmatrix}\)
swaps the two components, we obtain
\[
  \Re(\tilde S_+)^*J_0\,\Re(\tilde S_-)^\intercal
  = 4\,\Re(S_+)^*\Re(S_-),
  \qquad
  \Im(\tilde S_+)^*J_0\,\Im(\tilde S_-)^\intercal
  = 4\,\Im(S_+)^*\Im(S_-)
\]
and similarly
\[
  \Re(\tilde S_+)^*J_0\,\Im(\tilde S_-)^\intercal
  = 4\,\Re(S_+)^*\Im(S_-),
  \qquad
  \Im(\tilde S_+)^*J_0\,\Re(\tilde S_-)^\intercal
  = 4\,\Im(S_+)^*\Re(S_-)
\]
Specializing \eqref{eq:over_mu_3_for_n_=_9_1} at \(S_0 = 1_\bbR\)
and \(S_1 = S = S_+\oplus S_-\)
and using the above identities for \(\tilde S_\pm\) yields
\begin{equation}\label{eq:over_mu_3_via_J_0}
\begin{aligned}
  \overmu_3(1_\bbR \oplus S)
  &=
    \frac{1}{2}\Bigl(
      \overmu_1(1_\bbR \oplus S)
      + \overmu_2(1_\bbR \oplus S)
    \Bigr)\\
  &\qquad
    - \frac{1}{4}\bigl\langle
      \Re(\tilde{S}_+)^* J_0\, \Re(\tilde{S}_-)^\intercal,\;
      \Im(\tilde{S}_+)^* J_0\, \Im(\tilde{S}_-)^\intercal
    \bigr\rangle\\
  &\qquad
    + \frac{1}{4}\bigl\langle
      \Re(\tilde{S}_+)^* J_0\, \Im(\tilde{S}_-)^\intercal,\;
      \Im(\tilde{S}_+)^* J_0\, \Re(\tilde{S}_-)^\intercal
    \bigr\rangle\\
  &=
    \frac{1}{2}\Bigl(
      \overmu_1(1_\bbR \oplus S)
      + \overmu_2(1_\bbR \oplus S)
    \Bigr)\\
  &\qquad
    - 4\bigl\langle
      \Re(S_+)^*\Re(S_-),\, \Im(S_+)^*\Im(S_-)
    \bigr\rangle\\
  &\qquad
    + 4\bigl\langle
      \Re(S_+)^*\Im(S_-),\, \Im(S_+)^*\Re(S_-)
    \bigr\rangle
\end{aligned}
\end{equation}
Moreover, \eqref{eq:over_mu_3_via_J_0} is equivalent to
\eqref{eq:over_mu_3_=_over_mu_mix} by \eqref{eq:n_=_9_binom_1} and
\eqref{eq:over_mu_mix}.

In the reducible case, an eigenvalue branch of \(-\underK^2\) of
multiplicity greater than two on some connected component of
\(\n \setminus \ram(\overK^2)\) is impossible, since for a generic
\(4\)-tuple
\[
  \bigl(
    (S_+, S_-),\,
    (\widetilde S_+, \widetilde S_-),\,
    (S^\sharp_+, S^\sharp_-),\,
    (S'_+, S'_-)
  \bigr)
\]
of pairs of oppositely handed spinors, the common fixed point group
\(\Fix(\,\cdot\,, \Spin(8))\) is trivial
\[
  \Spin(8)
  \stackrel{\Fix(S_+\oplus S_-)}{\longrightarrow} \Gtwo
  \stackrel{\Fix(\widetilde S_+\oplus \widetilde S_-)}{\longrightarrow}
  \SU(2)
  \stackrel{\Fix(S_+^\sharp\oplus S_-^\sharp)}{\longrightarrow} \{e\}
\]
in fact already for the first three pairs. Likewise,
\[
  8 \cdot 9 - 28 = 44 < 64
\]
while \(\dim \v \ge 64\) when \(\v\) contains at least two copies of
\(\bbO^2 \otimes_\bbR \bbC\). Thus the fundamental inequality
\eqref{eq:dimensions_abschaetzung_1} holds. Hence
Corollary~\ref{co:m_geq_4} provides us with a second argument showing
that there are no eigenvalue branches of multiplicity greater than two
if \(\v\) is reducible.

Moreover, \(\nu = 1\) is not a global eigenvalue of \(-\underK^2\) in the
irreducible case, so the \(J^2\)-condition does not hold. Hence, if
\(\v\) is reducible, Corollary~\ref{co:reducible_Clifford_module} rules
out the possibility that \(\nu = 1\) is a global eigenvalue of
\(-\underK^2\). Moreover, \(0\) is a global eigenvalue of multiplicity
\(m_0 = 1\) by Corollary~\ref{co:mu_=_constant}. Therefore the remaining
multiplicity splits into four distinct nonconstant eigenvalue branches
of multiplicity \(2\). This completes the argument for \(n = 9\).
\end{proof}

\subsection{\texorpdfstring{Eigenvalue branches of \(-\underK^2\) for
    \(\dim \z \ge 10\)}{Eigenvalue branches of (-K^2) for dim(z) >= 10}}
\label{se:n_ge_10}
The eightfold periodicity of real Clifford algebras asserts that
\[
\Cl(\bbR^{n})
\cong
\Cl(\bbR^8) \hat\otimes_{\bbR} \Cl(\bbR^{n-8})
\]
as graded algebras for all \(n \ge 8\) where \(\hat\otimes_{\bbR}\) denotes the graded tensor product
(see \cite[Ch.~I, Thm.~4.3]{LM}). Here \(\Cl(\bbR^8)\) is (as an ungraded \(\bbR\)-algebra)
isomorphic to \(\Mat_{16}(\bbR)\).

At the level of Clifford modules, given an \(\Cl(\bbR^{n})\)-module \(\v\),
there exists an \(\Cl(\bbR^{n-8})\)-module \(\v'\) such that
\(\v \cong \bbR^{16} \otimes_{\bbR} \v'\) as real vector spaces;
cf.~\cite[Ch.~I, Thm.~5.8]{LM}. However, the precise statement of
the following corollary is not made explicit in the aforementioned reference.

\begin{corollary}[{
  Corollary to \protect\cite[Ch.~I, Thm.~4.3]{LM} and
  \protect\cite[Ch.~I, Thm.~5.8]{LM}}
]
\label{co:periodicity}
Let \(\z\) be a Euclidean vector space with \(\dim \z \ge 8\), and fix an
orthogonal splitting \(\z \cong \bbO \oplus \z'\). Let \(\v\) be an orthogonal
\(\Cl(\z)\)-module. Then there exists an orthogonal \(\Cl(\z')\)-module \(\v'\),
unique up to isomorphism, and an isomorphism of orthogonal \(\Cl(\z)\)-modules
\[
  \v \cong \bbO^{2} \,\hat\otimes_{\bbR}\, \v'
\]
where \(\bbO^{2}\) carries the \(\bbZ_2\)-graded orthogonal \(\Cl(\bbO)\)-module
structure from \eqref{eq:Clifford_multiplication_for_n_=_8} via the canonical orthogonal
splitting \(\bbO^2 = (\bbO\oplus 0)\oplus(0\oplus \bbO)\), and the right-hand side is 
the orthogonal \(\Cl(\z))\)-module from \eqref{eq:tensor_inner_product},
\eqref{eq:tensor_product_multiplication} under the canonical identification 
\(\Cl(\z) \cong \Cl(\bbO)\,\hat\otimes_{\bbR}\,\Cl(\z')\).
\end{corollary}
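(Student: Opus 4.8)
The plan is to combine the classical periodicity isomorphisms for Clifford algebras with elementary module theory over a full matrix algebra, keeping careful track of the $\bbZ_2$-grading and of the orthogonal structure throughout. I would start from two structural inputs. First, the orthogonal splitting $\z \cong \bbO \oplus \z'$ induces a canonical isomorphism of $\bbZ_2$-graded algebras $\Cl(\z) \cong \Cl(\bbO)\,\hat\otimes_\bbR\,\Cl(\z')$, sending $Z_\bbO + Z'$ to $(Z_\bbO \hat\otimes 1) + (1 \hat\otimes Z')$ on generators; this is the graded-tensor-product description of Clifford algebras over orthogonal direct sums recalled just before \eqref{eq:tensor_product} (cf.\ \cite[Ch.~I, Thm.~4.3]{LM}). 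Second, \eqref{eq:Clifford_multiplication_for_n_=_8} exhibits $\bbO^2$, with the $\bbZ_2$-grading given by the splitting $\bbO^2 = (\bbO\oplus 0)\oplus(0\oplus\bbO)$, as an orthogonal $\bbZ_2$-graded $\Cl(\bbO)$-module in which the odd generators $Z_\bbO\cbullet$ interchange the two summands; I would then observe that the resulting algebra homomorphism $\Cl(\bbO)\to\End{\bbO^2}$ — where $\End{\bbO^2}$ is graded so that the even part $\End{\bbO^2}^{\bar 0}$ preserves, and the odd part $\End{\bbO^2}^{\bar 1}$ interchanges, the two summands — is an isomorphism of $\bbZ_2$-graded algebras: it is injective because $\Cl(\bbO)\cong\Mat_{16}(\bbR)$ is simple and the representation is nonzero, hence surjective by the dimension count $\dim\Cl(\bbO)=2^8=16^2=\dim\End{\bbO^2}$. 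Combining the two inputs, $\Cl(\z)\cong\End{\bbO^2}\,\hat\otimes_\bbR\,\Cl(\z')$ as graded algebras.

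For the existence statement, let $\v$ be an arbitrary orthogonal $\Cl(\z)$-module. Restricting the action along the subalgebra $\Cl(\bbO)\hat\otimes 1\cong\Mat_{16}(\bbR)$, and fixing a primitive idempotent $p\in\Cl(\bbO)$ of even degree whose image lies in $\bbO\oplus 0$ and whose associated simple left module is $\bbO^2$, the standard structure theory of modules over a matrix algebra provides a $\Cl(\bbO)$-linear isomorphism $\bbO^2\otimes_\bbR\v'\to\v$, where $\v'\coloneqq(p\hat\otimes 1)\v$ carries the inner product restricted from $\v$; one checks at once that $\v'$ is thereby an orthogonal $\Cl(\bbO)$-module. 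Since $p$ is even, $1\hat\otimes Z'$ commutes with $p\hat\otimes 1$ and hence preserves $\v'$, so $Z'\mapsto(1\hat\otimes Z')|_{\v'}$ makes $\v'$ an orthogonal $\Cl(\z')$-module, the Clifford relation and skew-symmetry being inherited from $\v$. It then remains to identify the $\Cl(\z)$-action transported to $\bbO^2\otimes_\bbR\v'$: by $\Cl(\bbO)$-equivariance, $Z_\bbO\hat\otimes 1$ acts as $(Z_\bbO\cbullet)\otimes\Id$, while $1\hat\otimes Z'$ is odd, so it commutes with $\End{\bbO^2}^{\bar 0}\otimes\Id$ and anticommutes with $\End{\bbO^2}^{\bar 1}\otimes\Id$. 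A short block-matrix computation shows that the operators on $\bbO^2\otimes\v'$ with exactly this behaviour are precisely those of the form $\Gamma\otimes T$, with $\Gamma$ the grading involution of $\bbO^2$ and $T\in\End{\v'}$; restricting to the copy of $\v'$ inside $\bbO^2\otimes\v'$, on which $\Gamma$ acts as the identity, forces $T$ to be the $\Cl(\z')$-action defined above. Hence the transported action is exactly \eqref{eq:tensor_product_Gamma}, equivalently \eqref{eq:tensor_product}, so $\v\cong\bbO^2\,\hat\otimes_\bbR\,\v'$ as $\Cl(\z)$-modules.

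Finally I would promote this to an isometric isomorphism and deduce uniqueness. Equipping $\bbO^2\hat\otimes_\bbR\v'$ with its tensor-product inner product — an orthogonal $\Cl(\z)$-module by the verification following \eqref{eq:tensor_product} — I would choose any $\Cl(\z)$-module isomorphism $f\colon\v\to\bbO^2\hat\otimes_\bbR\v'$; then $f^{*}f$ lies in the commutant $\Endop_{\Cl(\z)}(\v)$, because the Clifford generators act skew-symmetrically on both sides, and $f\,(f^{*}f)^{-1/2}$ is an isometric $\Cl(\z)$-module isomorphism, the positive square root again commuting with the $\Cl(\z)$-action (being a uniform limit of polynomials in $f^{*}f$). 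Uniqueness of $\v'$ follows because its isomorphism class is recovered functorially: $\Hom_{\Cl(\bbO)}\bigl(\bbO^2,\,\bbO^2\hat\otimes_\bbR\v'\bigr)\cong\v'$ naturally and $\Cl(\z')$-equivariantly, so any $\Cl(\z)$-module isomorphism between two such tensor products induces a $\Cl(\z')$-module isomorphism of the corresponding factors $\v'$, which one upgrades to an isometry exactly as above. The step I expect to be the main obstacle is the commutation computation in the existence argument: the whole point of using the graded tensor product (rather than the ungraded $\Cl_{n+8}\cong\Mat_{16}(\Cl_n)$) is that the odd factor $1\hat\otimes Z'$ must act on the $\bbO^2$-slot through the grading involution $\Gamma$, and one has to verify that this $\Gamma$ is precisely the one occurring in \eqref{eq:tensor_product_Gamma}; the remaining bookkeeping — that the inner product induced on $\v'$ is the orthogonal one, and that the tensor-product inner product on $\bbO^2\hat\otimes_\bbR\v'$ is matched by the isometry just constructed — is routine.
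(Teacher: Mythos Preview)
Your argument is correct and takes a genuinely different route from the paper. The paper reduces to irreducible summands and argues by classification: since $d_n = 16\,d_{n-8}$, the assignment $\v' \mapsto \bbO^2 \hat\otimes_\bbR \v'$ sends irreducibles to irreducibles, and one matches them up by counting---one irreducible on each side when $n \not\equiv 3 \pmod 4$, two on each side when $n \equiv 3 \pmod 4$, distinguished by the action of the volume element via an explicit verification that $\omega_n\cbullet = \Id \otimes (\omega_{n-8}\cbullet)$ under the tensor-product action. Your approach instead exploits the graded isomorphism $\Cl(\bbO) \cong \End{\bbO^2}$ together with Morita theory to construct $\v'$ directly as $(p\hat\otimes 1)\v$, thereby avoiding the case split on $n \bmod 4$ altogether; the graded-commutant computation you flag as the main obstacle (operators on $\bbO^2\otimes\v'$ that commute with $\End{\bbO^2}^{\bar 0}\otimes\Id$ and anticommute with $\End{\bbO^2}^{\bar 1}\otimes\Id$ are exactly those of the form $\Gamma\otimes T$) is indeed the core, and it goes through as you outline. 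Your version also treats the \emph{orthogonal} aspect more explicitly, via the polar-decomposition trick, whereas the paper leaves this to the implicit fact that any two invariant inner products on an irreducible module are proportional. One small slip: where you write ``$\v'$ is thereby an orthogonal $\Cl(\bbO)$-module'' you presumably mean only that the restricted inner product makes $\v'$ a Euclidean space; the $\Cl(\z')$-module structure appears in the next sentence.
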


\begin{proof}
By decomposing \(\v\) into irreducible summands, it suffices to assume
that \(\v\) is an irreducible orthogonal \(\Cl(\z)\)-module. Let \(n \coloneqq \dim \z\ge 8\);
then \(\dim \z' = n - 8 \ge 0\). The dimension \(d_n\) of a minimal
(hence irreducible) \(\Cl(\bbR^n)\)-module satisfies \(d_{n} = 16\,d_{n-8}\)
for all \(n \ge 8\) by \cite[Ch.~I, Thm.~5.8]{LM}. Moreover, there is,
up to isomorphism, a unique irreducible orthogonal \(\Cl(\bbO)\)-module of
dimension \(16\), namely \(\bbO^2\); see Section~\ref{se:n_=_8}. Thus
the natural splitting \(\bbO^2 = \bbO \oplus \bbO\) yields a natural
\(\bbZ_2\)-grading as an orthogonal \(\Cl(\bbO)\)-module.

Moreover, any \(\Cl(\z')\)-module \(\v'\) of minimal
dimension \(d_{n-8}\) gives a \(\Cl(\z)\)-module
\(\v \coloneqq \bbO^2 \hat\otimes_\bbR \v'\) of minimal dimension
\(d_{n} = 16 d_{n-8}\) (with the inner product defined
by~\eqref{eq:tensor_inner_product} and the orthogonal Clifford module
structure defined by~\eqref{eq:tensor_product_multiplication}).

If \(n \not\equiv 3 \pmod{4}\), then there is only one irreducible
\(\Cl(\z)\)-module \(\v\) and one irreducible
\(\Cl(\z')\)-module \(\v'\) (up to isomorphism). Hence, by uniqueness,
\(\v \cong \bbO^{2} \,\hat\otimes_{\bbR}\, \v'\).

If \(n \equiv 3 \pmod{4}\), then there are two different irreducible
\(\Cl(\z)\)-modules \(\v_+\) and \(\v_-\) of dimension \(d_{n}\), and there
are two different irreducible \(\Cl(\z')\)-modules \(\v'_+\) and \(\v'_-\)
of dimension \(d_{n-8}\). These can be distinguished by the action of the
volume elements \(\omega_{n}\) and \(\omega_{n-8}\): we have
\(\omega_{n} \cbullet = \pm \Id\) on \(\v_\pm\) and
\(\omega_{n-8} \cbullet = \pm \Id\) on \(\v'_\pm\) \cite[Prop.~5.9]{LM}.

Similarly, \(\omega_8^2 = 1\), and the natural decomposition
\(\bbO^2 = \bbO \oplus \bbO\) is the decomposition into the \(\pm 1\)-eigenspaces
of \(\omega_8 \cbullet\). Therefore, for all \(S_+ \oplus S_- \in \bbO \oplus \bbO\)
and all \(S' \in \v'\), we have
\[
  (\omega_8 \hat\otimes \omega_{n-8})\cbullet\bigl((S_+ \oplus S_-)\otimes S'\bigr)
  = (\omega_8 \cbullet S_+) \otimes (\omega_{n-8} \cbullet S')
    \;\oplus\;
    \bigl((-1)^{n-8}(\omega_8 \cbullet S_-)\otimes(\omega_{n-8} \cbullet S')\bigr)
\]
according to~\eqref{eq:tensor_product_multiplication}. Here the
factor \((-1)^{n-8}\) arises because \(\omega_{n-8} = e_1\cdots e_{n-8}\) 
is a product of \(n-8\) unit vectors \(e_i \in \z'\), and 
iterating~\eqref{eq:tensor_product_multiplication} contributes a sign
\((-1)^{\deg(S_\pm)}\) each time a vector from \(\z'\) acts. Since \(S_+\) is even
and \(S_-\) is odd with respect to the \(\bbZ_2\)-grading on \(\bbO^2\), the total
sign on the odd summand is \((-1)^{n-8}\). In particular, if \(n \equiv 3 \pmod 4\),
then \(n-8\) is odd and hence \((-1)^{n-8} = -1\). Furthermore, \(\omega_8 \cbullet S_+ =
S_+\) and \(\omega_8 \cbullet S_- = -S_-\). Therefore,
\[
  (\omega_8 \hat\otimes \omega_{n-8})\cbullet
  \bigl((S_+ \oplus S_-)\otimes S'\bigr)
  = (S_+ \oplus S_-)\otimes (\omega_{n-8} \cbullet S')
\]

On the other hand, if we orient \(\z = \bbO \oplus \z'\) by the
direct-sum orientation, then the volume elements satisfy
\begin{align*}
  \omega_n = e_1\cdots e_n
  &= (e_1\cdots e_8)(e_9\cdots e_n) \\
  &= \omega_8 \hat\otimes \omega_{n-8}
\end{align*}
Thus the above shows that \(\omega_n \cbullet = \Id \otimes \omega_{n-8} \cbullet\).
Hence \(\v_\pm \coloneqq \bbO^2 \hat\otimes_\bbR \v'_\pm\) yields
the two different irreducible \(\Cl(\z)\)-modules \(\v_\pm\) in the case
\(n \equiv 3 \pmod{4}\). This finishes the proof.
\end{proof}

By Proposition~\ref{p:n_=_8}, \(1\) is not a global eigenvalue of
\(-\underK^2\) for \(n = 8\). By eightfold periodicity, the same holds
for all \(n \ge 8\):

\begin{proposition}\label{p:n_ge_8}
Suppose that \(\dim \z \ge 8\). Then \(1\) is not a global eigenvalue
of \(-\underK^2\).
\end{proposition}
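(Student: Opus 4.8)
The strategy is to strip an $8$-dimensional Clifford factor off $\z$ and reduce to the base case $n=8$, which is Proposition~\ref{p:n_=_8}. The two ingredients are the eightfold periodicity in the packaged form of Corollary~\ref{co:periodicity} and the tensor-product comparison provided by Lemma~\ref{le:simple} together with Corollary~\ref{co:tensor_product}.

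First I would use $\dim\z\ge 8$ to fix an orthogonal splitting $\z=\z_1\oplus\z_2$ with $\z_1$ an $8$-dimensional Euclidean space, identified with $\bbO$, and $\z_2=\z_1^\perp$; let $\v_1\coloneqq\bbO^2$ carry the $\bbZ_2$-graded orthogonal $\Cl(\z_1)$-module structure of \eqref{eq:Clifford_multiplication_for_n_=_8}. Applying Corollary~\ref{co:periodicity} (with its $\bbO$ equal to $\z_1$ and its complement equal to $\z_2$) produces an orthogonal $\Cl(\z_2)$-module $\v_2$, necessarily nonzero since $\v\neq\{0\}$, and an isomorphism of orthogonal $\Cl(\z)$-modules $\v\cong\v_1\,\hat\otimes_\bbR\,\v_2$, where the tensor factor carries the Clifford action \eqref{eq:tensor_product}. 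Since any such isomorphism $\phi$ is an isometry intertwining the Clifford multiplications, the bilinear-form description \eqref{eq:def_overK} of $\overK$ shows that $\overK$ (for the target module) at $X_\z\oplus\phi X_\v$ equals $\overK$ (for the source module) at $X_\z\oplus X_\v$, as operators on $\z$; as $\phi$ also preserves the spinor norm, the corresponding families $-\underK^2$ have the same spectra at matching points, hence the same global eigenvalues. I may therefore assume $\v=\v_1\,\hat\otimes_\bbR\,\v_2$ outright.

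Next I would apply Lemma~\ref{le:simple} and Corollary~\ref{co:tensor_product} to the splitting $\z=\z_1\oplus\z_2$, with graded first factor $\v_1=\bbO^2$ and second factor $\v_2$. By Lemma~\ref{le:simple}, $\underK\bigl(Z_0\oplus(S\otimes T)\bigr)=\underK'(Z_0\oplus S)\oplus 0$ for all nonzero $Z_0\in\z_1$, $S\in\v_1$, $T\in\v_2$, where $\underK'$ is the family associated with $(\bbO,\bbO^2)$; hence $\Spec\bigl(-\underK(Z_0\oplus(S\otimes T))^2\bigr)=\Spec\bigl(-\underK'(Z_0\oplus S)^2\bigr)$, and every $Z_0\oplus S$ with both components nonzero occurs this way. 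Therefore, as recorded in Corollary~\ref{co:tensor_product}, if $\nu=1$ were a global eigenvalue of $-\underK^2$ for $(\z,\v)$ it would be a global eigenvalue of $-(\underK')^2$ for $(\bbO,\bbO^2)$. But $(\bbO,\bbO^2)$ is exactly the $n=8$ case with $\v$ irreducible, and Proposition~\ref{p:n_=_8} shows that there $\nu=1$ is \emph{not} a global eigenvalue of $-\underK^2$: the single nonconstant branch of multiplicity $6$ together with the zero branch of multiplicity $m_0=2$ already exhaust $\dim\bbO=8$. This contradiction proves the proposition.

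There is no genuine obstacle here; the work is entirely in matching hypotheses. The one point worth stating explicitly is that being a global eigenvalue of $-\underK^2$ is an invariant of the isomorphism class of the orthogonal $\Cl(\z)$-module $\v$ — this is what legitimises replacing $\v$ by $\v_1\,\hat\otimes_\bbR\,\v_2$ in the first step — and it follows at once from \eqref{eq:def_overK}.
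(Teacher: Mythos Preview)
Your proof is correct and follows essentially the same route as the paper: split off an $8$-dimensional factor via Corollary~\ref{co:periodicity}, then use Corollary~\ref{co:tensor_product} to push a hypothetical global eigenvalue $\nu=1$ down to the pair $(\bbO,\bbO^2)$, contradicting Proposition~\ref{p:n_=_8}. The paper's proof is terser but structurally identical; your explicit remark that global eigenvalues are invariant under isomorphisms of orthogonal $\Cl(\z)$-modules is a helpful clarification the paper leaves implicit.
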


\begin{proof}
Fix an orthogonal splitting \(\z = \bbO \oplus \z'\). By the
eightfold periodicity of Clifford algebras together with
Corollary~\ref{co:periodicity}, there exists an orthogonal \(\Cl(\z')\)-module
\(\v'\) such that \(\v \cong \bbO^2 \hat\otimes_\bbR \v'\).
Furthermore, by Proposition~\ref{p:n_=_8}, \(1\) is not a global
eigenvalue of \(-\underK_{\bbO^2}^2\). By
Corollary~\ref{co:tensor_product}, it cannot be a global eigenvalue
of \(-\underK^2\).
\end{proof}

In particular, for \(n = 9\) we recover the fact that \(1\) is not a global eigenvalue,
in accordance with Proposition~\ref{p:n_=_9} (recall that for \(\dim \z = 9\)
there are three nonconstant eigenvalue branches of \(-\underK^2\) in the
irreducible case, and four nonconstant eigenvalue branches otherwise).

\begin{proposition}\label{p:mult_ge_10}
Suppose that \(n \coloneqq \dim \z \ge 10\). Then every nonzero eigenvalue
branch \(\undermu\) of \(-\underK^2\) on \(\n \setminus \ram(\overK^2)\) has
multiplicity \(m_{\undermu}=2\).
\end{proposition}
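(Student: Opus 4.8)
\textit{Proof proposal.}
The plan is to deduce everything from Corollary~\ref{co:m_geq_4}, which already asserts that all nonzero eigenvalue branches of $-\underK^2$ have multiplicity $2$ whenever $\dim\z = n \ge 6$ and the fundamental inequality $\dim\v > 8n-28$ from \eqref{eq:dimensions_abschaetzung_1} holds. Since $n \ge 10 > 6$, the entire proof reduces to verifying $\dim\v > 8n-28$ for every $n \ge 10$ and every nontrivial orthogonal $\Cl(\z)$-module $\v$.

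First I would pass to the irreducible case. As $\v$ splits into an orthogonal direct sum of irreducible orthogonal $\Cl(\z)$-submodules, and all irreducible orthogonal $\Cl(\bbR^n)$-modules have the same minimal real dimension $d_n$ (when $n \equiv 3 \pmod 4$ or $n \equiv 7 \pmod 8$ there are two isomorphism types, but both of dimension $d_n$), the dimension $\dim\v$ is a positive integral multiple of $d_n$; hence $\dim\v \ge d_n$. Thus it suffices to prove the arithmetic inequality $d_n > 8n-28$ for all $n \ge 10$.

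This I would establish by a short induction on $n$ with step $8$, using the eightfold periodicity $d_{n+8} = 16\,d_n$ recalled in the proof of Corollary~\ref{co:periodicity} (cf.\ \cite[Ch.~I, Thm.~5.8]{LM}). For the base cases $10 \le n \le 17$, the values $d_2=4$, $d_3=4$, $d_4=d_5=d_6=d_7=8$, $d_8=16$, $d_9=32$ from the structure theory of low-dimensional Clifford modules recalled in Section~\ref{se:eigenvalue_branches} give $d_{10}=d_{11}=64$, $d_{12}=d_{13}=d_{14}=d_{15}=128$, $d_{16}=256$, $d_{17}=512$, each strictly larger than the corresponding value of $8n-28$, namely $52,60,68,76,84,92,100,108$. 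For the inductive step, if $d_n > 8n-28$ with $n \ge 10$, then
\[
  d_{n+8} = 16\,d_n > 16(8n-28) = 128n-448 \ge 8n+36 = 8(n+8)-28 ,
\]
the last inequality being $120n \ge 484$, which holds for $n \ge 10$. This finishes the induction and, via Corollary~\ref{co:m_geq_4}, the proof.

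There is no substantial obstacle here; the one point needing attention is to keep the inequalities strict throughout, so that the hypothesis $\dim\v > 8n-28$ of Corollary~\ref{co:m_geq_4} is met exactly, and to record that $n \ge 10$ places us in the range $n \ge 6$ where that corollary is valid. Everything else is bookkeeping with the periodicity table of irreducible Clifford modules.
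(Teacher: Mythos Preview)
Your proof is correct and follows essentially the same approach as the paper: both reduce to verifying the inequality $\dim\v > 8n-28$ so that Corollary~\ref{co:m_geq_4} applies, check the base cases $10\le n\le 17$ via the periodicity table of irreducible Clifford modules, and then use the eightfold periodicity $d_{n+8}=16\,d_n$ to propagate the inequality to all $n\ge 10$. Your inductive step is slightly more explicit than the paper's iteration argument, but the substance is identical.
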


\begin{proof}
Write \(n = 8n_1 + n_2\) with \(n_2 \in \{0,\dots,7\}\). By eightfold
periodicity, every nontrivial \(\Cl(\bbR^n)\)-module has real dimension at
least \(16^{\,n_1} d_{n_2}\), where
\[
  d_0 = 1,\quad d_1 = 2,\quad d_2 = d_3 = 4,\quad d_4 = d_5 = d_6 = d_7 = 8
\]
In particular,
\[
  \dim \v \ge
  \begin{cases}
    64  & 10 \le n \le 11\\
    128 & 12 \le n \le 15\\
    256 & 16 \le n \le 17
  \end{cases}
  \qquad\text{and}\qquad
  8n - 28 \le
  \begin{cases}
    60  & 10 \le n \le 11\\
    92  & 12 \le n \le 15\\
    108 & 16 \le n \le 17
  \end{cases}
\]
Hence \(\dim \v > 8n - 28\) for \(10 \le n \le 17\), establishing
\eqref{eq:dimensions_abschaetzung_1}. Replacing \(n\) by \(n+8\) multiplies
\(\dim\v\) by \(16\) while increasing \(8n-28\) by \(64\), so the inequality
persists for all \(n \ge 10\) by iteration.

By Corollary~\ref{co:m_geq_4},  every nonzero eigenvalue branch
of \(- \underK^2\) has multiplicity \(2\).
\end{proof}

\section{\texorpdfstring{The natural \(\mathfrak{C}_0\)-structure}
{The natural C0-structure}}\label{se:C0}
Let \(\z\) be a Euclidean space of positive dimension, let \(\v\)  be a 
nontrivial orthogonal \(\Cl(\z)\)-module and \(\n \coloneqq \z \oplus \v\) be the 
orthogonal Lie algebra associated with the pair \((\z,\v)\). Following 
\cite[Ch.~3.3]{BTV}, we consider the following subspaces of \(\n\)
\begin{align}\label{eq:def_n_3_X}
  (\n_3)_X &\coloneqq \bbR X_\z \oplus \bbR X_\v
    \oplus \bbR X_\z \cbullet X_\v \\
  \notag
           &\rotatebox[origin=c]{270}{$\subseteq$} \\
  \label{eq:def_hat_z_X}
  \Z_X &\coloneqq \z \oplus \bigl(\z \cbullet X_\v
    + \z \cbullet X_\z \cbullet X_\v\bigr) \subseteq \n\\
  \notag
           &\rotatebox[origin=c]{90}{$\subseteq$} \\
  \label{eq:def_H_X}
  \H_X &\coloneqq \h_X \oplus \bigl(\h_X \cbullet X_\v
    + \h_X \cbullet X_\z \cbullet X_\v\bigr)
\end{align}
defined for each \(X = X_\z \oplus X_\v \in \n = \z \oplus \v\) such
that \(X_\z \neq 0\) and \(X_\v \neq 0\). The direct-sum decompositions
indicated above hold because
\(\z \cbullet X_\v + \z \cbullet X_\z \cbullet X_\v \subseteq \v\) and
\(\langle X_\z \cbullet X_\v, X_\v \rangle = 0\).
Also note that \((\n_3)_X\) is the Lie algebra of an embedded,
totally geodesic Heisenberg subgroup \(N^3 \subseteq N\).

\begin{lemma}\label{le:decomposition_of_n}
\begin{enumerate}
\item We have
\begin{equation}\label{eq:z_X_perp}
  (\Z_X)^\perp
  = \Kern(X_\v \spinprod) \cap \Kern\bigl((X_\z \cbullet X_\v)\spinprod\bigr)
  \subseteq \v
\end{equation}
i.e., \((\Z_X)^\perp\) is the intersection of the kernels of
\(X_\v \spinprod\colon \v \to \z\) and \((X_\z \cbullet X_\v)\spinprod\colon
\v \to \z\), see~\eqref{eq:spinor_product_2}. Hence, there is an orthogonal
decomposition
\begin{equation}\label{eq:decomposition_of_n_1}
  \n = \Z_X \oplus
  \bigl(\Kern(X_\v \spinprod) \cap
  \Kern\bigl((X_\z \cbullet X_\v)\spinprod\bigr)\bigr)
\end{equation}

\item There is an orthogonal direct-sum decomposition
\begin{equation}\label{eq:decomposition_of_z}
  \Z_X = (\n_3)_X \oplus \H_X
\end{equation}
\end{enumerate}
In particular, for each \(X \in \n = \z \oplus \v\) with \(X_\z \neq 0\)
and \(X_\v \neq 0\) there is a pointwise orthogonal direct-sum
decomposition
\begin{equation}\label{eq:decomposition_of_n_2}
  \n = (\n_3)_X \oplus \H_X \oplus (\Z_X)^\perp
\end{equation}
with \((\Z_X)^\perp\) given by \eqref{eq:z_X_perp}.
\end{lemma}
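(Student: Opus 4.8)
The statement is a structural decomposition of $\n$ depending on $X$; the natural strategy is to prove parts (a) and (b) separately and then simply glue them together. For part (a), the key observation is that $\Z_X$ is by definition $\z$ together with the image of $\z$ under the two maps $Z \mapsto Z\cbullet X_\v$ and $Z \mapsto Z\cbullet X_\z \cbullet X_\v$. So I would first note that $\z \subseteq \Z_X$ and that $(\z)^\perp$ inside $\n$ is exactly $\v$; hence $(\Z_X)^\perp \subseteq \v$. Then for $S \in \v$, the condition $S \perp \Z_X$ amounts to $\langle S, Z\cbullet X_\v\rangle = 0$ and $\langle S, Z\cbullet X_\z\cbullet X_\v\rangle = 0$ for all $Z \in \z$. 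Using the defining adjunction \eqref{eq:def_spinor_product}, $\langle S, Z\cbullet X_\v\rangle = \langle Z\cbullet X_\v, S\rangle = \langle Z, X_\v \spinprod S\rangle = -\langle Z, S\spinprod X_\v\rangle$, so the first family of conditions says $S \spinprod X_\v = 0$, i.e.\ $S \in \Kern(X_\v\spinprod)$ (using \eqref{eq:spinor_product_2} and skew-symmetry of $\spinprod$), and likewise the second family gives $S \in \Kern((X_\z\cbullet X_\v)\spinprod)$. This yields \eqref{eq:z_X_perp}, and \eqref{eq:decomposition_of_n_1} is then just $\n = \Z_X \oplus (\Z_X)^\perp$.

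\textbf{Continued.} For part (b), I would first check that the three indicated summands of $\Z_X$ — namely $(\n_3)_X$, $\H_X$, and (implicitly, since $\Z_X = \z \oplus (\z\cbullet X_\v + \z\cbullet X_\z\cbullet X_\v)$) the remaining pieces — actually reassemble to all of $\Z_X$, and that $(\n_3)_X$ and $\H_X$ are mutually orthogonal. The $\z$-component of $\Z_X$ splits orthogonally as $\bbR X_\z \oplus \h_X$, matching the $\z$-parts of $(\n_3)_X$ and $\H_X$. For the $\v$-part, write $\z = \bbR X_\z \oplus \h_X$ and distribute: $\z\cbullet X_\v = \bbR(X_\z\cbullet X_\v) + \h_X\cbullet X_\v$ and $\z\cbullet X_\z\cbullet X_\v = \bbR(X_\z\cbullet X_\z\cbullet X_\v) + \h_X\cbullet X_\z\cbullet X_\v = \bbR X_\v + \h_X\cbullet X_\z\cbullet X_\v$ by \eqref{eq:Clifford_module_2}. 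Hence the $\v$-part of $\Z_X$ is $\bbR X_\v + \bbR(X_\z\cbullet X_\v) + \h_X\cbullet X_\v + \h_X\cbullet X_\z\cbullet X_\v$, which is exactly the $\v$-parts of $(\n_3)_X$ and $\H_X$ together. For orthogonality of $(\n_3)_X$ and $\H_X$: the $\z$-parts $\bbR X_\z$ and $\h_X$ are orthogonal by definition of $\h_X$; for the $\v$-parts one needs $\langle X_\v, \h_X\cbullet X_\v\rangle = 0$, $\langle X_\v, \h_X\cbullet X_\z\cbullet X_\v\rangle = 0$, $\langle X_\z\cbullet X_\v, \h_X\cbullet X_\v\rangle = 0$, and $\langle X_\z\cbullet X_\v, \h_X\cbullet X_\z\cbullet X_\v\rangle = 0$ for all $Z \in \h_X$. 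The first follows from skew-symmetry of $Z\cbullet$; the third from \eqref{eq:Clifford_module_4} (giving $\|X_\v\|^2\langle X_\z, Z\rangle = 0$ since $Z \perp X_\z$); the second and fourth follow similarly after moving $X_\z\cbullet$ across using \eqref{eq:Clifford_module_3} (with $Z \perp X_\z$ so $X_\z$ and $Z$ anticommute) and then applying \eqref{eq:Clifford_module_4} or skew-symmetry. This establishes \eqref{eq:decomposition_of_z}.

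\textbf{Final assembly and obstacle.} Combining \eqref{eq:decomposition_of_n_1} and \eqref{eq:decomposition_of_z} gives \eqref{eq:decomposition_of_n_2} directly: $\n = \Z_X \oplus (\Z_X)^\perp = (\n_3)_X \oplus \H_X \oplus (\Z_X)^\perp$, all sums orthogonal. I would also remark, for the record, that the direct-sum decompositions written inside the definitions \eqref{eq:def_n_3_X}--\eqref{eq:def_H_X} themselves require a brief justification — e.g.\ that $\bbR X_\z$, $\bbR X_\v$, $\bbR X_\z\cbullet X_\v$ are pairwise orthogonal and nonzero (the latter two are orthogonal since $\langle X_\z\cbullet X_\v, X_\v\rangle = 0$ by skew-symmetry, and all are nonzero because $X_\z \neq 0$, $X_\v \neq 0$, using $\|X_\z\cbullet X_\v\|^2 = \|X_\z\|^2\|X_\v\|^2$ from \eqref{eq:Clifford_module_4}) — but this is the sort of routine bookkeeping the paper flags in the sentence immediately after the definitions. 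The main obstacle is not conceptual but organizational: keeping careful track of which summand each basis vector $Z\cbullet X_\v$, $Z\cbullet X_\z\cbullet X_\v$ lands in when $Z$ is decomposed along $\bbR X_\z \oplus \h_X$, and making sure the anticommutation sign from \eqref{eq:Clifford_module_3} (valid only because $Z \perp X_\z$ on $\h_X$) is applied consistently; the orthogonality of the $\v$-parts of $(\n_3)_X$ and $\H_X$ is where a sign slip would be easiest to make, so I would write that verification out in full rather than abbreviate it.
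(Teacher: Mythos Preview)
Your proposal is correct and follows essentially the same approach as the paper. For part~(a) you unpack the adjunction \eqref{eq:def_spinor_product} directly where the paper instead cites the prepackaged identity \eqref{eq:Kern_spin_product} from Lemma~\ref{le:K}, and for part~(b) you are slightly more explicit about why the sum $(\n_3)_X + \H_X$ exhausts $\Z_X$; but the orthogonality verifications and the overall structure are identical.
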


\begin{proof}
  Since \(\z \subseteq \Z_X\), the orthogonal complement
  \((\Z_X)^\perp\) lies in \(\v\). Moreover, from~\eqref{eq:Kern_spin_product}
  we have
  \begin{align*}
    \z \cbullet X_\v + \z \cbullet X_\z \cbullet X_\v
      &= \Kern(X_\v \spinprod)^\perp +
         \Kern\bigl((X_\z \cbullet X_\v)\spinprod\bigr)^\perp \\
      &= \Bigl(\Kern\bigl(X_\v \spinprod\bigr) \cap
         \Kern\bigl((X_\z \cbullet X_\v)\spinprod\bigr)\Bigr)^\perp
  \end{align*}
  Taking orthogonal complements yields~\eqref{eq:decomposition_of_n_1}.

For~\eqref{eq:decomposition_of_z}, the defining equations
\eqref{eq:def_n_3_X} and \eqref{eq:def_H_X} show that
\(\Z_X = (\n_3)_X + \H_X\). To see that this sum is orthogonal, and hence
direct, let \(Z \in \h_X\). Then \(\langle X_\z, Z\rangle = 0\) by
definition of \(\h_X\). Moreover, using the skew-symmetry
\eqref{eq:Clifford_module_2} and the metric compatibility
\eqref{eq:Clifford_module_4}, we obtain
\begin{align*}
  \langle X_\v,\, Z \cbullet X_\v \rangle
  &\stackrel{\eqref{eq:Clifford_module_2}}{=} 0 =  \langle X_\z \cbullet X_\v,\, X_\v \rangle\\
  \langle X_\v,\, Z \cbullet X_\z \cbullet X_\v \rangle
  &\stackrel{\eqref{eq:Clifford_module_2}}{=}
    -\,\langle Z \cbullet X_\v,\, X_\z \cbullet X_\v \rangle \\
  &\stackrel{\eqref{eq:Clifford_module_4}}{=}
    -\,\langle X_\z, Z \rangle\,\norm{X_\v}^2 = 0 \\
  \langle X_\z \cbullet X_\v,\, Z \cbullet X_\v \rangle
  &\stackrel{\eqref{eq:Clifford_module_4}}{=}
    \langle X_\z, Z \rangle\,\norm{X_\v}^2 = 0 \\
  \langle X_\z \cbullet X_\v,\, Z \cbullet X_\z \cbullet X_\v \rangle
  &\stackrel{\eqref{eq:Clifford_module_2}}{=} 0
\end{align*}
This proves~\eqref{eq:decomposition_of_z} and~\eqref{eq:decomposition_of_n_2}.

\end{proof}
Following \cite{BTV}, for every \(X = X_\z \oplus X_\v \in \n\) with
\(X_\z \neq 0\) and \(X_\v \neq 0\), we consider the operator
\(\C(X)\colon \n \to \n\) defined with respect to the direct-sum
decomposition \(\n = (\n_3)_X \oplus \H_X \oplus (\Z_X)^\perp\)
by
\begin{equation}\label{eq:C_0_structure_1}
  Y = Y_\z \oplus Y_\v \longmapsto
  \begin{cases}
    -\dfrac{3}{2}\, X_\z \cbullet Y_\v
    - \dfrac{1}{2}\, Y_\z \cbullet X_\v
    + \dfrac{1}{2}\, X_\v \spinprod Y_\v
      & \text{if } Y \in \H_X \\
      \dfrac{1}{2}\, X_\z \cbullet Y_\v
    - \dfrac{1}{2}\, Y_\z \cbullet X_\v
    + \dfrac{1}{2}\, X_\v \spinprod Y_\v
      & \text{if } Y \in (\n_3)_X \oplus (\Z_X)^\perp
  \end{cases}
\end{equation}
and extending linearly to all of \(\n\). Since \(X \in (\n_3)_X\), it is
immediate from the definition that \(\C(X)X = 0\).

\begin{theorem}[{\cite[Ch.~3.5]{BTV}}]\label{th:C0}
For every geodesic \(\gamma\colon \bbR \to N\), the map
\begin{equation}\label{eq:C_0_structure_2}
  \bbR \longrightarrow \End{\gamma^*TN},\qquad t \longmapsto \C_\gamma(t)
\end{equation}
with \(\C_\gamma(t)\in \End{T_{\gamma(t)}N}\) characterized by
\begin{equation}\label{eq:C_0_structure_3}
  \C_\gamma(t)\circ\rmL_{\gamma(t)\,*}^{-1}
  = \rmL_{\gamma(t)\,*}
  \C\!\left(\rmL_{\gamma(t)\,*}^{-1}\dot{\gamma}(t)\right)
\end{equation}
where the right-hand side is defined by means of~\eqref{eq:C_0_structure_1},
defines a parallel skew-symmetric endomorphism field along \(\gamma\)
such that \(\scrR^{(1)}_\gamma =
[\C_\gamma(t),\,\scrR_\gamma(t)]\) for all \(t\in\bbR\).
\end{theorem}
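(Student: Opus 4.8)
The plan is to work throughout in the natural left-invariant framing~\eqref{eq:framing} and to reduce both assertions --- that \(\C_\gamma\) is parallel, and that \(\scrR^{(1)}_\gamma=[\C_\gamma,\scrR_\gamma]\) --- to two purely algebraic identities for the operator \(\C(X)\) of~\eqref{eq:C_0_structure_1}, valid for every \(X=X_\z\oplus X_\v\) with \(X_\z\neq 0\) and \(X_\v\neq 0\). (For a geodesic whose velocity stays outside this open dense set --- by Lemma~\ref{le:geodesics} and Corollary~\ref{co:Killing_1} this occurs precisely when \(\dot\gamma(0)\) already lies in \(\z\) or in \(\v\) --- the formula~\eqref{eq:C_0_structure_1} is undefined; there \(\gamma\) runs along a straight line and a parallel \(\frakC_0\)-structure is supplied by an elementary separate argument, e.g.\ the Levi-Civita operator \(A(\dot\gamma(0))\) itself, which is skew-symmetric and parallel in that case.)

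First I would settle the preliminaries. By Lemma~\ref{le:decomposition_of_n} we have \(\n=\H_X\oplus V_X\) with \(V_X\coloneqq(\n_3)_X\oplus(\Z_X)^\perp\), and a short computation with~\eqref{eq:Clifford_module_2}--\eqref{eq:Clifford_module_4} shows that the two branches of~\eqref{eq:C_0_structure_1} map \(\H_X\) into \(\H_X\) and \(V_X\) into \(V_X\); hence \(\C(X)\) is block-diagonal for this splitting. Both branches have the shape \(Y\mapsto c_1\,X_\z\cbullet Y_\v+c_2\,Y_\z\cbullet X_\v+c_3\,X_\v\spinprod Y_\v\) with \((c_2,c_3)=(-\tfrac12,\tfrac12)\); and from~\eqref{eq:Clifford_module_1} together with~\eqref{eq:def_spinor_product} one sees that any operator of this shape is skew-symmetric on \(\n\) exactly when \(c_2+c_3=0\), which is the case here, so \(\C(X)\) is skew-symmetric. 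Finally \(\C(X)X=0\) because \(X\in(\n_3)_X\subseteq V_X\) and \(X_\v\spinprod X_\v=0\).

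Next comes the reduction. Along \(\gamma\) with \(\dot\gamma(0)=X\), Lemma~\ref{le:geodesics} gives the framing representative \(X(t)\), with \(X_\z(t)\equiv X_\z\); the computation in its proof yields \(\dot X(t)=X_\z(t)\cbullet X_\v(t)\), a purely \(\v\)-valued vector. Since \(X_\v(t)\) stays in the two-plane \(\Span_\bbR\{X_\v,\,X_\z\cbullet X_\v\}\), which is stable under \(Z\mapsto X_\z\cbullet Z\) up to sign and scaling by~\eqref{eq:Clifford_module_2}, all three subspaces \((\n_3)_{X(t)}\), \(\H_{X(t)}\), \((\Z_{X(t)})^\perp\) are independent of \(t\). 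In this framing \(\tfrac{\nabla}{\d t}\) acts on endomorphism fields by \(\tfrac{d}{dt}+[A(X(t)),\,\cdot\,]\), with \(A(X)\) the Levi-Civita operator~\eqref{eq:LC}. Because the splitting \(\H_X\oplus V_X\) is \(t\)-independent, each branch of~\eqref{eq:C_0_structure_1} is linear in \(X\), and the \(X_\z\cbullet Y_\v\)-term vanishes when \(X\) is replaced by the \(\z\)-free vector \(\dot X(t)\), one gets \(\tfrac{d}{dt}\C(X(t))=A(\dot X(t))\); hence parallelism \(\tfrac{\nabla}{\d t}\C_\gamma=0\) is equivalent to the pointwise identity \([A(X),\C(X)]=-A(X_\z\cbullet X_\v)\). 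Likewise \(\scrR_\gamma(t)\cong\scrR(X(t))\), and since \(\scrR(\,\cdot\,)\) is quadratic with polarization~\eqref{eq:sym_curv_3} one has \(\scrR^{(1)}_\gamma(t)\cong 2\,\scrR(X(t),\dot X(t))+[A(X(t)),\scrR(X(t))]\); hence \(\scrR^{(1)}_\gamma=[\C_\gamma,\scrR_\gamma]\) is equivalent to \([\,\C(X)-A(X),\,\scrR(X)\,]=2\,\scrR(X,\,X_\z\cbullet X_\v)\). Comparing coefficients, \(\C(X)-A(X)\) acts as \(Y\mapsto -\,X_\z\cbullet Y_\v\) on \(\H_X\) and as \(Y\mapsto +\,X_\z\cbullet Y_\v\) on \(V_X\).

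It remains to verify these two pointwise identities, by expanding \(A\), \(\C\) and \(\scrR\) from~\eqref{eq:LC},~\eqref{eq:C_0_structure_1},~\eqref{eq:sym_curv_2},~\eqref{eq:sym_curv_3} and simplifying with the Clifford and spinor-product relations~\eqref{eq:Clifford_module_1}--\eqref{eq:Clifford_module_5} and~\eqref{eq:def_spinor_product}. A useful further reduction: using Lemma~\ref{le:decomposition_of_n} and the fact that the Heisenberg subgroup \(N^3\) is totally geodesic, one checks that \(\scrR(X)\) also preserves each of \((\n_3)_X\), \(\H_X\), \((\Z_X)^\perp\) --- acting on the last by the scalar \(\tfrac14\norm{X_\z}^2\) --- so that both identities may be checked one block at a time; on \((\Z_X)^\perp\) both sides vanish, and on \((\n_3)_X\) the identities collapse to the familiar three-dimensional Heisenberg computation. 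The hard part will be the \(\H_X\)-block of the commutator identity: there \(\C(X)-A(X)\) is the single operator \(Y\mapsto -X_\z\cbullet Y_\v\), and one must push it through the seven-term expression~\eqref{eq:sym_curv_2} for \(\scrR(X)\) restricted to \(\H_X\), repeatedly commuting Clifford factors via~\eqref{eq:Clifford_module_3} and collapsing terms \(S\spinprod(Z\cbullet S)\) via~\eqref{eq:Clifford_module_5}, then recognizing the outcome as the polarized curvature~\eqref{eq:sym_curv_3} with second argument \(X_\z\cbullet X_\v\). The bookkeeping is kept finite by the structure of \(\H_X\): on it \(Y_\z\in\h_X\) and \(Y_\v\in\h_X\cbullet X_\v+\h_X\cbullet X_\z\cbullet X_\v\), which pins down every Clifford product that can occur.
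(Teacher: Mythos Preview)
Your proposal is correct and follows essentially the same route as the paper's Appendix~\ref{se:C0_proof}: the paper likewise writes \(A(X)=\Theta(X)+\C(X)\) with \(\Theta(X)Y=\pm X_\z\cbullet Y_\v\) (your \(-(\C(X)-A(X))\)), proves the key commutator identity \([\Theta(X),\scrR(X)]=2\,\scrR(X,X')\) with \(X'=-X_\z\cbullet X_\v\) block by block on \((\n_3)_X\), \(\H_X\), \((\Z_X)^\perp\) (Lemma~\ref{le:Theta}), and verifies parallelism by an equivalent direct computation of \(\tfrac{\nabla}{\d t}\big|_{t=0}\C_\gamma\) (Lemma~\ref{le:C_is_parallel}). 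Your observation that the splitting \((\n_3)_{X(t)}\oplus\H_{X(t)}\oplus(\Z_{X(t)})^\perp\) is \(t\)-independent and hence \(\tfrac{d}{dt}\C(X(t))=A(\dot X(t))\) is a clean way to organize the parallelism half, but the resulting Clifford calculations on each block are the same as the paper's.
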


Appendix~\ref{se:C0_proof} provides a proof of Theorem~\ref{th:C0} from
a slightly different point of view than that in \cite{BTV}. For brevity,
we restrict our attention to the open dense subset where
\(X_\v \neq 0\) and \(X_\z \neq 0\) (which is enough for the
purpose of this article). How to extend the above definition
of \(\C(X)\) to the cases \(X_\z = 0\) or \(X_\v = 0\) is
explained in \cite[p.~44--47]{BTV}.

\subsection{The pointwise factors of the minimal polynomial}
\label{se:splitting}

In what follows, we assume \(X_\z \neq 0\) and \(X_\v \neq 0\).
By Proposition~\ref{p:overK}, every eigenvalue \(\undermu\) of
\(-\underK(X)^2\) satisfies \(0 \le \undermu \le 1\). In addition,
\(\underK(X)X_\z = 0\). Moreover, \(\h_X\) decomposes into eigenspaces of
\(-\underK(X)^2\):
\begin{equation}\label{eq:h-splitting}
  \h_X
  = \bigoplus_{\undermu \in \Spec\bigl(-\underK(X)^2\bigr)}
    (\h_\undermu)_X
\end{equation}

By definition, \(\underK(X)^2|_{(\h_\undermu)_X} = -\undermu\,\Id\) for
each \(\undermu \in \Spec\bigl(-\underK(X)^2\bigr)\). Set
\begin{align}
  \label{eq:def_H_mu_X}
  (\H_\undermu)_X &\coloneqq
  (\h_\undermu)_X
  \oplus \bigl((\h_\undermu)_X \cbullet X_\v\bigr)
  \oplus \bigl((\h_\undermu)_X \cbullet X_\z \cbullet X_\v\bigr),
  \quad\text{for } 0 \le \undermu < 1 \\[0.3em]
  \label{eq:def_H_1_X}
  (\H_1)_X &\coloneqq
  (\h_1)_X \oplus \bigl((\h_1)_X \cbullet X_\v\bigr)
\end{align}

\begin{lemma}\label{le:decomposition_of_h}
The right-hand sides of~\eqref{eq:def_H_mu_X} and~\eqref{eq:def_H_1_X}
are direct sums, and
\begin{equation}\label{eq:hat_h-split}
  \H_X
  = \bigoplus_{\undermu \in \Spec\bigl(-\underK(X)^2\bigr)}
    (\H_\undermu)_X
\end{equation}
\end{lemma}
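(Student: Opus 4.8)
The plan is to reduce the statement to the orthogonal eigenspace decomposition $\h_X=\bigoplus_{\undermu}(\h_\undermu)_X$ of the skew-symmetric operator $\underK(X)|_{\h_X}$ together with three elementary facts about Clifford multiplication by $X_\v$ and by $X_\z\cbullet X_\v$. First I would collect these. Since $-\underK(X)^2$ is a nonnegative self-adjoint operator on $\h_X$ that commutes with the skew-symmetric $\underK(X)$, the decomposition \eqref{eq:h-splitting} is orthogonal and each eigenspace $(\h_\undermu)_X$ is $\underK(X)$-invariant. From \eqref{eq:Clifford_module_4} one gets the metric identities $\langle Z_1\cbullet X_\v,Z_2\cbullet X_\v\rangle=\norm{X_\v}^2\langle Z_1,Z_2\rangle$ and $\langle Z_1\cbullet X_\z\cbullet X_\v,Z_2\cbullet X_\z\cbullet X_\v\rangle=\norm{X_\z}^2\norm{X_\v}^2\langle Z_1,Z_2\rangle$, and from \eqref{eq:def_K_2} together with \eqref{eq:def_spinor_product} the cross identity
\[
  \langle Z_1\cbullet X_\v,\;Z_2\cbullet X_\z\cbullet X_\v\rangle
  =\langle Z_1,\,\overK(X)Z_2\rangle
  =\norm{X_\z}\norm{X_\v}^2\,\langle Z_1,\,\underK(X)Z_2\rangle ,
\]
valid for all $Z_1,Z_2\in\z$.

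Second I would establish orthogonality across distinct eigenvalues. Fix $\undermu\neq\nu$. The two metric identities and $(\h_\undermu)_X\perp(\h_\nu)_X$ give $(\h_\undermu)_X\cbullet X_\v\perp(\h_\nu)_X\cbullet X_\v$ and $(\h_\undermu)_X\cbullet X_\z\cbullet X_\v\perp(\h_\nu)_X\cbullet X_\z\cbullet X_\v$; the cross identity, combined with $\underK(X)(\h_\nu)_X\subseteq(\h_\nu)_X$, gives $(\h_\undermu)_X\cbullet X_\v\perp(\h_\nu)_X\cbullet X_\z\cbullet X_\v$. Since $\h_X\subseteq\z$ is orthogonal to every $\v$-piece, it follows that the subspaces $(\H_\undermu)_X$, for distinct $\undermu$, are mutually orthogonal in $\n$; in particular $\sum_\undermu(\H_\undermu)_X$ is an orthogonal direct sum, and the same holds for the sum of the ``$\v$-parts'' $(W_\undermu)_X\coloneqq(\h_\undermu)_X\cbullet X_\v+(\h_\undermu)_X\cbullet X_\z\cbullet X_\v$ (for $\undermu<1$) and $(W_1)_X\coloneqq(\h_1)_X\cbullet X_\v$.

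Third I would check that each individual $(\H_\undermu)_X$ is a genuine internal direct sum as written, treating $\undermu=1$ and $\undermu<1$ separately. For $\undermu=1$: by Proposition~\ref{p:overK}(e) every $Z\in(\h_1)_X$ satisfies $Z\cbullet X_\z\cbullet X_\v=\norm{X_\z}\,\underK(X)Z\cbullet X_\v$, and $\underK(X)$ restricts to a bijection of $(\h_1)_X$; hence $(\h_1)_X\cbullet X_\z\cbullet X_\v=(\h_1)_X\cbullet X_\v$, so \eqref{eq:def_H_1_X} reads $(\H_1)_X=(\h_1)_X\oplus(\h_1)_X\cbullet X_\v$, which is direct because the first summand lies in $\z$, the second in $\v$, and $Z\mapsto Z\cbullet X_\v$ is injective by Lemma~\ref{le:K}(b). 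For $\undermu<1$: if $Z_1\cbullet X_\v=Z_2\cbullet X_\z\cbullet X_\v$ with $Z_1,Z_2\in(\h_\undermu)_X$, then \eqref{eq:de_K_5} exhibits $U(X,Z_2)$ as an element of $\z\cbullet X_\v$, while $U(X,Z_2)\in\Kern(X_\v\spinprod)=(\z\cbullet X_\v)^\perp$ by \eqref{eq:Kern_spin_product}; hence $U(X,Z_2)=0$, and Proposition~\ref{p:overK}(e) then gives $Z_2\in\Eig_1(-\underK(X)^2)\cap(\h_\undermu)_X=\{0\}$, so $Z_2=0$ and therefore $Z_1=0$. Thus the two $\v$-summands in \eqref{eq:def_H_mu_X} intersect only in $0$, which together with $(\h_\undermu)_X\subseteq\z$ makes \eqref{eq:def_H_mu_X} a direct sum equal to $(\h_\undermu)_X\oplus(W_\undermu)_X$.

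Finally I would assemble: by \eqref{eq:def_H_X}, $\H_X=\h_X\oplus\bigl(\h_X\cbullet X_\v+\h_X\cbullet X_\z\cbullet X_\v\bigr)$; decomposing $\h_X=\bigoplus_\undermu(\h_\undermu)_X$ orthogonally and using the $\undermu=1$ identity $(\h_1)_X\cbullet X_\z\cbullet X_\v\subseteq(\h_1)_X\cbullet X_\v$, the $\v$-part of $\H_X$ becomes $\sum_\undermu(W_\undermu)_X$, which by the second step is an orthogonal direct sum; hence $\H_X=\bigoplus_\undermu\bigl((\h_\undermu)_X\oplus(W_\undermu)_X\bigr)=\bigoplus_\undermu(\H_\undermu)_X$. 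I expect the only real obstacle to be organisational: keeping track of why the third summand appears for $\undermu<1$ but is absent for $\undermu=1$, which hinges on using Proposition~\ref{p:overK}(e) — the equivalence $U(X,Z)=0\Leftrightarrow Z\in\Eig_1(-\underK(X)^2)$ — in both directions, once to collapse the $\undermu=1$ summand and once to trivialise the intersection of the two $\v$-summands when $\undermu<1$.
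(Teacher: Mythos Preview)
Your argument is correct and hinges on the same key ingredient as the paper's proof, namely Proposition~\ref{p:overK}(e) and its equivalence $U(X,Z)=0 \Leftrightarrow Z\in\Eig_1(-\underK(X)^2)$. The organisation, however, is genuinely different. The paper does not invoke the metric identities at all: it simply observes that $Z\mapsto Z\cbullet X_\v$ is injective (so the ``same-type'' pieces $\bigoplus_\undermu(\h_\undermu)_X\cbullet X_\v$ and $\bigoplus_{\undermu<1}(\h_\undermu)_X\cbullet X_\z\cbullet X_\v$ are each direct), and then handles the only nontrivial cross-intersection---between all of $\h_X\cbullet X_\v$ and all of $\bigoplus_{\undermu\neq 1}(\h_\undermu)_X\cbullet X_\z\cbullet X_\v$---in one stroke via \eqref{eq:mu_=_1}. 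Your route instead establishes pairwise \emph{orthogonality} of the blocks $(\H_\undermu)_X$ using the identities $\langle Z_1\cbullet X_\v,Z_2\cbullet X_\v\rangle=\norm{X_\v}^2\langle Z_1,Z_2\rangle$ and $\langle Z_1\cbullet X_\v,Z_2\cbullet X_\z\cbullet X_\v\rangle=\langle Z_1,\overK(X)Z_2\rangle$, and then treats the internal directness of each $(\H_\undermu)_X$ separately. This buys you a stronger conclusion (the decomposition \eqref{eq:hat_h-split} is orthogonal, not merely direct), at the cost of a longer argument; the paper's version is leaner but yields only the direct-sum statement actually needed downstream.
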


\begin{proof}
  By definition, \(X_\z \perp \h_X\). Also, since the map
  \(\z \to \v\), \(Z \mapsto Z \cbullet X_\v\), is injective, both
  \[
    \bigoplus_{\undermu} (\h_\undermu)_X \cbullet X_\v
    \quad\text{and}\quad
    \bigoplus_{\undermu < 1} (\h_\undermu)_X \cbullet X_\z \cbullet X_\v
  \]
  are direct sums. To show that~\eqref{eq:hat_h-split} is a direct sum as
  well, first note that the intersections are trivial: suppose there
  exist \(Z_1 \in \bigoplus_{\undermu \neq 1} (\h_\undermu)_X\) and
  \(Z_2 \in \h_X\) such that
  \(Z_1 \cbullet X_\z \cbullet X_\v = Z_2 \cbullet X_\v\). By setting
  \(Z \coloneqq Z_1\) and \(\tilde Z \coloneqq Z_2\), we see
  that~\eqref{eq:mu_=_1} holds. In particular, \(Z_1 \in (\h_1)_X\) by
  Proposition~\ref{p:overK}. Since, on the other hand,
  \(Z_1 \in \bigoplus_{\undermu \neq 1} (\h_\undermu)_X\), we conclude
  \(Z_1 = Z_2 = 0\).

  To see that the sum is \(\H_X\), use~\eqref{eq:def_H_X}
  and~\eqref{eq:h-splitting} to write
  \[
    \H_X
    = \bigoplus_{\undermu} (\h_\undermu)_X
      \oplus
      \bigoplus_{\undermu} \bigl((\h_\undermu)_X \cbullet X_\v\bigr)
      \oplus
      \bigoplus_{\undermu < 1}
      \bigl((\h_\undermu)_X \cbullet X_\z \cbullet X_\v\bigr)
  \]
  where the last sum omits \(\undermu = 1\) by~\eqref{eq:mu_=_1}.
  Grouping the terms yields
  \(\H_X = (\H_1)_X \oplus \bigoplus_{\undermu < 1} (\H_\undermu)_X\), as
  claimed.
\end{proof}

Moreover, the subspace \(X_\z^\perp\) and the eigenvalues \(\undermu(X)\)
(and hence also the eigenspaces \((\h_\undermu)_X\)) depend only on the
\(2\)-plane \(\mathrm{span}_\bbR\{X_\z, X_\v\} \subseteq \n\) generated
by \(X_\z\) and \(X_\v\). The same holds for the splittings
\eqref{eq:decomposition_of_n_2}, \eqref{eq:h-splitting}, and
\eqref{eq:hat_h-split}. In particular, all preceding algebraic
constructions are invariant under rescalings \(X_\z \mapsto a X_\z\) and
\(X_\v \mapsto b X_\v\) with \(a,b \in \bbR\setminus\{0\}\).

We now show that the splittings~\eqref{eq:decomposition_of_n_2}
and~\eqref{eq:hat_h-split} are invariant under both \(\scrR(X)\), as
given in~\eqref{eq:sym_curv_2}, and \(\C(X)\), as defined
in~\eqref{eq:C_0_structure_1}.

First, consider the summands \((\H_\undermu)_X\). Let
\(Z \in (\h_\undermu)_X\). Using~\eqref{eq:sym_curv_2} and
\eqref{eq:def_K_2} (cf.~\cite[p.~39]{BTV}) we find
\begin{align}\label{eq:R_X_5}
  \scrR(X)Z
    &= \tfrac34\, Z \cbullet X_\z \cbullet X_\v
       + \tfrac14\, \norm{X_\v}^2 Z \\[0.3em]
  \label{eq:R_X_6}
  \scrR(X)(Z \cbullet X_\v)
    &= \tfrac14\bigl(\norm{X_\z}^2 - 3 \norm{X_\v}^2\bigr)
       Z \cbullet X_\v
       - \tfrac34\, \overK(X) Z \\[0.3em]
  \label{eq:R_X_7}
  \scrR(X)(Z \cbullet X_\z \cbullet X_\v)
    &= \tfrac34\, \norm{X_\v}^2 \norm{X_\z}^2 Z
       - \tfrac34\, \overK(X) Z \cbullet X_\v
       + \tfrac14\, \norm{X_\z}^2 Z \cbullet X_\z \cbullet X_\v
\end{align}
Since \((\h_\undermu)_X\) is invariant under \(\overK(X)\), it follows
that \((\H_\undermu)_X\) is invariant under \(\scrR(X)\).

Similarly, from~\eqref{eq:C_0_structure_1} we obtain
(cf.~\cite[pp.~57--58]{BTV})
\begin{align}\label{eq:C_X_5}
  \C(X)Z
    &= -\tfrac12 Z \cbullet X_\v \\[0.3em]
  \label{eq:C_X_6}
  \C(X)(Z \cbullet X_\v)
    &\stackrel{\eqref{eq:Clifford_module_3},
    \eqref{eq:Clifford_module_5}}{=}
      \tfrac32 Z \cbullet X_\z \cbullet X_\v
      + \tfrac12 \norm{X_\v}^2 Z \\[0.3em]
  \label{eq:C_X_7}
  \C(X)(Z \cbullet X_\z \cbullet X_\v)
    &\stackrel{\eqref{eq:def_K_2}}{=}
      \tfrac12 \overK(X) Z
      - \tfrac32 \norm{X_\z}^2 Z \cbullet X_\v
\end{align}
for each \(Z \in (\h_\undermu)_X\). Therefore \((\H_\undermu)_X\), and
hence \(\H_X\), is invariant under \(\C(X)\).

Let \(\{Z_1, \dotsc, Z_{m_\undermu}\}\) be a basis of
\((\h_\undermu)_X\). Since the sums in~\eqref{eq:def_H_mu_X} are direct by
Lemma~\ref{le:decomposition_of_h}, a basis of \((\H_\undermu)_X\) for
\(\undermu < 1\) is
\[
  \bigl\{
    Z_1, \dotsc, Z_{m_\undermu},\,
    Z_1 \cbullet X_\v, \dotsc, Z_{m_\undermu} \cbullet X_\v,\,
    Z_1 \cbullet X_\z \cbullet X_\v, \dotsc,
    Z_{m_\undermu} \cbullet X_\z \cbullet X_\v
  \bigr\}
\]
If \(\undermu = 1\), then a basis of \((\H_1)_X\) is
\[
  \bigl\{
    Z_1, \dotsc, Z_{m_1},\,
    Z_1 \cbullet X_\v, \dotsc, Z_{m_1} \cbullet X_\v
  \bigr\}
\]

A decomposition of \((\H_\undermu)_X\) into smaller subspaces, each
invariant under both \(\C(X)\) and \(\scrR(X)\), is obtained as follows.
Fix \(X \in \n\) with \(X_\v \neq 0\) and \(X_\z \neq 0\), and assume
\(0 < \undermu < 1\).

For each nonzero \(Z \in (\h_\undermu)_X\), set
\begin{equation}\label{eq:q_2}
  \H^Z_X \coloneqq \mathrm{span}_\bbR\bigl\{
    Z,\ \underK(X)Z,\ Z \cbullet X_\v,\ \underK(X)Z \cbullet X_\v,\ 
    Z \cbullet X_\z \cbullet X_\v,\ 
    \underK(X)Z \cbullet X_\z \cbullet X_\v
  \bigr\}
\end{equation}
Then \(\H^Z_X\) is a \(6\)-dimensional subspace of \((\H_\undermu)_X\)
that is invariant under both \(\scrR(X)\) and \(\C(X)\). The operator
\[
  J \coloneqq \tfrac{1}{\sqrt{\undermu}}\,
  \underK(X)\big|_{(\h_\undermu)_X}
\]
is a complex structure on \((\h_\undermu)_X\).

Hence, writing \(m_\undermu \coloneqq \dim\bigl((\h_\undermu)_X\bigr)\),
we obtain
\[
  \dim\bigl((\H_\undermu)_X\bigr) = 3m_\undermu
  \quad\text{and}\quad
  \dim\bigl((\H_\undermu)_X\bigr) \equiv 0 \pmod 6
\]
Moreover, for any \(J\)-complex basis
\(\{Z_1,\dotsc,Z_{m_\undermu/2}\}\) of \((\h_\undermu)_X\),
\[
  (\H_\undermu)_X \cong
  \bigoplus_{i=1}^{m_\undermu/2} \H^{Z_i}_X
\]

The corresponding \(6 \times 6\) matrices with respect to the basis
\[
  \bigl(
    Z,\ \underK(X)Z,\ Z \cbullet X_\v,\ \underK(X)Z \cbullet X_\v,\ 
    Z \cbullet X_\z \cbullet X_\v,\ 
    \underK(X)Z \cbullet X_\z \cbullet X_\v
  \bigr)
\]
are
\begin{align}\label{eq:R_mu_matrix}
  \scrR_\undermu(z,v) &\coloneqq \tfrac14
  \begin{pmatrix}
    v^2 & 0 & 0 & 3\undermu v^2 z & 3 v^2 z^2 & 0 \\
    0 & v^2 & -3 v^2 z & 0 & 0 & 3 v^2 z^2 \\
    0 & 0 & z^2 - 3 v^2 & 0 & 0 & 3 \undermu v^2 z \\
    0 & 0 & 0 & z^2 - 3 v^2 & -3 v^2 z & 0 \\
    3 & 0 & 0 & 0 & z^2 & 0 \\
    0 & 3 & 0 & 0 & 0 & z^2
  \end{pmatrix}
  \\[1ex]
  \label{eq:C_mu_matrix}
  \C_\undermu(z,v) &\coloneqq \tfrac12
  \begin{pmatrix}
    0  & 0  & v^2 & 0   & 0      & -\undermu v^2 z \\
    0  & 0  & 0   & v^2 & v^2 z  & 0 \\
    -1 & 0  & 0   & 0   & -3 z^2 & 0 \\
    0  & -1 & 0   & 0   & 0      & -3 z^2 \\
    0  & 0  & 3   & 0   & 0      & 0 \\
    0  & 0  & 0   & 3   & 0      & 0
  \end{pmatrix}
\end{align}
where \(z \coloneqq \norm{X_\z}\) and \(v \coloneqq \norm{X_\v}\). Also,
for every triple \((\undermu,z,v) \in \bbR^3\), define
\(P_\undermu(z,v) \in \bbR[\lambda]\) by
\begin{equation}\label{eq:P_mu_param}
\begin{aligned}
  P_\undermu(z,v) \coloneqq\;& \lambda^6
    + \Bigl(\tfrac{27}{2} z^2 + \tfrac{3}{2} v^2\Bigr) \lambda^4
    + \Bigl(\tfrac{729}{16} z^4 + \tfrac{81}{8} z^2 v^2
       + \tfrac{9}{16} v^4\Bigr) \lambda^2 \\
    &\quad+ \tfrac{729}{16} z^6 + \tfrac{243}{16} z^4 v^2
       + \Bigl(\tfrac{27}{16} - \tfrac{243}{64} \undermu\Bigr) z^2 v^4
       + \tfrac{1}{16} v^6
\end{aligned}
\end{equation}
in accordance with~\eqref{eq:def_P_mu_1}, and set
\begin{equation}\label{eq:def_tilde_P_mu}
  \tilde P_\undermu(z,v) \coloneqq \lambda\,P_\undermu(z,v)
\end{equation}

\begin{lemma}\label{le:maple_P_mu}
There exists an open dense subset \(\tilde U \subseteq \bbR^3\) such that
\(\tilde P_\undermu(z,v)\) is the minimal annihilating polynomial of
\(\scrR_\undermu(z,v)\) with respect to the endomorphism
\[
  \ad\bigl(\C_\undermu(z,v)\bigr)\colon \Mat_6(\bbR) \longrightarrow
  \Mat_6(\bbR)
\]
if and only if \((\undermu,z,v) \in \tilde U\). In particular,
\begin{equation}\label{eq:Eval_q_mu}
  \tilde P_\undermu(z,v)\bigl(\ad(\C_\undermu(z,v))\bigr)\,
  \scrR_\undermu(z,v) = 0
\end{equation}
for all \((\undermu,z,v) \in \bbR^3\).
\end{lemma}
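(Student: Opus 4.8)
The claim is essentially an explicit linear-algebra computation in the $36$-dimensional space $\Mat_6(\bbR)$, parametrized by $(\undermu,z,v)\in\bbR^3$. The plan is to run the blueprint algorithm of Section~\ref{se:blueprint} on the single $6\times 6$ block: starting from $B_0\coloneqq\scrR_\undermu(z,v)$, form the iterates $B_{i+1}\coloneqq\ad(\C_\undermu(z,v))B_i=[\C_\undermu(z,v),B_i]$ and show that $B_0,\dots,B_6$ satisfy the relation encoded by $\tilde P_\undermu(z,v)=\lambda\,P_\undermu(z,v)$, i.e.\ that
\[
  B_7 + \Bigl(\tfrac{27}{2}z^2+\tfrac{3}{2}v^2\Bigr)B_5
  + \Bigl(\tfrac{729}{16}z^4+\tfrac{81}{8}z^2v^2+\tfrac{9}{16}v^4\Bigr)B_3
  + c_0(\undermu,z,v)\,B_1 = 0,
\]
where $c_0$ is the constant coefficient of $P_\undermu(z,v)$ from~\eqref{eq:P_mu_param}; this is exactly~\eqref{eq:Eval_q_mu}. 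Since the matrix entries of $\scrR_\undermu$ and $\C_\undermu$ are polynomials in $\undermu,z,v$, every $B_i$ is a polynomial matrix, and the identity~\eqref{eq:Eval_q_mu} is a polynomial identity in $\bbR[\undermu,z,v]$; it therefore suffices to verify it symbolically (e.g.\ via the Maple code in Section~\ref{se:splitting}), and it then holds for \emph{all} $(\undermu,z,v)\in\bbR^3$, as asserted.

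\textbf{Minimality.} For the "if and only if'' statement I would argue that $\tilde P_\undermu(z,v)$ is minimal precisely on an open dense subset $\tilde U\subseteq\bbR^3$. By~\eqref{eq:Eval_q_mu} the polynomial $\tilde P_\undermu(z,v)$ always annihilates $\scrR_\undermu(z,v)$ under $\ad(\C_\undermu(z,v))$, so the minimal annihilating polynomial divides it; hence minimality is equivalent to the linear independence of $B_0,\dots,B_6$ in $\Mat_6(\bbR)$. Flattening via $\operatorname{vec}$, this is the statement that the $36\times 7$ matrix $[\operatorname{vec}(B_0),\dots,\operatorname{vec}(B_6)]$ has rank $7$, which fails exactly on the zero set of the $7\times 7$ minors of this matrix — a proper algebraic subset, provided the rank is $7$ at even one point. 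So the concrete task reduces to exhibiting a single triple $(\undermu_0,z_0,v_0)$ (with $z_0,v_0\neq 0$ and $0<\undermu_0<1$) at which $B_0,\dots,B_6$ are linearly independent; then $\tilde U$ is the complement of the vanishing locus of those minors, which is open and dense. The degree count is consistent: $\deg\tilde P_\undermu=7$ and $B_7$ lies in a $7$-dimensional cyclic subspace, so $7$ is the maximal possible degree, matching $\deg P_{\n_3}+\deg P_\undermu=3+\ldots$ — wait, more precisely the block contributes degree $6$ to $P_\undermu$ and the relation is stated for $\lambda P_\undermu$ because the full minimal polynomial always carries a factor $\lambda$; this is consistent with $\scrR_\undermu(z,v)$ not being invertible (its matrix~\eqref{eq:R_mu_matrix} need not be, and more structurally $\scrR(X)$ always annihilates $X$).

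\textbf{Main obstacle.} The genuine difficulty is purely computational: carrying out the seven nested commutators $[\C_\undermu,\,\cdot\,]$ on the $6\times 6$ matrices~\eqref{eq:R_mu_matrix} and~\eqref{eq:C_mu_matrix} and collecting the resulting degree-$\le 6$ polynomials in $z,v$ (degree $\le 1$ in $\undermu$) to check the vanishing identity~\eqref{eq:Eval_q_mu}. This is best delegated to a computer algebra system — the dimension-agnostic Maple routine displayed in Section~\ref{se:blueprint}, specialized to $n=6$ with $A\coloneqq\C_\undermu(z,v)$ and $B[0]\coloneqq\scrR_\undermu(z,v)$, produces $\tilde P_\undermu(z,v)$ directly; the \texttt{LinearSolve} step simultaneously confirms both that $\tilde P_\undermu(z,v)$ is the minimal annihilating polynomial generically (rank $7$) and, by examining where the solution is well-defined, pins down $\tilde U$. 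The only conceptual point, already made above, is that once~\eqref{eq:Eval_q_mu} is verified as a polynomial identity, it extends from $\tilde U$ to all of $\bbR^3$ by Zariski density, which disposes of the degenerate parameter values with no extra work.
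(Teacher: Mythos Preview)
Your proposal is correct and follows essentially the same approach as the paper: both run the blueprint algorithm of Section~\ref{se:blueprint} symbolically on the $6\times 6$ block (the paper literally displays the Maple code), obtain~\eqref{eq:Eval_q_mu} as a polynomial identity valid for all $(\undermu,z,v)$, and infer minimality on an open dense set from the generic full rank implicit in \texttt{LinearSolve} returning a unique solution. One small correction to your aside: the factor $\lambda$ in $\tilde P_\undermu$ has nothing to do with $\scrR_\undermu(z,v)$ being non-invertible as a matrix (and $\scrR(X)X=0$ is irrelevant here since $X\in(\n_3)_X$, not $(\H_\undermu)_X$); rather, $\lambda\mid\tilde P_\undermu$ reflects that some nonzero element of the cyclic $\ad(\C_\undermu)$-orbit of $\scrR_\undermu$ commutes with $\C_\undermu$, which is in line with the vanishing of odd-indexed coefficients in Theorem~\ref{th:JR_on_C_0_spaces}.
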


\begin{proof}
\begin{verbatim}
# case 0 < mu < 1
with(LinearAlgebra):
n := 6:  # block size
C := (1/2)*Matrix([
  [0, 0, v^2, 0, 0, -mu*v^2*z],
  [0, 0, 0, v^2, v^2*z, 0],
  [-1, 0, 0, 0, -3*z^2, 0],
  [0, -1, 0, 0, 0, -3*z^2],
  [0, 0, 3, 0, 0, 0],
  [0, 0, 0, 3, 0, 0]
]):
k := 7:  # target degree (guess-and-check)
R := array(0..k):
R[0] := (1/4)*Matrix([
  [v^2, 0, 0, 3*mu*v^2*z, 3*v^2*z^2, 0],
  [0, v^2, -3*v^2*z, 0, 0, 3*v^2*z^2],
  [0, 0, z^2-3*v^2, 0, 0, 3*mu*v^2*z],
  [0, 0, 0, z^2-3*v^2, -3*v^2*z, 0],
  [3, 0, 0, 0, z^2, 0],
  [0, 3, 0, 0, 0, z^2]
]):
for i from 0 to k - 1 do
  R[i+1] := simplify(C . R[i] - R[i] . C):
od:

# Build augmented system [vec(R_0) ... vec(R_(k-1)) | vec(R_k)]
flatR := Matrix(n^2, k+1):
for i from 1 to k+1 do
  for j from 1 to n do
    for l from 1 to n do
      flatR[(j-1)*n+l, i] := R[i-1][j,l]:
    od:
  od:
od:

# Solve for (a_1,...,a_k) in  R_k + a_1 R_(k-1) + ... + a_k R_0 = 0
a := LinearSolve(flatR);     # last column is RHS
a_help := array(1..k):
for i from 1 to k do
    a_help[i] := - a[k + 1 - i]:
od:
a := a_help:

# Annihilating polynomial
p := collect(lambda^k + sum(a[f]*lambda^(k - f),
                            f = 1 .. k), lambda);
\end{verbatim}

We obtain
\begin{equation}\label{eq:tilde_P_mu}
  \begin{aligned}
    p \coloneqq \lambda^{7}
    &+ \Bigl(\tfrac{27}{2} z^{2}+\tfrac{3}{2} v^{2}\Bigr)\lambda^{5}
     + \Bigl(\tfrac{729}{16} z^{4}+\tfrac{81}{8} v^{2}z^{2}
              +\tfrac{9}{16} v^{4}\Bigr)\lambda^{3}\\
    &+ \Bigl(\tfrac{729}{16} z^{6}+\tfrac{243}{16} z^{4}v^{2}
              + \tfrac{27}{16} v^{4}z^{2}+\tfrac{1}{16} v^{6}
              - \tfrac{243}{64} \undermu v^{4}z^{2}\Bigr)\lambda
  \end{aligned}
\end{equation}
which is exactly \(\tilde P_\undermu(z,v)\) from
\eqref{eq:def_tilde_P_mu}. An optional certificate is
\begin{verbatim}
simplify( R[k] + sum(a[f]*R[k - f], f = 1 .. k) );  # -> zero 6x6
\end{verbatim}
\end{proof}

Suppose that \(\undermu = 0\). Let \(X \in \n\) such that \(X_\z \neq 0\)
and \(X_\v \neq 0\). For every nonzero \(Z \in (\h_0)_X\),
\begin{equation}\label{eq:q_1}
  \H^Z_X \coloneqq \mathrm{span}_\bbR\bigl\{
    Z,\ Z \cbullet X_\v,\ Z \cbullet X_\z \cbullet X_\v
  \bigr\}
\end{equation}
is a \(3\)-dimensional subspace of \((\H_0)_X\).
By~\eqref{eq:R_X_5}--\eqref{eq:R_X_7} and~\eqref{eq:C_X_5}--\eqref{eq:C_X_7},
\(\H^Z_X\) is invariant under both \(\scrR(X)\) and \(\C(X)\). Let
\(m_0 \coloneqq \dim\bigl((\h_0)_X\bigr)\). Then
\(\dim\bigl((\H_0)_X\bigr) = 3m_0\), and in particular
\(\dim\bigl((\H_0)_X\bigr) \equiv 0 \pmod 3\), with the direct sum
decomposition
\[
  (\H_0)_X \cong \bigoplus_{i=1}^{m_0} \H^{Z_i}_X
\]

With respect to the basis
\(\bigl(Z,\ Z \cbullet X_\v,\ Z \cbullet X_\z \cbullet X_\v\bigr)\), the
corresponding \(3 \times 3\) matrices are
\begin{align}\label{eq:R_0_sharp_matrix}
  \scrR_0^\sharp(z,v) &\coloneqq \tfrac14
  \begin{pmatrix}
    v^2 & 0 & 3 v^2 z^2 \\
    0 & z^2 - 3 v^2 & 0 \\
    3 & 0 & z^2
  \end{pmatrix}
  \\[0.7ex]
  \label{eq:C_0_sharp_matrix}
  \C_0^\sharp(z,v) &\coloneqq \tfrac12
  \begin{pmatrix}
    0 & v^2 & 0 \\
    -1 & 0 & -3 z^2 \\
    0 & 3 & 0
  \end{pmatrix}
\end{align}
where we set \(z \coloneqq \norm{X_\z}\) and \(v \coloneqq \norm{X_\v}\).

For each \((z,v) \in \bbR^2\), define \(P_0^\sharp(z,v) \in \bbR[\lambda]\)
by
\begin{equation}\label{eq:P_0_sharp_param}
\begin{aligned}
  P_0^\sharp(z,v) \coloneqq\;& \lambda^4
    + \Bigl(\tfrac{45}{4} z^2 + \tfrac{5}{4} v^2\Bigr) \lambda^2
    + \tfrac{81}{4} z^4 + \tfrac{9}{2} z^2 v^2 + \tfrac14 v^4
\end{aligned}
\end{equation}
in accordance with~\eqref{eq:def_P_0_sharp}. Also let
\begin{equation}\label{eq:tilde_P_0_sharp}
  \tilde P_0^\sharp(z,v) \coloneqq \lambda\,P_0^\sharp(z,v)
\end{equation}

\begin{lemma}\label{le:maple_P_0_sharp}
There exists an open dense subset \(\tilde U_0 \subseteq \bbR^2\) such
that \(\tilde P_0^\sharp(z,v)\) is the minimal annihilating polynomial
of \(\scrR_0^\sharp(z,v)\) with respect to the linear operator
\[
  \ad\bigl(\C_0^\sharp(z,v)\bigr)\colon
  \Mat_3(\bbR) \longrightarrow \Mat_3(\bbR)
\]
if and only if \((z,v) \in \tilde U_0\). In particular,
\begin{equation}\label{eq:Eval_q_0}
  \tilde P_0^\sharp(z,v)\bigl(\ad(\C_0^\sharp(z,v))\bigr)\,
  \scrR_0^\sharp(z,v) = 0
\end{equation}
for all \((z,v) \in \bbR^2\).
\end{lemma}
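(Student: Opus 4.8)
The plan is to mirror exactly the argument used in Lemma~\ref{le:maple_P_mu}, but in the smaller $3\times 3$ setting corresponding to the eigenvalue branch $\undermu = 0$. First I would run the dimension-agnostic algorithm from Section~\ref{se:blueprint}: initialize $B_0 \coloneqq \scrR_0^\sharp(z,v)$, iterate $B_{i+1} \coloneqq \ad(\C_0^\sharp(z,v))B_i = \C_0^\sharp B_i - B_i \C_0^\sharp$ for $i = 0,1,2,3$, flatten $B_0,\dots,B_3$ into columns of a $9\times 4$ matrix with $B_4$ as the right-hand side, and solve the resulting linear system over the rational function field $\bbR(z,v)$. The Maple code is word-for-word that of Lemma~\ref{le:maple_P_mu} with $n \coloneqq 3$, the target degree $k \coloneqq 5$, and the matrices $\C$ and $R[0]$ replaced by $\C_0^\sharp(z,v)$ and $\scrR_0^\sharp(z,v)$ from~\eqref{eq:C_0_sharp_matrix} and~\eqref{eq:R_0_sharp_matrix}. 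The output of \texttt{collect}, after reversing the coefficient order, is a monic degree-$5$ polynomial in $\lambda$; comparing coefficients shows it equals $\lambda^5 + \bigl(\tfrac{45}{4}z^2 + \tfrac{5}{4}v^2\bigr)\lambda^3 + \bigl(\tfrac{81}{4}z^4 + \tfrac{9}{2}z^2v^2 + \tfrac{1}{4}v^4\bigr)\lambda = \lambda\,P_0^\sharp(z,v) = \tilde P_0^\sharp(z,v)$, as in~\eqref{eq:P_0_sharp_param} and~\eqref{eq:tilde_P_0_sharp}.

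Next I would verify the identity~\eqref{eq:Eval_q_0} directly: evaluating the degree-$5$ polynomial $\tilde P_0^\sharp(z,v)$ at the operator $\ad(\C_0^\sharp(z,v))$ and applying it to $\scrR_0^\sharp(z,v)$ produces the zero $3\times 3$ matrix. This is the certificate line
\begin{verbatim}
simplify( R[k] + sum(a[f]*R[k - f], f = 1 .. k) );  # -> zero 3x3
\end{verbatim}
and it holds identically in $z$ and $v$, since after clearing denominators both sides are polynomial in $(z,v)$ and agree on the Zariski-dense set where the linear solve succeeds. For the "if and only if" statement, $\tilde U_0$ is precisely the locus where the $9\times 4$ flattened matrix $[\,\mathrm{vec}(B_0),\dots,\mathrm{vec}(B_3)\,]$ has rank $4$; equivalently, where $\{B_0,B_1,B_2,B_3\} = \{\scrR_0^\sharp,\ \ad(\C_0^\sharp)\scrR_0^\sharp,\ \ad(\C_0^\sharp)^2\scrR_0^\sharp,\ \ad(\C_0^\sharp)^3\scrR_0^\sharp\}$ is linearly independent. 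This rank condition is given by the non-vanishing of at least one $4\times 4$ minor, hence defines a Zariski-open subset $\tilde U_0 \subseteq \bbR^2$; it is nonempty because the linear solve returns a unique solution for generic $(z,v)$ (e.g.\ one checks at a single numerical point such as $(z,v) = (1,1)$ that the four iterates are independent while the fifth is dependent). On $\tilde U_0$ the minimal annihilating polynomial of $\scrR_0^\sharp(z,v)$ under $\ad(\C_0^\sharp(z,v))$ has exactly degree $5$ and equals $\tilde P_0^\sharp(z,v)$ by uniqueness of the coefficients; off $\tilde U_0$ the degree drops, so $\tilde P_0^\sharp(z,v)$ is no longer minimal there, establishing both directions.

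The only genuine obstacle is the symbolic linear algebra over $\bbR(z,v)$: one must confirm that Maple's \texttt{LinearSolve} returns precisely the coefficients matching~\eqref{eq:P_0_sharp_param}, with the odd-indexed coefficients $a_2 = a_4 = 0$ vanishing as forced by Theorem~\ref{th:JR_on_C_0_spaces}. This is a routine but not trivial computation; it is considerably lighter than the $6\times 6$ case of Lemma~\ref{le:maple_P_mu} because the block is half the size and the matrices~\eqref{eq:R_0_sharp_matrix}–\eqref{eq:C_0_sharp_matrix} are sparser. Everything else—the openness and density of $\tilde U_0$, the polynomial identity~\eqref{eq:Eval_q_0}, and the minimality characterization—follows the same template as before with no new ideas required.
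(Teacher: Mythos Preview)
Your approach is essentially identical to the paper's: the paper's proof is precisely the Maple code you describe (with \(n=3\), \(k=5\), the matrices from \eqref{eq:R_0_sharp_matrix}--\eqref{eq:C_0_sharp_matrix}), the output matching \(\tilde P_0^\sharp(z,v)\), and the same certificate line. Two small indexing slips to fix: for target degree \(k=5\) you must iterate to \(B_5\), flatten \(B_0,\dots,B_4\) into a \(9\times 5\) matrix with \(B_5\) as right-hand side, and the rank/independence condition defining \(\tilde U_0\) involves the five iterates \(\{B_0,\dots,B_4\}\), not four; and the vanishing coefficients forced by Theorem~\ref{th:JR_on_C_0_spaces} are the odd-indexed \(a_1=a_3=a_5=0\), not \(a_2,a_4\) (which are precisely the nonzero ones you display).
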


\begin{proof}
\begin{verbatim}
# case mu = 0
with(LinearAlgebra):

n := 3:  # block size
k := 5:  # target degree (guess-and-check)

C := (1/2)*Matrix([
  [0, v^2, 0],
  [-1, 0, -3*z^2],
  [0, 3, 0]
]):

R := array(0..k):

R[0] := (1/4)*Matrix([
  [v^2, 0, 3*v^2*z^2],
  [0, z^2-3*v^2, 0],
  [3, 0, z^2]
]):

for i from 0 to k - 1 do
  R[i+1] := simplify(C . R[i] - R[i] . C):
od:

# Build augmented system [vec(R_0) ... vec(R_(k-1)) | vec(R_k)]
flatR := Matrix(n^2, k+1):

for i from 1 to k+1 do
  for j from 1 to n do
    for l from 1 to n do
      flatR[(j-1)*n + l, i] := R[i-1][j, l]:
    od:
  od:
od:

# Solve for (a_1,...,a_k) in R_k + a_1 R_(k-1) + ... + a_k R_0 = 0
a := LinearSolve(flatR):  # last column is RHS

# Reverse order and flip sign for the polynomial coefficients
a_help := array(1..k):
for i from 1 to k do
  a_help[i] := -a[k + 1 - i]:
od:
a := a_help:

# Annihilating polynomial
p := collect(lambda^k + sum(a[f]*lambda^(k - f), f = 1 .. k), lambda);
\end{verbatim}

\[
  \begin{aligned}
   p \coloneqq \lambda^{5}
     + \Bigl(\tfrac{45}{4} z^{2}+\tfrac{5}{4} v^{2}\Bigr) \lambda^{3}
     + \Bigl(\tfrac{81}{4} z^{4}+\tfrac{9}{2} v^{2} z^{2}+\tfrac{1}{4} v^{4}\Bigr)
       \lambda
  \end{aligned}
\]
which is \(\tilde P_0^\sharp(z,v)\) from~\eqref{eq:tilde_P_0_sharp}.
\begin{verbatim}
# optional certificate:
simplify( R[k] + sum(a[f]*R[k - f], f = 1 .. k) );  # -> zero 3x3
\end{verbatim}
\end{proof}

Suppose that \(\undermu = 1\). Let \(X \in \n\) with \(X_\z \neq 0\) and
\(X_\v \neq 0\). For every nonzero \(Z \in (\h_1)_X\),
\begin{equation}\label{eq:q_3}
  \H^Z_X \coloneqq \mathrm{span}_\bbR\bigl\{
    Z,\ \underK(X)Z,\ Z \cbullet X_\v,\ \underK(X)Z \cbullet X_\v
  \bigr\}
\end{equation}
is a \(4\)-dimensional subspace of \((\H_1)_X\) that is invariant under
both \(\scrR(X)\) and \(\C(X)\). Hence
\(J \coloneqq \underK(X)\big|_{(\h_1)_X}\) is an orthogonal complex
structure, so \(\dim\bigl((\H_1)_X\bigr) \equiv 0 \pmod 4\) and
\[
  (\H_1)_X \cong \bigoplus_{i=1}^{m_1/2} \H^{Z_i}_X
\]
for any \(J\)-complex basis \(\{Z_1,\dotsc,Z_{m_1/2}\}\) of
\((\h_1)_X\). By~\eqref{eq:R_X_5}--\eqref{eq:R_X_7} and
\eqref{eq:C_X_5}--\eqref{eq:C_X_7} in combination with the
characterization of the \(1\)-eigenspace \(\Eig_{1}(-\underK(X)^2)\)
given by~\eqref{eq:mu_=_1}, the corresponding \(4 \times 4\) matrices
(with respect to the basis
\(\bigl(Z,\ \underK(X)Z,\ Z \cbullet X_\v,\ \underK(X)Z \cbullet X_\v\bigr)\))
are
\begin{align}\label{eq:R_1_sharp_matrix}
  \scrR_1^\sharp(z,v) &\coloneqq \tfrac14
  \begin{pmatrix}
    v^2 & 0   & 0            & 3 v^2 z \\
    0   & v^2 & -3 z v^2     & 0 \\
    0   & -3z & z^2 - 3 v^2  & 0 \\
    3z  & 0   & 0            & z^2 - 3 v^2
  \end{pmatrix}
  \\[0.7ex]
  \label{eq:C_1_sharp_matrix}
  \C_1^\sharp(z,v) &\coloneqq \tfrac12
  \begin{pmatrix}
    0 & 0 & v^2 & 0 \\
    0 & 0 & 0   & v^2 \\
    -1& 0 & 0   & -3z \\
    0 & -1& 3z  & 0
  \end{pmatrix}
\end{align}
where \(z \coloneqq \norm{X_\z}\) and \(v \coloneqq \norm{X_\v}\).

For each \((z,v) \in \bbR^2\), define
\begin{equation}\label{eq:P_1_sharp_param}
  P_1^\sharp(z,v) \coloneqq \lambda^2 + \tfrac94 z^2 + v^2
\end{equation}
in accordance with~\eqref{eq:def_P_1_sharp}, and set
\begin{equation}\label{eq:tilde_P_1_sharp}
  \tilde P_1^\sharp(z,v) \coloneqq \lambda\,P_1^\sharp(z,v)
\end{equation}

\begin{lemma}\label{le:maple_P_1_sharp}
There exists an open dense subset \(\tilde U_{1} \subseteq \bbR^2\) such
that \(\tilde P_1^\sharp(z,v)\) is the minimal annihilating polynomial
of \(\scrR_1^\sharp(z,v)\) with respect to the linear operator
\[
  \ad\bigl(\C_1^\sharp(z,v)\bigr)\colon
  \Mat_4(\bbR) \longrightarrow \Mat_4(\bbR)
\]
if and only if \((z,v) \in \tilde U_{1}\). In particular,
\begin{equation}\label{eq:Eval_q_1}
  \tilde P_1^\sharp(z,v)\bigl(\ad(\C_1^\sharp(z,v))\bigr)\,
  \scrR_1^\sharp(z,v) = 0
\end{equation}
for all \((z,v) \in \bbR^2\).
\end{lemma}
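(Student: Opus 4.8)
The plan is to mirror exactly the structure of the proofs of Lemmas~\ref{le:maple_P_mu} and~\ref{le:maple_P_0_sharp}: this is a purely computational verification carried out in the $4\times 4$ block given by $\scrR_1^\sharp(z,v)$ and $\C_1^\sharp(z,v)$. First I would set up the iterated commutators $B_0 \coloneqq \scrR_1^\sharp(z,v)$ and $B_{i+1} \coloneqq \ad(\C_1^\sharp(z,v))B_i = \C_1^\sharp B_i - B_i \C_1^\sharp$, following the blueprint algorithm of Section~\ref{se:blueprint}. By the general theory (equation~\eqref{eq:C_0}), these iterates represent the symmetrized covariant derivatives of the curvature tensor restricted to the invariant subspace $\H^Z_X$, so the minimal annihilating polynomial of $B_0$ under $\ad(\C_1^\sharp(z,v))$ is a factor of $P_{\min}(N,g)(X)$ whenever $X$ lies in the relevant stratum.

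Next, guided by the claimed degree $3$ of $\tilde P_1^\sharp(z,v) = \lambda\,P_1^\sharp(z,v) = \lambda^3 + (\tfrac94 z^2 + v^2)\lambda$, I would run the Maple \texttt{LinearSolve} routine with target degree $k = 3$ on the flattened augmented system $[\operatorname{vec}(B_0),\operatorname{vec}(B_1),\operatorname{vec}(B_2)\mid\operatorname{vec}(B_3)]$, in exactly the same dimension-agnostic style as the two preceding lemmas. The output should be the polynomial $p = \lambda^3 + (\tfrac94 z^2 + v^2)\lambda$, which matches~\eqref{eq:tilde_P_1_sharp} after inserting~\eqref{eq:P_1_sharp_param}. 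I would then include the optional certificate line \texttt{simplify( R[k] + sum(a[f]*R[k - f], f = 1 .. k) )} to confirm that this polynomial annihilates $\scrR_1^\sharp(z,v)$ for \emph{all} $(z,v)\in\bbR^2$, which gives~\eqref{eq:Eval_q_1} unconditionally. Finally, minimality — that is, linear independence of $\{\scrR_1^\sharp{}^0, \scrR_1^\sharp{}^1\}$, equivalently nonvanishing of the relevant $2\times2$ minors in the flattened system — holds on a Zariski-open subset $\tilde U_1 \subseteq \bbR^2$ cut out by the condition that $B_0$ and $B_1 = [\C_1^\sharp, \scrR_1^\sharp]$ are linearly independent (so that the degree does not drop to $1$ or $0$); this subset is nonempty since at, e.g., generic $(z,v)$ with $z,v>0$ the commutator $B_1$ is visibly not a scalar multiple of $B_0$. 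The ``if and only if'' then follows: on $\tilde U_1$ the minimal annihilating polynomial has degree $3$ and equals $\tilde P_1^\sharp(z,v)$, while off $\tilde U_1$ its degree is strictly smaller.

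The main obstacle here is essentially bookkeeping rather than conceptual: one must be careful that the degree guess $k=3$ is correct — too large a $k$ would make \texttt{LinearSolve} return no solution, too small would miss the true minimal polynomial — and that the $4\times4$ matrices~\eqref{eq:R_1_sharp_matrix} and~\eqref{eq:C_1_sharp_matrix} are transcribed correctly from~\eqref{eq:R_X_5}--\eqref{eq:R_X_7} and~\eqref{eq:C_X_5}--\eqref{eq:C_X_7}, using the characterization~\eqref{eq:mu_=_1} of the $1$-eigenspace (which is what collapses the would-be $6$-dimensional block $\H^Z_X$ to the $4$-dimensional one, because $Z\cbullet X_\z\cbullet X_\v$ now lies in the span of $\underK(X)Z\cbullet X_\v$). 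Since Maple does the symbolic commutator algebra and the linear solve exactly, the verification is routine once these inputs are set; I would simply present the Maple code block and the resulting polynomial, exactly as in Lemmas~\ref{le:maple_P_mu} and~\ref{le:maple_P_0_sharp}, together with a one-line remark that the genericity locus $\tilde U_1$ is the complement of the vanishing locus of the $2\times2$ minors expressing linear independence of $B_0$ and $B_1$.

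\begin{proof}
\begin{verbatim}
# case mu = 1
with(LinearAlgebra):

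n := 4:  # block size
k := 3:  # target degree (guess-and-check)

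C := (1/2)*Matrix([
  [0, 0, v^2, 0],
  [0, 0, 0, v^2],
  [-1, 0, 0, -3*z],
  [0, -1, 3*z, 0]
]):

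R := array(0..k):

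R[0] := (1/4)*Matrix([
  [v^2, 0, 0, 3*v^2*z],
  [0, v^2, -3*z*v^2, 0],
  [0, -3*z, z^2-3*v^2, 0],
  [3*z, 0, 0, z^2-3*v^2]
]):

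for i from 0 to k - 1 do
  R[i+1] := simplify(C . R[i] - R[i] . C):
od:

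# Build augmented system [vec(R_0) ... vec(R_(k-1)) | vec(R_k)]
flatR := Matrix(n^2, k+1):

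for i from 1 to k+1 do
  for j from 1 to n do
    for l from 1 to n do
      flatR[(j-1)*n + l, i] := R[i-1][j, l]:
    od:
  od:
od:

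# Solve for (a_1,...,a_k) in R_k + a_1 R_(k-1) + ... + a_k R_0 = 0
a := LinearSolve(flatR):  # last column is RHS

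# Reverse order and flip sign for the polynomial coefficients
a_help := array(1..k):
for i from 1 to k do
  a_help[i] := -a[k + 1 - i]:
od:
a := a_help:

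# Annihilating polynomial
p := collect(lambda^k + sum(a[f]*lambda^(k - f), f = 1 .. k), lambda);
\end{verbatim}

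We obtain
\[
  p \coloneqq \lambda^{3} + \Bigl(\tfrac{9}{4} z^{2} + v^{2}\Bigr)\lambda
\]
which is exactly \(\tilde P_1^\sharp(z,v)\) from~\eqref{eq:tilde_P_1_sharp}
after inserting~\eqref{eq:P_1_sharp_param}. An optional certificate is
\begin{verbatim}
simplify( R[k] + sum(a[f]*R[k - f], f = 1 .. k) );  # -> zero 4x4
\end{verbatim}
which confirms~\eqref{eq:Eval_q_1} for all \((z,v) \in \bbR^2\).

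It remains to identify the open dense subset \(\tilde U_1 \subseteq \bbR^2\)
on which \(\tilde P_1^\sharp(z,v)\) is in fact the \emph{minimal} annihilating
polynomial. By the blueprint algorithm of Section~\ref{se:blueprint}, the
degree of the minimal annihilating polynomial of \(\scrR_1^\sharp(z,v)\)
under \(\ad(\C_1^\sharp(z,v))\) equals \(3\) precisely when the iterates
\(\scrR_1^\sharp(z,v)^0 = \scrR_1^\sharp(z,v)\) and
\(\scrR_1^\sharp(z,v)^1 = [\C_1^\sharp(z,v),\scrR_1^\sharp(z,v)]\)
are linearly independent in \(\Mat_4(\bbR)\); otherwise the degree drops
to \(1\) or \(0\). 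Linear independence of these two matrices is expressed by
the nonvanishing of the \(2\times 2\) minors of the \(16\times 2\) matrix
with columns \(\operatorname{vec}(\scrR_1^\sharp(z,v)^0)\) and
\(\operatorname{vec}(\scrR_1^\sharp(z,v)^1)\); these minors are polynomials
in \(z\) and \(v\). Define \(\tilde U_1\) to be the complement of their
common vanishing locus. Then \(\tilde U_1\) is Zariski open in \(\bbR^2\),
and it is nonempty: a direct computation of
\([\C_1^\sharp(z,v),\scrR_1^\sharp(z,v)]\) from the matrices above shows
that for generic \((z,v)\) with \(z,v>0\) the commutator is not a scalar
multiple of \(\scrR_1^\sharp(z,v)\). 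Hence \(\tilde U_1\) is open and dense.
On \(\tilde U_1\), the minimal annihilating polynomial therefore has degree
\(3\) and, being monic and annihilating \(\scrR_1^\sharp(z,v)\) by
\eqref{eq:Eval_q_1}, must coincide with \(\tilde P_1^\sharp(z,v)\); off
\(\tilde U_1\), its degree is strictly smaller, so the equality fails.
This proves both implications.
\end{proof}
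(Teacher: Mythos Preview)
Your proof is correct and follows essentially the same approach as the paper: the Maple code is identical (up to the trivial reordering \texttt{3*v\textasciicircum 2*z} vs.\ \texttt{3*z*v\textasciicircum 2}), and the paper's proof consists of nothing more than this code block and its output, together with the optional certificate line.

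Your additional paragraph justifying the existence and density of $\tilde U_1$ goes beyond what the paper provides. One small imprecision there: for the minimal annihilating polynomial to have degree exactly $3$, you need linear independence of $\{B_0, B_1, B_2\}$, not just $\{B_0, B_1\}$. Independence of $\{B_0, B_1\}$ alone does not preclude the minimal polynomial being $\lambda^2 + (\tfrac94 z^2 + v^2)$, which divides $\tilde P_1^\sharp(z,v)$. The fix is immediate---replace the $2\times 2$ minors of the $16\times 2$ matrix by the $3\times 3$ minors of the $16\times 3$ matrix with columns $\operatorname{vec}(B_0)$, $\operatorname{vec}(B_1)$, $\operatorname{vec}(B_2)$---and the rest of your argument (polynomial minors, Zariski-open complement, nonemptiness at generic $(z,v)$) goes through unchanged.
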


\begin{proof}
\begin{verbatim}
# case mu = 1
with(LinearAlgebra):

n := 4:  # block size
k := 3:  # target degree (guess-and-check)

C := (1/2)*Matrix([
  [0, 0, v^2, 0],
  [0, 0, 0, v^2],
  [-1, 0, 0, -3*z],
  [0, -1, 3*z, 0]
]):

R := array(0..k):
R[0] := (1/4)*Matrix([
  [v^2, 0, 0, 3*z*v^2],
  [0, v^2, -3*z*v^2, 0],
  [0, -3*z, z^2-3*v^2, 0],
  [3*z, 0, 0, z^2-3*v^2]
]):
for i from 0 to k - 1 do
  R[i+1] := simplify(C . R[i] - R[i] . C):
od:

# Build augmented system [vec(R_0) ... vec(R_(k-1)) | vec(R_k)]
flatR := Matrix(n^2, k+1):

for i from 1 to k+1 do
  for j from 1 to n do
    for l from 1 to n do
      flatR[(j-1)*n + l, i] := R[i-1][j, l]:
    od:
  od:
od:

# Solve for (a_1,...,a_k) in R_k + a_1 R_(k-1) + ... + a_k R_0 = 0
a := LinearSolve(flatR):  # last column is RHS
# Reverse order and flip sign for the polynomial coefficients
a_help := array(1..k):
for i from 1 to k do
  a_help[i] := -a[k + 1 - i]:
od:
a := a_help:

# Annihilating polynomial
p := collect(lambda^k + sum(a[f]*lambda^(k - f), f = 1 .. k), lambda);
\end{verbatim}

We obtain the following result:
\[
  p \coloneqq \lambda^{3} + \Bigl(\tfrac{9}{4} z^{2} + v^{2}\Bigr)\lambda
\]
\begin{verbatim}
# optional certificate:
simplify( R[k] + sum(a[f]*R[k - f], f = 1 .. k) );  # -> zero 4x4
\end{verbatim}
\end{proof}

We also want to show that \((\n_3)_X\), defined in~\eqref{eq:def_n_3_X},
is invariant under \(\scrR(X)\) and \(\C(X)\). Here,
\eqref{eq:sym_curv_2} implies that
\begin{align}
  \label{eq:R_X_1}
  \scrR(X)X_\v
    &= \tfrac14\bigl(\norm{X_\z}^2 X_\v - \norm{X_\v}^2 X_\z\bigr) \\
  \label{eq:R_X_2}
  \scrR(X)\bigl(X_\z \cbullet X_\v\bigr)
    &= \tfrac14\bigl(\norm{X_\z}^2 - 3\norm{X_\v}^2\bigr)
      \bigl(X_\z \cbullet X_\v\bigr) \\
  \label{eq:R_X_3}
  \scrR(X)X_\z
    &= \tfrac14\bigl(\norm{X_\v}^2 X_\z - \norm{X_\z}^2 X_\v\bigr)
\end{align}

Similarly, from~\eqref{eq:C_0_structure_1} we directly see that
\begin{align}
  \label{eq:C_X_1}
  \C(X)X_\v
    &= \tfrac12 \bigl(X_\z \cbullet X_\v\bigr) \\
  \label{eq:C_X_2}
  \C(X)\bigl(X_\z \cbullet X_\v\bigr)
    &\stackrel{\eqref{eq:Clifford_module_2},
    \eqref{eq:Clifford_module_5}}{=}
      \tfrac12\bigl(\norm{X_\v}^2 X_\z - \norm{X_\z}^2 X_\v\bigr) \\
  \label{eq:C_X_3}
  \C(X)X_\z
    &= -\tfrac12 \bigl(X_\z \cbullet X_\v\bigr)
\end{align}

Thus, by~\eqref{eq:R_X_1}--\eqref{eq:R_X_3} and
\eqref{eq:C_X_1}--\eqref{eq:C_X_3}, \((\n_3)_X\) is invariant under both
\(\scrR(X)\) and \(\C(X)\). On the open set where
\(z \coloneqq \norm{X_\z} > 0\) and \(v \coloneqq \norm{X_\v} > 0\), the
corresponding \(3 \times 3\) matrices in the basis
\(\bigl(X_\v,\ X_\z \cbullet X_\v,\ X_\z\bigr)\) are
\begin{align}\label{eq:R_n_3_matrix}
  \scrR_{\n_3}(z,v) &\coloneqq \tfrac14
  \begin{pmatrix}
    z^2 & 0 & -z^2 \\
    0 & z^2 - 3 v^2 & 0 \\
    -v^2 & 0 & v^2
  \end{pmatrix}
  \\[0.7ex]
  \label{eq:C_n_3_matrix}
  \C_{\n_3}(z,v) &\coloneqq \tfrac12
  \begin{pmatrix}
    0 & -z^2 & 0 \\
    1 & 0 & -1 \\
    0 & v^2 & 0
  \end{pmatrix}
\end{align}

For each \((z,v) \in \bbR^2\), define
\begin{equation}\label{eq:P_n_3_param}
  P_{\n_3}(z,v) \coloneqq \lambda^{3} + \bigl(z^{2} + v^{2}\bigr)\lambda
\end{equation}
Since \(\norm{X}^2 = \norm{X_\z}^2 + \norm{X_\v}^2\), this agrees
with~\eqref{eq:def_P_n_3}.

\begin{lemma}\label{le:maple_P_n_3}
There exists an open dense subset \(\tilde U_{\n_3} \subseteq \bbR^2\)
such that \(P_{\n_3}(z,v)\), given by~\eqref{eq:P_n_3_param}, is the
minimal annihilating polynomial of \(\scrR_{\n_3}(z,v)\) with respect to
the linear operator
\[
  \ad\bigl(\C_{\n_3}(z,v)\bigr)\colon
  \Mat_3(\bbR) \longrightarrow \Mat_3(\bbR)
\]
if and only if \((z,v) \in \tilde U_{\n_3}\). In particular,
\begin{equation}\label{eq:Eval_q_n3}
  P_{\n_3}(z,v)\bigl(\ad(\C_{\n_3}(z,v))\bigr)\,
  \scrR_{\n_3}(z,v) = 0
\end{equation}
for all \((z,v) \in \bbR^2\).
\end{lemma}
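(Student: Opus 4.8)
The plan is to follow the same template used in Lemmas~\ref{le:maple_P_mu}, \ref{le:maple_P_0_sharp}, and \ref{le:maple_P_1_sharp}, now applied to the $3\times 3$ matrices $\scrR_{\n_3}(z,v)$ and $\C_{\n_3}(z,v)$ from \eqref{eq:R_n_3_matrix} and \eqref{eq:C_n_3_matrix}. First I would record the Maple computation: initialize $R[0]\coloneqq \scrR_{\n_3}(z,v)$, set $C\coloneqq \C_{\n_3}(z,v)$, iterate $R[i+1]\coloneqq C\cdot R[i]-R[i]\cdot C$ for $i=0,\dots,k-1$ with target degree $k=3$, flatten the matrices into $9\times 4$ form, and call \texttt{LinearSolve} to express $R[3]$ as a linear combination of $R[0],R[1],R[2]$. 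The output is $p\coloneqq \lambda^{3}+(z^{2}+v^{2})\lambda$, which is precisely $P_{\n_3}(z,v)$ from \eqref{eq:P_n_3_param}; the optional certificate \texttt{simplify(R[k] + sum(a[f]*R[k-f], f=1..k))} returns the zero $3\times 3$ matrix, which proves the identity \eqref{eq:Eval_q_n3} for all $(z,v)\in\bbR^3$ by analytic continuation, since it is a polynomial identity in $z$ and $v$ verified by symbolic computation.

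For the minimality claim, I would argue exactly as in the cases $\undermu\in\{0,1\}$: since $P_{\n_3}(z,v)$ is a monic annihilating polynomial of $\scrR_{\n_3}(z,v)$ under $\ad(\C_{\n_3}(z,v))$ of degree $3$, it is the minimal annihilating polynomial precisely on the set $\tilde U_{\n_3}$ where the iterates $\scrR_{\n_3}(z,v),\ \ad(\C_{\n_3})\scrR_{\n_3}(z,v),\ \ad(\C_{\n_3})^{2}\scrR_{\n_3}(z,v)$ are linearly independent in $\Mat_3(\bbR)$. This condition is the nonvanishing of the $3\times 3$ minor of the $9\times 3$ flattened matrix $[\operatorname{vec}(R_0),\operatorname{vec}(R_1),\operatorname{vec}(R_2)]$, i.e.\ a polynomial inequality in $(z,v)$; hence $\tilde U_{\n_3}$ is Zariski open. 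To see it is nonempty (and thus dense), it suffices to exhibit one pair $(z,v)$—for instance $z=v=1$, corresponding to the Heisenberg group $N^3$ itself—where linear independence holds; this can be checked by a direct $3\times 3$ rank computation, which I would include as an optional Maple snippet or note that it follows from \eqref{eq:min_pol_n_=_1}, where the minimal polynomial of the real Heisenberg group is known to have degree exactly $3$.

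The only genuine subtlety, as opposed to routine verification, is making sure the identification is consistent: the block $(\n_3)_X$ carries $\scrR(X)$ and $\C(X)$ in the basis $(X_\v,\ X_\z\cbullet X_\v,\ X_\z)$ with matrices \eqref{eq:R_n_3_matrix}--\eqref{eq:C_n_3_matrix} derived from \eqref{eq:R_X_1}--\eqref{eq:R_X_3} and \eqref{eq:C_X_1}--\eqref{eq:C_X_3}; I would double-check that the commutator convention $\ad(\C(X))=[\C(X),\,\cdot\,]$ matches the Maple line \texttt{C . R - R . C}, which it does by \eqref{eq:ad}. I do not anticipate a real obstacle here; the computation is structurally identical to the three preceding lemmas, only smaller (degree $3$, size $3\times 3$), and the answer $\lambda^3+\norm{X}^2\lambda$ is already forced by the known minimal polynomial \eqref{eq:min_pol_n_=_1} of the totally geodesic Heisenberg subgroup $N^3\subseteq N$.
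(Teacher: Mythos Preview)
Your proposal is correct and follows essentially the same approach as the paper: a Maple computation of the iterates $R[i]=\ad(\C_{\n_3})^{i}\scrR_{\n_3}$ up to $k=3$, flattening, and \texttt{LinearSolve}, yielding $p=\lambda^{3}+(z^{2}+v^{2})\lambda$ with a zero-certificate for \eqref{eq:Eval_q_n3}. Two small slips: you wrote $(z,v)\in\bbR^{3}$ where you mean $\bbR^{2}$, and the linear-independence condition is the nonvanishing of \emph{some} $3\times 3$ minor (not ``the'' minor) of the $9\times 3$ flattened matrix; otherwise your open-dense argument is exactly what the paper leaves implicit.
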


\begin{proof}
\begin{verbatim}
# case n_3
with(LinearAlgebra):
n := 3:  # block size
k := 3:  # target degree (guess-and-check)
C := (1/2)*Matrix([
  [0, -z^2, 0],
  [1, 0, -1],
  [0, v^2, 0]
]):  # C_0-structure
R := array(0..k):
R[0] := (1/4)*Matrix([
  [z^2, 0, -z^2],
  [0, z^2-3*v^2, 0],
  [-v^2, 0, v^2]
]):  # symmetrized curvature tensor

for i from 0 to k - 1 do
  R[i+1] := simplify(C . R[i] - R[i] . C):
od:

# Build augmented system [vec(R_0) ... vec(R_(k-1)) | vec(R_k)]
flatR := Matrix(n^2, k+1):
for i from 1 to k+1 do
  for j from 1 to n do
    for l from 1 to n do
      flatR[(j-1)*n + l, i] := R[i-1][j, l]:
    od:
  od:
od:

# Solve for (a_1,...,a_k) in R_k + a_1 R_(k-1) + ... + a_k R_0 = 0
a := LinearSolve(flatR):  # last column is RHS

# Reverse order and flip sign for the polynomial coefficients
a_help := array(1..k):
for i from 1 to k do
  a_help[i] := -a[k + 1 - i]:
od:
a := a_help:

# Minimal annihilating polynomial
p := collect(lambda^k + sum(a[f]*lambda^(k - f), f = 1 .. k), lambda);
\end{verbatim}
This yields the following output:
\[
  p \coloneqq \lambda^{3} + \bigl(z^{2} + v^{2}\bigr)\lambda
\]
\begin{verbatim}
# optional certificate
simplify( R[3] + (z^2+v^2)*R[1] );     # -> zero 3x3
\end{verbatim}
This shows that \(P_{\n_3}(z,v)\bigl(\ad(\C_{\n_3}(z,v))\bigr)\,\scrR_{\n_3}(z,v) = 0\),
as claimed in~\eqref{eq:Eval_q_n3}.
\end{proof}

Finally, we consider the remaining summand \((\Z_X)^\perp\).
From~\eqref{eq:sym_curv_2} we obtain
\begin{equation}\label{eq:R_X_4}
  \scrR(X)S = \tfrac14\, \norm{X_\z}^2\, S
\end{equation}
for \(S \in (\Z_X)^\perp\).
From~\eqref{eq:C_0_structure_1} we also obtain, for \(S \in (\Z_X)^\perp\)
(so \(X_\v \spinprod S = 0\)),
\begin{equation}\label{eq:C_X_4}
  \C(X)S = \tfrac12 X_\z \cbullet S
\end{equation}
Moreover,
\[
  \begin{aligned}
    (X_\z \cbullet S) \spinprod X_\v
      &= S \spinprod (X_\z \cbullet X_\v) \\
    (X_\z \cbullet S) \spinprod (X_\z \cbullet X_\v)
      &= -\norm{X_\z}^2\, (S \spinprod X_\v)
  \end{aligned}
\]
so both expressions vanish for \(S \in (\Z_X)^\perp\), and hence
\(X_\z \cbullet S \in (\Z_X)^\perp\).

Hence there is a unique monic polynomial \(P_{\Z^\perp}(X)\) of smallest
degree such that
\begin{equation}\label{eq:local_minimal_p}
  P_{\Z^\perp}\bigl(X;\,\ad(\C(X))\bigr)\scrR(X)\big|_{(\Z_X)^\perp} = 0
\end{equation}
i.e., \(P_{\Z^\perp}(X)\) is the minimal annihilating polynomial of
\(\scrR(X)\big|_{(\Z_X)^\perp} \in \End{(\Z_X)^\perp}\) with
respect to the derivation
\[
  \ad\bigl(\C(X)\big|_{(\Z_X)^\perp}\bigr)\colon
  \End{(\Z_X)^\perp} \longrightarrow
  \End{(\Z_X)^\perp}
\]

\begin{lemma}\label{le:unimportant}
We have \(\ad(\C(X))\scrR(X) = 0\) on \((\Z_X)^\perp\). It follows that
\(P_{\Z^\perp}(X) = \lambda\) for each \(X \in \n\) with \(X_\v \neq 0\)
and \(X_\z \neq 0\), provided \((\Z_X)^\perp \neq 0\).
\end{lemma}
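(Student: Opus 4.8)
The plan is to read everything off the two displayed formulas \eqref{eq:R_X_4} and \eqref{eq:C_X_4}, together with the invariance of $(\Z_X)^\perp$ under $\C(X)$ established in the paragraph just before the lemma. First I would record that on $(\Z_X)^\perp$ the operator $\scrR(X)$ is a homothety: by \eqref{eq:R_X_4}, $\scrR(X)|_{(\Z_X)^\perp} = \tfrac14\norm{X_\z}^2\,\Id_{(\Z_X)^\perp}$. Since a scalar operator commutes with every endomorphism of $(\Z_X)^\perp$ and $\C(X)$ preserves $(\Z_X)^\perp$ (because $X_\z\cbullet S\in(\Z_X)^\perp$ whenever $S\in(\Z_X)^\perp$), the commutator $[\C(X),\scrR(X)]$ restricts to $0$ on $(\Z_X)^\perp$.

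I would then make this explicit by the short computation: for $S\in(\Z_X)^\perp$,
\begin{equation*}
  \ad(\C(X))\scrR(X)\,S
  = \C(X)\bigl(\tfrac14\norm{X_\z}^2 S\bigr)
    - \scrR(X)\bigl(\tfrac12\,X_\z\cbullet S\bigr)
  = \tfrac18\norm{X_\z}^2\,X_\z\cbullet S
    - \tfrac18\norm{X_\z}^2\,X_\z\cbullet S
  = 0,
\end{equation*}
where the second equality uses \eqref{eq:C_X_4}, the fact that $X_\z\cbullet S\in(\Z_X)^\perp$, and \eqref{eq:R_X_4} once more. This establishes the first assertion, $\ad(\C(X))\scrR(X)=0$ on $(\Z_X)^\perp$.

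For the consequence about $P_{\Z^\perp}(X)$, I would argue as follows. By definition $P_{\Z^\perp}(X)$ is the minimal monic annihilating polynomial of the vector $\scrR(X)|_{(\Z_X)^\perp}\in\End{(\Z_X)^\perp}$ under the derivation $\ad\bigl(\C(X)|_{(\Z_X)^\perp}\bigr)$. Having just shown that this derivation kills $\scrR(X)|_{(\Z_X)^\perp}$, I conclude that $\lambda$ is an annihilating polynomial, so $\deg P_{\Z^\perp}(X)\le 1$; and $P_{\Z^\perp}(X)$ is not the constant polynomial $1$, since when $X_\z\neq 0$ and $(\Z_X)^\perp\neq 0$ the factor $\tfrac14\norm{X_\z}^2$ is nonzero, whence $\scrR(X)|_{(\Z_X)^\perp}\neq 0$. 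Hence $P_{\Z^\perp}(X)=\lambda$.

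The argument has no real obstacle: the one ingredient that must be in place is the invariance $\C(X)\bigl((\Z_X)^\perp\bigr)\subseteq(\Z_X)^\perp$, and this is exactly what is verified immediately above the lemma statement via the two spinor-product identities there. Everything else is the two-term cancellation displayed above.
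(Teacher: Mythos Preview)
Your proof is correct and follows essentially the same approach as the paper: both arguments rest on the observation that \(\scrR(X)\big|_{(\Z_X)^\perp}=\tfrac14\norm{X_\z}^2\Id\) is a scalar operator, hence commutes with \(\C(X)\big|_{(\Z_X)^\perp}\). You additionally spell out the explicit two-term cancellation and the justification that \(P_{\Z^\perp}(X)\neq 1\), which the paper leaves implicit.
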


\begin{proof}
By~\eqref{eq:C_X_4} and~\eqref{eq:R_X_4},
\[
  \scrR(X)\big|_{(\Z_X)^\perp}
  = \tfrac14 \norm{X_\z}^2 \Id
\]
so \(\C(X)\big|_{(\Z_X)^\perp}\) and \(\scrR(X)\big|_{(\Z_X)^\perp}\)
commute. Thus
\[
  \ad(\C(X))\scrR(X)
  = \C(X)\circ \scrR(X) - \scrR(X)\circ \C(X)
  = 0
\]
on \((\Z_X)^\perp\), as claimed.
\end{proof}

Because \(P_{\n_3}(X)\) is divisible by \(P_{\Z^\perp}(X) = \lambda\),
this last factor does not influence the construction of the minimal
polynomial \(P_{\min}(N,g)\).

\begin{lemma}\label{le:gcd}
Fix pairwise distinct numbers \(\undermu_1,\dotsc,\undermu_\ell \in (0,1)\).
There exists an open dense set
\(U(\undermu_1,\dotsc,\undermu_\ell) \subseteq \bbR^2\) such that, for
every \((z,v) \in U(\undermu_1,\dotsc,\undermu_\ell)\), the polynomials
\begin{equation}\label{eq:set_of_polynomials_1}
  \mathcal{S}(\undermu_1,\dotsc,\undermu_\ell;z,v)
  \coloneqq
  \{P_{\undermu_i}(z,v)\}_{i=1}^\ell
  \cup
  \{P_0^\sharp(z,v),\,P_1^\sharp(z,v),\,P_{\n_3}(z,v)\}
  \subseteq \bbR[\lambda]
\end{equation}
(see~\eqref{eq:P_mu_param}, \eqref{eq:P_0_sharp_param},
\eqref{eq:P_1_sharp_param}, and~\eqref{eq:P_n_3_param}) are pairwise
coprime.
\end{lemma}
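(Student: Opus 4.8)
The plan is to translate pairwise coprimality in $\mathbb{R}[\lambda]$ into the non-vanishing of resultants, and then into a genericity statement in the $(z,v)$-plane. Recall that two monic polynomials in $\mathbb{R}[\lambda]$ of fixed positive degree are coprime — equivalently, have no common root in $\mathbb{C}$ — if and only if their resultant with respect to $\lambda$ is nonzero. For a fixed tuple $\undermu_1,\dots,\undermu_\ell\in(0,1)$, each member of the finite set $\mathcal{S}(\undermu_1,\dots,\undermu_\ell;z,v)$ is a monic polynomial in $\lambda$ whose coefficients are polynomials in $(z,v)$, by \eqref{eq:P_mu_param}, \eqref{eq:P_0_sharp_param}, \eqref{eq:P_1_sharp_param}, \eqref{eq:P_n_3_param}. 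Hence for each unordered pair $\{P,Q\}$ of distinct members, $\operatorname{Res}_\lambda(P,Q)$ is a well-defined element of $\mathbb{R}[z,v]$. Granting that each of these finitely many resultants is a nonzero polynomial, let $R(z,v)$ be their product and put
\[
  U(\undermu_1,\dots,\undermu_\ell)\coloneqq \bigl\{(z,v)\in\mathbb{R}^2 \mid R(z,v)\neq 0\bigr\}.
\]
Then $U(\undermu_1,\dots,\undermu_\ell)$ is the complement of the zero set of a nonzero polynomial, hence Zariski-open, and therefore open and dense in the Euclidean topology; at every point of it all members of $\mathcal{S}$ are pairwise coprime. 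So everything reduces to showing that no pairwise resultant vanishes identically.

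For the pairs $\{P_{\undermu_i},P_{\undermu_j}\}$ with $i\neq j$ this is immediate from \eqref{eq:P_mu_param}: the $\undermu$-dependence sits only in the constant term, and
\[
  P_{\undermu_i}(z,v)-P_{\undermu_j}(z,v)=\tfrac{243}{64}\bigl(\undermu_j-\undermu_i\bigr)\,z^2v^4\not\equiv 0 .
\]
Thus a common root $\lambda_0$ at a point with $zv\neq 0$ would force $z^2v^4=0$, a contradiction; hence $P_{\undermu_i}$ and $P_{\undermu_j}$ are already coprime on the open dense set $\{zv\neq 0\}$, and in particular $\operatorname{Res}_\lambda(P_{\undermu_i},P_{\undermu_j})\not\equiv 0$.

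For the remaining (finitely many) pairs I would exploit the structure that every member of $\mathcal{S}$ other than $P_{\n_3}$ is a polynomial in $\mu\coloneqq\lambda^2$ — a cubic in $\mu$ for each $P_{\undermu_i}$, a quadratic for $P_0^\sharp$, a linear polynomial for $P_1^\sharp$ — whereas $P_{\n_3}(z,v)=\lambda\,(\lambda^2+z^2+v^2)$. For two such ``even'' polynomials, coprimality in $\lambda$ is equivalent to coprimality of their $\mu$-polynomials; and coprimality of $P_{\n_3}$ with an even polynomial $Q(\lambda^2)$ means precisely that $Q$ does not vanish at $\lambda=0$ and does not vanish at $\lambda^2=-(z^2+v^2)$. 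So it suffices, for each such pair, to exhibit a single point $(z,v)\in\mathbb{R}^2$ at which the required coprimality holds, the verification being the evaluation of a cubic, quadratic, or linear polynomial at a handful of explicit arguments. Concretely, the point $(z,v)=(1,1)$ works for the pairs $\{P_{\undermu_i},P_0^\sharp\}$, $\{P_{\undermu_i},P_1^\sharp\}$, $\{P_0^\sharp,P_1^\sharp\}$, $\{P_0^\sharp,P_{\n_3}\}$ and $\{P_1^\sharp,P_{\n_3}\}$, whereas for $\{P_{\undermu_i},P_{\n_3}\}$ it is cleaner to evaluate at $(z,v)=(1,0)$, where the $\undermu_i$-dependence of $P_{\undermu_i}$ drops out entirely. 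Crucially, at $(1,1)$ the pair $\{P_{\undermu_i},P_0^\sharp\}$ fails to be coprime only if $\undermu_i=0$, and $\{P_{\undermu_i},P_1^\sharp\}$ only if $\undermu_i=1$; both are excluded because $0<\undermu_i<1$ strictly.

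I expect the only real work — and the main obstacle — to be this last, finite, case-by-case check: one must rule out that some resultant accidentally degenerates to the zero polynomial, i.e.\ that some pair shares a root for \emph{all} $(z,v)$. The reduction to polynomials in $\mu=\lambda^2$ keeps the bookkeeping small, and the hypothesis $0<\undermu_i<1$ enters exactly to kill the two borderline coincidences identified above; no further input is needed. Once every pairwise resultant is known to be a nonzero element of $\mathbb{R}[z,v]$, the conclusion follows as in the first paragraph with $R=\prod\operatorname{Res}_\lambda(P,Q)$.
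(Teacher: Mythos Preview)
Your proposal is correct, and the overall architecture---pairwise coprimality as nonvanishing of finitely many resultant polynomials in $\mathbb{R}[z,v]$, hence a Zariski-open condition---matches the paper. Where you diverge is in how you certify that each resultant is not the zero polynomial. You exhibit a single witness point (mostly $(1,1)$, occasionally $(1,0)$) at which the pair is coprime, which is clean and requires only a handful of numerical evaluations; in particular your observation that at $(1,1)$ the pairs $\{P_{\undermu_i},P_0^\sharp\}$ and $\{P_{\undermu_i},P_1^\sharp\}$ fail precisely when $\undermu_i=0$ resp.\ $\undermu_i=1$ is exactly right, and is how the strict inequality $0<\undermu_i<1$ enters. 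The paper instead proves the divisibility relations $P_0^\sharp\mid P_0$ and $P_1^\sharp\mid P_1$ (via explicit factorizations \eqref{eq:P_0}, \eqref{eq:P_1}) and then argues that since any two $P_\undermu$'s are coprime off $zv=0$, so are $P_{\undermu_i}$ and $P_\nu^\sharp$; for the pairs with $P_{\n_3}$ it explicitly solves for the bad locus in $(z,v)$ and gives a concrete description of $U(\undermu_1,\dots,\undermu_\ell)$ by inequalities. Your route is shorter and avoids the factorization identities; the paper's route yields the bonus of an explicit algebraic description of the exceptional set, which is not needed for the lemma as stated but is pleasant to have.
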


\begin{proof}
Let \(z > 0\) and \(v > 0\) be fixed. Two polynomials
\(P_{\undermu_{i_1}}(z,v)\) and \(P_{\undermu_{i_2}}(z,v)\) as in
\eqref{eq:P_mu_param} differ only in their constant term. More precisely,
\[
  P_{\undermu_{i_2}}(z,v) - P_{\undermu_{i_1}}(z,v)
  = \frac{243}{64}\,(\undermu_{i_1} - \undermu_{i_2})\, z^{2} v^{4}
\]
Since this is a nonzero scalar for \(i_1 \neq i_2\), we have
\(\gcd\bigl(P_{\undermu_{i_1}}(z,v),P_{\undermu_{i_2}}(z,v)\bigr) = 1\)
whenever \(z \neq 0\) and \(v \neq 0\).

We claim that \(P_\nu^\sharp(z,v) \mid P_{\nu}(z,v)\) for each
\(\nu \in \{0,1\}\).

\emph{Case \(\nu = 0\).} Set
\[
  p \coloneqq P_0(z,v)
\]
so that
\begin{equation*}
\begin{aligned}
  p = {} & \lambda^{6}
  + \left(\tfrac{27}{2} z^{2}+\tfrac{3}{2} v^{2}\right)\lambda^{4}
  + \left(\tfrac{729}{16} z^{4}+\tfrac{81}{8} z^{2}v^{2}
    + \tfrac{9}{16} v^{4}\right)\lambda^{2} \\
  &\quad + \left(\tfrac{729}{16} z^{6}+\tfrac{243}{16} z^{4} v^{2}
    + \tfrac{27}{16} z^{2} v^{4}+\tfrac{1}{16} v^{6}\right)
\end{aligned}
\end{equation*}
Moreover,
\begin{equation}\label{eq:P_0_1}
  q \coloneqq P_0^\sharp(z,v)
  = \frac{\bigl(\lambda^{2}+9 z^{2}+v^{2}\bigr)\,
    \bigl(4 \lambda^{2}+9 z^{2}+v^{2}\bigr)}{4}
\end{equation}
A direct computation yields
\[
  \frac{p}{q} = \lambda^{2}+\tfrac{9}{4}z^{2}+\tfrac{1}{4}v^{2}
\]
and hence
\begin{equation}\label{eq:P_0}
  P_0(z,v) = \bar P_0(z,v)\,P_0'(z,v)^{2}
\end{equation}
with
\[
  \bar P_0(z,v) = \lambda^2 + 9 z^2 + v^2,\qquad
  P_0'(z,v) = \lambda^2 + \tfrac{9}{4} z^2 + \tfrac{1}{4} v^2
\]
In particular, \(P_0^\sharp(z,v) \mid P_0(z,v)\).

\emph{Case \(\nu = 1\).} Set
\[
  p \coloneqq P_1(z,v)
\]
so that
\begin{equation*}
\begin{aligned}
  p = {} & \lambda^{6}
  + \left(\tfrac{27}{2} z^{2}+\tfrac{3}{2} v^{2}\right)\lambda^{4}
  + \left(\tfrac{729}{16} z^{4}+\tfrac{81}{8} z^{2}v^{2}
    + \tfrac{9}{16} v^{4}\right)\lambda^{2} \\
  &\quad + \left(\tfrac{729}{16} z^{6}+\tfrac{243}{16} z^{4} v^{2}
    - \tfrac{135}{64} z^{2} v^{4}+\tfrac{1}{16} v^{6}\right)
\end{aligned}
\end{equation*}
Moreover,
\[
  q \coloneqq P_1^\sharp(z,v) = \lambda^2 + \tfrac{9}{4} z^2 + v^2
\]
and a direct computation gives
\begin{equation}\label{eq:p_over_q}
  \frac{p}{q}
  = \lambda^{4}
    + \frac{\left(45 z^{2}+2 v^{2}\right)\lambda^{2}}{4}
    + \frac{81\left(z^{2}-\tfrac{v^{2}}{18}\right)^{2}}{4}
\end{equation}
Thus
\begin{equation}\label{eq:P_1}
  P_1(z,v) = P_1^\sharp(z,v)\,P_1'(z,v)
\end{equation}
with
\[
  P_1'(z,v)
  = \lambda^4
    + \left(\tfrac{45}{4} z^2 + \tfrac{1}{2} v^2\right)\lambda^2
    + \tfrac{1}{16} v^4 - \tfrac{9}{4} v^2 z^2 + \tfrac{81}{4} z^4
\]
cf.~\eqref{eq:P_1_sharp_param}. In particular, \(P_1^\sharp(z,v)\) divides
\(P_1(z,v)\).

Now fix \(i\) with \(0 < \undermu_i < 1\). Then \(P_{\undermu_i}(z,v)\) is
distinct from \(P_0(z,v)\) and \(P_1(z,v)\). In particular, as shown
before,
\[
  \gcd\bigl(P_{\undermu_i}(z,v),P_\nu(z,v)\bigr)=1
  \quad\text{for }\nu \in \{0,1\}
\]
and
\[
  \gcd\bigl(P_0(z,v),P_1(z,v)\bigr)=1
\]
whenever \(z \neq 0\) and \(v \neq 0\). Since \(P_\nu^\sharp(z,v)\mid
P_\nu(z,v)\) for \(\nu \in \{0,1\}\), it follows that
\[
  \gcd\bigl(P_{\undermu_i}(z,v),P_\nu^\sharp(z,v)\bigr)=1
  \quad\text{for }\nu \in \{0,1\}
\]
and
\[
  \gcd\bigl(P_0^\sharp(z,v),P_1^\sharp(z,v)\bigr)=1
\]
whenever \(z \neq 0\) and \(v \neq 0\).

Next, consider coprimality of \(P_{\undermu}(z,v)\) with
\(P_{\n_3}(z,v)=\lambda(\lambda^2+v^2+z^2)\). These two polynomials share
a factor if and only if \(\lambda_0 = 0\) or
\(\lambda_0 = \pm \mathrm{i}\sqrt{v^2+z^2}\) is a root of
\(P_{\undermu}(z,v)\).

If \(\lambda_0=0\) is a root, then
\[
  0=P_{\undermu}(z,v)\big|_{\lambda=0}
   =\tfrac{729}{16}z^6+\tfrac{243}{16}z^4v^2
    +\left(\tfrac{27}{16}-\tfrac{243}{64}\undermu\right)z^2v^4
    +\tfrac{1}{16}v^6
\]
Solving for \(\undermu\) gives
\[
  \undermu=\frac{4(9z^2+v^2)^3}{243\,v^4 z^2}\;\ge\;1
\]
with equality if and only if \(v^2=18z^2\).

If \(\lambda_0^2 = -\,(v^2 + z^2)\) (with \(z\neq 0\) and \(v\neq 0\)),
then
\[
  0 = P_{\undermu}(z,v)\big|_{\lambda^2=-v^2-z^2}
   =\tfrac{25}{2}z^6-15z^4v^2-\tfrac{243}{64}
     \left(\undermu-\tfrac{32}{27}\right)v^4 z^2
\]
Solving for \(\undermu\) yields
\[
  \undermu=\frac{32\,(5z^2-3v^2)^2}{243\,v^4}
\]
This defines a proper algebraic subset of \(\bbR^2\), as do the
equations \(z=0\), \(v=0\), and \(v^2=18z^2\).

Finally, \(P_{\n_3}(z,v)\) is coprime to \(P_0^\sharp(z,v)\) and
\(P_1^\sharp(z,v)\) under the same conditions on \(\z\) and \(\v\), since
\(P_\nu^\sharp(z,v) \mid P_\nu(z,v)\) for \(\nu \in \{0,1\}\).

Therefore, the conditions
\[
  z\neq 0,\quad v\neq 0,\quad v^2\neq 18z^2,\quad
  \frac{(5z^2-3v^2)^2}{v^4}\neq \frac{243}{32}\,\undermu_i
  \quad (i=1,\ldots,\ell)
\]
define an open dense subset of \(\bbR^2\). On this set, all polynomials
from~\eqref{eq:set_of_polynomials_1} are pairwise coprime, as claimed.
\end{proof}

\subsection{Proof of Theorem~\ref{th:main_1}}
\label{se:proof_of_the_main_theorem_1}

Using the results of the previous section, one can multiply the
factors~\eqref{eq:def_P_mu_1}--\eqref{eq:def_P_n_3} to obtain
\(P_{\min}(N,g)\). To make this rigorous, we transfer the results of
Lemmas~\ref{le:maple_P_mu}--\ref{le:gcd} from the parameter spaces to
the geometric setting.

Throughout this section, we work on the open and dense subset
\(U_{(0,1)} \subseteq \n \setminus \ram(\overK^2)\) defined in
\eqref{eq:U_simple}. For each \(X \in U_{(0,1)}\), we have the natural
identification
\[
  \Spec\bigl(-\underK(X)^2\bigr) \cap (0,1)
  \cong
  \branch_{\mathrm{nc}}(U(X))
\]
from~\eqref{eq:Bnc_ident}, while the remaining eigenvalues in
\(\Spec\bigl(-\underK(X)^2\bigr) \cap \{0,1\}\) correspond to global
eigenvalues. Set
\[
  \begin{aligned}
    \mathcal{S}(X) \coloneqq {}&
    \bigl\{\,P_{\undermu}(X)\ \bigm|\ \undermu \in
      \Spec\bigl(-\underK(X)^2\bigr) \cap (0,1)\,\bigr\} \\
    &\;\cup\;
    \bigl\{\,P_\nu^\sharp(X)\ \bigm|\ \nu \in
      \Spec\bigl(-\underK(X)^2\bigr) \cap \{0,1\},\ 
      m_\nu \ge 2\,\bigr\} \\
    &\;\cup\; \bigl\{\,P_{\n_3}(X)\,\bigr\}
    \;\subseteq\; \bbR[\lambda]
  \end{aligned}
\]
Then the cardinality of \(\mathcal{S}(X)\) does not depend on
\(X \in U_{(0,1)}\), and the polynomials in \(\mathcal{S}(X)\) depend
continuously on \(X\).

\begin{definition}\label{de:U_min}
Define \(U_{\bwmin} \subseteq U_{(0,1)}\) as the set of all \(X\) such
that all polynomials in \(\mathcal{S}(X)\) are the blockwise minimal
annihilating polynomials of \(\scrR(X)\) with respect to the linear
operator
\[
  \ad(\C(X))\colon \End{\n} \longrightarrow \End{\n}
\]
More explicitly, we require that \(X_\z \neq 0\) and \(X_\v \neq 0\), and
that the following conditions all hold:
\begin{itemize}
  \item For each
    \(\undermu \in \Spec\bigl(-\underK(X)^2\bigr)\cap(0,1)\), the minimal
    annihilating polynomial of
    \(\scrR(X)\big|_{(\H_\undermu)_X} \in \End{(\H_\undermu)_X}\)
    with respect to the linear operator
    \[
      \ad\bigl(\C(X)\big|_{(\H_\undermu)_X}\bigr)\colon
      \End{(\H_\undermu)_X} \longrightarrow \End{(\H_\undermu)_X}
    \]
    is \(\lambda\,P_{\undermu}(X)\)
    (see~\eqref{eq:def_P_mu_1} and~\eqref{eq:def_H_mu_X}).
  \item For each global eigenvalue \(\nu \in \{0,1\}\) of multiplicity
    \(m_{\nu} \ge 2\), the minimal annihilating polynomial of
    \(\scrR(X)\big|_{(\H_\nu)_X} \in \End{(\H_\nu)_X}\)
    with respect to the linear operator
    \[
      \ad\bigl(\C(X)\big|_{(\H_\nu)_X}\bigr)\colon
      \End{(\H_\nu)_X} \longrightarrow \End{(\H_\nu)_X}
    \]
    is \(\lambda\,P_\nu^\sharp(X)\)
    (see~\eqref{eq:def_P_0_sharp}, \eqref{eq:def_P_1_sharp},
    \eqref{eq:def_H_mu_X}, and~\eqref{eq:def_H_1_X}).
  \item The minimal annihilating polynomial of
    \(\scrR(X)\big|_{(\n_3)_X} \in \End{(\n_3)_X}\)
    with respect to the linear operator
    \[
      \ad\bigl(\C(X)\big|_{(\n_3)_X}\bigr)\colon
      \End{(\n_3)_X} \longrightarrow \End{(\n_3)_X}
    \]
    is \(P_{\n_3}(X)\) (see~\eqref{eq:def_P_n_3}
    and~\eqref{eq:def_n_3_X}).
\end{itemize}
\end{definition}

Here it is natural to assume that \(X_\z \neq 0\) and \(X_\v \neq 0\) for
every \(X \in U_{\bwmin}\), since otherwise \(\C(X)\) is not defined in
this article. Recall that this condition is automatically satisfied on
\(\n \setminus \ram(\overK^2)\) for \(n \ge 3\).

\begin{proposition}\label{p:minimal_polynomial_1}
The set \(U_{\bwmin}\) defined above is open and dense in \(U_{(0,1)}\).
\end{proposition}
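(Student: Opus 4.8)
The plan is to peel the assertion apart block by block, reduce it to the parametrized computations in Lemmas~\ref{le:maple_P_mu}--\ref{le:maple_P_n_3}, and then pull those back along the real-analytic maps that record the invariants $\undermu(X)$, $\norm{X_\z}$, $\norm{X_\v}$. First I would establish the following reduction. Fix $X\in U_{(0,1)}$, so $X_\z\neq0$ and $X_\v\neq0$ (automatic on $\n\setminus\ram(\underK^2)$ for $n\ge3$, and imposed by definition of $U_{\bwmin}$ otherwise). By the discussion around \eqref{eq:q_2}, \eqref{eq:q_1}, \eqref{eq:q_3}, each of the $\scrR(X)$- and $\C(X)$-invariant subspaces $(\H_\undermu)_X$ with $0<\undermu<1$, $(\H_0)_X$, $(\H_1)_X$, and $(\n_3)_X$ is an orthogonal direct sum of pairwise isomorphic atoms $\H^{Z_i}_X$, and on each atom the pair $(\scrR(X),\C(X))$ has, in the bases displayed in the text, precisely the matrices $(\scrR_\undermu(z,v),\C_\undermu(z,v))$ from \eqref{eq:R_mu_matrix}--\eqref{eq:C_mu_matrix} (respectively $(\scrR_0^\sharp,\C_0^\sharp)$, $(\scrR_1^\sharp,\C_1^\sharp)$, $(\scrR_{\n_3},\C_{\n_3})$), with $z\coloneqq\norm{X_\z}$, $v\coloneqq\norm{X_\v}$, $\undermu\coloneqq\undermu(X)$. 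Since $\ad(\C)$ acts block-diagonally on $\End{V}$ when $V$ is a direct sum of copies of a single $(\scrR,\C)$-module, the minimal annihilating polynomial of $\scrR(X)$ restricted to such a subspace equals that of a single atom. Hence $X\in U_{\bwmin}$ if and only if $X_\z\neq0$, $X_\v\neq0$, $(\undermu(X),z,v)\in\tilde U$ for every $\undermu\in\Spec(-\underK(X)^2)\cap(0,1)$, $(z,v)\in\tilde U_0$ when $m_0\ge2$, $(z,v)\in\tilde U_1$ when $1$ is a global eigenvalue, and $(z,v)\in\tilde U_{\n_3}$, where $\tilde U\subseteq\bbR^3$ and $\tilde U_0,\tilde U_1,\tilde U_{\n_3}\subseteq\bbR^2$ are the open dense loci of Lemmas~\ref{le:maple_P_mu}--\ref{le:maple_P_n_3} and $m_0$ is the multiplicity of the $0$-branch.

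Openness is then immediate and I would dispatch it quickly: on each connected component $U$ of $\n\setminus\ram(\underK^2)$ the nonconstant branches $\undermu_1,\dots,\undermu_\ell\in\branch_{\mathrm{nc}}(U)$ are globally defined and real-analytic, $z$ and $v$ are real-analytic where $X_\z,X_\v\neq0$, and $U_{\bwmin}\cap U$ is the intersection of $\{X_\z\neq0,X_\v\neq0\}$ with the preimages of the open sets $\tilde U,\tilde U_0,\tilde U_1,\tilde U_{\n_3}$ under finitely many continuous maps.

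For density I would work componentwise, since density in $U_{(0,1)}$ can be checked on each $U_{(0,1)}\cap U$, and $U_{\bwmin}\cap U\subseteq U_{(0,1)}\cap U\subseteq U$ with $U$ connected. The crucial structural fact is that $\bbR^3\setminus\tilde U$ is a \emph{proper real-algebraic} set: $\tilde U$ is exactly the locus where the iterates $\ad(\C_\undermu(z,v))^{j}\scrR_\undermu(z,v)$, $j=0,\dots,6$, are linearly independent in $\Mat_6(\bbR)$, i.e.\ where some $7\times7$ minor of the corresponding flattened matrix is nonzero; being nonempty, its complement is contained in the zero set of a single nonzero polynomial $Q$ (a sum of squares of those minors), and similarly $\bbR^2\setminus\tilde U_j\subseteq\{Q_j=0\}$ for $j\in\{0,1,\n_3\}$. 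Next, the map $\phi_i\colon U\to\bbR^3$, $X\mapsto(\undermu_i(X),\norm{X_\z},\norm{X_\v})$, is real-analytic, and $U$ is invariant under the rescalings $X_\z\mapsto aX_\z$, $X_\v\mapsto bX_\v$ ($a,b>0$), which fix $\undermu_i$; hence the image of $\phi_i$ is $\undermu_i(U)\times(0,\infty)^2$, which has nonempty interior because $\undermu_i$ is nonconstant on the connected open set $U$. Since the dense open set $\tilde U$ meets this interior, $Q\circ\phi_i$ is a nonzero real-analytic function on $U$, so its zero set --- which contains $\phi_i^{-1}(\bbR^3\setminus\tilde U)$ --- is closed and nowhere dense by the identity theorem. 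The same argument applied to $X\mapsto(\norm{X_\z},\norm{X_\v})$ (whose image is all of $(0,\infty)^2$) handles $\tilde U_0,\tilde U_1,\tilde U_{\n_3}$, while the defining conditions of $U_{(0,1)}\cap U$ inside $U$ also cut out complements of nowhere-dense analytic sets (for instance $\{\undermu_i=1\}\subseteq\{\undermu_i-1=0\}$ with $\undermu_i-1\not\equiv0$ on $U$), as do $\{X_\z=0\}$ and $\{X_\v=0\}$. Therefore $U_{\bwmin}\cap U$ is the complement in $U$ of a finite union of closed nowhere-dense subsets, hence dense (and open) in $U$, and thus dense in $U_{(0,1)}\cap U$; summing over the components of $\n\setminus\ram(\underK^2)$ gives density of $U_{\bwmin}$ in $U_{(0,1)}$.

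The work here is organizational rather than conceptual: the essential points are that the parametrized Maple identities transfer \emph{fiberwise} --- which rests entirely on the invariant block decompositions already established in Sections~\ref{se:C0}--\ref{se:splitting} and on the triviality that adding isomorphic summands does not change the minimal annihilating polynomial --- and that in the density step one must invoke \emph{real-analyticity} of the eigenvalue branches (not merely continuity), combined with the rescaling invariance that forces the image of the parameter map to be large enough to meet the dense open ``good'' loci $\tilde U,\tilde U_0,\tilde U_1,\tilde U_{\n_3}$. If any step is the bottleneck it is making the last clause precise uniformly across the low-dimensional exceptional cases recorded in Theorem~\ref{th:main_2}, but even there the argument is identical once one knows which of the factors $P_0^\sharp$, $P_1^\sharp$ occur.
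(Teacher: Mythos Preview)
Your proposal is correct and essentially complete. The reduction to block-wise conditions and the openness argument match the paper. In the density step, however, you take a genuinely different route from the paper.

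The paper proves that the map $\pi_\undermu\colon X\mapsto(\undermu(X),\norm{X_\z},\norm{X_\v})$ is a \emph{submersion}: it invokes Corollary~\ref{co:U_sharp} (ultimately the gradient identity \eqref{eq:grad_3}) to get $\nabla\undermu|_X\neq0$ on $U_{(0,1)}$, and observes that $\nabla\undermu$ is orthogonal to $\nabla\norm{\,\cdot\,}_\z$ and $\nabla\norm{\,\cdot\,}_\v$ because $\undermu$ is constant along the $2$-plane $\mathrm{span}\{X_\z,X_\v\}$. Then density of $\pi_\undermu^{-1}(\tilde U)$ follows simply from the fact that submersions are open maps. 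Your argument instead exploits that $\bbR^3\setminus\tilde U$ is the zero set of a nonzero polynomial $Q$, and that the rescaling-invariance of $U$ forces the image of $\phi_i$ to contain an open box $\undermu_i(U)\times(0,\infty)^2$; hence $Q\circ\phi_i$ is a nonzero real-analytic function on the connected set $U$, whose zero locus is nowhere dense. The trade-off is clear: the paper's submersion argument needs the specific differential-geometric input $\nabla\undermu\neq0$ but is indifferent to the structure of $\tilde U$, whereas yours needs only that $\undermu_i$ is nonconstant (tautological for $\undermu_i\in\branch_{\mathrm{nc}}$) but relies on the algebraicity of the bad locus. Both are valid and comparably short.
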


\begin{proof}
Since finite intersections of open dense subsets are again open and
dense, it suffices to treat each condition separately and show that it
defines an open dense subset of \(U_{(0,1)}\) with the claimed property.
Then their intersection \(U_{\bwmin}\) is again open and dense.

Let us start with the last item. In order to take advantage of
Lemma~\ref{le:maple_P_n_3}, consider
\[
  \pi \colon \n \longrightarrow \bbR^2,\qquad
  Y \longmapsto \bigl(\norm{Y_\z},\,\norm{Y_\v}\bigr)
\]
On the open dense subset
\[
  W \coloneqq \bigl\{\,Y \in \n \bigm| \norm{Y_\z}\neq 0,\ \norm{Y_\v}\neq 0\,\bigr\}
\]
of \(\n\), this map is differentiable and has full rank \(2\); hence it
is a submersion there. By Lemma~\ref{le:maple_P_n_3} there exists an
open dense subset \(\tilde U_{\n_3}\subseteq \bbR^2\) such that
\(P_{\n_3}(z,v)\) is the minimal annihilating polynomial of
\(\scrR_{\n_3}(z,v)\) with respect to the linear operator
\[
  \ad\bigl(\C_{\n_3}(z,v)\bigr)\colon
  \Mat_3(\bbR) \longrightarrow \Mat_3(\bbR)
\]
if and only if \((z,v) \in \tilde U_{\n_3}\). Moreover, by the
correspondence between the polynomials~\eqref{eq:def_P_n_3}
and~\eqref{eq:P_n_3_param}, and using the matrix representations
\eqref{eq:R_n_3_matrix}--\eqref{eq:C_n_3_matrix} of \(\scrR(X)\) and
\(\C(X)\) restricted to \((\n_3)_X\), the minimal annihilating
polynomial of \(\scrR(X)\big|_{(\n_3)_X} \in \End{(\n_3)_X}\) with respect
to the linear operator
\[
  \ad\bigl(\C(X)\big|_{(\n_3)_X}\bigr)\colon
  \End{(\n_3)_X} \longrightarrow \End{(\n_3)_X}
\]
is \(P_{\n_3}(X)\) if and only if \(X \in \pi^{-1}(\tilde U_{\n_3})\).
Thus \(U_{(0,1)} \cap \pi^{-1}(\tilde U_{\n_3})\) is the subset of
\(U_{(0,1)}\) satisfying the last condition. Furthermore, since \(\pi(W)\)
is open and \(\tilde U_{\n_3}\) is dense in \(\bbR^2\), the intersection
\(\tilde U_{\n_3}\cap \pi(W)\) is open and dense in \(\pi(W)\); hence
\(\pi^{-1}(\tilde U_{\n_3})\) is open and dense in \(W\). Therefore
\(U_{(0,1)} \cap \pi^{-1}(\tilde U_{\n_3})\) is open and dense in
\(U_{(0,1)}\).

Next suppose there is a global eigenvalue \(\nu \in \{0,1\}\) with
\(m_\nu \ge 2\). According to Lemmas~\ref{le:maple_P_0_sharp}
and~\ref{le:maple_P_1_sharp}, there exists an open dense subset
\(\tilde U_\nu \subseteq \bbR^2\) such that
\(\tilde P_\nu^\sharp(z,v) \coloneqq \lambda\,P_\nu^\sharp(z,v)\)
(see~\eqref{eq:tilde_P_0_sharp} and~\eqref{eq:tilde_P_1_sharp}) is the
minimal annihilating polynomial of \(\scrR_\nu^\sharp(z,v)\) with respect
to the linear operator
\[
  \ad\bigl(\C_\nu^\sharp(z,v)\bigr)\colon
  \Mat_{i_\nu}(\bbR) \longrightarrow \Mat_{i_\nu}(\bbR)
\]
if and only if \((z,v) \in \tilde U_\nu\), where \(i_\nu \coloneqq 3\)
(for \(\nu = 0\)) or \(i_\nu \coloneqq 4\) (for \(\nu = 1\)). Again,
\(\pi^{-1}(\tilde U_{\nu}) \subseteq W\) is open and dense. By the
correspondence between the polynomials~\eqref{eq:def_P_0_sharp},
\eqref{eq:def_P_1_sharp} and~\eqref{eq:P_0_sharp_param},
\eqref{eq:P_1_sharp_param}, and using the matrix representations
\eqref{eq:R_0_sharp_matrix}--\eqref{eq:C_0_sharp_matrix} and
\eqref{eq:R_1_sharp_matrix}--\eqref{eq:C_1_sharp_matrix}, respectively,
of \(\scrR(X)\) and \(\C(X)\) restricted to \((\H_\nu)_X\), the minimal
annihilating polynomial of \(\scrR(X)\big|_{(\H_\nu)_X} \in
\End{(\H_\nu)_X}\) with respect to the linear operator
\[
  \ad\bigl(\C(X)\big|_{(\H_\nu)_X}\bigr)\colon
  \End{(\H_\nu)_X} \longrightarrow \End{(\H_\nu)_X}
\]
is \(\lambda\,P_\nu^\sharp(X)\) for each \(X \in \pi^{-1}(\tilde
U_{\nu})\). Thus \(U_{(0,1)} \cap \pi^{-1}(\tilde U_{\nu})\) is the open
and dense subset of \(U_{(0,1)}\) satisfying the second condition.

Furthermore, openness and density are local properties, so for the first
item it suffices to consider the connected component
\(U(X)\subseteq \n \setminus \ram(\overK^2)\) of some \(X \in U_{(0,1)}\)
(cf.~Definition~\ref{de:U_simple} and Corollary~\ref{co:U_sharp}). Then
there is a natural identification
\[
  \Spec\bigl(-\underK(X)^2\bigr) \cap (0,1)
  \cong
  \branch_{\mathrm{nc}}(U(X))
\]
The family of linear operators \(\underK\) is constant along the
\(2\)-plane \(\mathrm{span}_\bbR\{X_\z,X_\v\} \subseteq \n\) by
\eqref{eq:def_underK}, and so is each
\(\undermu \in \branch_{\mathrm{nc}}(U(X))\). Moreover,
\(\left.\nabla\undermu\right|_X\neq 0\) because \(0<\undermu(X)<1\)
(cf.~Corollary~\ref{co:U_sharp}). Thus the gradients of \(\undermu\),
\(\norm{\,\cdot\,}_\z\), and \(\norm{\,\cdot\,}_\v\) are linearly
independent at \(X\).

After shrinking to a neighborhood \(W_\undermu\) of \(X\) that is
contained in \(U(X) \cap U_{(0,1)}\), the map
\[
  \pi_\undermu \colon W_\undermu \longrightarrow \bbR^3,\qquad
  Y \longmapsto \bigl(\undermu(Y),\norm{Y_\z},\norm{Y_\v}\bigr)
\]
has full rank \(3\) everywhere on \(W_\undermu\). Hence \(\pi_\undermu\)
is a submersion and \(\pi_\undermu(W_\undermu)\) is open in \(\bbR^3\);
in particular, \(\pi_\undermu\) is an open map.

By Lemma~\ref{le:maple_P_mu} there exists an open dense subset
\(\tilde U \subseteq \bbR^3\) such that
\(\tilde P_\undermu(z,v) \coloneqq \lambda\,P_\undermu(z,v)\)
(see~\eqref{eq:def_tilde_P_mu}) is the minimal annihilating polynomial
of \(\scrR_\undermu(z,v)\) with respect to the linear operator
\[
  \ad\bigl(\C_\undermu(z,v)\bigr)\colon
  \Mat_6(\bbR) \longrightarrow \Mat_6(\bbR)
\]
for all \((\undermu,z,v) \in \tilde U\). Therefore
\(\pi_\undermu^{-1}(\tilde U)\) is open and dense in \(W_\undermu\),
since \(\pi_\undermu\) is a submersion and \(\tilde U\) is dense in
\(\bbR^3\).

By the correspondence between the polynomials~\eqref{eq:def_P_mu_1}
and~\eqref{eq:P_mu_param}, and using the matrix representations
\eqref{eq:R_mu_matrix}--\eqref{eq:C_mu_matrix} of \(\scrR(X)\) and
\(\C(X)\) restricted to \((\H_\undermu)_X\), the minimal annihilating
polynomial of \(\scrR(X)\big|_{(\H_\undermu)_X} \in \End{(\H_\undermu)_X}\)
with respect to the linear operator
\[
  \ad\bigl(\C(X)\big|_{(\H_\undermu)_X}\bigr)\colon
  \End{(\H_\undermu)_X} \longrightarrow \End{(\H_\undermu)_X}
\]
is \(\lambda\,P_\undermu(X)\) for each \(X \in \pi_\undermu^{-1}(\tilde
U)\).

These considerations together imply that \(U_{\bwmin}\) is open and
dense in \(U_{(0,1)}\).
\end{proof}

\begin{definition}\label{de:U_pwcp}
Define
\begin{equation}\label{eq:U_pwcp}
  U_\pwcp
  \coloneqq
  \bigl\{\, X \in U_{(0,1)}
    \bigm| \mathcal{S}(X) \text{ is pairwise coprime in } \bbR[\lambda]
  \,\bigr\}
\end{equation}
\end{definition}

\begin{proposition}\label{p:minimal_polynomial_2}
The set \(U_\pwcp\) is open and dense in \(U_{(0,1)}\).
\end{proposition}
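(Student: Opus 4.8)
The plan is to establish openness by a resultant argument and density by exhibiting the locus where pairwise coprimality fails as a finite union of zero sets of real-analytic functions, none identically zero; both properties are local, so throughout I work on an arbitrary connected component $U\subseteq\n\setminus\ram(\underK^2)$, on which the distinct nonconstant branches $\undermu_1,\dots,\undermu_\ell$ are real-analytic and, by the remarks following Theorem~\ref{th:main_1} together with Corollary~\ref{co:U_sharp}, the members of $\mathcal{S}(X)$ form a family of monic polynomials of fixed degrees for $X\in U_{(0,1)}\cap U$. For openness, note that by the formulas \eqref{eq:def_P_mu_1}--\eqref{eq:def_P_n_3} the coefficients of the polynomials in $\mathcal{S}(X)$ are polynomial expressions in $\norm{X_\z}^2$, $\norm{X_\v}^2$ and the $\undermu_i(X)$, hence real-analytic in $X$ on $U_{(0,1)}$; since pairwise coprimality of a finite family of monic polynomials of fixed degrees is equivalent to the non-vanishing of all their pairwise resultants, and resultants are polynomials in the coefficients, the set $U_\pwcp$ from \eqref{eq:U_pwcp} is open in $U_{(0,1)}$.

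For density I would first observe that most pairs in $\mathcal{S}(X)$ are coprime at \emph{every} point of $U_{(0,1)}$. By the computations in the proof of Lemma~\ref{le:gcd}: $P_{\undermu_i}(X)$ and $P_{\undermu_j}(X)$ with $i\neq j$ differ only in their constant terms, by the scalar $\tfrac{243}{64}\bigl(\undermu_j(X)-\undermu_i(X)\bigr)\norm{X_\z}^2\norm{X_\v}^4$, which is nonzero on $U_{(0,1)}$ since distinct branches take distinct values there by \eqref{eq:Bnc_ident}; similarly $P_{\undermu_i}(X)$ is coprime to $P_\nu^\sharp(X)$ for $\nu\in\{0,1\}$, because $P_\nu^\sharp\mid P_\nu$ (with $P_\nu$ the specialization of $P_\undermu$ at $\undermu=\nu$) and $P_{\undermu_i}(X)-P_\nu(X)$ is a nonzero constant, using $0<\undermu_i(X)<1$ on $U_{(0,1)}$; and $P_0^\sharp(X)$, $P_1^\sharp(X)$ are coprime whenever $\norm{X_\z},\norm{X_\v}\neq0$, which is automatic on $\n\setminus\ram(\underK^2)$ for $n\geq3$ (and for $n\leq2$ at most one of them lies in $\mathcal{S}(X)$). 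Hence coprimality can fail on $U_{(0,1)}\cap U$ only through a pair involving $P_{\n_3}(X)$, and again by the proof of Lemma~\ref{le:gcd} this forces $X$ into the zero set of one of the real-analytic functions $5\norm{X_\z}^2-3\norm{X_\v}^2$, $\bigl(5\norm{X_\z}^2-3\norm{X_\v}^2\bigr)^2-\tfrac{243}{32}\norm{X_\v}^4$, or $g_i(X):=\bigl(5\norm{X_\z}^2-3\norm{X_\v}^2\bigr)^2-\tfrac{243}{32}\undermu_i(X)\norm{X_\v}^4$ for $i=1,\dots,\ell$ (the companion condition $\undermu_i(X)=\tfrac{4(9\norm{X_\z}^2+\norm{X_\v}^2)^3}{243\,\norm{X_\v}^4\norm{X_\z}^2}$ coming from a common root at $\lambda=0$ cannot occur, as its right-hand side is $\geq1$ while $\undermu_i(X)<1$; for $n\leq2$ one adjoins only the proper algebraic set $\{X_\z=0\}$).

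The first two of these functions are nonzero polynomials on $\n$, so their zero sets are closed and nowhere dense. For $g_i$ I would invoke the identity theorem on the connected set $U$: by \eqref{eq:def_underK}, $\undermu_i$ depends only on the directions $X_\z/\norm{X_\z}$ and $X_\v/\norm{X_\v}$, so $a\mapsto\undermu_i(aX_\z\oplus X_\v)$ is constant for $a>0$, whereas $a\mapsto\bigl(5a^2\norm{X_\z}^2-3\norm{X_\v}^2\bigr)^2$ is not; since $U$ is invariant under these rescalings, $g_i\not\equiv0$ on $U$, and hence $g_i^{-1}(0)$ is closed and nowhere dense in $U$. A finite union of closed nowhere dense sets is nowhere dense, so $U_\pwcp\cap U$ is dense in $U_{(0,1)}\cap U$; as such components cover $U_{(0,1)}$, $U_\pwcp$ is dense in $U_{(0,1)}$, and together with openness this proves the proposition.

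I expect the main obstacle to be precisely that one cannot simply transcribe the submersion argument from the proof of Proposition~\ref{p:minimal_polynomial_1}: there a single branch $\undermu$ together with $\norm{\,\cdot\,}_\z$ and $\norm{\,\cdot\,}_\v$ yielded a submersion onto $\bbR^3$ along which an open dense set could be pulled back, whereas here all $\ell$ branches enter Lemma~\ref{le:gcd} simultaneously and the gradients $\nabla\undermu_1,\dots,\nabla\undermu_\ell$ need not be linearly independent, so there is no map onto $\bbR^{\ell+2}$ to pull back along. The workaround reduces the failure locus to the finitely many hypersurfaces above and checks each is proper by the one-parameter rescaling argument; the delicate point is the bookkeeping showing that only pairs containing $P_{\n_3}$ can contribute, and only on those specific hypersurfaces.
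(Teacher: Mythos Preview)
Your proof is correct but takes a different route from the paper's. The paper does not enumerate the failure hypersurfaces at all: instead it fixes \(X \in U_{(0,1)}\) and restricts attention to the \(2\)-plane \(\mathrm{span}_\bbR\{X_\z, X_\v\}\), on which \emph{all} branches \(\undermu_1,\dots,\undermu_\ell\) are constant (since \(\underK\) depends only on the directions of \(X_\z\) and \(X_\v\)); on that slice the map \(\Phi(Y)=(\norm{Y_\z},\norm{Y_\v})\) is a fourfold cover onto \(\bbR_{>0}^2\), and Lemma~\ref{le:gcd} can be applied as a black box with the \emph{fixed} numbers \(\undermu_1(X),\dots,\undermu_\ell(X)\) to produce an open dense subset of \(\bbR^2\) whose \(\Phi\)-preimage lies in \(U_\pwcp\). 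Thus \(X\) is in the closure of \(U_\pwcp\), and density follows. This sidesteps the obstacle you flagged---that there is no submersion onto \(\bbR^{\ell+2}\)---not by case analysis but by freezing all \(\ell\) branches simultaneously on the \(2\)-plane.

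Your workaround is equally valid and in fact uses the same underlying observation (scale invariance of \(\undermu_i\)) to show \(g_i\not\equiv 0\); the paper's version just packages it more cleanly by invoking Lemma~\ref{le:gcd} once rather than re-deriving each hypersurface. One small wrinkle in your bookkeeping: when you bound the failure locus of the pair \((P_{\n_3},P_1^\sharp)\) via \(P_1^\sharp\mid P_1\), the \(\lambda=0\) root of \(P_1\) contributes the condition \(\norm{X_\v}^2=18\norm{X_\z}^2\), which is not on your list; this does no harm (it is just another proper algebraic set, and in fact \(P_1^\sharp\) and \(P_{\n_3}\) are already coprime there), but be aware that the ``proof of Lemma~\ref{le:gcd}'' route you cite would literally produce it.
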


\begin{proof}
Pairwise coprimeness is equivalent to the nonvanishing of finitely many
resultants, which are polynomial in the coefficients. Hence the
coprimeness locus is open, and therefore \(U_\pwcp\) is open in
\(U_{(0,1)}\).

To see that \(U_\pwcp\) is dense in \(U_{(0,1)}\), fix
\(X \in U_{(0,1)}\). In order to include the case \(\K \equiv 0\) for
\(n = 2\), assume moreover explicitly that \(X_\z \neq 0\) and \(X_\v \neq 0\). Then
\[
  \Phi\colon \mathrm{span}_\bbR\{X_\z, X_\v\}\setminus
  \bigl( (\bbR X_\z \oplus \{0\}) \cup (\{0\}\oplus \bbR X_\v) \bigr)
  \longrightarrow \bbR_{>0}^2,\qquad
  \Phi(Y) = \bigl(\norm{Y_\z},\norm{Y_\v}\bigr)
\]
is a fourfold covering map. Also let
\[
  \{\undermu_1,\dotsc,\undermu_\ell\}
  \coloneqq \Spec\bigl(-\underK(X)^2\bigr) \cap (0,1)
\]
Then \(\ell = \bigl|\branch_{\mathrm{nc}}(U(X))\bigr|\), the number of
nonconstant branches on \(U(X)\), and by~\eqref{eq:Bnc_ident} each
\(\undermu_i\) extends uniquely to a nonconstant branch. Since
\(\underK(X)\) depends only on the two-plane
\(\mathrm{span}_\bbR\{X_\z, X_\v\} \subseteq \n\), these nonconstant
branches are nevertheless constant along this two-plane.

By Lemma~\ref{le:gcd} there is an open dense subset
\(U(\undermu_1,\dotsc,\undermu_\ell) \subseteq \bbR^2\) on which the
specialized polynomials~\eqref{eq:P_mu_param},
\eqref{eq:P_0_sharp_param}, \eqref{eq:P_1_sharp_param}, and
\eqref{eq:P_n_3_param} are pairwise coprime. Then
\(\Phi^{-1}\bigl(U(\undermu_1,\dotsc,\undermu_\ell)\bigr) \subseteq
U_\pwcp\), by correspondence with~\eqref{eq:def_P_mu_1},
\eqref{eq:def_P_0_sharp}, \eqref{eq:def_P_1_sharp}, and
\eqref{eq:def_P_n_3}. Since \(\Phi\) is a covering map,
\(\Phi^{-1}\bigl(U(\undermu_1,\dotsc,\undermu_\ell)\bigr)\) is dense in
\(\mathrm{span}_\bbR\{X_\z, X_\v\}\). Therefore every
\(X \in U_{(0,1)}\) with \(X_\z \neq 0\) and \(X_\v \neq 0\) lies in the
closure of \(U_\pwcp\). The claim follows.
\end{proof}

\begin{corollary}\label{co:U_min_cap_U_pwcp}
Let \(X \in U_\pwcp \cap U_\bwmin\). The minimal annihilating polynomial
\(P_{\min}(\C,X)\) of \(\scrR(X) \in \End{\n}\) for the operator
\[
  \ad(\C(X))\colon \End{\n}\longrightarrow \End{\n}
\]
is
\begin{equation}\label{eq:local_minimal_polynomial_2}
  P(X) \coloneqq
  P_{\n_3}(X)
  \prod_{\undermu \in \Spec\bigl(-\underK(X)^2\bigr)\cap (0,1)}
    \!\!\!\!\!\!P_\undermu(X)
  \prod_{\substack{\nu \in \Spec\bigl(-\underK(X)^2\bigr)\cap\{0,1\}\\
    m_\nu \ge 2}}\!\!\!\!\!\!
    P_\nu^\sharp(X)
\end{equation}
where the factors \(P_\undermu(X)\), \(P_\nu^\sharp(X)\) and  \(P_{\n_3}(X)\) are defined
by~\eqref{eq:def_P_mu_1}--\eqref{eq:def_P_n_3}.
\end{corollary}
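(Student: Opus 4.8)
The plan is to deduce the corollary from the blockwise data assembled in Section~\ref{se:splitting}, using the elementary principle that the minimal annihilating polynomial of an operator compatible with a direct-sum decomposition is the least common multiple of the blockwise minimal annihilating polynomials. So first I would record that principle precisely: if $V=\bigoplus_i V_i$ with every $V_i$ invariant under both $A,B\in\End{V}$, then $\ad(B)^j A$ preserves each $V_i$ for all $j$, hence a polynomial $P$ satisfies $P(\ad B)A=0$ iff $P(\ad B|_{V_i})(A|_{V_i})=0$ for every $i$, iff every blockwise minimal annihilating polynomial divides $P$; thus $P_{\min}(B,A)=\operatorname{lcm}_i P_{\min}(B|_{V_i},A|_{V_i})$.

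Next I would set up the relevant decomposition. Since $X\in U_\bwmin\subseteq U_{(0,1)}$ forces $X_\z\neq 0$ and $X_\v\neq 0$ (cf.\ Definition~\ref{de:U_min}), Lemmas~\ref{le:decomposition_of_n} and~\ref{le:decomposition_of_h} together with~\eqref{eq:hat_h-split} give the orthogonal decomposition
\[
  \n=(\n_3)_X\ \oplus\!\!\!\bigoplus_{\undermu\in\Spec(-\underK(X)^2)}\!\!\!(\H_\undermu)_X\ \oplus\ (\Z_X)^\perp,
\]
and the formulas~\eqref{eq:R_X_1}--\eqref{eq:R_X_7} and~\eqref{eq:R_X_4} (for $\scrR(X)$) together with~\eqref{eq:C_X_1}--\eqref{eq:C_X_7} and~\eqref{eq:C_X_4} (for $\C(X)$) show that each summand is invariant under both operators. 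Applying the principle above, $P_{\min}(\C,X)$ is the least common multiple, over all these blocks, of the blockwise minimal annihilating polynomials of $\scrR(X)$ with respect to $\ad(\C(X))$.

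I would then plug in the known block values. By Lemma~\ref{le:unimportant} the block $(\Z_X)^\perp$ contributes the factor $\lambda$. Since $X\in U_\bwmin$, Definition~\ref{de:U_min} supplies the remaining contributions: $P_{\n_3}(X)$ from $(\n_3)_X$; $\lambda\,P_\undermu(X)$ from each block $(\H_\undermu)_X$ with $\undermu\in\Spec(-\underK(X)^2)\cap(0,1)$ — for $X\in U_{(0,1)}$ these are exactly the nonconstant branches by~\eqref{eq:Bnc_ident}, while any eigenvalue in $\{0,1\}$ occurring is a global eigenvalue; and $\lambda\,P_\nu^\sharp(X)$ from each $(\H_\nu)_X$ with $\nu\in\{0,1\}$ of multiplicity $m_\nu(X)\ge 2$ (note that the $0$-eigenspace inside $\h_X$ has dimension $m_0-1$, so $(\H_0)_X$ is nontrivial precisely when $m_0=2$). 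Thus the collection of blockwise minimal annihilating polynomials is $\{P_{\n_3}(X)\}\cup\{\lambda\,P_\undermu(X)\}\cup\{\lambda\,P_\nu^\sharp(X)\}\cup\{\lambda\}$.

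Finally I would compute the least common multiple. Because $P_{\n_3}(X)=\lambda(\lambda^2+\norm{X}^2)$ already carries the factor $\lambda$, the stray $\lambda$'s coming from $(\Z_X)^\perp$ and from the $\H$-blocks add nothing. Since $X\in U_\pwcp$, Lemma~\ref{le:gcd} makes the members of $\mathcal S(X)=\{P_\undermu(X)\}\cup\{P_\nu^\sharp(X):m_\nu(X)\ge 2\}\cup\{P_{\n_3}(X)\}$ pairwise coprime; in particular $\lambda$ and $\lambda^2+\norm{X}^2$, being divisors of $P_{\n_3}(X)$, are coprime to every $P_\undermu(X)$ and every $P_\nu^\sharp(X)$, and the latter are pairwise coprime among themselves. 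Hence the least common multiple equals
\[
  \lambda\,(\lambda^2+\norm{X}^2)\,\prod_{\undermu\in\Spec(-\underK(X)^2)\cap(0,1)}\!\!\!\! P_\undermu(X)\ \prod_{\substack{\nu\in\Spec(-\underK(X)^2)\cap\{0,1\}\\ m_\nu(X)\ge 2}}\!\!\!\! P_\nu^\sharp(X)
  \;=\;P(X),
\]
which is~\eqref{eq:local_minimal_polynomial_2}. There is no genuinely hard step once Lemmas~\ref{le:maple_P_mu}--\ref{le:gcd} are in hand; the only point demanding care is precisely this bookkeeping of the spurious $\lambda$ factors against the coprimality input provided by $U_\pwcp$, which works because $\lambda\mid P_{\n_3}(X)$ absorbs all of them.
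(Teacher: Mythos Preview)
Your proof is correct and follows essentially the same approach as the paper: both invoke the block decomposition of $\n$ from Lemmas~\ref{le:decomposition_of_n} and~\ref{le:decomposition_of_h}, observe that $P_{\min}(\C,X)$ equals the least common multiple of the blockwise minimal annihilating polynomials, read off those blockwise polynomials from the definition of $U_\bwmin$ (together with Lemma~\ref{le:unimportant} for $(\Z_X)^\perp$), and then use the pairwise coprimality from $U_\pwcp$ to turn the lcm into the product~\eqref{eq:local_minimal_polynomial_2}. Your bookkeeping of the stray $\lambda$ factors and of the $m_0=1$ versus $m_0=2$ cases is a bit more explicit than the paper's, but the substance is identical.
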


\begin{proof}
Consider the splittings~\eqref{eq:decomposition_of_n_2} and
\eqref{eq:hat_h-split} defined for every \(X \in \n\) such that
\(\norm{X_\z}\neq 0\) and \(\norm{X_\v}\neq 0\). Each block is invariant
under \(\scrR(X)\) and \(\C(X)\). By restricting \(\scrR(X)\) and
\(\C(X)\) to each block, we obtain, for each block, a minimal
annihilating polynomial of \(\scrR(X)\) with respect to the linear
operator \(\ad(\C(X))\). Then, on the one hand, \(P_{\min}(\C,X)\)
divides the least common multiple of these blockwise minimal annihilating
polynomials. On the other hand, each blockwise minimal annihilating
polynomial divides \(P_{\min}(\C,X)\). Thus \(P_{\min}(\C,X)\) coincides
with this least common multiple for every \(X \in \n\) such that
\(\norm{X_\z}\neq 0\) and \(\norm{X_\v}\neq 0\).

For \(X \in U_{\bwmin}\), the blockwise minimal annihilating polynomials
of \(\scrR(X)\) with respect to the linear operator \(\ad(\C(X))\) are
precisely the polynomials defined in
\eqref{eq:def_P_mu_1}--\eqref{eq:def_P_n_3}, that is (up to the
negligible factor \(\lambda\)) the factors
\(\{P_{\n_3}(X),\,P_\undermu(X),\,P_\nu^\sharp(X)\}\) in
\eqref{eq:local_minimal_polynomial_2}, with \(\undermu\) and \(\nu\) as
indicated in the subscripts.

If moreover \(X \in U_\pwcp\), then these factors are pairwise coprime,
so the least common multiple is the product itself. This shows that
\eqref{eq:local_minimal_polynomial_2} holds.
\end{proof}

\paragraph{Proof of Theorem~\ref{th:main_1}.}
By Corollary~\ref{co:U_min_cap_U_pwcp} and~\eqref{eq:Bnc_ident}, the
formula~\eqref{eq:P_min_2} holds for all
\(X \in U_\bwmin\cap U_\pwcp\cap U(e_N)\). This set is open and dense in
\(\n \setminus \ram(\overK^2)\), so~\eqref{eq:P_min_2} extends to all of
\(\n \setminus \ram(\overK^2)\) by continuity.\qed

\begin{remark}
From the proofs, one directly reads off that
\(U(e_N)=U_\bwmin\cap U_\pwcp\) for \(n \ge 3\). For \(n = 2\) the same
assertion still holds. For \(n = 1\) we simply have
\[
  U(e_N)=\bigl\{\,X\in\n \bigm| X_\v\neq 0\,\bigr\}
\]
\end{remark}

\appendix
\section{\texorpdfstring{Explicit verification that~\eqref{eq:over_mu_for_n_=_4},
\eqref{eq:over_mu_for_n_=_5}, \eqref{eq:over_mu_for_n_=_7_1},
\eqref{eq:over_mu_for_n_=_7_2}, \eqref{eq:over_mu_for_n_=_7_3},
\eqref{eq:over_mu_for_n_=_8}, \eqref{eq:over_sigma_1_for_n_=_9}, and
\eqref{eq:over_sigma_2_for_n_=_9} describe Killing tensors}%
{Explicit verification: the \(n=4,5,7,8,9\) cases describe Killing tensors}}
\label{se:verification}

To ensure consistency with the theory, we explicitly verify (without
invoking Proposition~\ref{p:eigenvalues_of_K_and_tilde_K}) that
\eqref{eq:over_mu_for_n_=_4}, \eqref{eq:over_mu_for_n_=_5},
\eqref{eq:over_mu_for_n_=_7_1}, \eqref{eq:over_mu_for_n_=_7_2},
\eqref{eq:over_mu_for_n_=_7_3}, \eqref{eq:over_mu_for_n_=_8},
\eqref{eq:over_sigma_1_for_n_=_9}, and \eqref{eq:over_sigma_2_for_n_=_9}
indeed define Killing tensors.

Recall that parallel transport with respect to the left-invariant
framing along a geodesic \(\gamma\colon \bbR \to N\) with
\(\gamma(0) = e_N\) and \(\dot{\gamma}(0) = X\) is given by
\eqref{eq:geodesic}.

\subsection{\texorpdfstring{For~\eqref{eq:over_mu_for_n_=_4}
and~\eqref{eq:over_mu_for_n_=_5}}{For \eqref{eq:over_mu_for_n_=_4} and
\eqref{eq:over_mu_for_n_=_5}}}
\label{se:verification_for_n_=_4_and_n_=_5}
For \(n = 5\), let \(\z \coloneqq \bbR^5 \subseteq \bbR^7 = \Im(\bbO)\)
and \(\v \coloneqq \bbO\), and consider the geodesic \(\gamma\) through
\(e_N\) in the direction \(\dot \gamma(0) = S_0 \oplus S_1 \in
\bbR^5 \oplus \bbO\). By~\eqref{eq:geodesic},
\begin{equation}\label{eq:S_1(t)_0}
  S_1(t) = \cos(t)S_1 + \sin(t)S_1 S_0
\end{equation}
while \(S_0(t) \equiv S_0\) is constant.

We claim that \(\langle (S_1(t) S_0) E_6,\, S_1(t) E_7 \rangle\) is
constant. This follows by a direct calculation: after reparameterizing
we may assume \(\norm{S_0} = 1\). Then
\begin{align*}
  \langle (S_1(t) S_0) E_6,\, S_1(t) E_7 \rangle
   &= \cos^2\!t\,\langle (S_1 S_0) E_6,\, S_1 E_7 \rangle \\
   &\quad + \cos t\,\sin t\Big(
        \underbrace{\langle (S_1 S_0) E_6,\,(S_1 S_0) E_7 \rangle}_{=\,0}
        + \underbrace{\langle ((S_1 S_0) S_0) E_6,\, S_1 E_7 \rangle}
          _{=\,-\langle S_1 E_6,\, S_1 E_7 \rangle = 0}
      \Big) \\
   &\quad + \sin^2\!t\,\langle ((S_1 S_0) S_0) E_6,\,(S_1 S_0) E_7 \rangle \\
   &= \Bigl(\cos^2\!t\,\langle (S_1 S_0) E_6,\, S_1 E_7 \rangle
       - \sin^2\!t\,\underbrace{\langle S_1 E_6,\,(S_1 S_0) E_7 \rangle}
         _{=\,-\langle S_1 E_7,\,(S_1 S_0) E_6 \rangle}\Bigr) \\
   &= (\sin^2\!t + \cos^2\!t)\,\langle (S_1 S_0) E_6,\, S_1 E_7 \rangle \\
   &= \langle (S_1 S_0) E_6,\, S_1 E_7 \rangle
\end{align*}
and hence
\[
  \langle (S_1(t) S_0) E_6,\, S_1(t) E_7 \rangle^2
  = \langle (S_1(0) S_0) E_6,\, S_1(0) E_7 \rangle^2
\]
for all \(t\). Thus \(\overmu\) as defined in~\eqref{eq:over_mu_for_n_=_5}
is a Killing tensor.

The fact that~\eqref{eq:over_mu_for_n_=_4} defines a Killing tensor for
\(n = 4\) follows by the same argument.

\subsection{\texorpdfstring{For~\eqref{eq:over_mu_for_n_=_7_1},
\eqref{eq:over_mu_for_n_=_7_2}, and~\eqref{eq:over_mu_for_n_=_7_3}}%
{For \eqref{eq:over_mu_for_n_=_7_1}, \eqref{eq:over_mu_for_n_=_7_2},
and \eqref{eq:over_mu_for_n_=_7_3}}}
\label{se:verification_for_n_=_7}

For \(n = 7\), consider first \(\z \coloneqq \Im(\bbO)\) and
\(\v \coloneqq \v_1 \oplus \v_2 \coloneqq \bbO \oplus \bbO\) (with
Clifford multiplication given by~\eqref{eq:Clifford_multiplication_for_n_=_7_1}
on both summands), or \(\v \coloneqq \v_1 \oplus \v_2 \coloneqq
\overline{\bbO} \oplus \overline{\bbO}\) (with the sign of the Clifford
multiplication reversed on both summands as
in~\eqref{eq:Clifford_multiplication_for_n_=_7_2}). We claim that both
summands \(\norm{S_0}^2(\norm{S_1}^2+\norm{S_2}^2)\) and
\[
  2\,\bigl\langle S_2^*(S_1 S_0), (S_0 S_2^*) S_1 \bigr\rangle
\]
from~\eqref{eq:over_mu_for_n_=_7_1} are Killing tensors.

For the first summand this follows as for \(n = 3\); cf.
Section~\ref{se:explicit_formulas}. For the second summand, let
\(S_0 \oplus S_1 \oplus S_2 \in \Im(\bbO) \oplus \bbO^2\) and consider
the geodesic \(\gamma\) through \(e_N\) with initial velocity
\(S_0 \oplus S_1 \oplus S_2\). After reparameterizing, assume
\(\norm{S_0} = 1\).

By~\eqref{eq:geodesic}, we have
\begin{align}
  S_1(t) &= \cos t\,S_1 \pm \sin t\,S_1 S_0 \label{eq:S_1(t)_1}\\
  S_2(t) &= \cos t\,S_2 \pm \sin t\,S_2 S_0 \label{eq:S_2(t)_1}
\end{align}
We must show that
\[
  \bigl\langle S_1(t)^*(S_2(t)S_0),\,(S_0S_1(t)^*)S_2(t)\bigr\rangle
\]
is constant in \(t\). We have
\begin{align}
  S_0S_1(t)^* &= \bigl(-\cos t\,S_1S_0 \pm \sin t\,S_1\bigr)^*
  \label{eq:S_1(t)_2}\\
  S_2(t)S_0 &= \cos t\,S_2S_0 \mp \sin t\,S_2
  \label{eq:S_2(t)_2}
\end{align}
where \eqref{eq:S_2(t)_2} uses alternativity:
\((S_2S_0)S_0 = S_2(S_0^2) = -S_2\). A direct calculation yields
\begin{align*}
  &\bigl\langle S_1(t)^*(S_2(t)S_0),\,(S_0S_1(t)^*)S_2(t)\bigr\rangle \\
  &\quad= \cos^4 t\,\langle S_1^*(S_2S_0), (S_0S_1^*)S_2\rangle \\
  &\qquad - 2\sin^2 t\cos^2 t\,\langle S_1^*S_2, (S_0S_1^*)(S_2S_0)\rangle \\
  &\qquad + \sin^4 t\,\langle (S_0S_1^*)S_2, S_1^*(S_2S_0)\rangle
\end{align*}
Moreover,
\begin{equation}\label{eq:identity_3}
  \langle S_1^*S_2, (S_0S_1^*)(S_2S_0)\rangle
  = -\,\langle S_1^*(S_2S_0), (S_0S_1^*)S_2\rangle
\end{equation}
which follows from the octonionic identity
\[
  \langle xy, zw\rangle + \langle xw, zy\rangle
  = 2\langle x, z\rangle\langle y, w\rangle
\]
together with \(\langle S_1^*, S_0S_1^*\rangle = 0\) and
\(\langle S_2, S_2S_0\rangle = 0\) (skewness of left multiplication
\(L_{S_0}\) and right multiplication \(R_{S_0}\) for imaginary \(S_0\)).
Using~\eqref{eq:identity_3}, we obtain
\begin{align*}
  &\bigl\langle S_1(t)^*(S_2(t)S_0),\,(S_0S_1(t)^*)S_2(t)\bigr\rangle \\
  &\quad= (\cos^2 t + \sin^2 t)^2\,
    \langle (S_0S_1^*)S_2, S_1^*(S_2S_0)\rangle \\
  &\quad= \langle (S_0S_1^*)S_2, S_1^*(S_2S_0)\rangle
\end{align*}
Hence the second summand is constant along \(\gamma\), and thus \(\overmu\)
in~\eqref{eq:over_mu_for_n_=_7_1} is a Killing tensor.

Now suppose that \(\v_1\) and \(\v_2\) have opposite signs, i.e.,
\(\v = \v_1 \oplus \v_2 \coloneqq \bbO \oplus \overline{\bbO}\). As
before, both \(\norm{\cdot}_{\v_1}^2\) and \(\norm{\cdot}_{\v_2}^2\) are
Killing tensors, revealing~\eqref{eq:over_mu_for_n_=_7_2} as Killing.

To verify that~\eqref{eq:over_mu_for_n_=_7_3} also defines a Killing
tensor, set
\begin{align}
  S_1(t) &= \cos t\,S_1 + \sin t\,S_1S_0 \label{eq:S_1(t)_3}\\
  S_2(t) &= \cos t\,S_2 - \sin t\,S_2S_0 \label{eq:S_2(t)_3}\\
  S_0S_1(t)^* &= \bigl(-\cos t\,S_1S_0 + \sin t\,S_1\bigr)^*
  \label{eq:S_1(t)_4}\\
  S_2(t)S_0 &= \cos t\,S_2S_0 + \sin t\,S_2 \label{eq:S_2(t)_4}
\end{align}
The same computation then gives
\[
  \bigl\langle S_1(t)^*(S_2(t)S_0),\,(S_0S_1(t)^*)S_2(t)\bigr\rangle
  = \langle (S_0S_1^*)S_2,\,S_1^*(S_2S_0)\rangle
\]
so~\eqref{eq:over_mu_for_n_=_7_3} defines a Killing tensor as well.

\subsection{\texorpdfstring{For~\eqref{eq:over_mu_for_n_=_8}}%
{For \eqref{eq:over_mu_for_n_=_8}}}
\label{se:verification_for_n_=_8}
Let \(n = 8\), \(\z \coloneqq \bbO\), and \(\v \coloneqq \bbO^2\) with
Clifford multiplication given by~\eqref{eq:Clifford_multiplication_for_n_=_8}.
We show that~\eqref{eq:over_mu_for_n_=_8} is constant along geodesics
\(\gamma\) through \(e_N\). Since \(\Spin(\bbO)\) acts transitively on
the unit sphere \(\rmS^7 \subseteq \bbO\) via the vector representation,
we may assume that the velocity vector
\[
  \dot\gamma(0) \coloneqq S_0 \oplus S_+ \oplus S_-
  \in \bbO \oplus \bbO^2
\]
satisfies \(S_0 = 1_{\bbO}\).

By~\eqref{eq:geodesic}, with \(c \coloneqq \cos(t)\) and
\(s \coloneqq \sin(t)\),
\[
  S_+(t) = c\,S_+ + s\,S_-, \qquad
  S_-(t) = c\,S_- - s\,S_+^* .
\]
Set \(A \coloneqq \norm{S_+}^2\), \(B \coloneqq \norm{S_-}^2\), and
\(C \coloneqq \langle S_+, S_- \rangle\). Using
\(\norm{S_+^*} = \norm{S_+}\) and
\(\langle S_-, S_+^* \rangle = \langle S_+, S_- \rangle\), we obtain
\[
  \begin{aligned}
    \norm{S_+(t)}^2 &= c^2 A + s^2 B + 2 s c\, C,\\
    \norm{S_-(t)}^2 &= c^2 B + s^2 A - 2 s c\, C,\\
    \langle S_+(t), S_-(t) \rangle
      &= s c\,(B - A) + (c^2 - s^2) C
  \end{aligned}
\]
Hence
\[
  \begin{aligned}
    &\norm{S_+(t)}^2 \norm{S_-(t)}^2
      - \langle S_+(t), S_-(t) \rangle^2 \\
    &\qquad= (c^4 + 2 s^2 c^2 + s^4)\,(A B - C^2)
     \;=\; (c^2 + s^2)^2\,(A B - C^2)
     \;=\; A B - C^2
  \end{aligned}
\]
The expression is constant in \(t\), so~\eqref{eq:over_mu_for_n_=_8}
defines a Killing tensor.
\subsection{\texorpdfstring{For~\eqref{eq:over_sigma_1_for_n_=_9},
\eqref{eq:over_sigma_2_for_n_=_9}, and~\eqref{eq:over_mu_3_for_n_=_9_1}}%
{For \eqref{eq:over_sigma_1_for_n_=_9}, \eqref{eq:over_sigma_2_for_n_=_9},
and \eqref{eq:over_mu_3_for_n_=_9_1}}}
\label{se:verification_for_n_=_9}

Let \(n = 9\), \(\z \coloneqq \bbR \oplus \bbO\), and
\(\v \coloneqq \bbO^2 \otimes_\bbR \bbC\) with Clifford multiplication
given by~\eqref{eq:Clifford_multiplication_for_n_=_9}. First we show that
\eqref{eq:over_sigma_1_for_n_=_9} and~\eqref{eq:over_sigma_2_for_n_=_9}
are constant along geodesics \(\gamma\) through \(e_N\). Since
\(\Spin(\bbR \oplus \bbO)\) acts transitively on the \(8\)-sphere
\(\rmS^8 \subseteq \z\) via the vector representation, we may assume that
the velocity vector
\[
  \dot\gamma(0) \coloneqq S_0 \oplus S_+ \oplus S_-
  \in \bbR \oplus \bbO \oplus \bbO^2 \oplus \i\,\bbO^2
\]
satisfies \(S_0 \coloneqq 1_\bbR \coloneqq 1 \oplus 0\). By
\eqref{eq:n_=_9_binom_1} and~\eqref{eq:n_=_9_binom_2}, it suffices to
show that \eqref{eq:over_mu_+} and~\eqref{eq:over_mu_-} are constant.

For this, write \(c \coloneqq \cos(t)\) and \(s \coloneqq \sin(t)\). By
\eqref{eq:geodesic},
\[
  \begin{aligned}
    \Re(S_+(t)) &= c\,\Re(S_+) - s\,\Im(S_+), &
    \Im(S_+(t)) &= s\,\Re(S_+) + c\,\Im(S_+),\\
    \Re(S_-(t)) &= c\,\Re(S_-) + s\,\Im(S_-), &
    \Im(S_-(t)) &= -s\,\Re(S_-) + c\,\Im(S_-)
  \end{aligned}
\]
Set \(A_\pm \coloneqq \norm{\Re(S_\pm)}^2\),
\(B_\pm \coloneqq \norm{\Im(S_\pm)}^2\), and
\(C_\pm \coloneqq \langle \Re(S_\pm), \Im(S_\pm) \rangle\). Then
\[
  \begin{aligned}
    \norm{\Re(S_+(t))}^2 &= c^2 A_+ + s^2 B_+ - 2 s c\, C_+,\\
    \norm{\Im(S_+(t))}^2 &= s^2 A_+ + c^2 B_+ + 2 s c\, C_+,\\
    \langle \Re(S_+(t)), \Im(S_+(t)) \rangle
      &= s c\,(A_+ - B_+) + (c^2 - s^2)\,C_+
  \end{aligned}
\]
and
\[
  \begin{aligned}
    \norm{\Re(S_-(t))}^2 &= c^2 A_- + s^2 B_- + 2 s c\, C_-,\\
    \norm{\Im(S_-(t))}^2 &= s^2 A_- + c^2 B_- - 2 s c\, C_-,\\
    \langle \Re(S_-(t)), \Im(S_-(t)) \rangle
      &= - s c\,(A_- - B_-) + (c^2 - s^2)\,C_-
  \end{aligned}
\]
Consequently, for each sign,
\[
  \begin{aligned}
    &\norm{\Re(S_\pm(t))}^2 \norm{\Im(S_\pm(t))}^2
      - \langle \Re(S_\pm(t)), \Im(S_\pm(t)) \rangle^2 \\
    &\qquad= (c^2 + s^2)^2\,(A_\pm B_\pm - C_\pm^2)
     \;=\; A_\pm B_\pm - C_\pm^2
  \end{aligned}
\]
is constant in \(t\) by the Pythagorean identity \(c^2 + s^2 = 1\).

Therefore~\eqref{eq:over_mu_+} (and likewise~\eqref{eq:over_mu_-}) is
constant along geodesics. Hence the sum and the difference are also
constant:
\[
  \begin{aligned}
    &\bigl(\norm{\Re(S_+(t))}^2 \norm{\Im(S_+(t))}^2
       \pm \norm{\Re(S_-(t))}^2 \norm{\Im(S_-(t))}^2\bigr) \\
    &\quad - \bigl(\langle \Re(S_+(t)), \Im(S_+(t)) \rangle^2
       \pm \langle \Re(S_-(t)), \Im(S_-(t)) \rangle^2\bigr) \\
    &= \bigl(A_+ B_+ - C_+^2\bigr) \pm \bigl(A_- B_- - C_-^2\bigr)
  \end{aligned}
\]
By~\eqref{eq:n_=_9_binom_1} and~\eqref{eq:n_=_9_binom_2}, both
\(\tilde{\sigma}_1(1 \oplus S(t))\) and
\(\tilde{\sigma}_2(1 \oplus S(t))\) are constant in \(t\). This proves
that~\eqref{eq:over_sigma_1_for_n_=_9} and~\eqref{eq:over_sigma_2_for_n_=_9}
are Killing tensors.

Next, we show the same for~\eqref{eq:over_mu_3_for_n_=_9_1}. For this,
we may work with~\eqref{eq:over_mu_mix} and~\eqref{eq:over_mu_3_=_over_mu_mix}.
Since~\eqref{eq:over_mu_+} and~\eqref{eq:over_mu_-} are constant along
geodesics, it remains to show that the following expression is constant:
\[
  \begin{aligned}
    E(t) \coloneqq {}\,&
      \bigl\langle \Re(S_+(t))^* \Re(S_-(t)),\,
                   \Im(S_+(t))^* \Im(S_-(t)) \bigr\rangle \\
    &- \bigl\langle \Re(S_+(t))^* \Im(S_-(t)),\,
                   \Im(S_+(t))^* \Re(S_-(t)) \bigr\rangle
  \end{aligned}
\]

Set \(A(t) \coloneqq \Re(S_+(t))\), \(C(t) \coloneqq \Im(S_+(t))\),
\(B(t) \coloneqq \Re(S_-(t))\), and \(D(t) \coloneqq \Im(S_-(t))\).
Furthermore, put \(A_0 \coloneqq A(0)\), \(C_0 \coloneqq C(0)\),
\(B_0 \coloneqq B(0)\), and \(D_0 \coloneqq D(0)\). Hence
\[
  E(t) = \langle A(t)^* B(t),\, C(t)^* D(t) \rangle
       - \langle A(t)^* D(t),\, C(t)^* B(t) \rangle
\]

By~\eqref{eq:geodesic}, the geodesic equation decouples so that the
pairs \((A(t), C(t))\) and \((B(t), D(t))\) rotate with opposite angles:
\[
  \binom{A(t)}{C(t)} =
  \begin{pmatrix}\cos t & -\sin t\\ \sin t & \cos t\end{pmatrix}
  \binom{A_0}{C_0},\qquad
  \binom{B(t)}{D(t)} =
  \begin{pmatrix}\cos t & \ \sin t\\ -\sin t & \ \cos t\end{pmatrix}
  \binom{B_0}{D_0}
\]
Define
\[
  E(A,B,C,D) \coloneqq
  \langle A^* B,\, C^* D \rangle - \langle A^* D,\, C^* B \rangle
\]
This is alternating in the pairs \((A,C)\) and \((B,D)\). Therefore,
\(E\) factors through a unique alternating map
\[
  \tilde E \colon \Lambda^2(\bbO) \times \Lambda^2(\bbO) \to \bbR,\qquad
  \tilde E(A \wedge C,\, B \wedge D) = E(A,B,C,D)
\]
Since both rotation matrices have determinant \(1\), we get
\[
  A(t) \wedge C(t) = A_0 \wedge C_0,\qquad
  B(t) \wedge D(t) = B_0 \wedge D_0
\]
Hence
\[
  E(t) = \tilde E\bigl(A(t) \wedge C(t),\, B(t) \wedge D(t)\bigr)
       = \tilde E\bigl(A_0 \wedge C_0,\, B_0 \wedge D_0\bigr)
       = E(0)
\]
Thus~\eqref{eq:over_mu_3_for_n_=_9_1} defines a Killing tensor as well.

\section{Proof of Theorem~\ref{th:C0}}
\label{se:C0_proof}

The proof of the following lemma uses the same arguments as in
\cite[Ch.~3.5]{BTV}. For \(X \in \n\) with \(X_\z \neq 0\)
and \(X_\v \neq 0\), let \(\gamma \colon \bbR \to N\) be the geodesic
with \(\gamma(0) = e_N\) and \(\dot\gamma(0) = X\).

\begin{lemma}\label{le:C_is_parallel}
The endomorphism field \(\C_\gamma\) defined by
\eqref{eq:C_0_structure_1}--\eqref{eq:C_0_structure_3} is parallel
along \(\gamma\); that is,
\[
  \frac{\nabla}{\d t}\,\C_\gamma \equiv 0
\]
\end{lemma}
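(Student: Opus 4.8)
The plan is to transfer everything to the left-invariant framing, where \(\C_\gamma\) becomes a matrix-valued ODE, and then exploit that the block decomposition of \(\n\) used to define \(\C\) stays \emph{constant} along \(\gamma\). First I would set up the ODE. By \eqref{eq:C_0_structure_3}, in the framing \eqref{eq:framing} the field \(\C_\gamma\) corresponds to the curve \(t\mapsto\C(X(t))\), where \(X(t)\) is the velocity of \(\gamma\) given by \eqref{eq:geodesic}. Differentiating \(FY\) and using that \(\nabla\) is metric and left-invariant (as in the proof of Lemma~\ref{le:geodesics}), one sees that \(\tfrac{\nabla}{\d t}\) acts on endomorphism fields along \(\gamma\), in this framing, by \(F\mapsto\tfrac{d}{dt}F+[A(X(t)),F]\), with \(A(X(t))\) the skew-symmetric Levi--Civita operator of \eqref{eq:LC}. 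Hence \(\tfrac{\nabla}{\d t}\C_\gamma\equiv 0\) is equivalent to
\[
  \tfrac{d}{dt}\,\C(X(t)) = \bigl[\C(X(t)),\,A(X(t))\bigr]\qquad\text{for all }t .
\]
Reparametrizing \(\gamma\) and left-translating (left translations are isometries and the construction is left-invariant) further reduces this to the single identity at \(t=0\), where \(\tfrac{d}{dt}\big|_{0}X(t) = 0\oplus(X_\z\cbullet X_\v)\) by \eqref{eq:geodesic}; equivalently one may keep \(t\) general, since the argument below is the same.

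Next I would show the decomposition is constant. From \eqref{eq:geodesic} one reads off \(X(t)_\z\equiv X_\z\) and \(\tfrac{d}{dt}X(t)_\v = X_\z\cbullet X(t)_\v\); in particular \(\norm{X(t)_\v}\) is constant (Corollary~\ref{co:Killing_1}) and the \(2\)-plane \(\mathrm{span}_\bbR\{X_\z,X(t)_\v\}\) is constant. By Proposition~\ref{p:eigenvalues_of_K_and_tilde_K} the operators \(\overK(\dot\gamma(t))\) and \(\underK(\dot\gamma(t))\) are constant in the framing, hence so are \(\Spec(-\underK(X(t))^2)\), the eigenspaces \((\h_\undermu)_{X(t)}\subseteq\z\), and \(\h_{X(t)}=X_\z^\perp\). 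A dimension count then shows each of the subspaces \((\n_3)_{X(t)}\), \((\H_\undermu)_{X(t)}\) (Lemmas~\ref{le:decomposition_of_n}, \ref{le:decomposition_of_h}) and \((\Z_{X(t)})^\perp\) is constant along \(\gamma\): it is contained in its value at \(t=0\), because \(X(t)_\v\) and \(X_\z\cbullet X(t)_\v\) both lie in \(\mathrm{span}_\bbR\{X_\v,X_\z\cbullet X_\v\}\), and its dimension does not change. Since \(\C(X(t))\) preserves each block (established before \eqref{eq:R_mu_matrix} and around \eqref{eq:C_X_4}), and \(A(X(t))\) does too (a short computation from \eqref{eq:LC} using \(X_\v\spinprod(Z\cbullet X_\v)=\norm{X_\v}^2 Z\), the Clifford relation \eqref{eq:Clifford_module_3}, and \eqref{eq:z_X_perp}), it suffices to verify the ODE \emph{block by block}.

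On \((\Z_X)^\perp\) the identity is immediate: by \eqref{eq:C_X_4} and \eqref{eq:z_X_perp} one has \(\C(X(t))|_{(\Z_X)^\perp}=\tfrac12 X_\z\cbullet(\cdot)\) and \(A(X(t))|_{(\Z_X)^\perp}=-\tfrac12 X_\z\cbullet(\cdot)\); both are \(t\)-independent and commute, so both sides vanish. On \((\n_3)_X\), \((\H_0)_X\), \((\H_1)_X\) and \((\H_\undermu)_X\) for \(0<\undermu<1\) I would use the explicit action formulas \eqref{eq:C_X_1}--\eqref{eq:C_X_7} — with \(X_\v\) replaced throughout by \(X(t)_\v\) — together with the analogous formulas for \(A(X(t))\), the relation \(\tfrac{d}{dt}X(t)_\v=X_\z\cbullet X(t)_\v\), and \(X_\z\cbullet X_\z\cbullet X(t)_\v=-\norm{X_\z}^2 X(t)_\v\). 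Evaluating \(\tfrac{d}{dt}\C(X(t))\) on the fixed basis \(Z,\ \underK(X)Z,\ Z\cbullet X_\v,\dots\) of the block and comparing with the commutator is then a routine finite check. Equivalently — and this is precisely the bookkeeping behind Lemmas~\ref{le:maple_P_mu}--\ref{le:maple_P_n_3} — in the \(t\)-dependent block bases the matrices of \(\C(X(t))\), \(A(X(t))\), \(\scrR(X(t))\) are the \emph{constant} matrices \(\C_\undermu(z,v)\), the corresponding matrix of \(A\), and \(\scrR_\undermu(z,v)\) (and likewise for the other blocks), the connection matrix in these moving bases is \(N(z,v)=(\text{matrix of }A)+W(z)\) with \(W(z)\) the velocity matrix of the basis (\(\tfrac{d}{dt}(Z\cbullet X(t)_\v)=Z\cbullet X_\z\cbullet X(t)_\v\), etc.), and parallelism of \(\C_\gamma\) reduces to the constant-matrix identities \([N(z,v),\C_\undermu(z,v)]=0\), one per block, each checked directly (or with the same Maple snippets).

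The main obstacle is the change of basis in the last step: the natural bases of the blocks depend on \(t\) through \(X(t)_\v\), so relative to them \(\tfrac{d}{dt}\C(X(t))\) carries the extra moving-frame term \([W(z),\C_\undermu(z,v)]\); keeping track of this term — equivalently, differentiating \(\C(X(t))\) correctly as an operator on the fixed space \(\n\) rather than on the moving basis — is where care is needed. Once that is done, each block collapses to a small explicit matrix computation, and the Clifford bookkeeping required is limited to \eqref{eq:Clifford_module_3}, \eqref{eq:Clifford_module_5}, and \(X_\v\spinprod(X_\z\cbullet X_\v)=\norm{X_\v}^2 X_\z\). This yields \(\tfrac{\nabla}{\d t}\C_\gamma\equiv 0\).
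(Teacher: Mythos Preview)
Your approach is correct. Both yours and the paper's reduce parallelism of \(\C_\gamma\) to the identity \(\dot\C(X(t))+[A(X(t)),\C(X(t))]=0\) in the left-invariant framing, but from there the routes diverge. The paper verifies this at \(t=0\) by a direct Clifford-algebraic computation on the \emph{coarse} blocks \(\H_X\) and \(\H_X^\perp=(\n_3)_X\oplus(\Z_X)^\perp\), then propagates to all \(t\) by left-invariance; it never uses the fine eigenspace decomposition \(\bigoplus_\undermu(\H_\undermu)_X\) nor any matrix representation. You instead invoke Proposition~\ref{p:eigenvalues_of_K_and_tilde_K} to see that the fine decomposition is constant along \(\gamma\), pass to moving block bases in which \(\C\) and \(A\) have the constant matrices of Section~\ref{se:splitting}, and reduce to a finite identity \([N,M_\C]=0\) on each block, with \(N=W+M_A\) the connection matrix. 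Your route in fact buys more than you note: writing \(\C=A-\Theta\) with \(\Theta\) as in~\eqref{eq:Theta_0}, the identity becomes \([W+M_\Theta,M_\C]=0\), and on \((\n_3)_X\) and each \((\H_\undermu)_X\) one checks \(M_\Theta=-W\) exactly (both arise from one extra \(X_\z\cbullet\) on the \(\v\)-component of a basis vector, with opposite signs), so the bracket vanishes \emph{for free}; on \((\Z_X)^\perp\) one has \(W=0\) and \(M_\Theta\) proportional to \(M_\C\), so again trivially. The trade-off is that your matrix argument as written needs \(X\notin\ram(\underK^2)\) and a closing continuity step, whereas the paper's coarse-block computation applies directly to every \(X\) with \(X_\z,X_\v\neq0\).
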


\begin{proof}
We will first show that
\[
  \left.\frac{\nabla}{\d t}\right|_{t=0} \C_\gamma = 0
\]
By~\eqref{eq:geodesic},
\[
  \dot\gamma(t) = \bigl(\gamma(t), X(t)\bigr)
\]
with
\begin{equation}\label{eq:canonical_trivialization}
  X(t) =
  \cos\bigl(t\norm{X_\z}\bigr)\,X_\v
  +
  \frac{\sin\bigl(t\norm{X_\z}\bigr)}{\norm{X_\z}}\,
  X_\z \cbullet X_\v
  +
  X_\z
\end{equation}
in the canonical trivialization \(TN \cong N \times \n\) induced by
left translation.

In particular, the ordinary derivative of the second component of
\(\dot\gamma(t)\) in \eqref{eq:canonical_trivialization} satisfies
\begin{equation}\label{eq:canonical_trivialization_1}
  \frac{\mathrm d}{\mathrm dt}\,X(t) =
  -\norm{X_\z}\,\sin\bigl(t\norm{X_\z}\bigr)\,X_\v
  +
  \cos\bigl(t\norm{X_\z}\bigr)\,X_\z \cbullet X_\v
\end{equation}
With these preliminaries, suppose that
\(Y \in \H_X^\perp = (\n_3)_X \oplus (\Z_X)^\perp\)
(see~\eqref{eq:def_n_3_X}, \eqref{eq:z_X_perp}
and~\eqref{eq:decomposition_of_n_2}). Then
\begin{align*}
 \Bigl(\frac{\nabla}{\d t}\Big|_{t=0} \C_\gamma(t)\Bigr)Y
  &=
  \frac{\nabla}{\d t}\Big|_{t=0}
    \Bigl(\C_\gamma(t)Y \Bigr) - \C(X)\nabla_X Y\\
  &\stackrel{\eqref{eq:C_0_structure_1}-\eqref{eq:C_0_structure_3}}{=}
     \frac12\,\left.\tfrac{\nabla}{\d t}\right|_{t=0}
     \Bigl(\gamma(t),
           X(t)_\z \cbullet Y_\v
           - Y_\z \cbullet X(t)_\v
           + X(t)_\v \spinprod Y_\v\Bigr) \\
  &\quad
     - \frac12\Bigl(
         X_\z \cbullet (\nabla_X Y)_\v
         - (\nabla_X Y)_\z \cbullet X_\v
         + X_\v \spinprod (\nabla_X Y)_\v
       \Bigr)
\end{align*}
Here the term
\[
  \left.\tfrac{\nabla}{\d t}\right|_{t=0}
  \Bigl(\gamma(t),
        X(t)_\z \cbullet Y_\v
        - Y_\z \cbullet X(t)_\v
        + X(t)_\v \spinprod Y_\v\Bigr)
\]
splits into the algebraic component
\[
  \frac12\Bigl(
    X_\v \spinprod (X_\z \cbullet Y_\v - Y_\z \cbullet X_\v)
    - X_\z \cbullet (X_\z \cbullet Y_\v - Y_\z \cbullet X_\v)
    - (X_\v \spinprod Y_\v) \cbullet X_\v
  \Bigr)
\]
(which is \(A(X)X\) from~\eqref{eq:LC}) and the
usual derivative
\[
  \left.\tfrac{\mathrm d}{\mathrm dt}\right|_{t=0}
  \Bigl(
    X(t)_\z \cbullet Y_\v
    - Y_\z \cbullet X(t)_\v
    + X(t)_\v \spinprod Y_\v
  \Bigr)
  =
  -\Bigl(
    Y_\z \cbullet X_\z \cbullet X_\v
    + Y_\v \spinprod (X_\z \cbullet X_\v)
  \Bigr)
\]
obtained by inserting \(t = 0\) into
\eqref{eq:canonical_trivialization_1}. Thus,
\begin{align*}
  &\left.\Bigl(\tfrac{\nabla}{\d t}\right|_{t=0} \C_\gamma(t)\Bigr)
  Y
  + \frac12\bigl(
      Y_\z \cbullet X_\z \cbullet X_\v
      + Y_\v \spinprod (X_\z \cbullet X_\v)
    \bigr) \\
  &\qquad=\;
    \frac14\Bigl(
      X_\v \spinprod (X_\z \cbullet Y_\v - Y_\z \cbullet X_\v)
      - X_\z \cbullet (X_\z \cbullet Y_\v - Y_\z \cbullet X_\v)
      - (X_\v \spinprod Y_\v) \cbullet X_\v
    \Bigr) \\
  &\qquad\quad
    - \frac12\Bigl(
        X_\z \cbullet (\nabla_X Y)_\v
        - (\nabla_X Y)_\z \cbullet X_\v
        + X_\v \spinprod (\nabla_X Y)_\v
      \Bigr) \\
  &\qquad\stackrel{\eqref{eq:LC}}{=}\;
    \frac14\Bigl(
      X_\v \spinprod (X_\z \cbullet Y_\v - Y_\z \cbullet X_\v)
      - X_\z \cbullet (X_\z \cbullet Y_\v - Y_\z \cbullet X_\v)
      - (X_\v \spinprod Y_\v) \cbullet X_\v
    \Bigr) \\
  &\qquad\quad
    - \frac14\Bigl(
        - X_\z \cbullet (X_\z \cbullet Y_\v + Y_\z \cbullet X_\v)
        - (X_\v \spinprod Y_\v) \cbullet X_\v
        - X_\v \spinprod (X_\z \cbullet Y_\v + Y_\z \cbullet X_\v)
      \Bigr) \\
  &\qquad=\;
    \frac12\Bigl(
      X_\v \spinprod (X_\z \cbullet Y_\v)
      + X_\z \cbullet Y_\z \cbullet X_\v
    \Bigr)
\end{align*}
Thus,
\[
  \Bigl(\left.\tfrac{\nabla}{\d t}\right|_{t=0}\C_\gamma(t)\Bigr)
 Y
  =
  \frac12\Bigl(
    X_\v \spinprod (X_\z \cbullet Y_\v)
    + X_\z \cbullet Y_\z \cbullet X_\v
    - Y_\z \cbullet X_\z \cbullet X_\v
    - Y_\v \spinprod (X_\z \cbullet X_\v)
  \Bigr)
\]
To prove that this vanishes, observe that
\[
  X_\z \cbullet Y_\z \cbullet X_\v - Y_\z \cbullet X_\z \cbullet X_\v
  =
  \frac{\langle Y_\z, X_\z \rangle}{\norm{X_\z}^2}\,
  \bigl(
    X_\z \cbullet X_\z \cbullet X_\v
    - X_\z \cbullet X_\z \cbullet X_\v
  \bigr)
  = 0
\]
since \(Y_\z\) lies in the real line spanned by \(X_\z\), that is,
\(Y_\z \in \bbR X_\z\), for each
\(Y \in (\n_3)_X \oplus (\Z_X)^\perp\). It remains to show that
\[
  Y_\v \spinprod (X_\z \cbullet X_\v)
  - X_\v \spinprod (X_\z \cbullet Y_\v)
  = 0
\]
for all \(Y \in \H_X^\perp\). Let \(Z \in \z\). If \(Z \in \h_X =
X_\z^\perp\), then
\begin{align*}
  \bigl\langle
    X_\v \spinprod (X_\z \cbullet Y_\v)
    - Y_\v \spinprod (X_\z \cbullet X_\v),
    Z
  \bigr\rangle
  &\stackrel{\eqref{eq:def_spinor_product}}{=}
    \langle X_\z \cbullet Y_\v, Z \cbullet X_\v \rangle
    - \langle X_\z \cbullet X_\v, Z \cbullet Y_\v \rangle \\
  &\stackrel{\eqref{eq:Clifford_module_1}}{=}
   - \langle Y_\v, X_\z \cbullet Z \cbullet X_\v \rangle
   + \langle Z \cbullet X_\z \cbullet X_\v, Y_\v \rangle \\
  &\stackrel{\eqref{eq:Clifford_module_3}}{=}
   2\langle Y_\v, \underbrace{Z \cbullet X_\z \cbullet X_\v}_{\in \v}
   \rangle = 2\langle Y, Z \cbullet X_\z \cbullet X_\v
   \rangle  = 0
\end{align*}
because \( Z \cbullet X_\z \cbullet X_\v \in \H_X\) and \(Y \in \H_X^\perp\),
so the inner product vanishes. If
\(Z = X_\z\), then
\[
  \bigl\langle
    X_\v \spinprod (X_\z \cbullet Y_\v)
    - Y_\v \spinprod (X_\z \cbullet X_\v),
    Z
  \bigr\rangle
  \stackrel{\eqref{eq:def_spinor_product}}{=}
  \langle X_\z \cbullet Y_\v, X_\z \cbullet X_\v \rangle
  - \langle X_\z \cbullet X_\v, X_\z \cbullet Y_\v \rangle
  = 0
\]
Since \(\z = \h_X \oplus \bbR X_\z\), it follows that
\[
  X_\v \spinprod (X_\z \cbullet Y_\v)
  - Y_\v \spinprod (X_\z \cbullet X_\v)
  = 0
\]
by the nondegeneracy of the metric. Hence
\[
  \Bigl(\left.\tfrac{\nabla}{\d t}\right|_{t=0}\C_\gamma(t)\Bigr) Y = 0
\]

Next, let \(Y \in \H_X\) (see~\eqref{eq:def_H_X}). Then,
similar to the previous case,
\begin{align*}
  \Bigl(\frac{\nabla}{\d t}\Big|_{t=0} \C_\gamma(t)\Bigr)Y
  &=
  \frac{\nabla}{\d t}\Big|_{t=0}
    \Bigl(\C_\gamma(t)Y \Bigr) - \C(X)\nabla_X Y \\
  &=
  \frac12\,\frac{\nabla}{\d t}\Big|_{t=0}
  \Bigl(
    \gamma(t),
    -3\,X(t)_\z \cbullet Y_\v
    - Y_\z \cbullet X(t)_\v
    + X(t)_\v \spinprod Y_\v
  \Bigr) \\
  &\quad
  - \frac12\Bigl(
      -3\,X_\z \cbullet (\nabla_X Y)_\v
      - (\nabla_X Y)_\z \cbullet X_\v
      + X_\v \spinprod (\nabla_X Y)_\v
    \Bigr)
\end{align*}
The algebraic part~\eqref{eq:LC} of
\[
  \frac{\nabla}{\d t}\Big|_{t=0}
  \Bigl(
    \gamma(t),
    -3\,X(t)_\z \cbullet Y_\v
    - Y_\z \cbullet X(t)_\v
    + X(t)_\v \spinprod Y_\v
  \Bigr)
\]
is
\begin{align*}
  \frac12\Bigl(
    X_\v \spinprod \bigl(-3\,X_\z \cbullet Y_\v - Y_\z \cbullet X_\v\bigr)
    - X_\z \cbullet \bigl(-3\,X_\z \cbullet Y_\v - Y_\z \cbullet X_\v\bigr)
    - (X_\v \spinprod Y_\v) \cbullet X_\v
  \Bigr)
\end{align*}
while the derivative from~\eqref{eq:canonical_trivialization_1} is the
same as in the case \(Y \in \H_X^\perp\):
\[
  \frac{\mathrm d}{\mathrm dt}\Big|_{t=0}
  \Bigl(
    -3\,X(t)_\z \cbullet Y_\v
    - Y_\z \cbullet X(t)_\v
    + X(t)_\v \spinprod Y_\v
  \bigr)
  =
  -\bigl(
    Y_\z \cbullet X_\z \cbullet X_\v
    + Y_\v \spinprod (X_\z \cbullet X_\v)
  \Bigr)
\]
We obtain
\begin{align*}
  &\Bigl(\frac{\nabla}{\d t}\Big|_{t=0} \C_\gamma(t)\Bigr)Y
    + \frac12\Bigl(
      Y_\z \cbullet X_\z \cbullet X_\v
      + Y_\v \spinprod (X_\z \cbullet X_\v)
    \Bigr) \\
  &= \frac14\Bigl(
        - X_\v \spinprod \bigl(3\,X_\z \cbullet Y_\v + Y_\z \cbullet X_\v\bigr)
        + X_\z \cbullet \bigl(3\,X_\z \cbullet Y_\v + Y_\z \cbullet X_\v\bigr)
        - (X_\v \spinprod Y_\v) \cbullet X_\v
     \Bigr) \\
  &\quad
    + \frac14\Bigl(
        - 3\,X_\z \cbullet \bigl(X_\z \cbullet Y_\v + Y_\z \cbullet X_\v\bigr)
        + (X_\v \spinprod Y_\v) \cbullet X_\v
        + X_\v \spinprod \bigl(X_\z \cbullet Y_\v + Y_\z \cbullet X_\v\bigr)
     \Bigr) \\
  &= -\frac12\Bigl(
       X_\v \spinprod (X_\z \cbullet Y_\v)
       + X_\z \cbullet Y_\z \cbullet X_\v
     \Bigr)
\end{align*}
Thus,
\begin{align*}
  \Bigl(\frac{\nabla}{\d t}\Big|_{t=0} \C_\gamma(t)\Bigr)Y
  =
  -\frac12\Bigl(
    Y_\z \cbullet X_\z \cbullet X_\v
    + Y_\v \spinprod (X_\z \cbullet X_\v)
    + X_\v \spinprod (X_\z \cbullet Y_\v)
    + X_\z \cbullet Y_\z \cbullet X_\v
  \Bigr)
\end{align*}
To show that this term vanishes, note that
\[
  Y_\z \cbullet X_\z \cbullet X_\v
  + X_\z \cbullet Y_\z \cbullet X_\v
  = - 2\,\langle X_\z, Y_\z \rangle X_\v
  = 0
\]
since \(Y_\z \in \h_X = (X_\z)^\perp\) for \(Y \in \H_X\). It
remains to show that
\[
  Y_\v \spinprod (X_\z \cbullet X_\v)
  + X_\v \spinprod (X_\z \cbullet Y_\v)
  = 0
\]
For \(Z \in \z\),
\begin{align*}
  \langle
    Y_\v \spinprod (X_\z \cbullet X_\v)
    + X_\v \spinprod (X_\z \cbullet Y_\v),
    Z
  \rangle
  &=
    \langle X_\z \cbullet X_\v, Z \cbullet Y_\v \rangle
    + \langle X_\z \cbullet Y_\v, Z \cbullet X_\v \rangle \\
  &\stackrel{\eqref{eq:Clifford_module_1}}{=}
     -\langle Z \cbullet X_\z \cbullet X_\v, Y_\v \rangle
     -\langle Y_\v, X_\z \cbullet Z \cbullet X_\v \rangle \\
  &\stackrel{\eqref{eq:Clifford_module_3}}{=}
    2\,\langle Z, X_\z \rangle\,\langle X_\v, Y_\v \rangle
    \underbrace{-\langle Z \cbullet X_\z \cbullet X_\v, Y_\v \rangle
     + \langle Y_\v, Z \cbullet X_\z \cbullet X_\v \rangle}_{=\,0}\\
  &= 2\,\langle Z, X_\z \rangle\,\langle X_\v, Y_\v \rangle = 0
\end{align*}
since \(X_\v \in (\n_3)_X\) and \(Y \in \H_X\), so
\(\langle X_\v, Y_\v \rangle = 0\). Hence \(Y_\v \spinprod (X_\z \cbullet X_\v)
 + X_\v \spinprod (X_\z \cbullet Y_\v) = 0\), and therefore
\[
  \frac{\nabla}{\d t}\Big|_{t=0} \C_\gamma(t) = 0
\]
The above holds for all geodesics \(\gamma \colon \bbR \to N\) such that
\(\gamma(0) = e_N\) and \(X \coloneqq \dot\gamma(0)\) satisfies
\(X_\z \neq 0\) and \(X_\v \neq 0\). Moreover, by~\eqref{eq:geodesic}
the \(\n\)-component \(X(s)\) of \(\dot\gamma(s)\) in the left
trivialization \(T N \cong N \times \n\) again satisfies
\(X(s)_\z \neq 0\) and \(X(s)_\v \neq 0\) for all \(s \in \bbR\).
Therefore we can apply the previous argument to the geodesic
\(\gamma_s \colon \bbR \to N\) with \(\gamma_s(0) = e_N\)
and \(\dot\gamma_s(0) = X(s)\). Since the Levi-Civita connection
is left-invariant (and hence the geodesic equation is left-invariant
as well), we conclude that
\[
  \frac{\nabla}{\d t}\Big|_{t=s}\,\C_\gamma(t)
  = \rmL_{\gamma(s)*} \circ
    \Bigl(\frac{\nabla}{\d t}\Big|_{t=0}\C_{\gamma_s}(t)\Bigr)
    \circ \rmL_{\gamma(s)*}^{-1}
  = 0
\]
for all \(s \in \bbR\). Thus \(\C_\gamma\) is parallel along \(\gamma\).
\end{proof}

For each \(X \in \n\) with \(X_\z \neq 0\) and \(X_\v \neq 0\), we
consider the endomorphism \(\C(X)\) of \(\n\) defined by \eqref{eq:C_0_structure_1}.
Let  \(\Theta(X)\in\End{\n}\) be defined by
\begin{equation}\label{eq:Theta_0}
  \Theta(X)\colon \n \longrightarrow \n,\qquad Y \longmapsto
  \begin{cases}
    X_\z \cbullet Y_\v & \text{if } Y \in \H_X \\
    -\,X_\z \cbullet Y_\v & \text{if } Y \in (\n_3)_X \oplus (\Z_X)^\perp
  \end{cases}
\end{equation}
Then
\begin{equation}\label{eq:X_prime}
  X' \coloneqq \Theta(X)X = -\,X_\z \cbullet X_\v
\end{equation}
since \(X \in (\n_3)_X\).

\begin{lemma}\label{le:Theta}
We have
\begin{equation}\label{eq:Theta_1}
  [\Theta(X), \scrR(X)] = 2\,\scrR(X, X')
\end{equation}
where \(X' \coloneqq \Theta(X)X\) according to \eqref{eq:X_prime} and
the right-hand side is defined by polarization:
\[
  2\,\scrR(X, X') \coloneqq \scrR(X + X') - \scrR(X) - \scrR(X')
\]
\end{lemma}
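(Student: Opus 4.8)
The identity~\eqref{eq:Theta_1} is purely algebraic, so I would fix $X \in \n$ with $X_\z \neq 0$ and $X_\v \neq 0$ and prove $[\Theta(X),\scrR(X)]Y = 2\,\scrR(X,X')Y$ for all $Y \in \n$. The conceptual reason is as follows: let $\gamma$ be the geodesic with $\gamma(0)=e_N$ and $\dot\gamma(0)=X$, and let $X(t)$ be as in~\eqref{eq:geodesic}; then $\tfrac{d}{dt}\big|_{t=0}X(t) = X_\z \cbullet X_\v = -X'$ by~\eqref{eq:X_prime}, and since $\scrR(\,\cdot\,)$ is quadratic, $2\,\scrR(X,X') = -\tfrac{d}{dt}\big|_{t=0}\scrR(X(t))$. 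On the other hand, by Proposition~\ref{p:eigenvalues_of_K_and_tilde_K} and a short rotation argument the subspaces $(\n_3)_X$, $(\H_\undermu)_X$, $(\Z_X)^\perp$ of Lemmas~\ref{le:decomposition_of_n} and~\ref{le:decomposition_of_h} are constant along $\gamma$; on each of them $\scrR(X(t))$ has a matrix depending only on $\undermu$, $\norm{X_\z}$, $\norm{X_\v}$ — all constant along $\gamma$ by Corollary~\ref{co:Killing_1} — in the \emph{natural} frame (basis vectors $W$, $W\cbullet X_\v(t)$, $W\cbullet X_\z\cbullet X_\v(t)$, and so on), whose basis vectors evolve infinitesimally by $-\Theta(X)$ on account of $\tfrac{d}{dt}\big|_{t=0}X_\v(t) = X_\z\cbullet X_\v$ and the defining formula~\eqref{eq:Theta_0}. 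Since $\Theta(X)$ is skew-symmetric and the frame has constant norms, the moving-frame formula gives $\tfrac{d}{dt}\big|_{t=0}\scrR(X(t)) = -[\Theta(X),\scrR(X)]$, and comparison yields~\eqref{eq:Theta_1}.

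To make this rigorous I would verify it block by block against the explicit formulas already in the text. On $(\Z_X)^\perp$ both sides vanish: $\scrR(X)|_{(\Z_X)^\perp} = \tfrac14\norm{X_\z}^2\,\Id$ by~\eqref{eq:R_X_4}, so the commutator is zero, while substituting $Y = S \in (\Z_X)^\perp$ into the polarization~\eqref{eq:sym_curv_3} and using $X'_\z = 0$, $S_\z = 0$, $X_\v\spinprod S = 0$, $(X_\z\cbullet X_\v)\spinprod S = 0$, and $S \perp X_\z\cbullet X_\v$ kills every remaining term. On $(\n_3)_X$ I would work in the basis $(X_\v,\ X_\z\cbullet X_\v,\ X_\z)$: the $3\times 3$ matrix of $\scrR(X)$ is~\eqref{eq:R_X_1}--\eqref{eq:R_X_3}, that of $\Theta(X)$ is determined by $\Theta(X)X_\v = -X_\z\cbullet X_\v$, $\Theta(X)(X_\z\cbullet X_\v) = \norm{X_\z}^2 X_\v$, $\Theta(X)X_\z = 0$, and the matrix of $2\,\scrR(X,X')$ is read off from~\eqref{eq:sym_curv_3} with $X'_\v = -X_\z\cbullet X_\v$; one checks that the commutator of the first two matrices equals the third.

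The substantive case is $(\H_\undermu)_X$. Fixing $W \in (\h_\undermu)_X$, I would evaluate both sides on $W$, $W\cbullet X_\v$, $W\cbullet X_\z\cbullet X_\v$, which span $(\H_\undermu)_X$ as $W$ varies, keeping everything in invariant form. For instance, $\scrR(X)W = \tfrac34\,W\cbullet X_\z\cbullet X_\v + \tfrac14\norm{X_\v}^2 W$ by~\eqref{eq:R_X_5} and $\Theta(X)W = 0$, so $[\Theta(X),\scrR(X)]W = \tfrac34\norm{X_\z}^2\,W\cbullet X_\v$; on the other side, the reduced~\eqref{eq:sym_curv_3} with $Y=W$ (using $W_\v = 0$, $W\perp X_\z$, $\langle X_\v,X_\z\cbullet X_\v\rangle = 0$) gives $2\,\scrR(X,X')W = \tfrac34\,W\cbullet X_\z\cbullet(-X_\z\cbullet X_\v) = \tfrac34\norm{X_\z}^2\,W\cbullet X_\v$. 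The other two evaluations proceed the same way, now also invoking~\eqref{eq:R_X_6}, \eqref{eq:R_X_7} and the identities~\eqref{eq:def_K_2}, \eqref{eq:de_K_4}, \eqref{eq:Clifford_module_3}, \eqref{eq:Clifford_module_5}, \eqref{eq:mu_=_1} to recognize $\overK(X) = \norm{X_\z}\norm{X_\v}^2\,\underK(X)$ and to handle the collapse $W\cbullet X_\z\cbullet X_\v = \norm{X_\z}\,\underK(X)W\cbullet X_\v$ on $(\h_1)_X$. On the $6$-dimensional type this is a single $6\times 6$ matrix identity in $(\undermu,\norm{X_\z},\norm{X_\v})$, which can be checked symbolically exactly as in Lemmas~\ref{le:maple_P_mu}--\ref{le:maple_P_n_3}.

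The main obstacle is precisely this bookkeeping on $(\H_\undermu)_X$: substituting $Y = W\cbullet X_\z\cbullet X_\v$ and $X'_\v = -X_\z\cbullet X_\v$ into~\eqref{eq:sym_curv_3} produces many Clifford and spinor products that must be reduced carefully, and sign slips are easy. As built-in checks I would use that both sides are symmetric (since $\Theta(X)$ is skew-symmetric and $\scrR(X)$ symmetric, $[\Theta(X),\scrR(X)]$ is symmetric, and $2\,\scrR(X,X')$ is a polarization of a symmetric bilinear form) and trace-free. Finally I would record the structural fact, obtained by comparing~\eqref{eq:C_0_structure_1} with~\eqref{eq:LC}, that $\C(X) + \Theta(X) = A(X)$ with $A(X)Y = \nabla_X Y$; this explains why $\Theta(X)$ is the natural operator here and is what one combines with Lemma~\ref{le:C_is_parallel} to deduce Theorem~\ref{th:C0}.
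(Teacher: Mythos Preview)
Your proposal is correct and follows essentially the same route as the paper: a direct block-by-block verification of $[\Theta(X),\scrR(X)]Y=2\,\scrR(X,X')Y$ on $(\n_3)_X$, $\H_X$, and $(\Z_X)^\perp$ using the explicit formulas \eqref{eq:R_X_1}--\eqref{eq:R_X_7} and \eqref{eq:sym_curv_3}, exactly as the paper does (with $Z$ in place of your $W$). Your opening geodesic/moving-frame heuristic---that $\tfrac{d}{dt}\big|_{t=0}\scrR(X(t))=-2\,\scrR(X,X')$ while the natural frames on the invariant blocks rotate by $-\Theta(X)$, hence $\tfrac{d}{dt}\big|_{t=0}\scrR(X(t))=-[\Theta(X),\scrR(X)]$---is a pleasant conceptual explanation that the paper does not give; just note that on $(\Z_X)^\perp$ the fixed basis does not evolve by $-\Theta(X)$, so there the identity follows instead from $\scrR(X)$ being scalar, which you also observe.
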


\begin{proof}
  Recall that both \(\C(X)\) and \(\scrR(X)\) leave each of
  \((\n_3)_X\), \(\H_X\), and \((\Z_X)^\perp\) invariant (see
  \eqref{eq:R_X_1}--\eqref{eq:C_X_3}, \eqref{eq:R_X_4}--\eqref{eq:C_X_4},
  \eqref{eq:R_X_5}--\eqref{eq:C_X_7}). The same holds for \(\Theta(X)\).
  Thus it suffices to verify \eqref{eq:Theta_1} for \(Y\) in each of the
  subspaces \((\n_3)_X\), \(\H_X\), and \((\Z_X)^\perp\).

 We begin with the case \(Y \in \H_X\). By linearity, it suffices
 to consider \(Y \in \{\,Z,\, Z \cbullet X_\v,\, Z \cbullet X_\z \cbullet
 X_\v\,\}\) for some \(Z\in \h_X \coloneqq X_\z^\perp \subseteq \z\).
 Then \(\Theta(X)Z = 0\), and by~\eqref{eq:Theta_0} and~\eqref{eq:R_X_5}
 we have
 \begin{align*}
  \Theta(X)\bigl(\scrR(X)Z\bigr) &= \frac{3}{4}\,\norm{X_\z}^2\,
    Z \cbullet X_\v \\
  \scrR(X)\bigl(\Theta(X)Z\bigr) &= 0
 \end{align*}
 Thus
 \[
  [\Theta(X), \scrR(X)]\,Z
  = \frac{3}{4} \norm{X_\z}^2\, Z \cbullet X_\v
 \]

 For \(2\,\scrR(X, X')Z\), using~\eqref{eq:sym_curv_3} with \(X'_\z = 0\)
 and \(X'_\v = -\,X_\z \cbullet X_\v\) (so \(\langle X_\v, X'_\v \rangle
 = 0\)) gives
 \begin{align*}
  2\,\scrR(X, X')Z
  &= \frac{3}{4}\, Z \cbullet X_\z \cbullet X'_\v \\
  &\stackrel{\eqref{eq:X_prime}}{=} \frac{3}{4} \norm{X_\z}^2\,
    Z \cbullet X_\v
 \end{align*}
 We conclude
 \[
  [\Theta(X), \scrR(X)]\,Z = 2\,\scrR(X, X')Z
 \]

 Furthermore, since \(Z \cbullet X_\v \in \H_X\),
 \[
  \Theta(X)\bigl(Z \cbullet X_\v\bigr)
  = X_\z \cbullet Z \cbullet X_\v
  = -\,Z \cbullet X_\z \cbullet X_\v
 \]
 By~\eqref{eq:R_X_6} and~\eqref{eq:R_X_7}, and since \(\Theta(X)\) vanishes
 on \(\z\) (so \(\Theta(X)\overK(X)Z = 0\)), we have
 \begin{align*}
  \Theta(X)\bigl(\scrR(X)(Z \cbullet X_\v)\bigr)
  &= -\,\frac{1}{4}\bigl( \norm{X_\z}^2 - 3\,\norm{X_\v}^2 \bigr)
      Z \cbullet X_\z \cbullet X_\v \\
  \scrR(X)\bigl(\Theta(X)(Z \cbullet X_\v)\bigr)
  &= -\,\frac{3}{4} \norm{X_\v}^2\,\norm{X_\z}^2\, Z
     + \frac{3}{4} \overK(X)Z \cbullet X_\v
     - \frac{1}{4} \norm{X_\z}^2\, Z \cbullet X_\z \cbullet X_\v
 \end{align*}
 Therefore
 \begin{equation}\label{eq:commutator_2}
 \begin{aligned}
  [\Theta(X), \scrR(X)]\bigl(Z \cbullet X_\v\bigr)
  &= \frac{3}{4} \norm{X_\v}^2\, Z \cbullet X_\z \cbullet X_\v
     + \frac{3}{4} \norm{X_\v}^2\,\norm{X_\z}^2\, Z \\
  &\quad - \frac{3}{4} \overK(X)Z \cbullet X_\v
 \end{aligned}
 \end{equation}

 Let us compare~\eqref{eq:commutator_2} with
 \(2\,\scrR(X, X')\bigl(Z \cbullet X_\v\bigr)\):
 \begin{align*}
  2\,\scrR(X, X')\bigl(Z \cbullet X_\v\bigr)
  &\stackrel{\eqref{eq:sym_curv_3}}{=}
    -\,\frac{3}{4} \Bigl(
      \bigl(X'_\v \spinprod (Z \cbullet X_\v)\bigr)\cbullet X_\v
      + \bigl(X_\v \spinprod (Z \cbullet X_\v)\bigr)\cbullet X'_\v
    \Bigr) \\
  &\quad -\,\frac{3}{4}\, (Z \cbullet X_\v) \spinprod (X_\z \cbullet
    X'_\v) + \frac{1}{2} \langle X'_\v, Z \cbullet X_\v \rangle\, X_\z
 \end{align*}
 We compute each term:
 \begin{align*}
  \bigl(X'_\v \spinprod (Z \cbullet X_\v)\bigr)\cbullet X_\v
    &\stackrel{\eqref{eq:de_K_4}}{=} \overK(X)Z \cbullet X_\v \\
  \bigl(X_\v \spinprod (Z \cbullet X_\v)\bigr)\cbullet X'_\v
    &\stackrel{\eqref{eq:Clifford_module_5}}{=}
      -\,\norm{X_\v}^2\, Z \cbullet X_\z \cbullet X_\v \\
  (Z \cbullet X_\v) \spinprod (X_\z \cbullet X'_\v)
    &\stackrel{\eqref{eq:Clifford_module_5}}{=}
      -\,\norm{X_\v}^2\,\norm{X_\z}^2\,Z \\
  \langle X'_\v, Z \cbullet X_\v \rangle\,X_\z &= 0
 \end{align*}
 Thus
 \begin{align*}
  2\,\scrR(X, X')\bigl(Z \cbullet X_\v\bigr)
  &= -\,\frac{3}{4}\,\overK(X)Z \cbullet X_\v
     + \frac{3}{4}\,\norm{X_\v}^2\, Z \cbullet X_\z \cbullet X_\v \\
  &\quad + \frac{3}{4}\,\norm{X_\v}^2\,\norm{X_\z}^2\, Z \\
  &\stackrel{\eqref{eq:commutator_2}}{=}
     [\Theta(X), \scrR(X)]\bigl(Z \cbullet X_\v\bigr)
 \end{align*}
 Hence~\eqref{eq:Theta_1} holds on \(Z \cbullet X_\v\).

 Similarly, for
 \([\Theta(X), \scrR(X)]\bigl(Z \cbullet X_\z \cbullet X_\v\bigr)\),
 we have:
 \begin{align*}
  \Theta(X)\bigl(\scrR(X)(Z \cbullet X_\z\cbullet X_\v)\bigr)
  &\stackrel{\eqref{eq:R_X_7}}{=}
    \frac{3}{4}\,\overK(X)Z \cbullet X_\z\cbullet X_\v
    +\frac{1}{4}\,\norm{X_\z}^4\,Z \cbullet X_\v \\
  \scrR(X)\bigl(\Theta(X)(Z \cbullet X_\z\cbullet X_\v)\bigr)
  &=\norm{X_\z}^2\,\scrR(X)\bigl(Z \cbullet X_\v\bigr) \\
  &\stackrel{\eqref{eq:R_X_6}}{=}
    \norm{X_\z}^2\!\left[
    \frac{1}{4}\bigl(\norm{X_\z}^2-3\,\norm{X_\v}^2\bigr)Z \cbullet X_\v
    - \frac{3}{4}\,\overK(X)Z
    \right] \\
  &= \frac{1}{4}\bigl(\norm{X_\z}^2-3\,\norm{X_\v}^2\bigr)\norm{X_\z}^2\,
     Z \cbullet X_\v
    - \frac{3}{4}\,\norm{X_\z}^2\,\overK(X)Z
 \end{align*}
 Therefore
 \begin{equation}\label{eq:commutator_3}
  \begin{aligned}
   [\Theta(X), \scrR(X)]\bigl(Z \cbullet X_\z \cbullet X_\v\bigr)
   &= \frac{3}{4}\,\norm{X_\v}^2\,\norm{X_\z}^2\, Z \cbullet X_\v
     + \frac{3}{4}\,\overK(X)Z \cbullet X_\z \cbullet X_\v \\
   &\quad + \frac{3}{4}\,\norm{X_\z}^2\,\overK(X)Z
  \end{aligned}
 \end{equation}

 On the other hand,
 \begin{align*}
  2\,\scrR(X, X')\bigl(Z \cbullet X_\z \cbullet X_\v\bigr)
  &\stackrel{\eqref{eq:sym_curv_3}}{=}
    -\,\frac{3}{4}\Bigl(
      \bigl(X'_\v \spinprod (Z \cbullet X_\z \cbullet X_\v)\bigr)\cbullet
      X_\v
      + \bigl(X_\v \spinprod (Z \cbullet X_\z \cbullet X_\v)\bigr)\cbullet
      X'_\v
    \Bigr) \\
  &\quad -\,\frac{3}{4}\,\bigl(Z \cbullet X_\z \cbullet X_\v\bigr) \spinprod
    (X_\z \cbullet X'_\v)
    + \frac{1}{2}\,\langle X'_\v, Z \cbullet X_\z \cbullet X_\v \rangle\,
    X_\z
 \end{align*}
 where:
 \begin{align*}
  \bigl(X'_\v \spinprod (Z \cbullet X_\z \cbullet X_\v)\bigr)\cbullet
   X_\v &\stackrel{\eqref{eq:def_spinor_product},\eqref{eq:X_prime}}{=}
          - \norm{X_\v}^2\,\norm{X_\z}^2Z \cbullet X_\v \\
  \bigl(X_\v \spinprod (Z \cbullet X_\z \cbullet X_\v)\bigr)\cbullet
  X'_\v
    &\stackrel{\eqref{eq:def_K_2},\eqref{eq:X_prime}}{=}
      -\,\overK(X)Z \cbullet X_\z \cbullet X_\v \\
  \bigl(Z \cbullet X_\z \cbullet X_\v\bigr) \spinprod (X_\z \cbullet X'_\v)
    &\stackrel{\eqref{eq:def_K_2},\eqref{eq:X_prime}}{=}
      -\,\norm{X_\z}^2\,\overK(X)Z  \\
  \langle X'_\v, Z \cbullet X_\z \cbullet X_\v \rangle\,X_\z
    &\stackrel{\eqref{eq:X_prime}}{=}
      -\langle X'_\v, Z \cbullet X'_\v \rangle\,X_\z
      \stackrel{\eqref{eq:Clifford_module_1}}{=} 0
 \end{align*}
 Thus
 \begin{align*}
  2\,\scrR(X,X')\bigl(Z \cbullet X_\z \cbullet X_\v\bigr)
  &= \frac{3}{4}\,\norm{X_\v}^2\,\norm{X_\z}^2\,Z \cbullet X_\v
     + \frac{3}{4}\,\overK(X)Z \cbullet X_\z \cbullet X_\v \\
  &\quad + \frac{3}{4}\,\norm{X_\z}^2\,\overK(X)Z \\
  &\stackrel{\eqref{eq:commutator_3}}{=}
     [\Theta(X), \scrR(X)]\bigl(Z \cbullet X_\z \cbullet X_\v\bigr)
 \end{align*}
 This proves that~\eqref{eq:Theta_1} holds on \(\H_X\).

 Next, we will verify that~\eqref{eq:Theta_1} holds on \((\n_3)_X\) as
 well:
 \begin{align*}
  \scrR(X)\bigl(\Theta(X)X_\v\bigr)
    &\stackrel{\eqref{eq:R_X_2},\,\eqref{eq:Theta_0}}{=}
      \left( \frac{3}{4}\,\norm{X_\v}^2 - \frac{1}{4}\,\norm{X_\z}^2
      \right) (X_\z \cbullet X_\v) \\
  \scrR(X)\bigl(\Theta(X)(X_\z \cbullet X_\v)\bigr)
    &\stackrel{\eqref{eq:R_X_1},\,\eqref{eq:Theta_0}}{=}
      \frac{1}{4}\,\norm{X_\z}^2 \left( \norm{X_\z}^2\, X_\v -
      \norm{X_\v}^2\, X_\z \right) \\
  \scrR(X)\bigl(\Theta(X)X_\z\bigr)
    &\stackrel{\eqref{eq:Theta_0}}{=} 0
 \end{align*}
 and
 \begin{align*}
  \Theta(X)\bigl(\scrR(X)X_\v\bigr)
    &\stackrel{\eqref{eq:R_X_1}}{=}
      -\,\frac{1}{4}\,\norm{X_\z}^2\, (X_\z \cbullet X_\v) \\
  \Theta(X)\bigl(\scrR(X)(X_\z \cbullet X_\v)\bigr)
    &\stackrel{\eqref{eq:R_X_2}}{=}
      \norm{X_\z}^2 \left( -\,\frac{3}{4}\,\norm{X_\v}^2 +
      \frac{1}{4}\,\norm{X_\z}^2 \right) X_\v \\
  \Theta(X)\bigl(\scrR(X)X_\z\bigr)
    &\stackrel{\eqref{eq:R_X_3}}{=}
      \frac{1}{4}\,\norm{X_\z}^2\, (X_\z \cbullet X_\v)
 \end{align*}
 Therefore:
 \begin{align}
  [\Theta(X), \scrR(X)]\,X_\v
    &= -\,\frac{3}{4}\,\norm{X_\v}^2\, (X_\z \cbullet X_\v)
    \label{eq:commutator_4} \\
  [\Theta(X), \scrR(X)]\,(X_\z \cbullet X_\v)
    &= \frac{1}{4}\,\norm{X_\v}^2\,\norm{X_\z}^2\, X_\z
       - \frac{3}{4}\,\norm{X_\v}^2\,\norm{X_\z}^2\, X_\v
       \label{eq:commutator_5} \\
  [\Theta(X), \scrR(X)]\,X_\z
    &= \frac{1}{4}\,\norm{X_\z}^2\, (X_\z \cbullet X_\v)
    \label{eq:commutator_6}
 \end{align}

 On the other hand:
 \begin{align*}
  2\,\scrR(X, X')X_\v
  &\stackrel{\eqref{eq:sym_curv_3}}{=}
    -\,\frac{3}{4}\,\bigl((X'_\v \spinprod X_\v) \cbullet X_\v\bigr)
    - \frac{3}{4}\,\bigl((X_\v \spinprod X_\v) \cbullet X'_\v\bigr) \\
  &\quad -\,\frac{3}{4}\,\bigl(X_\v \spinprod (X_\z \cbullet X'_\v)
    \bigr)
    + \frac{1}{2}\,\langle X'_\v, X_\v \rangle\,X_\z
 \end{align*}
 where
 \begin{align*}
  (X'_\v \spinprod X_\v) \cbullet X_\v
    &\stackrel{\eqref{eq:Clifford_module_5}}{=}
      \norm{X_\v}^2\, (X_\z \cbullet X_\v) \\
  (X_\v \spinprod X_\v) \cbullet X'_\v
    &\stackrel{\eqref{eq:def_spinor_product}}{=} 0 \\
  X_\v \spinprod (X_\z \cbullet X'_\v)
    &\stackrel{\eqref{eq:Clifford_module_2},\,\eqref{eq:def_spinor_product}}
      {=} 0 \\
  \langle X'_\v, X_\v \rangle\,X_\z
    &\stackrel{\eqref{eq:Clifford_module_1}}{=} 0
 \end{align*}
 Hence:
 \[
  2\,\scrR(X, X')X_\v
  = -\,\frac{3}{4}\,\norm{X_\v}^2\, (X_\z \cbullet X_\v)
  \stackrel{\eqref{eq:commutator_4}}{=}
  [\Theta(X), \scrR(X)]\,X_\v
 \]

 Similarly, for \(2\,\scrR(X, X')(X_\z \cbullet X_\v)\):
 \begin{align*}
  2\,\scrR(X, X')(X_\z \cbullet X_\v)
  &\stackrel{\eqref{eq:sym_curv_3}}{=}
    -\,\frac{3}{4}\,\bigl((X'_\v \spinprod (X_\z \cbullet X_\v)) \cbullet
    X_\v\bigr)
    - \frac{3}{4}\,\bigl((X_\v \spinprod (X_\z \cbullet X_\v)) \cbullet
    X'_\v\bigr) \\
  &\quad -\,\frac{3}{4}\,\bigl((X_\z \cbullet X_\v) \spinprod (X_\z
    \cbullet X'_\v)\bigr)
    + \frac{1}{2}\,\langle X'_\v, X_\z \cbullet X_\v \rangle\,X_\z
 \end{align*}
 where
 \begin{align*}
  (X'_\v \spinprod (X_\z \cbullet X_\v)) \cbullet X_\v
    &\stackrel{\eqref{eq:X_prime}}{=} 0, \\
  (X_\v \spinprod (X_\z \cbullet X_\v)) \cbullet X'_\v
    &= \norm{X_\v}^2\,\norm{X_\z}^2\, X_\v \\
  (X_\z \cbullet X_\v) \spinprod (X_\z \cbullet X'_\v)
    &= -\,\norm{X_\v}^2\,\norm{X_\z}^2\, X_\z \\
  \langle X'_\v, X_\z \cbullet X_\v \rangle\,X_\z
    &\stackrel{\eqref{eq:Clifford_module_1},\,\eqref{eq:Clifford_module_2},
      \,\eqref{eq:X_prime}}{=}
      -\,\norm{X_\v}^2\,\norm{X_\z}^2\, X_\z
 \end{align*}
 Thus:
 \begin{align*}
  2\,\scrR(X, X')(X_\z \cbullet X_\v)
  &= -\,\frac{3}{4}\,\norm{X_\v}^2\,\norm{X_\z}^2\, X_\v
     + \frac{3}{4}\,\norm{X_\v}^2\,\norm{X_\z}^2\, X_\z \\
  &\quad -\,\frac{1}{2}\,\norm{X_\v}^2\,\norm{X_\z}^2\, X_\z \\
  &= -\,\frac{3}{4}\,\norm{X_\v}^2\,\norm{X_\z}^2\, X_\v
     + \frac{1}{4}\,\norm{X_\v}^2\,\norm{X_\z}^2\, X_\z \\
  &\stackrel{\eqref{eq:commutator_5}}{=}
     [\Theta(X), \scrR(X)]\,(X_\z \cbullet X_\v)
 \end{align*}

 For \(2\,\scrR(X, X')X_\z\):
 \begin{align*}
  2\,\scrR(X, X')X_\z
  &\stackrel{\eqref{eq:sym_curv_3}}{=}
    \frac{3}{4}\,(X_\z \cbullet X_\z \cbullet X'_\v)
    + \frac{1}{2}\,\norm{X_\z}^2\, X'_\v
    + \frac{1}{2}\,\langle X'_\v, X_\v \rangle\, X_\z \\
  &= -\,\frac{3}{4}\,\norm{X_\z}^2\, X'_\v
     + \frac{1}{2}\,\norm{X_\z}^2\, X'_\v \\
  &= \frac{1}{4}\,\norm{X_\z}^2\, (X_\z \cbullet X_\v) \\
  &\stackrel{\eqref{eq:commutator_6}}{=}
     [\Theta(X), \scrR(X)]\,X_\z
 \end{align*}

 Finally, for \(Y \in (\Z_X)^\perp\), we have \(Y_\z = 0\) and
 \(Y_\v \in \Kern(X_\v \spinprod) \cap \Kern\bigl((X_\z \cbullet X_\v)
 \spinprod\bigr)\) by~\eqref{eq:decomposition_of_n_1}. Then:
 \begin{align*}
  \Theta(X)\bigl(\scrR(X)Y\bigr)
    &\stackrel{\eqref{eq:R_X_4}}{=}
      -\,\frac{1}{4}\,\norm{X_\z}^2\, (X_\z \cbullet Y_\v) \\
  \scrR(X)\bigl(\Theta(X)Y\bigr)
    &\stackrel{\eqref{eq:R_X_4}}{=}
      -\,\frac{1}{4}\,\norm{X_\z}^2\, (X_\z \cbullet Y_\v)
 \end{align*}
 Thus
 \[
  [\Theta(X), \scrR(X)]\,Y = 0
  \stackrel{\eqref{eq:sym_curv_3}}{=}
  2\,\scrR(X, X')Y
 \]
 This shows that~\eqref{eq:Theta_1} also holds on \((\Z_X)^\perp\).
\end{proof}

\paragraph{Proof of Theorem~\ref{th:C0}}
For \(X \in \n\) with \(X_\z \neq 0\) and \(X_\v \neq 0\) let
\(\gamma \colon \bbR \to N\) be the geodesic with \(\gamma(0) = e_N\)
and \(\dot\gamma(0) = X\). Then \(\C_\gamma(0) = \C(X)\), where
\(\C(X) \in \End{\n}\) is defined by~\eqref{eq:C_0_structure_1}. Let
\(A\) denote the tensor defined in \eqref{eq:LC}. Then \(A(X)Y = \nabla_X Y\)
for \(X, Y \in \n\) describes the Levi-Civita connection
in the left-invariant framing. Comparing~\eqref{eq:LC}
with~\eqref{eq:C_0_structure_1} and~\eqref{eq:Theta_0}, we see that
\[A(X)Y = \Theta(X)Y + \C(X)Y\] for all \(X, Y \in \n\). Since both
\(\Theta(X)\) and \(A(X)\) are skew-symmetric endomorphisms of \(\n\),
it follows that \(\C(X)\) is skew-symmetric as well. Therefore, also
\(\C_\gamma(t)\) is skew-symmetric for all \(t \in \bbR\)
by~\eqref{eq:C_0_structure_3}. Moreover, it is now straightforward
to see that \(\scrR^1(X) = [\C(X),\,\scrR(X)]\) holds:
\begin{align*}
  \scrR^1(X)Y
    &= \bigl( A(X) \cdot \scrR \bigr)(X)Y \\
    &= \bigl( \Theta(X) \cdot \scrR \bigr)(X)Y
       + \bigl( \C(X) \cdot \scrR \bigr)(X)Y \\
    &\stackrel{\eqref{eq:Theta_1}}{=}
       \bigl( \C(X) \cdot \scrR \bigr)(X)Y \\
    &= [\C(X),\,\scrR(X)]\,Y
\end{align*}
where the second-to-last equation uses that \(\C(X)X = 0\), as indicated
immediately after the defining equation~\eqref{eq:C_0_structure_1}.
By~\eqref{eq:Jacobi_operator_versus_symmetrized_curvature_tensor} we conclude that
\[
  \scrR^{(1)}_\gamma(0) = [\C_\gamma(0), \scrR_\gamma(0)]
\]
for all geodesics \(\gamma \colon \bbR \to N\) with \(\gamma(0) = e_N\)
such that \(X \coloneqq \dot\gamma(0)\) satisfies \(X_\z \neq 0\) and
\(X_\v \neq 0\). Because the curvature tensor is left-invariant as well,
we can argue as in the proof of Lemma~\ref{le:C_is_parallel} to conclude
that
\[
  \scrR^{(1)}_\gamma(t) = [\C_\gamma(t), \scrR_\gamma(t)]
\]
holds for all \(t \in \bbR\). Together with Lemma~\ref{le:C_is_parallel},
this shows that \(\C_\gamma\) defines a \(\frakC_0\)-structure along
\(\gamma\).\qed

\section{\texorpdfstring{Positivity of the coefficient polynomials
    \(a_i(X)\)}{Positivity of the coefficient polynomials a_i(X)}}
\label{se:positivity}

\begin{lemma}\label{le:positive}
Consider again the polynomial from~\eqref{eq:def_tilde_P_mu},
written as
\begin{equation}\label{eq:P_mu_2}
\tilde P_{\undermu}(z,v)
  = \lambda^7
    + a_2(z,v)\lambda^5
    + a_4(z,v)\lambda^3
    + a_6(\undermu,z,v)\lambda
\end{equation}
where the coefficients are given explicitly in~\eqref{eq:tilde_P_mu}.
Assume \(z\ge 0\), \(v\ge 0\) and \((z,v)\neq(0,0)\), and also
\(\undermu<1\). Then \(a_i(z,v)>0\) for \(i=2,4\) and
\(a_6(\undermu,z,v)>0\).
\end{lemma}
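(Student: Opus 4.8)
The plan is to read the three coefficients off~\eqref{eq:tilde_P_mu}, dispatch $a_2$ and $a_4$ immediately by factoring, and reduce the positivity of $a_6$ to one application of the AM--GM inequality after the $\undermu$-term has been bounded using $\undermu<1$.

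First I would record the closed forms obtained from~\eqref{eq:tilde_P_mu}. Factoring yields $a_2(z,v)=\tfrac32\,(9z^2+v^2)$ and $a_4(z,v)=\tfrac{9}{16}\,(9z^2+v^2)^2$, which are strictly positive whenever $(z,v)\neq(0,0)$, because $z,v\ge 0$ forces $9z^2+v^2>0$. For the constant coefficient, the binomial identity $(9z^2+v^2)^3=729z^6+243z^4v^2+27z^2v^4+v^6$ lets me rewrite
\[
  a_6(\undermu,z,v)=\tfrac{1}{16}\,(9z^2+v^2)^3-\tfrac{243}{64}\,\undermu\,z^2v^4 .
\]

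Next I would split into cases. If $z=0$ or $v=0$, then $z^2v^4=0$ and $a_6=\tfrac{729}{16}z^6+\tfrac{1}{16}v^6>0$ since $(z,v)\neq(0,0)$. If instead $z>0$ and $v>0$, then $z^2v^4>0$, so the hypothesis $\undermu<1$ gives the strict bound $a_6(\undermu,z,v)>\tfrac{1}{16}(9z^2+v^2)^3-\tfrac{243}{64}z^2v^4$, and applying AM--GM to the three nonnegative numbers $9z^2,\ \tfrac{v^2}{2},\ \tfrac{v^2}{2}$ gives $9z^2+v^2\ge 3\sqrt[3]{\tfrac94 z^2v^4}$, hence $(9z^2+v^2)^3\ge\tfrac{243}{4}z^2v^4$, i.e.\ $\tfrac{1}{16}(9z^2+v^2)^3\ge\tfrac{243}{64}z^2v^4$. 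Combining the two inequalities yields $a_6(\undermu,z,v)>0$, finishing the proof.

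There is essentially no obstacle: the only mildly nonobvious step is spotting the correct AM--GM split, equivalently noticing the cube $(9z^2+v^2)^3$ concealed in the constant term. As a sanity check, this matches the coprimality computation in the proof of Lemma~\ref{le:gcd}, where the vanishing locus of $a_6$ is exactly $\undermu=\tfrac{4(9z^2+v^2)^3}{243\,z^2v^4}\ge 1$ (with equality iff $v^2=18z^2$), and it is the rigorous form of the remark in Section~\ref{se:main} that the constant coefficient of~\eqref{eq:def_P_mu_1} is strictly positive whenever $X\neq0$ and $\undermu(X)<1$, hence $P_{\min}(N,g,e_N;X)>0$; cf.\ Corollary~\ref{co:positive}.
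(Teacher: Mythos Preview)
Your proof is correct. Both you and the paper first note that \(a_2,a_4>0\) are immediate and that the boundary cases \(z=0\) or \(v=0\) are trivial, then reduce the remaining case to showing \(a_6(1,z,v)\ge 0\) via the strict monotonicity \(a_6(\undermu,z,v)>a_6(1,z,v)\). The key difference is how this last inequality is established. The paper invokes the factorization \(P_1=P_1^\sharp\cdot P_1'\) already computed in the proof of Lemma~\ref{le:gcd} (equation~\eqref{eq:p_over_q}), so that positivity of \(a_6(1,z,v)\) reduces to the constant term of \(P_1'\), which it then recognizes as the perfect square \((18z^2-v^2)^2\). You instead spot the perfect cube \(\tfrac{1}{16}(9z^2+v^2)^3\) hidden in \(a_6\) and bound it below by \(\tfrac{243}{64}z^2v^4\) via AM--GM on \(9z^2,\tfrac{v^2}{2},\tfrac{v^2}{2}\). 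Your route is more self-contained (it does not rely on the earlier factorization) and arguably cleaner; the paper's route has the virtue of reusing work already done. The two arguments agree on the equality locus \(v^2=18z^2\), as you note.
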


\begin{proof}
It is clear that \(a_i(z,v)>0\) for \(i=2,4\) unless \(z=v=0\).
If \(v=0\) or \(z=0\) (but not both), then also
\(a_6(\undermu,z,v)>0\). Hence we may assume \(z>0\) and \(v>0\).
Since \(\undermu<1\), we have \(a_6(\undermu,z,v)>a_6(1,z,v)\).

Consider~\eqref{eq:P_mu_2} with \(\undermu\coloneqq 1\):
\begin{equation}\label{eq:P_mu_3}
\begin{aligned}
\lambda^7
&+ \left(\frac{27\,z^{2}}{2}+\frac{3\,v^{2}}{2}\right)\lambda^{5}
+ \left(\frac{729}{16}\,z^{4}+\frac{81}{8}\,v^{2}z^{2}
         +\frac{9}{16}\,v^{4}\right)\lambda^{3}\\
&+ \left(\frac{729}{16}\,z^{6}+\frac{243}{16}\,z^{4}v^{2}
         -\frac{135}{64}\,v^{4}z^{2}+\frac{1}{16}\,v^{6}\right)\lambda
\end{aligned}
\end{equation}
By~\eqref{eq:p_over_q}, this factors as
\begin{equation}\label{eq:P_mu_4}
\frac{\lambda\,(4\lambda^{2}+9z^{2}+4v^{2})
\bigl(16\lambda^{4}+180\lambda^{2}z^{2}+8\lambda^{2}v^{2}
      +324z^{4}-36v^{2}z^{2}+v^{4}\bigr)}{64}\,.
\end{equation}
Hence \(a_6(1,z,v)\ge 0\) iff
\(324z^{4}-36v^{2}z^{2}+v^{4}\ge 0\). Note
\[
324z^{4}-36v^{2}z^{2}+v^{4}=(18z^{2}-v^{2})^{2}\,,
\]
equivalently the quadratic form with matrix
\(\bigl(\begin{smallmatrix}1&-18\\-18&324\end{smallmatrix}\bigr)\)
is positive semidefinite (determinant \(0\), upper‐left entry
positive). Therefore \(a_6(\undermu,z,v)>a_6(1,z,v)\ge 0\), proving
the claim.
\end{proof}

\begin{corollary}\label{co:positive}
Suppose \(X\in U(e_N)\). Then the even coefficients \(a_{2i}(X)\) of
\[
P_{\mathrm{min}}(X)=\sum_{i=0}^k a_i(X)\lambda^{k-i}
\]
are strictly positive.
\end{corollary}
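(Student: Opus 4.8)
The plan is to read off Corollary~\ref{co:positive} directly from the pointwise factorization of Theorem~\ref{th:main_1}, once the relevant individual factors are shown to have positive coefficients. Recall from Theorem~\ref{th:JR_on_C_0_spaces} that $k \coloneqq \deg P_{\min}(N,g)$ is odd and all odd-indexed coefficients vanish, so that, writing $t \coloneqq \lambda^2$, one has $P_{\min}(N,g)(X) = \lambda\,R(X)$ with $R(X) \in \bbR[t]$ of degree $\tfrac{k-1}{2}$ whose coefficients are precisely the even coefficients $a_{2i}(X)$, $i = 0,\dots,\tfrac{k-1}{2}$. The first step is to note that for $X \in U(e_N)$ we have $X \neq 0$ and $\undermu(X) < 1$ for every nonconstant eigenvalue branch $\undermu$ of $-\underK^2$ at $X$ (these are recorded just before Corollary~\ref{co:positive}), and that $U(e_N) \subseteq \n \setminus \ram(\underK^2)$ — for $n \geq 2$ because $U(e_N) = U_{\bwmin} \cap U_{\pwcp} \subseteq U_{(0,1)}$ by the closing remark of Section~\ref{se:proof_of_the_main_theorem_1}, and trivially for $n = 1$ where $\ram(\underK^2) = \emptyset$. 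Hence the factorization \eqref{eq:P_min_2} is valid at $X$.

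The second step is to rewrite $R(X)$ as a product of polynomials in $t$ and check that each has strictly positive coefficients at $X$. From \eqref{eq:P_min_2} and $P_{\n_3}(X) = \lambda(\lambda^2 + \norm{X}^2)$ we get
\[
  R(X) \;=\; \bigl(t + \norm{X}^2\bigr)
  \prod_{\substack{\nu \in \{0,1\}\\ m_\nu \ge 2}} P_\nu^\sharp(X)
  \prod_{\undermu \in \branch_{\mathrm{nc}}(U(X))} P_\undermu(X),
\]
where each factor on the right is read as a polynomial in $t$. For $t + \norm{X}^2$ positivity of both coefficients is immediate since $X \neq 0$. For $P_0^\sharp(X)$ and $P_1^\sharp(X)$ one reads off from \eqref{eq:def_P_0_sharp} and \eqref{eq:def_P_1_sharp} that every coefficient is a sum of nonnegative monomials in $\norm{X_\z}^2$ and $\norm{X_\v}^2$, with at least one strictly positive term unless $X = 0$. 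For each $P_\undermu(X)$, Lemma~\ref{le:positive} applied with $z \coloneqq \norm{X_\z}$, $v \coloneqq \norm{X_\v}$ and $\undermu \coloneqq \undermu(X)$ — legitimate since $(z,v) \neq (0,0)$ and $\undermu(X) < 1$ — shows that the coefficients $a_2(z,v), a_4(z,v), a_6(\undermu,z,v)$ of $\tilde P_\undermu$, which are exactly the three non-leading coefficients of $P_\undermu(X)$ as a polynomial in $t$, are strictly positive (the leading one being $1$). Thus $R(X)$ is a finite product of polynomials in $t$, each with all coefficients strictly positive.

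The third step is the elementary closure fact: a product of finitely many univariate polynomials each having all coefficients strictly positive again has all coefficients strictly positive. This follows because each coefficient of a product of two such polynomials is a convolution sum $\sum_{i+j=m} b_i c_j$ over a nonempty index range, hence a sum of positive numbers; an induction on the number of factors finishes it. Applying this to $R(X)$ gives $a_{2i}(X) > 0$ for all $i$, which is the claim.

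There is no genuinely hard step here — the argument is essentially bookkeeping on top of Theorem~\ref{th:main_1} and Lemma~\ref{le:positive}. The only point demanding care is making the first step watertight uniformly in $n$: confirming that membership in $U(e_N)$ really forces $X \neq 0$, places $X$ outside $\ram(\underK^2)$, and forces $\undermu(X) < 1$ for the nonconstant branches, including the degenerate low-dimensional cases $n = 1, 2$ where $\underK \equiv 0$ and the factorization reduces to $P_{\n_3}$ or $P_{\n_3}P_0^\sharp$; this is exactly what the remarks closing Section~\ref{se:proof_of_the_main_theorem_1} supply.
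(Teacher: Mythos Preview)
Your proof is correct and follows essentially the same strategy as the paper's: factorize via Theorem~\ref{th:main_1}, apply Lemma~\ref{le:positive} to each \(P_\undermu\) (with direct inspection for the remaining factors), and multiply. The only notable difference is how \(\undermu(X)<1\) is secured: you cite the closing remark \(U(e_N)=U_{\bwmin}\cap U_{\pwcp}\subseteq U_{(0,1)}\), whereas the paper argues it directly inside the proof by degree reduction---if some nonconstant branch had \(\undermu(X)=1\), one could drop the factor \(P_\undermu(X)\) (when \(\nu=1\) is already global) or replace it by \(P_1^\sharp(X)\), yielding an annihilating polynomial of degree at most \(k-4\) and contradicting \(X\in U(e_N)\).
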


\begin{proof}
Let \(X\in U(e_N)\) and \(k\coloneqq \deg P_{\mathrm{min}}(N,g)\).
Clearly \(X\neq 0\). Let \(\undermu\) be a nonconstant eigenvalue
branch of \(-\underK^2\). We claim \(\undermu(X)<1\). Otherwise we
can either remove the factor \(P_{\undermu}(X)\) from
\eqref{eq:P_min_2} completely (if \(\nu=1\) is a global eigenvalue
of \(-\underK^2\)), or replace it by \(P_1^\sharp(X)\) otherwise, to
obtain a polynomial \(P(X)\) of degree at most \(k-4\) with
\(P(X;\ad(\C(X))\scrR^0(X))=0\). Thus
\(\deg P_{\mathrm{min}}(\C(X),\scrR(X))\le k-4\). By
\eqref{eq:U(p)_2} this implies \(X\notin U(e_N)\), a contradiction.

Hence \(\undermu(X)<1\) for all nonconstant eigenvalue branches of
\(-\underK^2\). By Lemma~\ref{le:positive}, all even coefficients
\(a_{2i}(X)\) of \(P_{\mathrm{min}}(X)\) in~\eqref{eq:P_min_2}\;are
strictly positive.
\end{proof}

\section{\texorpdfstring{Expression of~\eqref{eq:characteristic_polynomial}
in terms of the elementary symmetric polynomials
\(\oversigma_j\) defined in~\eqref{eq:over_sigma_j}}%
{Expression of P\_min in terms of the elementary symmetric polynomials sigma\_j}}
\label{se:expression_as_polynomials}

For \(i = 1, \dotsc, \ell\), let \(P_i(X)\) be defined by
\[
  P_i(X)
  =
  \lambda^6 + A(X)\lambda^4 + B(X)\lambda^2 + C_i(X)
\]
where
\begin{align*}
  A(X)
  &=
  \frac{27}{2}\,\norm{X_\z}^2
  + \frac{3}{2}\,\norm{X_\v}^2
  \\
  B(X)
  &=
  \frac{729}{16}\,\norm{X_\z}^4
  + \frac{81}{8}\,\norm{X_\z}^2 \norm{X_\v}^2
  + \frac{9}{16}\,\norm{X_\v}^4
  \\
  C_i(X)
  &=
  \frac{729}{16}\,\norm{X_\z}^6
  + \frac{243}{16}\,\norm{X_\z}^4 \norm{X_\v}^2
  + \frac{27}{16}\,\norm{X_\z}^2 \norm{X_\v}^4
  + \frac{1}{16}\,\norm{X_\v}^6
  - \frac{243}{64}\,\overmu_i(X)
\end{align*}

Set
\[
  Q(X)
  \coloneqq
  \lambda^6 + A(X)\lambda^4 + B(X)\lambda^2 + C(X)
\]
where
\[
  C(X)
  \coloneqq
  \frac{729}{16}\,\norm{X_\z}^6
  + \frac{243}{16}\,\norm{X_\z}^4 \norm{X_\v}^2
  + \frac{27}{16}\,\norm{X_\z}^2 \norm{X_\v}^4
  + \frac{1}{16}\,\norm{X_\v}^6
\]

Then
\[
  P_i(X)
  =
  Q(X) - \alpha\,\overmu_i(X),
  \qquad
  \alpha \coloneqq \frac{243}{64}
\]

\subsection*{Symmetric expansion of the product}

The product of these polynomials is
\[
  \prod_{i = 1}^{\ell} P_i(X)
  =
  \prod_{i = 1}^{\ell}
  \bigl(Q(X) - \alpha\,\overmu_i(X)\bigr)
\]
and hence, by the elementary symmetric expansion,
\[
  \prod_{i = 1}^{\ell} P_i(X)
  =
  \sum_{j = 0}^{\ell}
  (-1)^j\,\alpha^j\,
  \oversigma_j(X)\,Q(X)^{\ell - j}
\]
where \(\oversigma_0 \coloneqq 1\).

For \(m \ge 0\), write
\[
  Q(X)^m
  =
  \sum_{q = 0}^{3m} E_{q,m}(X)\,\lambda^{2q}
\]
where
\[
  E_{q,m}(X)
  =
  \sum_{\substack{
    a_0 + a_1 + a_2 + a_3 = m \\
    a_1 + 2a_2 + 3a_3 = q
  }}
  \binom{m}{a_0,\,a_1,\,a_2,\,a_3}\,
  C(X)^{a_0}\,B(X)^{a_1}\,A(X)^{a_2}
\]

Thus the coefficient of \(\lambda^{2q}\) in the product is
\begin{equation}\label{eq:D_q_l}
  D_{q,\ell}(X)
  \coloneqq
  \sum_{j = 0}^{\ell - \lceil q/3 \rceil}
  (-1)^j
  \left(\frac{243}{64}\right)^j
  \oversigma_j(X)\,
  E_{q,\ell - j}(X)
\end{equation}

\subsection*{Final form of the product}

The product of the factors in~\eqref{eq:P_min_2} contributed by the
nonconstant eigenvalue branches
\(\undermu_1, \ldots, \undermu_\ell\) therefore becomes
\begin{equation}\label{eq:product_of_the_P_undermu_i}
  \prod_{i = 1}^{\ell} P_{\undermu_i}
  =
  \sum_{q = 0}^{3\ell} D_{q,\ell}\,\lambda^{2q}
\end{equation}
This exhibits the polynomial dependence of
\(\prod_{i = 1}^{\ell} P_{\undermu_i}\) on
\(\oversigma_1, \ldots, \oversigma_\ell\).

To obtain \(\left.P_{\min}(N,g)\right|_e\), it remains to multiply
\eqref{eq:product_of_the_P_undermu_i} by \(P_{\n_3}\), possibly by
\(P_0^\sharp\), depending on the parity of \(n\), and possibly also by
\(P_1^\sharp\), depending on whether \(1\) is a global eigenvalue of
\(-\underK^2\).

Moreover, our analysis of the \(\overmu_j\), more precisely of the
corresponding elementary symmetric functions \(\oversigma_j\), in
Section~\ref{se:explicit_formulas} shows that each \(\oversigma_j(X)\)
is already a polynomial function of \(X \in \n\), for all pairs
\((\z,\v)\). This completes our discussion of the implications of
Theorem~\ref{th:JR_on_C_0_spaces} for the coefficients of
\(\left.P_{\min}(N,g)\right|_e\).

\section*{ACKNOWLEDGEMENTS}
I would like to thank Gregor Weingart for helpful discussions.

\bibliographystyle{amsplain}

\end{document}